\documentclass[11pt,reqno]{amsart}
\usepackage{fullpage}

\usepackage{graphicx}
\usepackage[draft]{hyperref}
\usepackage{amsmath,amsopn,amssymb,amsfonts,stmaryrd}
\usepackage{verbatim}
\usepackage{amsthm}
\usepackage{mathtools}
\usepackage{color}
\usepackage{enumitem}
\usepackage[framemethod=TikZ]{mdframed}
\usepackage{bbm}
\usepackage{mathrsfs}
\usepackage{booktabs}
\usepackage{caption}
\usepackage{bm}

\usepackage{tensor}
\usepackage{xcolor}
\usepackage{bbm}
\usepackage{cleveref}

\makeatletter
\@namedef{subjclassname@2020}{%
 \textup{2020} Mathematics Subject Classification}
\makeatother

\newcommand{\IR}{{\mathbb R}}%Reals
%non-zero real
%positive real
\newcommand{\IC}{{\mathbb C}}%Complex
%non-zero complex
\newcommand{\E}{\mathbb{E}}
%Projective
%Integers
%Positive integers
%Negative integers
%non-zero integers
%Natural numbers
%Rationals
%non-zero rationals
%positive rationals
%Algebraic closure of the rationals
%Octonions
%\newcommand{\H}{{\mathbb H}}%quaternions
%finite field

\newcommand{\CC}{\mathcal{C}}

\newcommand{\CH}{\mathcal{H}}

\newcommand{\CK}{\mathcal{K}}
\newcommand{\CL}{\mathcal{L}}
\newcommand{\CM}{\mathcal{M}}
\newcommand{\CN}{\mathcal{N}}

\newcommand{\CP}{\mathcal{P}}
\newcommand{\CR}{\mathcal{R}}
\newcommand{\CS}{\mathcal{S}}

\newcommand{\1}{\mathbbm{1}}

\newcommand{\SU}{\mathfrak{su}}
\newcommand{\SO}{\mathfrak{so}}

\newcommand{\Arg}{\operatorname{Arg}}
\newcommand{\pp}{\operatorname{pp}}

\newcommand{\so}[1]{{\mathfrak{so}{#1}}}

\newcommand{\nor}[1]{||#1||}

\newcommand{\N}{\mathbb N}
\newcommand{\C}{\mathbb C}

% orientifold planes and D-branes

%%%%%%%%%%%% MACROS MACROS MACROS %%%%%%%%%

\theoremstyle{plain}
\newtheorem{thm}{Theorem}[section]
\newtheorem{cor}[thm]{Corollary}
\newtheorem{lem}[thm]{Lemma}
\newtheorem{prop}[thm]{Proposition}

\theoremstyle{definition}

\newtheorem*{Defn}{Definition}

\numberwithin{equation}{section}

\theoremstyle{theorem}
\newtheorem*{rems}{Remarks}

\newcommand{\ceil}[1]{\lceil #1\rceil}

\newcommand{\flo}[1]{\lfloor #1\rfloor}
\newcommand{\pflo}[1]{\left\lfloor #1\right\rfloor}
\newcommand{\sm}{\setminus}
\newcommand{\bs}{\backslash}

%%%%%%% Greek letters %%%%%%%%%%%%%%%%%%
\def\a{\alpha}
\def\b{\beta}
\def\d{\delta}

\def\k{\kappa}
\def\l{\lambda}
\def\p{\rho}

\def\w{\omega}

\def\z{\zeta}
\def\th{\theta}

\def\e{\varepsilon}
\def\vf{\varphi}

\def\s{\sigma}
\def\g{\gamma}
\def\t{\tau}

\def\a{\alpha}
\def\b{\beta}
\def\d{\delta}

\def\k{\kappa}
\def\l{\lambda}
\def\p{\varrho}

\def\w{\omega}

\def\z{\zeta}
\def\th{\theta}

\def\e{\varepsilon}
\def\vf{\varphi}

\def\s{\sigma}
\def\g{\gamma}
\def\t{\tau}

\def\GG{\Gamma}

\def\LL{\Lambda}

\def\del{ \partial}

\newcommand{\re}{{\rm Re}}
\newcommand{\im}{{\rm Im}}

\newcommand{\R}{\mathbb R}
\newcommand{\Z}{\mathbb Z}

\newcommand{\Log}{\mathrm{Log}}

\newcommand{\us}[2]{\underset{#1}{#2}}

\newcommand{\pdd}[2]{\frac{d^#2}{d#1^#2}}

\newcommand{\MT}{\operatorname{MT}}

\setlist[itemize]{noitemsep, topsep=0pt}

\makeatletter
\newcommand{\vast}{\bBigg@{2}}
\newcommand{\Vast}{\bBigg@{5}}
\makeatother

\newcommand{\Res}{\operatorname{Res}}

\definecolor{Green}{rgb}{0,0.4,0}

\allowdisplaybreaks

\title{Asymptotic expansions for partitions generated by infinite products} 
\author{Walter Bridges}
\author{Benjamin Brindle}
\author{Kathrin Bringmann}
\address{University of Cologne, Department of Mathematics and Computer Science, Weyertal 86-90, 50931 Cologne, Germany}
\email{wbridges@uni-koeln.de}
\email{bbrindle@uni-koeln.de}
\email{kbringma@math.uni-koeln.de}
\email{jfrank12@uni-koeln.de}
\author{Johann Franke}
%\address{University of Cologne, Department of Mathematics and Computer Science, Weyertal 86-90, 50931 Cologne, Germany}

\keywords{asymptotic formula, Circle Method, partitions, polygonal numbers, Witten zeta functions. }

\subjclass[2020]{11E45, 11M41, 11P82.}

\begin{document}
\maketitle

\begin{abstract}
	Recently, Debruyne and Tenenbaum proved asymptotic formulas for the number of partitions with parts in $\LL\subset\N$ ($\gcd(\LL)=1$) and good analytic properties of the corresponding zeta function, generalizing work of Meinardus. In this paper, we extend their work to prove asymptotic formulas if $\LL$ is a multiset of integers and the zeta function has multiple poles. In particular, our results imply an asymptotic formula for the number of irreducible representations of degree $n$ of $\so{(5)}$. We also study the Witten zeta function $\z_{\so{(5)}}$, which is of independent interest.
\end{abstract}

\section{Introduction and statement of results}

\subsection{The Circle Method}

In analytic number theory and combinatorics, one uses complex analysis to better understand properties of sequences. Suppose that a sequence $(c(n))_{n\in\N_0}$ has moderate growth and the {\it generating function}
$$
	f(q):=\sum_{n \geq 0} c(n)q^n,
$$
is holomorphic in the unit disk with radius of convergence 1. Via Cauchy's integral formula one can then recover the coefficients from the generating function
\begin{align} \label{paper:eq:Cauchy}
	c(n) = \frac{1}{2\pi i} \int_{\mathcal{C}} 	\frac{f(q)}{q^{n+1}} dq,
\end{align}
for any closed curve $\CC$ contained in the unit disk that surrounds the origin exactly once counterclockwise. The so-called Circle Method uses the analytic behavior of $f(q)$ near the boundary of the unit circle to obtain asymptotic information about $c(n)$. For instance, if the $c(n)$ are positive and monotonically increasing, it is expected that the part close to $q=1$ provides the dominant contribution to \eqref{paper:eq:Cauchy}. These parts of the curve are the \textit{major arcs} and the complement are the \textit{minor arcs}. To obtain an asymptotic expansion for $c(n)$, one then evaluates the major arc to some degree of accuracy and bounds the minor arcs. Depending on the function $f(q)$, both of these tasks present a variety of difficulties.

In the present paper, we are interested in infinite product generating functions of the form 
$$
	f(q) = \prod_{n \geq 1} \frac{1}{(1 - q^n)^{a(n)}}.
$$
Such generating functions are important in the theory of partitions, but also arise, for example, in representation theory. If $a(n)$ is a ``simple'' sequence of nonnegative integers and $f$ is ``bounded'' away from $q=1$, then Meinardus \cite{Meinardus} proved an asymptotic expression for $c(n)$. Debruyne and Tenenbaum \cite{DebrTen} eliminated the technical growth conditions on $f$ by adding a few more assumptions on the $a(n)$, which made their result more applicable. Our main results, Theorems \ref{paper:T:main2} and \ref{paper:T:TwoPoleAsymptotics}, yield asymptotic expansions given mild assumptions on $a(n)$ and have a variety of new applications.

\subsection{The classical partition function}

Let $n \in \N$. A weakly decreasing sequence of positive integers that sum to $n$ is called a {\it partition} of $n$. The number of partitions is denoted by $p(n)$. If $\l_1+\ldots+\l_r=n$, then the $\l_j$ are called the {\it parts} of the partition. The partition function has no elementary closed formula, nor does it satisfy any finite order recurrence. However, setting $p(0):=1$, its generating function has the following product expansion
\begin{equation}\label{paper:partitionproduct}
	\sum_{n\ge0} p(n)q^n = \prod_{n\ge1} \frac{1}{1-q^n},
\end{equation}
where $|q| < 1$. In \cite{HardyRama}, Hardy and Ramanujan used \eqref{paper:partitionproduct} to show the asymptotic formula 
\begin{equation}\label{paper:partitionasy}
	p(n) \sim \frac{1}{4\sqrt{3}n}e^{\pi\sqrt{\frac{2n}{3}}},\qquad n\to\infty,
\end{equation}
which gave birth of the Circle Method. With Theorem \ref{paper:T:main2} we find, for certain constants $B_j$ and arbitrarily $N \in \N$,
\begin{equation*}%\label{paper:partitionasy2}
	p(n) = \frac{e^{\pi\sqrt{\frac{2n}{3}}}}{4\sqrt{3}n}\left(1+\sum_{j=1}^N \frac{B_j}{n^\frac j2}+O_N\left(n^{-\frac{N+1}{2}}\right)\right).
\end{equation*}
Similarly, one can treat the cases for $k$-th powers (in arithmetic progressions), see \cite{DebrTen}.

\subsection{Plane partitions}\label{paper:sect:Plane-partitions}

Another application is an asymptotic formula for plane partitions. A {\it plane partition of size $n$} is a two-dimensional array of non-negative integers $\pi_{j,k}$ for which $\sum_{j,k}\pi_{j,k}=n$, such that $\pi_{j,k}\ge\pi_{j,k+1}$ and $\pi_{j,k}\ge\pi_{j+1,k}$ for all $j,k\in\N$. We denote the number of plane partitions of $n$ by $\pp(n)$. MacMahon \cite{MacMahon} proved that
\[
	\sum_{n\ge0} \pp(n)q^n = \prod_{n\ge1} \frac{1}{\left(1-q^n\right)^n}.
\]
Using Theorem \ref{paper:T:main2}, we recover Wright's asymptotic formula \cite{Wright}
\begin{equation*}%\label{paper:eq:planeasy}
	\pp(n) = \frac{C}{n^\frac{25}{36}} e^{A_1 n^\frac23}\left(1+\sum_{j=2}^{N+1} \frac{B_j}{n^\frac{2(j-1)}{3}}+O_N\left(n^{-\frac{2(N+1)}{3}}\right)\right),
\end{equation*}
where the constants $B_j$ are explicitly computable, 
$$
	C:=\frac{\z(3)^\frac{7}{36}e^{\z'(-1)}}{2^{\frac{11}{36}}\sqrt{3\pi}}, \qquad A_1:=\frac{3\z(3)^\frac13}{2^\frac23}
$$
with $\z$ the Riemann zeta function.

\subsection{Partitions into polygonal numbers}

The $n$-th {\it$k$-gonal number} is given by ($k\in\N_{\ge3}$)
\begin{align*}
	%P_k(n) := \frac{n^2(k-2) - n(k-4)}{2}.
	P_k(n) := \frac12 \left((k-2)n^2 +(4-k)n\right).
\end{align*}
The study of representations of integers as sums of polygonal numbers has a long history. Fermat conjectured in 1638 that every $n\in \N$ may be written as the sum of at most $k$ $k$-gonal numbers which was finally proved by Cauchy. Let $p_k(n)$ denotes the number of partitions of $n$ into $k$-gonal numbers. We have the generating function
$$
	\sum_{n \geq 0} p_k(n)q^n = \prod_{n \geq 1} \frac{1}{1-q^{P_k(n)}}.
$$
The $p_k(n)$ have the following asymptotics.\footnote{Note that asymptotics for polynomial partitions were investigated in a more general setting by Dunn and Robles in \cite{DunnRob}.}

\begin{thm}\label{paper:T:n-gonalpartitions}
	We have, for all\ \footnote{Explicit asymptotic formulas for $p_3(n)$, $p_4(n)$, and $p_5(n)$ are given in Corollary \ref{paper:cor:n-gonalpartitions}.} $N\in\N$,
	\[
		p_k(n) = \frac{C(k)e^{A(k)n^\frac13}}{n^\frac{5k-6}{6(k-2)}}\left(1+\sum_{j=1}^N \frac{B_{j,k}}{n^\frac j3}+O_N\left(n^{-\frac{N+1}{3}}\right)\right),
	\]
	where the $B_{j,k}$ can be computed explicitly and 
	\[
		C(k) := \frac{(k-2)^\frac{6-k}{6(k-2)}\GG\left(\frac{2}{k-2}\right)\z\left(\frac32\right)^\frac{k}{3(k-2)}}{2^{\frac{3k-2}{2(k-2)}}\sqrt{3}\pi^\frac{4k-9}{3(k-2)}},\qquad A(k) := \frac32\left(\sqrt{\frac{\pi}{k-2}}\z\left(\frac32\right)\right)^\frac23.
	\]
\end{thm}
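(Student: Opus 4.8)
The plan is to obtain Theorem~\ref{paper:T:n-gonalpartitions} as an application of the general asymptotic result Theorem~\ref{paper:T:main2} to the multiset of parts $\LL := \{P_k(n) : n \ge 1\}$; equivalently, one applies it with weight sequence $a(m) := \#\{n \ge 1 : P_k(n) = m\}$, so that $\sum_{n \ge 0} p_k(n) q^n = \prod_{m \ge 1}(1-q^m)^{-a(m)}$. The structural hypotheses are immediate: since $P_k(n+1) - P_k(n) = (k-2)n + 1 \ge 1$, the sequence $(P_k(n))_n$ is strictly increasing, so $a(m) \in \{0,1\}$; and $P_k(1) = 1$, so $1 \in \LL$ and hence $\gcd(\LL) = 1$. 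What then has to be fed into Theorem~\ref{paper:T:main2} is the analytic behaviour of the Dirichlet series of the parts.

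That series is $L_k(s) := \sum_{m} a(m) m^{-s} = \sum_{n \ge 1} P_k(n)^{-s}$, convergent for $\Re(s) > \tfrac12$. Completing the square gives $P_k(n) = \tfrac{k-2}{2}\big((n+b_k)^2 - b_k^2\big)$ with $b_k := \tfrac{4-k}{2(k-2)}$, and one checks $b_k > -\tfrac12$ (indeed $1+b_k = \tfrac{k}{2(k-2)} > 0$) for every $k \ge 3$, so the binomial expansion of $\big(1 - b_k^2/(n+b_k)^2\big)^{-s}$ is valid termwise; summing in $n$ yields, for $\Re(s)$ large,
\begin{equation*}
	L_k(s) = \left(\tfrac{k-2}{2}\right)^{-s} \sum_{j \ge 0} \binom{-s}{j}(-b_k^2)^j\, \zeta\!\left(2s+2j,\, 1+b_k\right),
\end{equation*}
with $\zeta(\cdot,\cdot)$ the Hurwitz zeta function. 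Since $w \mapsto \zeta(w,a)$ is meromorphic on $\C$ with only a simple pole at $w=1$ of residue $1$, and the $j$-sum converges locally uniformly once the finitely many relevant poles are subtracted (standard convexity/Euler--Maclaurin estimates giving polynomial growth on vertical strips), this identity yields the meromorphic continuation of $L_k$ to $\C$: holomorphic except for simple poles at $s \in \{\tfrac12 - j : j \in \N_0\}$, with $\Res_{s=1/2} L_k(s) = \tfrac12 \sqrt{2/(k-2)}$, and regular at $s = 0$ with $L_k(0) = \zeta(0, 1+b_k) = -\tfrac{1}{k-2}$ and $L_k'(0)$ computable via Lerch's formula $\zeta'(0,a) = \log\!\big(\Gamma(a)/\sqrt{2\pi}\big)$. (Should Theorem~\ref{paper:T:main2} additionally impose a minor-arc/equidistribution condition, relevant here because $\LL$ is sparse and Meinardus's original boundedness hypothesis fails for it, one verifies it using that $P_k$ is a quadratic polynomial, so the relevant exponential sums $\sum_n e^{2\pi i h P_k(n)/\ell}$ are Gauss sums with square-root cancellation.)

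Granting the hypotheses, Theorem~\ref{paper:T:main2} applies with pole location $\alpha = \tfrac12$. Hence the sub-exponential exponent is $\tfrac{\alpha}{\alpha+1} = \tfrac13$ and the expansion runs in powers of $n^{-1/(2(\alpha+1))} = n^{-1/3}$, matching the asserted shape. It then remains to unwind the master theorem's general formulas for the leading constant, the polynomial exponent and the coefficients $B_{j,k}$ in terms of the data just computed: a saddle-point bookkeeping (the saddle point occurs at a $\tau_0$ of size $n^{-2/3}$, with the Mellin transform of $\log f(e^{-\tau})$ being $\Gamma(s)\zeta(s+1)L_k(s)$) converts $\Res_{s=1/2} L_k = \tfrac12\sqrt{2/(k-2)}$ together with $\zeta(\alpha+1) = \zeta(\tfrac32)$ and $\Gamma(\alpha+1) = \Gamma(\tfrac32)$ into $A(k) = \tfrac32\big(\sqrt{\pi/(k-2)}\,\zeta(\tfrac32)\big)^{2/3}$, converts $L_k(0) = -\tfrac{1}{k-2}$ into the polynomial exponent $\tfrac{5k-6}{6(k-2)}$, and expresses $C(k)$ through $e^{L_k'(0)}$ and the residue, while the $B_{j,k}$ come from the subleading poles of $s \mapsto \Gamma(s)\zeta(s+1)L_k(s)$ (at $s = \tfrac12 - j$ and at $s = -1,-2,\dots$).

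I expect the one genuinely delicate step to be producing the closed form of $C(k)$, in particular the appearance of $\Gamma\big(\tfrac{2}{k-2}\big)$: the factor $e^{L_k'(0)}$ must be rewritten using Lerch's formula for $\zeta'(0,1+b_k)$ together with a regularized rearrangement of the contributions $\sum_{j \ge 1}\tfrac{b_k^{2j}}{j}\,\zeta(2j, 1+b_k) = -\sum_{n \ge 1}^{\mathrm{reg}}\log\!\big(\tfrac{2}{k-2}\,P_k(n)/(n+b_k)^2\big)$ into the displayed product of $\Gamma$-, $\zeta$- and elementary factors. Everything else, namely the continuation of $L_k$, the verification of the hypotheses of Theorem~\ref{paper:T:main2}, and the determination of the $B_{j,k}$, is routine once the master theorem is in hand.
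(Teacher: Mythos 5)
Your proposal is correct and follows the same high-level strategy as the paper (apply Theorem \ref{paper:T:main2} to $Z_{P_k}(s)=\sum_{n\ge1}P_k(n)^{-s}$ with $\a=\tfrac12$), but it obtains the required analytic input by a genuinely different, more self-contained route. The paper verifies the meromorphic continuation, the pole locations, the growth on vertical strips, and the values $Z_{P_k}(0)$, $Z_{P_k}'(0)$, $\Res_{s=1/2}Z_{P_k}(s)$ by citing Matsumoto--Weng's theory of zeta functions attached to two polynomials (Lemma \ref{paper:lem:Lambdak}, Propositions \ref{paper:lem:zetaPk} and \ref{paper:lem:zetapkRes}); you instead complete the square and expand in Hurwitz zeta functions. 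Your expansion does deliver all of the paper's constants: the ``delicate'' series you flag is the absolutely convergent sum $\sum_{j\ge1}\frac{b_k^{2j}}{j}\z(2j,1+b_k)=-\sum_{n\ge1}\log\frac{n(n+2b_k)}{(n+b_k)^2}=\log\frac{\GG(1+2b_k)}{\GG(1+b_k)^2}$ (no regularization is needed, the summands are $O(n^{-2})$), and since $1+2b_k=\frac{2}{k-2}$ this combines with Lerch's formula to give exactly $Z_{P_k}'(0)=\frac{\log((k-2)/2)}{k-2}+\log\GG(\frac{2}{k-2})-\log(2\pi)$, hence the factor $\GG(\frac{2}{k-2})$ in $C(k)$. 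The residue $\tfrac12\sqrt{2/(k-2)}$ and the value $Z_{P_k}(0)=\frac{1}{2-k}$ also match.

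Two points need tightening. First, the hypothesis to verify is not $\gcd(\LL)=1$ but \ref{paper:main:1} with $L$ arbitrarily large: for each prime $p$, infinitely many $P_k(n)$ must be coprime to $p$. This follows at once from $P_k(1+2pm)\equiv P_k(1)=1\pmod p$ together with the injectivity of $n\mapsto P_k(n)$ that you already noted (this is Lemma \ref{paper:lem:Lambdak}); no Gauss-sum input is needed, and without $L$ and $R$ arbitrarily large one could not push the error to $O_N(n^{-\frac{N+1}{3}})$ for every $N$. Second, your assertion of polynomial growth on vertical strips does not follow directly from the Hurwitz expansion: bounding $|\binom{-s}{j}|\le\binom{|s|+j-1}{j}$ and $\z(2s+2j,1+b_k)\ll (1+b_k)^{-2j}$ on a fixed strip gives only $O\bigl((1-r_k)^{-|s|}\bigr)$ with $r_k=(b_k/(1+b_k))^2=(1-\frac4k)^2$ for $k\ge4$, and $\log\frac{1}{1-r_k}$ exceeds $\frac\pi2$ once $k$ is large, so the naive estimate is too weak to certify \ref{paper:main:4}. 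This is easily repaired (split off the first $n_0(k)$ terms of the Dirichlet series, which are entire and bounded on vertical strips, before expanding, so that the expansion ratio is as small as desired and \ref{paper:main:4} holds with any $a>0$), but as written the growth estimate is a gap for large $k$.
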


\subsection{Numbers of finite-dimensional representations of Lie algebras}

The special unitary group $\SU(2)$ has (up to equivalence) one irreducible representation $V_k$ of each dimension $k\in \N$. Each $n$-dimensional representation $\bigoplus_{k=1}^\infty r_k V_k$ corresponds to a unique partition 
\begin{align}
	\label{paper:partition} n = \lambda_1 + \lambda_2 + \cdots + \lambda_r, \qquad \lambda_1 \geq \lambda_2 \geq \ldots \geq \lambda_r \geq 1
\end{align}
such that $r_k$ counts the number of $k$ in \eqref{paper:partition}. As a result, the number of representations equals $p(n)$. It is natural to ask whether this can be generalized. The next case is the unitary group $\SU(3)$, whose irreducible representations $W_{j,k}$ indexed by pairs of positive integers. Note that (see Chapter 5 of \cite{Hall}) $\dim(W_{j,k})=\frac12jk(j+k)$. Like in the case of $\SU(2)$, a general $n$-dimensional representation decomposes into a sum of these $W_{j,k}$, again each with some multiplicity. So analogous to \eqref{paper:partitionproduct}, the numbers $r_{\SU(3)}(n)$ of $n$-dimensional representations, have the generating function
\begin{equation*}%\label{paper:G}
	\sum_{n\ge0} r_{\SU(3)}(n)q^n = \prod_{j,k\ge1} \frac{1}{1-q^{\frac{jk(j+k)}{2}}},
\end{equation*}
again with $r_{\SU(3)}(0):=1$. In \cite{Ro}, Romik proved that, as $n\to\infty$,
\begin{equation*}%\label{paper:rn}
	r_{\SU(3)}(n) \sim \frac{C_0}{n^{\frac 35}} \exp\left(A_1n^{\frac 25}+A_2n^{\frac{3}{10}}+A_3n^{\frac 15}+A_4n^{\frac{1}{10}}\right),
\end{equation*}
with explicit constants\footnote{Note that Romik used different signs for the constants in the exponential.} $C_0,A_1,\dots,A_4$ expressible in terms of zeta and gamma values. Two of the authors \cite{BF} improved this to an analogue of formula \eqref{paper:partitionasy}, namely, for any $N\in\N_0$, we have
\begin{align} \label{paper:eq:rsu3asy}
	r_{\SU(3)}(n) = \frac{C_0}{n^{\frac 35}}\exp\left(A_1n^{\frac 25}+A_2n^{\frac{3}{10}}+A_3n^{\frac 15}+A_4n^{\frac{1}{10}}\right)\left(1+\sum_{j=1}^{N}\frac{C_j}{n^\frac{j}{10}} + O_N\left( n^{-\frac{N}{10}-\frac{3}{80}}\right)\right),
\end{align}
as $n\to\infty$, where the constants $C_j$ do not depend on $N$ and $n$ and can be calculated explicitly. The expansion \eqref{paper:eq:rsu3asy} with improved error term $O_N(n^{-\frac{N+1}{10}})$ and explicit values for $A_j$ ($1\le j\le4$) and $C_0$, can also be obtained using \Cref{paper:T:TwoPoleAsymptotics}.

This framework generalizes to other groups. For example, one can investigate the {\it Witten zeta function} for $\SO(5)$, which is (for more background to this function, see \cite{Mat} and \cite{MatTsu})
\begin{equation}\label{paper:eq:SO(5)zeta}
 \z_{\SO(5)}(s) := \sum_\varphi \frac1{\dim(\varphi)^{s}} = 6^s\sum_{n,m\ge1}\frac1{ m^{s}n^{s}(m+n)^{s}(m+2n)^{s}},
\end{equation}
where the $\vf$ are running through the finite-dimensional irreducible representations of $\SO(5)$. We prove the following; for the more precise statement see Theorem \ref{paper:T:mainSO5}.

\begin{thm}\label{paper:T:shortermainSO5}
	The function $\z_{\so(5)}$ has a meromorphic continuation to $\C$ whose positive poles are simple and occur for $s\in\{\frac12,\frac13\}$.
\end{thm}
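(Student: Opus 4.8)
The plan is to realize $\z_{\SO(5)}$ as a Mellin–Barnes-type integral built from a four-fold Dirichlet series and to track the poles through an explicit finite sequence of integral evaluations. Concretely, starting from
\begin{equation*}
	\z_{\SO(5)}(s) = 6^s \sum_{m,n\ge1} \frac{1}{m^s n^s (m+n)^s (m+2n)^s},
\end{equation*}
I would first use the Mellin transform identity $x^{-s} = \frac{1}{\Gamma(s)}\int_0^\infty t^{s-1} e^{-xt}\, dt$ (or, more efficiently for disentangling the shifted factors, the beta-integral representation $\frac{1}{(m+n)^s} = \frac{1}{\Gamma(s)^2/\Gamma(2s)}\int_0^1 \cdots$-type identities, or a Feynman-parameter combination) to reduce the sum over $m$ and $n$ to objects expressible through the Riemann zeta function and Barnes-type zeta functions. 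The cleanest route is probably: fix the ratio $u = m/n$, sum the resulting homogeneous-degree-$(-4s)$ expression in $n$ to get $\z(4s)$ times a finite sum, and handle the $u$-dependence by an Euler–Maclaurin or Mellin analysis; alternatively, peel off one variable at a time via the Mellin convolution, writing $\z_{\SO(5)}(s)$ as an iterated contour integral $\frac{1}{(2\pi i)^k}\int \cdots \prod \Gamma(\cdots)\, \z(\cdots)\, ds_1\cdots ds_k$.

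The key steps, in order: (1) Establish absolute convergence of the defining series for $\IRe(s) > \frac12$ — the exponent count is $4 \cdot 2 = 8$ powers against a $2$-dimensional lattice sum, and the worst direction is $n$ fixed, $m\to\infty$ (only the $m^s$ and $(m+n)^s$ and $(m+2n)^s$ factors grow, giving effective decay $m^{-3s}$, fine) versus $m\sim n\to\infty$ along a ray, giving decay $\rho^{-4s}$ over a $2$-dimensional cone, hence convergence iff $4s > 2$, i.e. $\IRe(s) > \frac12$; this already pins the rightmost pole location. (2) Produce the meromorphic continuation by the integral representation, shifting contours and collecting residues; each contour shift past a pole of a $\Gamma$-factor or of an inner $\z$-factor contributes a lower-order term that is again meromorphic, and one argues the tail integral is holomorphic in a progressively larger half-plane. (3) Identify the \emph{positive} poles: the pole at $s = \frac12$ comes from the overall degree of homogeneity (the "leading" Barnes zeta pole), and the pole at $s = \frac13$ should emerge at the next stage of the recursive evaluation — I would expect it from a sub-sum of homogeneity degree $3$ in a single remaining variable, i.e. a $\z(3s - 1)$-type factor hitting its pole at $3s = 2$. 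One must also check that no positive pole appears at $s = \frac14$ or other candidate points: the relevant residues there should vanish, presumably because of a cancellation forced by the symmetry $n \mapsto$ (something) or by a trivial zero of $\z$ at a negative even integer landing on that spot.

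The main obstacle I anticipate is step (3): \textbf{showing that the apparent poles at $s = \frac14$ and at non-positive rationals are either absent or non-positive, and in particular proving the list $\{\frac12, \frac13\}$ is exhaustive among positive reals}. The homogeneity heuristic suggests candidate poles at $s = \frac{2}{k}$ for $k = 4, 3$ ($\frac12$ and $\frac23$? — careful, the total degree is $4s$ in two variables so the first is at $4s=2$) and then at subleading stages; disentangling which of these survive requires either an explicit closed-form reduction of the four-factor sum (a genuine computation with Barnes zeta functions $\z_2(s; a_1, a_2)$ and Mordell–Tornheim–Witten zeta values) or a careful residue bookkeeping argument. I would structure the proof so that the full reduction is carried out in the detailed Theorem~\ref{paper:T:mainSO5}, and here only invoke the structural facts: absolute convergence for $\IRe(s) > \frac12$, existence of the continuation, and the explicit residue computation at $s = \frac12$ and $s = \frac13$ together with vanishing of the residue at every other positive candidate. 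A cross-check on the final answer: the rightmost pole at $s = \frac12$ with its residue governs, via a Tauberian/Meinardus-type argument, the growth of the representation-counting coefficients, and this must be consistent with the exponent $n^{2/3}$-type behavior one expects for $\SO(5)$ (matching the shape in Theorem~\ref{paper:T:TwoPoleAsymptotics}); getting $\frac12$ and $\frac13$ as the two positive poles is exactly what produces a two-term exponential $\exp(A_1 n^{1/3} + A_2 n^{1/4})$ after the Legendre-type transform, so the pole list is forced by the known growth rate.
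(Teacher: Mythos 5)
Your overall strategy --- represent $\z_{\SO(5)}$ by a Mellin--Barnes integral, shift contours, and do residue bookkeeping --- is the same one the paper uses, but as written the proposal has a genuine gap exactly at the point you flag as ``the main obstacle.'' The paper's engine is the identity
\[
	\z_{\SO(5)}(s) = \frac{6^s}{2\pi i\,\GG(s)}\int_{c-i\infty}^{c+i\infty}\GG(s+z)\GG(-z)\,\z_{\MT,2}(s,s-z,2s+z)\,dz,
\]
combined with Matsumoto's meromorphic continuation of the Mordell--Tornheim function $\z_{\MT,2}(s_1,s_2,s_3)$ to $\C^3$, whose singular locus is contained in the hyperplanes $s_1+s_3=1-\ell$, $s_2+s_3=1-\ell$, $s_1+s_2+s_3=2$ ($\ell\in\N_0$). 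Specializing to $(s,s-z,2s+z)$ these become $3s+z=1-\ell$, $3s=1-\ell$, $4s=2$, which is what makes the candidate pole set $\{\tfrac12\}\cup\tfrac{1}{3}(1-\N_0)$ \emph{finite and explicit} and hence makes exhaustiveness of $\{\tfrac12,\tfrac13\}$ among positive reals provable rather than heuristic. Your proposal never commits to an input of this strength: ``peel off one variable at a time'' and ``Euler--Maclaurin on the $u$-dependence'' do not by themselves yield a continuation of the two-dimensional sum with a controlled singular set, and without that the exclusion of other positive poles (your $s=\tfrac14$ worry, etc.) cannot be carried out. One still needs, as the paper does, to verify that $\tfrac12$ and $\tfrac13$ are genuine poles via explicit residue computations (which reduce to ${}_2F_1(\cdot,\cdot;\cdot;-1)$ evaluations), and to check that the prefactor $1/\GG(s)$ and vanishing of certain residues remove the spurious candidates.

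Two of the concrete claims you do make are incorrect and would mislead the residue bookkeeping. First, the pole at $s=\tfrac13$ does not come from ``$\z(3s-1)$ at $3s=2$'' (that would give $s=\tfrac23$, which is \emph{not} a pole); it comes from the hyperplane $s_2+s_3=1$, i.e.\ $3s=1$, equivalently from the factor $\z(3s+z+m)$ at $z=m=0$ after the contour shift. Second, your consistency cross-check is off: with poles at $\a=\tfrac12$ and $\b=\tfrac13$, the two-term exponential is $\exp(A_1n^{\a/(\a+1)}+A_2n^{\b/(\a+1)}+\cdots)=\exp(A_1n^{1/3}+A_2n^{2/9}+\cdots)$, not $\exp(A_1n^{1/3}+A_2n^{1/4})$, as in Theorem \ref{paper:T:rsu5asy}. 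Your convergence computation in step (1), giving abscissa $\re(s)=\tfrac12$ from the homogeneity degree $4s$ over a two-dimensional cone, is correct and matches the paper.
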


It is well-known that the finite-dimensional representations of $\SO(5)$ can be doubly indexed as $(\vf_{j,k})_{j,k\in\N}$ with $\dim(\vf_{j,k})=\frac16jk(j+k)(j+2k)$, which explains the last equality in \eqref{paper:eq:SO(5)zeta}. A general $n$-dimensional representation decomposes as a sum of these $\vf_{j,k}$, each with some multiplicity. Therefore, as in the case $\SU(3)$, we find that
\begin{align*}
	\sum_{n \geq 0} r_{\SO(5)}(n)q^n = \prod_{j,k \geq 1} \frac{1}{1-q^{\frac{jk(j+k)(j+2k)}{6}}}.
\end{align*}
We prove the following.
 
\begin{thm}\label{paper:T:rsu5asy}
	As $n\to\infty$, we have, for any $N\in\N$,
	\begin{align*}
		r_{\SO(5)}(n) = \frac{C}{n^\frac{7}{12}}\exp\left( A_1 n^{\frac{1}{3}} + A_2 n^{\frac{2}{9}} + A_3 n^{\frac{1}{9}} + A_4 \right)\left( 1 + \sum_{j=2}^{N+1} \frac{B_j}{n^{\frac{j-1}{9}}} + O_N\left({n^{-\frac{N+1}{9}}}\right)\right),
	\end{align*}
	where $C$, $A_1$, $A_2$, $A_3$, and $A_4$ are given in \eqref{paper:A}--\eqref{paper:A2} and the $B_j$ can be calculated explicitly.
\end{thm}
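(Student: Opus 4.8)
The plan is to apply the general two‑pole asymptotic machinery of Theorem \ref{paper:T:TwoPoleAsymptotics} to the generating function
\[
	F(q)=\sum_{n\ge0} r_{\SO(5)}(n)q^n=\prod_{j,k\ge1}\frac{1}{1-q^{d(j,k)}},\qquad d(j,k):=\frac{jk(j+k)(j+2k)}{6},
\]
which is of the form $\prod_{n\ge1}(1-q^n)^{-a(n)}$ with $a(n)=\#\{(j,k)\in\N^2:d(j,k)=n\}$. The Dirichlet series attached to this multiset is exactly $\sum_{n\ge1}a(n)n^{-s}=6^s\sum_{j,k\ge1}(jk(j+k)(j+2k))^{-s}=\z_{\SO(5)}(s)/6^{0}$ up to the elementary bookkeeping in \eqref{paper:eq:SO(5)zeta}; more precisely it equals $6^{-s}\cdot 6^{s}\sum m^{-s}n^{-s}(m+n)^{-s}(m+2n)^{-s}$, i.e. $L(s):=\sum_{j,k\ge1}(jk(j+k)(j+2k))^{-s}$. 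First I would invoke Theorem \ref{paper:T:shortermainSO5} (equivalently, Theorem \ref{paper:T:mainSO5}): $L(s)$ has meromorphic continuation to $\C$, its rightmost pole is at $s=\tfrac12$, simple, the next positive pole is at $s=\tfrac13$, simple, and the necessary polynomial growth in vertical strips holds. I would also record the values of the residues at $s=\tfrac12,\tfrac13$ and of $L$ at the nonpositive integers/half‑integers that feed the constants, together with $\z_{\SO(5)}(0)$, which governs the overall constant $C$ via the $e^{L'(0)}$‑type factor; these are the ingredients for \eqref{paper:A}--\eqref{paper:A2}.

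Next I would verify the hypotheses of Theorem \ref{paper:T:TwoPoleAsymptotics} one by one: (i) $a(n)\ge0$ and $\gcd\{n:a(n)>0\}=1$ — the latter because $d(1,1)=1$, so $1$ is in the support; (ii) $a(n)$ has at most polynomial growth, which follows from the crude bound $a(n)\ll n^\e$ since $d(j,k)\le n$ forces $j,k\ll n^{1/4}$; (iii) the analytic continuation and growth bounds for $L$ just cited. With these in hand, Theorem \ref{paper:T:TwoPoleAsymptotics} produces a saddle‑point asymptotic of the shape $r_{\SO(5)}(n)=C n^{\alpha}\exp(A_1 n^{\rho_1}+\cdots)(1+\sum_j B_j n^{-j/\star}+O(\cdots))$, where the leading exponent $\rho_1$ and the step $1/\star$ in the sub‑leading series are determined by the location of the poles. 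The dominant pole at $s=\tfrac12$ gives, by the standard Meinardus/Wright saddle analysis, a main term $\exp(A_1 n^{1/3})$ with $A_1$ built from $\Gamma(\tfrac12)\z(\tfrac32)\Res_{s=1/2}L(s)$; the second pole at $s=\tfrac13$ contributes the secondary exponential term $\exp(A_2 n^{2/9})$, and the Laurent expansion at $s=0$ (the ``trivial'' pole of $\Gamma(s)$ against $L(0)$ and $L'(0)$) contributes the $\exp(A_3 n^{1/9}+A_4)$ corrections and the power $n^{-7/12}$; here $-\tfrac{7}{12}=-\tfrac12\cdot\tfrac{1/2+1}{1/2+1}$... more honestly, $\alpha$ comes out of the Gaussian normalization as $-\tfrac{\rho_1}{2}-\beta_0$ with $\beta_0$ the constant term data, and one checks it equals $-\tfrac{7}{12}$. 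Finally the residue theorem / saddle expansion yields the full asymptotic series $\sum_{j\ge2}B_j n^{-(j-1)/9}$ with the claimed error $O_N(n^{-(N+1)/9})$, the exponent step $\tfrac19$ being $\gcd$ of the relevant fractional exponents $\tfrac13,\tfrac29,\tfrac19$.

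The main obstacle is \emph{not} the application of Theorem \ref{paper:T:TwoPoleAsymptotics} itself — that is by design a plug‑and‑play statement once the Dirichlet series is understood — but rather establishing the required analytic properties of $L(s)=\sum_{j,k\ge1}(jk(j+k)(j+2k))^{-s}$, i.e. proving Theorem \ref{paper:T:mainSO5}. This is a genuine four‑variable Witten‑type zeta function; the meromorphic continuation and the precise pole locations $\{\tfrac12,\tfrac13\}$ require either a Mellin–Barnes / Matsumoto‑style integral representation or an iterated application of the Euler–Maclaurin formula in one variable at a time, keeping careful track of which combinations of the linear forms $m,n,m+n,m+2n$ produce poles and where. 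Controlling the polynomial growth in vertical strips — needed to push the contour in the Circle Method integral and to truncate the asymptotic expansion — is the technically delicate part, since naive bounds on each Euler–Maclaurin remainder can lose too much. Once Theorem \ref{paper:T:mainSO5} is in place, the remaining work for Theorem \ref{paper:T:rsu5asy} is bookkeeping: substituting the residues and special values into the general formulas of Theorem \ref{paper:T:TwoPoleAsymptotics} to pin down $C,A_1,A_2,A_3,A_4$ as in \eqref{paper:A}--\eqref{paper:A2} and confirming the exponents $-\tfrac{7}{12}$ and $\tfrac{j-1}{9}$ match.
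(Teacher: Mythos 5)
Your overall strategy coincides with the paper's: one verifies the hypotheses of Theorem \ref{paper:T:main2} for $f(n)=\#\{(j,k)\in\N^2:\tfrac{jk(j+k)(j+2k)}{6}=n\}$ using the analytic theory of $\z_{\SO(5)}$ (Theorem \ref{paper:T:mainSO5}), then applies Theorem \ref{paper:T:TwoPoleAsymptotics} and substitutes the residues and special values. You are also right that the substantive work is the continuation and vertical-strip growth of the Witten zeta function. However, two of your steps would fail as written. First, hypothesis \ref{paper:main:1} is \emph{not} merely $\gcd(\LL)=1$: the minor-arc estimate (Lemma \ref{paper:L:generalminorarcsbound}) requires that for every prime $p$ the set $\LL\setminus(p\N\cap\LL)$ contain at least $L>\tfrac{\a}{2}$ elements, and to obtain the error $O_N(n^{-(N+1)/9})$ for arbitrary $N$ one needs $L$ arbitrarily large. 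Checking $d(1,1)=1$ gives the gcd condition but nothing more; the paper instead proves $|\LL\setminus(\LL\cap p\N)|=\infty$ for all primes $p$ (Lemma \ref{paper:lem:SO5conditions}), and this step cannot be skipped.

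Second, your attribution of the terms $A_3n^{1/9}$ and $A_4$ to ``the Laurent expansion at $s=0$'' is incorrect. The data at $s=0$ (namely $\z_{\SO(5)}(0)=\tfrac38$ and $\z_{\SO(5)}'(0)$) enter only the prefactor $Cn^{-7/12}$, via $b=\frac{1-L_f(0)+\a/2}{\a+1}=\frac{7}{12}$. The exponents $\tfrac19$ and $0$ in the exponential come from the nonlinear feedback of the secondary pole $\b=\tfrac13$ into the saddle point $\p_n$: the relevant exponent lattice is generated by $1-\frac{\b+1}{\a+1}=\frac19$, and one must apply Theorem \ref{paper:T:TwoPoleAsymptotics} with $\ell=3$ (determined by $\frac{\ell+1}{\ell}\b<\a\le\frac{\ell}{\ell-1}\b$), using the coefficients $K_2,K_3$ of Lemma \ref{paper:L:varrho}; computing $A_3,A_4$ from the $s=0$ data would give wrong constants. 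A third, minor, slip: the Dirichlet series to feed into the theorem is $L_f(s)=\z_{\SO(5)}(s)=6^s\sum_{m,n\ge1}(mn(m+n)(m+2n))^{-s}$, not the unnormalized double sum; dropping the $6^s$ changes the residues $\w_{1/2},\w_{1/3}$ and hence all of $A_1,\dots,A_4$ and $C$.
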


\subsection{Statement of results}

The main goal of this paper is to prove asymptotic formulas for a general class of partition functions. To state it, let $f:\N\to\N_0$, set $\LL:=\N\setminus f^{-1}(\{0\})$, and for $q=e^{-z}$ ($z\in\C$ with $\re(z) > 0$), define 
\begin{align} \label{def:GfLf}
	G_f(z):=\sum_{n\ge0} p_f(n)q^n = \prod_{n\ge1} \frac{1}{\left(1-q^n\right)^{f(n)}},\qquad L_f(s) := \sum_{n\ge1} \frac{f(n)}{n^s}.
\end{align}
We require the following key properties of these objects.
\begin{enumerate}[leftmargin=*,label=\rm{(P\arabic*)}]
	\item\label{paper:main:1} Let $\a>0$ be the largest pole of $L_f$. There exists $L\in\N$, such that for all primes $p$, we have $|\LL\sm(p\N\cap\LL)|\ge L>\frac\a2$.
	
	%\item\label{paper:main:2} For all primes $p$, we have $|\LL\setminus(\LL\cap p\N)|=\infty$.
	
	\item\label{paper:main:3} Condition \ref{paper:main:3} is attached to $R\in\R^+$. The series $L_f(s)$ converges for some $s\in\C$, has a meromorphic continuation to $\{s\in\C:\re(s)\ge-R\}$, and is holomorphic on the line $\{s\in\C:\re(s)=-R\}$. The function $L_f^*(s):=\GG(s)\z(s+1)L_f(s)$ has only real poles $0<\a:=\g_1>\g_2>\dots$ that are all simple, except the possible pole at $s=0$, that may be double.
	
	\item\label{paper:main:4} For some $a<\frac\pi2$, in every strip $\s_1\le\s\le\s_2$ in the domain of holomorphicity, we uniformly have, for $s=\s+it$,
	\begin{align*}
		L_f(s) = O_{\sigma_1, \sigma_2}\left(e^{a|t|}\right), \qquad |t| \to \infty.
	\end{align*}
\end{enumerate}
Note that \ref{paper:main:1} implies that $|\LL\setminus(b\N\cap\LL)|\ge L>\frac\a2$ for all $b\ge2$. 
 
\begin{thm}\label{paper:T:main2}
	Assume \ref{paper:main:1} for $L\in\N$, \ref{paper:main:3} for $R>0$, and \ref{paper:main:4}. Then, for some $M,N\in\N$,
	\begin{equation*}%\label{paper:eq:pfasymptotic2}
		p_f(n) = \frac{C}{n^b}%\left(1+\sum_{j=1}^{\textcolor{red}{\bf ?}} \frac{B_j}{n^{\b_j}}+O_{R, L}\left(n^{-\b_{{\textcolor{red}{\bf ?}}+1}}\right)\right)
		\exp\left(A_1n^\frac{\a}{\a+1}+\sum_{j=2}^M A_jn^{\a_j}\right)\left(1+\sum\limits_{j=2}^N \frac{B_j}{n^{\b_j}} + O_{L,R}\left(n^{-\min\left\{\frac{2L-\a}{2(\a+1)},\frac{R}{\a+1}\right\}}\right)\right),
	\end{equation*}
	where $0\le\a_M<\a_{M-1}<\cdots\a_2<\a_1=\frac{\a}{\a+1}$ are given by\footnote{We can enlarge the discrete exponent sets at will, since we can always add trivial powers with vanishing coefficients to an expansion. Therefore, from now on we always use this expression, even if the set increases tacitly.} $\CL$ (defined in \eqref{paper:setL}), and $0<\b_2<\b_3<\dots$ are given by $\CM+\CN$, where $\CM$ and $\CN$ are defined in \eqref{paper:setM} and \eqref{paper:setN}, respectively. The coefficients $A_j$ and $B_j$ can be calculated explicitly; the constants $A_1$, $C$, and $b$ are provided in \eqref{paper:eq:mainconstants2} and \eqref{paper:eq:b}. Moreover, if $\a$ is the only positive pole of $L_f$, then we have $M=1$.
\end{thm}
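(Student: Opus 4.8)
\emph{Proof sketch.} The plan is to run a saddle-point (Circle Method) argument in the tradition of Meinardus and of Debruyne--Tenenbaum, adapted to allow the possibly double pole of $L_f^*$ at $s=0$ and the subleading positive poles $\alpha=\gamma_1>\gamma_2>\cdots$ of $L_f^*$.

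\textbf{Step 1: Mellin analysis of $\log G_f$.} Writing $q=e^{-z}$ with $\re(z)>0$ and expanding the logarithm gives $\log G_f(z)=\sum_{n\ge1}\sum_{k\ge1}\frac{f(n)}{k}e^{-nkz}$; inserting the Mellin representation of $e^{-w}$ yields, for $\re(s)$ large,
\[
	\log G_f(z)=\frac{1}{2\pi i}\int_{(c)}\Gamma(s)\zeta(s+1)L_f(s)\,z^{-s}\,ds=\frac{1}{2\pi i}\int_{(c)}L_f^*(s)\,z^{-s}\,ds.
\]
By \ref{paper:main:3} one may shift the contour to $\re(s)=-R$, and \ref{paper:main:4} (the bound $L_f(s)=O(e^{a|t|})$ with $a<\tfrac\pi2$) together with the exponential decay of $\Gamma$ on vertical lines makes the shifted integral absolutely convergent and $O(|z|^{R})$, uniformly in the cone $|\arg z|\le\tfrac\pi2-a-\varepsilon$. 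Collecting residues at the real simple poles $\gamma_j$ and at $s=0$ produces a finite asymptotic expansion of $\log G_f(z)$ into real powers $z^{-\gamma_j}$, a constant, and (only if the pole at $s=0$ is double) a multiple of $\log(1/z)$, up to $O(|z|^{R})$. This is the input for everything that follows.

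\textbf{Step 2: saddle point, major arc, and the $M=1$ clause.} By \eqref{paper:eq:Cauchy} with $q=e^{-z}$, $p_f(n)=\frac{1}{2\pi i}\int_{\re(z)=\varrho}G_f(e^{-z})e^{nz}\,dz$ over $|\Im(z)|\le\pi$. Choose $\varrho=\varrho(n)>0$ to be the real saddle, i.e.\ the solution of $\Phi_n'(\varrho)=0$ where $\Phi_n(z):=\log G_f(z)+nz$; from Step 1, $\varrho\asymp n^{-1/(\alpha+1)}$ and $\varrho$ has a full asymptotic expansion in descending powers of $n^{1/(\alpha+1)}$ and of $n^{\gamma_j/(\alpha+1)}$. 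On the major arc $|\Im(z)|\le\varrho^{1+\delta}$ one Taylor-expands $\Phi_n(\varrho+it)$ about $t=0$; the linear term vanishes, $\Phi_n''(\varrho)\asymp n^{(\alpha+2)/(\alpha+1)}$ gives a Gaussian of width $\Phi_n''(\varrho)^{-1/2}$, and the higher derivatives give the correction series, so the major arc equals $\frac{e^{\Phi_n(\varrho)}}{\sqrt{2\pi\,\Phi_n''(\varrho)}}\bigl(1+\sum_j \widetilde B_j n^{-\widetilde\beta_j}\bigr)$. Substituting the expansions of $\varrho$, $\Phi_n(\varrho)$ and $\Phi_n''(\varrho)$: the terms coming from $z^{-\alpha}$ and $nz$ assemble into $A_1 n^{\alpha/(\alpha+1)}$, each subleading positive pole $\gamma_j$ contributes a term $A_j n^{\gamma_j/(\alpha+1)}$ to the exponential (these exponents, together with $0$, are exactly $\CL$), the $\log(1/z)$ residue combined with $-\tfrac12\log\Phi_n''(\varrho)$ gives the prefactor $Cn^{-b}$, and the remaining corrections collect into $1+\sum_j B_j n^{-\beta_j}$ with $\beta_j\in\CM+\CN$. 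In particular, if $\alpha$ is the only positive pole of $L_f$, then it is the only positive pole of $L_f^*$ as well, since $\Gamma(s)\zeta(s+1)$ is holomorphic and zero-free for $\re(s)>0$; hence no extra powers $n^{\alpha_j}$ survive in the exponential and $M=1$, which proves the final assertion.

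\textbf{Step 3: the minor arc (the main obstacle).} The heart of the proof is to show that $\varrho^{1+\delta}\le|t|\le\pi$ contributes negligibly against the major arc, and this is exactly where \ref{paper:main:1} enters (note it forces $\gcd(\LL)=1$, so $q=1$ is the dominant singularity of $G_f$). One needs a lower bound of the shape
\[
	\log G_f(\varrho)-\re\log G_f(\varrho+it)\ \gg\ (2L-\alpha)\log(1/\varrho),
\]
uniformly in such $t$; combined with the factor $e^{n\varrho}$ and with the size $e^{\Phi_n(\varrho)}\Phi_n''(\varrho)^{-1/2}$ of the main term, this gives the minor-arc error $O\bigl(n^{-(2L-\alpha)/(2(\alpha+1))}\bigr)$, which genuinely decays precisely because \ref{paper:main:1} guarantees $L>\tfrac\alpha2$. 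To obtain such a bound, given $t$ one uses a Dirichlet/pigeonhole argument to find a prime $p=p(t,\varrho)$ with $t$ close to some $2\pi a/p$; by \ref{paper:main:1} there are at least $L$ elements $n\in\LL$ with $p\nmid n$, and for each of these $\lVert nt/(2\pi)\rVert$ stays bounded away from $0$, so $e^{-n\varrho}\bigl(1-\cos(nt)\bigr)\gg1$; one must then argue that parts divisible by $p$ cannot compensate and that the case of $t$ near rationals with larger denominator is only easier. Making this uniform in $t$, and tracking the exponent $2L-\alpha$ sharply, is the technical crux; together with the $O(|z|^{R})$ of Step 1 it yields the stated error $O_{L,R}\bigl(n^{-\min\{(2L-\alpha)/(2(\alpha+1)),\,R/(\alpha+1)\}}\bigr)$.
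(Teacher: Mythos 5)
Your outline follows essentially the same route as the paper: a Mellin/contour-shift expansion of $\Phi_f=\log G_f$ using \ref{paper:main:3} and \ref{paper:main:4}, a saddle point $\varrho_n\asymp n^{-1/(\a+1)}$ with a full expansion, a Gaussian major-arc evaluation, and a minor-arc bound in the style of Debruyne--Tenenbaum driven by \ref{paper:main:1} (the paper likewise only sketches that lemma, citing \cite{DebrTen}), yielding the error exponent $\min\{\frac{2L-\a}{2(\a+1)},\frac{R}{\a+1}\}$. Your argument for the $M=1$ clause (that $\Gamma(s)\zeta(s+1)$ is pole- and zero-free on $\re(s)>0$, so $L_f$ and $L_f^*$ have the same positive poles) is also sound.

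There is, however, one concrete error in Step 2. You assert that each subleading positive pole $\gamma_j$ contributes a single term $A_jn^{\gamma_j/(\a+1)}$ and that these exponents together with $0$ are ``exactly $\CL$.'' That is false, and it is precisely why $\CL$ is defined as the sumset $\frac{1}{\a+1}\CP_R+\sum_{\mu\in\CP_R}\bigl(\frac{\mu+1}{\a+1}-1\bigr)\N_0$ rather than just $\frac{1}{\a+1}\CP_R$. When you substitute the full expansion of $\varrho_n$ (whose subleading terms are themselves driven by the poles $\gamma_j$) into $n\varrho_n$ and into each $\varrho_n^{-\gamma_j}$, the binomial expansion produces cross terms whose exponents are combinations $\frac{\mu}{\a+1}+\sum_j k_j\bigl(\frac{\gamma_j+1}{\a+1}-1\bigr)$, and in general several of these remain positive and hence survive in the exponential. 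This is visible already in the two-pole case (Theorem \ref{paper:T:TwoPoleAsymptotics} and its proof, equations \eqref{eq:mult1}--\eqref{eq:mult3}): for $\SO(5)$, two positive poles $\frac12$ and $\frac13$ produce four exponential terms $n^{1/3},n^{2/9},n^{1/9},n^0$, not two. As written, your formula would omit the terms $A_3n^{1/9}$ and $A_4$ there. The fix is exactly the bookkeeping the paper carries out in Lemmas \ref{paper:L:Gfasy} and \ref{paper:lem:expnrho}: track the exponent sets through composition of asymptotic expansions (Lemmas \ref{paper:Prop:Algebras}--\ref{paper:Prop:AsyComp}) rather than reading them off from the residues alone. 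A smaller quibble: your proposed minor-arc saving $(2L-\a)\log(1/\varrho)$ does not match the final exponent after dividing by the Gaussian width $\Phi_f''(\varrho_n)^{-1/2}\asymp\varrho_n^{(\a+2)/2}$; the pointwise saving you need is $(L+1)\log(1/\varrho)$, which then yields the relative error $\varrho_n^{L-\a/2}\asymp n^{-(2L-\a)/(2(\a+1))}$.
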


\begin{rems}
	\ \begin{enumerate}[label=\textnormal{(\arabic*)},leftmargin=*]
		\item Debruyne and Tenenbaum proved Theorem \ref{paper:T:main2} in the special case that $f$ is the indicator function of a subset $\LL$ of $\N$. They also assumed that the associated $L$-function can be analytically continued except for one pole in $0<\a\le1$. Our refined assumption (P1) on the set $\LL$ is necessary to bound minor arcs in this more general setup.
		
		\item The complexity of the exponential term depends on the number and positions of the positive poles of $L_f$. \Cref{paper:T:TwoPoleAsymptotics} is more explicit and covers the case of exactly two positive poles. This case has importance for representation numbers of $\SU(3)$ and $\SO(5)$.
	\end{enumerate}
\end{rems}

In \Cref{sec:pre}, we collect some analytic tools, properties of special functions and useful properties of asymptotic expansions that are heavily used throughout the paper. In \Cref{sec:minmajarc}, we apply the Circle Method and calculate asymptotic expansions for the saddle point $\p_n$ and the value of the generating function $G_f(\p_n)$. In \Cref{sec:proof1.4}, we complete the proof of \Cref{paper:T:main2}, and we also state and prove a more explicit version of \Cref{paper:T:main2} in the case that $L_f$ has two positive poles (\Cref{paper:T:TwoPoleAsymptotics}). The proofs of Theorems \ref{paper:T:n-gonalpartitions}, \ref{paper:T:shortermainSO5}, and \ref{paper:T:rsu5asy} are given in \Cref{sec:proofs1.1-1.3}; this includes a detailed study of the Witten zeta function $\z_{\SO(5)}$ which is of independent interest.

\section*{Acknowledgements}

We thank Gregory Debruyne, Kohji Matsumoto, and Andreas Mono for helpful discussions. The first author and the third author were partially supported by the SFB/TRR 191 ``Symplectic Structure in Geometry, Algebra and Dynamics'', funded by the DFG (Projektnummer 281071066 TRR 191). The second and third author received funding from the European Research Council (ERC) under the European Union’s Horizon 2020 research and innovation programme (grant agreement No. 101001179), and the last two authors are partially supported by the Alfried Krupp prize.

\section*{Notation}

For $\b\in\IR$, we denote by $\{\b\}:=\b-\flo{\b}$ the {\it fractional part} of $\b$. As usual, we set $\mathbb{H}:=\{\t\in\C:\im(\t)>0\}$ and $\E:=\{z\in\C:|z|<1\}$. For $\d>0$, we define
\begin{align*}
	\mathcal{C}_\delta := \left\{z \in \C \colon |\mathrm{Arg}(z)| \leq \tfrac{\pi}{2} - \delta \right\},
\end{align*}
where $\Arg$ uses the principal branch of the complex argument. For $r>0$ and $z\in\C$, we set
$$
	B_r(z) := \{ w \in \IC : |w - z| < r\}.
$$
For $a,b\in\R$, we let $\CR_{a,b;K}$ be the rectangle with vertices $a\pm iK$ and $b\pm iK$, and we let $\del\CR_{a,b;K}$ be the path along the boundary of $\CR_{a,b;K}$, surrounded once counterclockwise. For $-\infty\le a<b\le\infty$, we denote $S_{a,b}:=\{z\in\C:a<\re(z)<b\}$. We also set, for real $\s_1\le\s_2$ and $\d>0$,
\[
	S_{\s_1,\s_2,\d} := \{s\in\C : \s_1\le\re(s)\le\s_2\} \Bigg\bs \left(B_\d\left(\frac12\right)\cup\bigcup_{j=-\infty}^1 B_\d\left(\frac j3\right)\right).
\]
For $k\in\N$ and $s\in\C$, the {\it falling factorial} is $(s)_k:=s(s-1)\cdots(s-k+1)$. For $f:\N\to\N_0$, we let $\CP$ be the set of poles of $L^*_f$, and for $R>0$ we denote by $\CP_R$ %$:=\CP_{f,R}$ 
the union of the poles of $L_f^*$ greater than $-R$ with $\{0\}$. We define
\begin{align}\label{paper:setL}
	\CL &:= \frac{1}{\a+1}\CP_R + \sum_{\mu\in\CP_R} \left(\frac{\mu+1}{\a+1}-1\right)\N_0,\\
	\label{paper:setM}
	\CM &:= \frac{\a}{\a+1}\N_0 + \left(-\sum_{\mu\in\CP_R} \left(\frac{\mu+1}{\a+1}-1\right)\N_0\right) \cap \left[0,\frac{R+\a}{\a+1}\right),\\
	\label{paper:setN}
	\CN &:= \left\{\sum_{j=1}^K b_j\th_j : b_j,K\in\N_0,\th_j\in(-\CL)\cap\left(0,\frac{R}{\a+1}\right)\right\}.
\end{align}
We set, with $\w_\a:=\Res_{s=\a}L_f(s)$,
\begin{align}\label{paper:eq:mainconstants2}
	A_1 &:= \left(1+\frac1\a\right)(\w_\a\GG(\a+1)\z(\a+1))^\frac{1}{\a+1},\qquad C := \frac{e^{L_f'(0)}(\w_\a\GG(\a+1)\z(\a+1))^\frac{\frac12-L_f(0)}{\a+1}}{\sqrt{2\pi(\a+1)}},\\
	\label{paper:eq:b}
	b &:= \frac{1-L_f(0)+\frac\a2}{\a+1}.
\end{align}

\section{Preliminaries}\label{sec:pre}

In this section, we collect and prove some tools used in this paper.

\subsection{Tools from complex analysis}

We require the following results from complex analysis. The first theorem describes Taylor coefficients of the inverse of a biholomorphic function; for a proof, see Corollary 11.2 on p. 437 of \cite{Char}.

\begin{prop}\label{paper:T:InverseFormula}
	Let $\phi:B_r(0)\to D$ be a holomorphic function such that $\phi(0)=0$ and $\phi'(0)\ne0$, with $\phi(z)=:\sum_{n\ge1}a_nz^n$. Then $\phi$ is locally biholomorphic and its local inverse of $\phi$ has a power series expansion $\phi^{-1}(w)=:\sum_{k\ge1}b_kw^k$, where
	\begin{equation*}%\label{paper:inverseformula}
		b_k = \frac{1}{ka_1^{k}} \sum_{\substack{\ell_1,\ell_2, \ell_3 ... \geq 0 \\ \ell_1 + 2\ell_2 + 3\ell_3 + \cdots = k-1}} (-1)^{\ell_1 + \ell_2 +\ell_3+\cdots} \frac{k \cdots (k-1+\ell_1+\ell_2+\cdots)}{\ell_1! \ell_2! \ell_3! \cdots}\left( \frac{a_2}{a_1}\right)^{\ell_1} \left( \frac{a_3}{a_1}\right)^{\ell_2} \cdots.
	\end{equation*}
\end{prop}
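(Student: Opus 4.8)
The plan is to deduce the coefficient formula from the classical Lagrange inversion formula, which itself follows from Cauchy's integral formula, and then carry out the combinatorial expansion explicitly.

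\textbf{Local biholomorphicity and the residue integral.} Since $\phi$ is holomorphic with $\phi(0)=0$ and $\phi'(0)=a_1\neq 0$, I would first invoke the holomorphic inverse function theorem to obtain $\rho>0$ and a holomorphic inverse $\psi:=\phi^{-1}$ on $B_\rho(0)$ with $\psi(0)=0$; this already gives that $\phi$ is locally biholomorphic. Writing $\psi(w)=\sum_{k\geq 1}b_k w^k$ and noting that $\phi(z)/z\to a_1\neq 0$ as $z\to 0$, one fixes a small positively oriented circle $\gamma$ about $0$, contained in $\psi(B_\rho(0))$, on which $\phi$ is injective and has no zero other than at the origin. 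Cauchy's formula gives $b_k=\frac{1}{2\pi i}\oint_{\phi(\gamma)}\frac{\psi(w)}{w^{k+1}}\,dw$, and the substitution $w=\phi(z)$ together with $\psi(\phi(z))=z$ turns this into $b_k=\frac{1}{2\pi i}\oint_\gamma \frac{z\,\phi'(z)}{\phi(z)^{k+1}}\,dz$.

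\textbf{Lagrange inversion and the generalized binomial expansion.} Next I would integrate by parts using $\frac{d}{dz}\phi(z)^{-k}=-k\,\phi'(z)\phi(z)^{-k-1}$; the total-derivative contribution vanishes around the closed contour, leaving $b_k=\frac{1}{k}\cdot\frac{1}{2\pi i}\oint_\gamma\frac{dz}{\phi(z)^k}=\frac{1}{k}\,[z^{k-1}]\bigl(\frac{z}{\phi(z)}\bigr)^{k}$, which is the classical Lagrange inversion formula; the last equality uses that $(z/\phi(z))^k$ is holomorphic near $0$. Writing $\frac{\phi(z)}{z}=a_1\bigl(1+t(z)\bigr)$ with $t(z):=\sum_{j\geq 1}\frac{a_{j+1}}{a_1}z^j$, one has $\bigl(\frac{z}{\phi(z)}\bigr)^k=a_1^{-k}(1+t(z))^{-k}$, and since $t(0)=0$ the series $(1+t)^{-k}=\sum_{m\geq 0}\binom{-k}{m}t^m$ converges absolutely for $z$ in a small disk, so one may expand each $t^m$ by the multinomial theorem and rearrange.

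\textbf{Collecting the coefficient of $z^{k-1}$.} In $t(z)^m=\bigl(\sum_{j\geq 1}\frac{a_{j+1}}{a_1}z^j\bigr)^m$ the monomial $\prod_{j\geq 1}z^{j\ell_j}$ with $\ell_1+\ell_2+\cdots=m$ comes with weight $\frac{m!}{\ell_1!\ell_2!\cdots}$ and contributes to $z^{k-1}$ exactly when $\ell_1+2\ell_2+3\ell_3+\cdots=k-1$. Using $\binom{-k}{m}\,m!=(-1)^m k(k+1)\cdots(k+m-1)=(-1)^{\ell_1+\ell_2+\cdots}\,k\cdots(k-1+\ell_1+\ell_2+\cdots)$ and reindexing $\prod_{j\geq 1}\bigl(\frac{a_{j+1}}{a_1}\bigr)^{\ell_j}=\bigl(\frac{a_2}{a_1}\bigr)^{\ell_1}\bigl(\frac{a_3}{a_1}\bigr)^{\ell_2}\cdots$, and finally multiplying by $\frac{1}{k\,a_1^{k}}$, one obtains precisely the asserted formula. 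The only points that need genuine care are the change of variables $w=\phi(z)$ in the contour integral (where injectivity of $\phi$ on $\gamma$ is used) and the absolute convergence justifying the rearrangement of the double series, both of which are secured by shrinking the contour. I expect no real obstacle here: the statement is a standard packaging of Lagrange inversion, and the only mildly delicate step is the bookkeeping converting $\binom{-k}{m}$ into the rising-factorial form $k\cdots(k-1+\ell_1+\ell_2+\cdots)$.
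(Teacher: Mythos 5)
Your proof is correct and complete: the reduction to $b_k=\tfrac1k[z^{k-1}](z/\phi(z))^k$ via the contour integral and integration by parts is the classical Lagrange inversion formula, and the subsequent binomial--multinomial bookkeeping (including the conversion of $\binom{-k}{m}m!$ into the rising factorial $k(k+1)\cdots(k-1+\ell_1+\ell_2+\cdots)$ and the observation that only finitely many terms contribute to the coefficient of $z^{k-1}$) reproduces the stated formula exactly. The paper does not prove this proposition but simply cites Corollary 11.2 of Charalambides, whose derivation is the same standard Lagrange-inversion argument, so your write-up is essentially a self-contained version of the cited proof.
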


To deal with certain zeros of holomorphic functions, we require the following result from complex analysis, the proof of which is quickly obtained from Exercise 7.29 (i) in \cite{Burckel}. 

\begin{prop}\label{paper:T:CBiholMain}
	Let $r>0$ and let $\phi_n:B_r(0)\to\C$ be a sequence of holomorphic functions that converges uniformly on compact sets to a holomorphic function $\phi:B_r(0)\to\C$, with $\phi'(0)\ne0$. Then there exist $r>\k_1>0$ and $\k_2>0$ such that, for all $n$ sufficiently large, the restrictions $\phi_n|_{B_{\k_1}(0)}:B_{\k_1}(0)\to\phi_n(B_{\k_1}(0))$ are biholomorphic and $B_{\k_2}(0)\subset\phi_n(B_{\k_1}(0))$. In particular, the restrictions $\phi_n^{-1}|_{B_{\k_2}(0)}:B_{\k_2}(0)\to\phi_n^{-1}(B_{\k_2}(0))$ are biholomorphic functions. 
\end{prop}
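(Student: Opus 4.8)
The plan is to convert the hypothesis $\phi'(0)\neq0$ into a \emph{quantitative} lower bound on the difference quotients of $\phi$ near $0$ that survives passage to the limit $\phi_n$. First I would recall that uniform convergence on compact subsets of $B_r(0)$ propagates to derivatives, so $\phi_n'\to\phi'$ uniformly on compact subsets (apply Cauchy's integral formula for $\phi_n'$ on a slightly larger circle). Next, choose $\rho\in(0,r)$ so small that $|\phi'(w)-\phi'(0)|\le\tfrac12|\phi'(0)|$ for all $w\in\overline{B_\rho(0)}$, and for $z,w\in\overline{B_\rho(0)}$ use convexity of $B_\rho(0)$ together with the identity
\[
	\phi(z)-\phi(w)=(z-w)\int_0^1\phi'\!\left(w+t(z-w)\right)dt
\]
to get $|\phi(z)-\phi(w)|\ge\eta|z-w|$ with $\eta:=\tfrac12|\phi'(0)|>0$; in particular $\phi$ is injective on $\overline{B_\rho(0)}$. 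The same identity applied to $\phi_n$, together with $\sup_{\overline{B_\rho(0)}}|\phi_n'-\phi'|=:\delta_n\to0$, gives $|\phi_n(z)-\phi_n(w)|\ge(\eta-\delta_n)|z-w|$. Hence as soon as $\delta_n<\tfrac\eta2$ the map $\phi_n$ is injective on $B_\rho(0)$; since an injective holomorphic map is open with holomorphic inverse, this already yields the first assertion with $\kappa_1:=\rho$.

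For the image statement I would run a Rouché argument on a fixed circle strictly inside $B_{\kappa_1}(0)$. Here I assume, in line with the normalization of Proposition~\ref{paper:T:InverseFormula} and as holds in all applications below, that $\phi_n(0)=\phi(0)=0$ (otherwise one argues on a disc centred at $\phi_n(0)\to\phi(0)$). On the circle $|w|=\tfrac\rho2$ the lower bound gives $|\phi(w)|\ge\tfrac{\eta\rho}{2}$; put $\kappa_2:=\tfrac{\eta\rho}{8}$. For $n$ large one has $\sup_{|w|=\rho/2}|\phi_n(w)-\phi(w)|<\kappa_2$, hence $|\phi_n(w)|>3\kappa_2\ge|a|$ on $|w|=\tfrac\rho2$ for every $a\in B_{\kappa_2}(0)$. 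Rouché then shows that $\phi_n(w)-a$ and $\phi_n(w)$ have the same number of zeros in $B_{\rho/2}(0)$; since $\phi_n$ is injective with $\phi_n(0)=0$, that number is $1$. Thus each $a\in B_{\kappa_2}(0)$ has a preimage in $B_{\rho/2}(0)\subset B_{\kappa_1}(0)$, i.e. $B_{\kappa_2}(0)\subset\phi_n(B_{\kappa_1}(0))$, and consequently $\phi_n^{-1}$ is defined and holomorphic on $B_{\kappa_2}(0)$, mapping it biholomorphically onto $\phi_n^{-1}(B_{\kappa_2}(0))\subset B_{\kappa_1}(0)$.

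The one step that needs care is the \emph{uniform} injectivity of the $\phi_n$ on a fixed disc: the naive estimate $|\phi_n(z)-\phi_n(w)|\ge|\phi(z)-\phi(w)|-2\sup|\phi_n-\phi|$ is worthless once $|z-w|$ is smaller than $\sup|\phi_n-\phi|$, and the fix is precisely to compare the \emph{differences} through the integral representation, trading $\sup|\phi_n-\phi|$ for $|z-w|\cdot\sup|\phi_n'-\phi'|$, which rescales correctly. Everything else — Weierstrass convergence of derivatives, openness of injective holomorphic maps, and the Rouché count — is routine. One could instead obtain the injectivity step by a Hurwitz-type compactness argument: if infinitely many $\phi_n$ admitted distinct points $a_n\neq b_n$ with $\phi_n(a_n)=\phi_n(b_n)$, pass to convergent subsequences $a_n\to a$, $b_n\to b$ in $\overline{B_\rho(0)}$ and derive a contradiction with injectivity of $\phi$ when $a\neq b$, or with $\phi'(0)\neq0$ when $a=b$; this is presumably the route indicated by Exercise~7.29(i) in \cite{Burckel}.
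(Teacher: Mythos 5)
Your proof is correct. The paper itself gives no argument for this proposition -- it merely remarks that the proof ``is quickly obtained from Exercise 7.29 (i) in \cite{Burckel}'' (a Hurwitz-type statement on eventual injectivity of locally uniform limits, essentially the compactness argument you sketch in your closing remark) -- so your write-up supplies a complete proof where the paper delegates to a reference. The two ingredients are exactly right: the integral representation of differences converts $\sup|\phi_n'-\phi'|\to 0$ into the uniform lower bound $|\phi_n(z)-\phi_n(w)|\ge(\eta-\delta_n)|z-w|$ on a fixed disc, which gives injectivity on a disc of radius independent of $n$ (the point you correctly flag as the only delicate step), and the Rouch\'e count on $|w|=\rho/2$ produces a fixed ball $B_{\kappa_2}(0)$ in every image. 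You are also right that the statement as printed tacitly assumes the normalization $\phi(0)=0$ (otherwise $B_{\kappa_2}(0)\subset\phi_n(B_{\kappa_1}(0))$ can fail trivially); in the paper's only application, to $h_n(w)=\psi_n(w+1)-\psi_n(1)$ inside the proof of Proposition \ref{paper:T:maininverse}, one indeed has $h_n(0)=0$, so your reading matches the intended use.
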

 
\subsection{Asymptotic expansions}

We require two classes of asymptotic expansions.

\begin{Defn}%\label{paper:defn:AsyDef}
	Let $R\in\R$. 
	\begin{enumerate}[leftmargin=*,label=\textnormal{(\arabic*)}]
		\item\label{paper:item:AsyDef1} Let $g:\R^+\to\C$ be a function. Then $g\in\CK(R)$ if there exist real numbers $\nu_{g,1}<\nu_{g,2}<\nu_{g,3}<\dots<\nu_{g,N}<R$ and complex numbers $a_{g,j}$ such that
		\begin{equation*}%\label{paper:rhoexpansion}
			g(x) = \sum_{j=1}^{N_g} \frac{a_{g,j}}{x^{\nu_{g,j}}} + O_R\left(x^{-R}\right),\qquad (x\to\infty).
		\end{equation*}
		
		\item\label{paper:item:AsyDef2} Let $\phi$ be holomorphic on the right half-plane. Then $\phi\in\CH(R)$ if there are real numbers $\nu_{\phi,1}<\nu_{\phi,2}<\nu_{\phi,3}<\dots<\nu_{\phi,N}<R$ and $a_{\phi,j}\in\C$ such that, for all $k\in\N_0$ and $0<\d<\frac\pi2$,
		\begin{align}\label{paper:fasy}
			\phi^{(k)}(z) = \sum_{j=1}^{N_\phi} (\nu_{\phi, j})_k a_{\phi,j} z^{\nu_{\phi, j}-k} + O_{\d, R, k}\left( |z|^{R-k} \right), \qquad (z \to 0, z \in \CC_\d).
		\end{align}
	\end{enumerate}
	If there is no risk of confusion, then we write $N$, $\nu_j$, and $a_j$ in the above. The $R$-dependence of the error only matters if $R$ varies, for instance, if we can choose it to be arbitrarily large.
\end{Defn}

Note that any sequence $g(n)$ with
\begin{equation}\label{paper:eq:sequence-asy}
	g(n) = \sum_{j=1}^N \frac{a_j}{n^{\nu_j}} + O_R\left(n^{-R}\right),\qquad (n\to\infty),
\end{equation}
can be extended to a function $g$ in $\CK(R)$. Conversely, each function in $\CK(R)$ can be restricted to a sequence $g(n)_{n\in\N}$ satisfying \eqref{paper:eq:sequence-asy}. In addition, we include functions in $\CK(R)$ that have asymptotic expansion as in \ref{paper:item:AsyDef1}, but are initially defined only on intervals $(r,\infty)$ for some large $r>0$. The reason for this is that it does not matter how the function is defined up to $r$, and therefore it can always be continued to $(0,\infty)$. If $g\in\CK(R)$ for all $R>0$, then we write
\begin{equation}\label{paper:fexpand}
	g(x) = \sum_{j\ge1} \frac{a_j}{x^{\nu_j}},\qquad (x\to\infty).
\end{equation}
We use the same abbreviation if $\phi\in\CH(R)$ for all $R>0$. In this case we write $g\in\CK(\infty)$ and $\phi\in\CH(\infty)$, respectively. In some situations, we write for $R\in\R\cup\{\infty\}$
\begin{align*}
	g(x) = \sum_{j=1}^{N} \frac{a_{g,j}}{x^{\nu_{g,j}}} + O_R\left( x^{-R} \right),
\end{align*}
where $R$ might depend on the choice of the function $g$. If $R=\infty$, then one may ignore the error $O_R(x^{-R})$ and use the notation \eqref{paper:fexpand} instead. We have the following useful lemmas, that can be obtained by a straightforward calculation.

\begin{lem}\label{paper:Prop:Algebras}
	Let $R_1, R_2 \in \R$, $\lambda \in \C$, $g \in \CK(R_1)$, and $h \in \CK(R_2)$. Then we have the following:
	\begin{enumerate}[leftmargin=*,label=\textnormal{(\arabic*)}]
		\item\label{paper:Prop:Algebras:1} We have $\l g\in\CK(R_1)$ and $g+h\in\CK(\min\{R_1,R_2\})$. The exponents $\nu_{g+h,j}$ run through
		\begin{align*}
		 (\{ \nu_{g,j} \colon 1 \leq j \leq N_g \} \cup \{ \nu_{h,j} \colon 1 \leq j \leq N_h \}) \cap (-\infty, \min\{R_1, R_2\}).
		\end{align*}
		
		\item\label{paper:Prop:Algebras:2} We have $gh\in\CK(\min\{R_1+\nu_{h,1},R_2+\nu_{g,1}\})$. The exponents $\nu_{gh,j}$ run through
		\begin{align*}
			(\{ \nu_{g,j} \colon 1 \leq j \leq N_g \} + \{ \nu_{h,j} \colon 1 \leq j \leq N_h \}) \cap (-\infty, \min\{ R_1 + \nu_{h,1}, R_2 + \nu_{g,1}\}).
		\end{align*}
	\end{enumerate}
\end{lem}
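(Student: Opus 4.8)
The plan is to establish all three assertions by substituting the defining asymptotic expansions and tracking which monomials survive below the relevant threshold; since every claim is asymptotic as $x\to\infty$, I may assume $x\ge 1$ throughout, so that $x^{-\mu}\le x^{-\nu}$ whenever $\mu\ge\nu$.

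Part \ref{paper:Prop:Algebras:1} is immediate for $\lambda g$: multiply the expansion of $g$ by $\lambda$, keeping the exponents $\nu_{g,j}$ and the error $O_{R_1}(x^{-R_1})$. For $g+h$, set $R:=\min\{R_1,R_2\}$ and add the two expansions termwise. Any term $a_{g,j}x^{-\nu_{g,j}}$ with $\nu_{g,j}\ge R$ is $O_R(x^{-R})$ and is absorbed into the error, and likewise for the $h$-terms; what remains has exponents in $(\{\nu_{g,j}\}\cup\{\nu_{h,j}\})\cap(-\infty,R)$, with coefficients summed when an exponent occurs in both lists. Both lists being finite, the surviving exponent set is finite and can be reordered to be strictly increasing, and $O_{R_1}(x^{-R_1})+O_{R_2}(x^{-R_2})=O_R(x^{-R})$.

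For the product in \ref{paper:Prop:Algebras:2}, I would expand
\[
	g(x)h(x)=\left(\sum_{j=1}^{N_g}\frac{a_{g,j}}{x^{\nu_{g,j}}}+O_{R_1}\!\left(x^{-R_1}\right)\right)\left(\sum_{k=1}^{N_h}\frac{a_{h,k}}{x^{\nu_{h,k}}}+O_{R_2}\!\left(x^{-R_2}\right)\right)
\]
into four pieces: the main double sum $\sum_{j,k}a_{g,j}a_{h,k}x^{-(\nu_{g,j}+\nu_{h,k})}$; the two mixed terms, which are $O(x^{-(R_2+\nu_{g,1})})$ and $O(x^{-(R_1+\nu_{h,1})})$ since $\nu_{g,1}$ and $\nu_{h,1}$ are the smallest exponents of $g$ and $h$; and the product of the two errors, $O(x^{-(R_1+R_2)})$. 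With $R:=\min\{R_1+\nu_{h,1},R_2+\nu_{g,1}\}$, all three error contributions are $O(x^{-R})$, using $\nu_{h,1}<R_2$ and $\nu_{g,1}<R_1$ so that $R_1+R_2>R$. In the double sum, terms with $\nu_{g,j}+\nu_{h,k}\ge R$ are absorbed into $O(x^{-R})$, and the rest have exponents in $(\{\nu_{g,j}\}+\{\nu_{h,k}\})\cap(-\infty,R)$, again a finite set with equal exponents combined; this gives $gh\in\CK(R)$ with the stated exponents.

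These computations are entirely elementary. The only point needing a moment's care is the bookkeeping in the product case — verifying that the governing error exponent is $\min\{R_1+\nu_{h,1},R_2+\nu_{g,1}\}$ rather than something controlled by $R_1+R_2$ — which hinges precisely on the fact, built into the definition of $\CK$, that the exponents are bounded above by the corresponding $R$. I do not anticipate a substantive obstacle.
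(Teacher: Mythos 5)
Your proof is correct and matches the paper's intent: the paper gives no proof of this lemma, stating only that it "can be obtained by a straightforward calculation," and your term-by-term expansion with absorption of high-order monomials into the error is exactly that calculation. The only implicit point worth noting is that possible cancellation of coefficients is harmless because of the paper's convention (see its footnote) that exponent sets may be enlarged by terms with vanishing coefficients.
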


We next deal with compositions of asymptotic expansions with holomorphic functions. 
	
\begin{lem}\label{paper:Prop:Holoexpansion}
	Let $0<R\leq \infty$, $g\in\CK(R)$ with $\nu_{g,1}=0$ and $h$ holomorphic at $a_{g,1}$. Then $(h\circ g)(x)$ is defined for all $x>0$ sufficiently large, and we have $h\circ g\in\CK(R)$ with
	\[
		\{\nu_{h\circ g,j} : 1\le j\le N_{h\circ g}\} = \left(\sum_{j=1}^{N_g}\nu_{g,j}\N_0\right) \cap [0,R).
	\]
\end{lem}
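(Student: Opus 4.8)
The plan is to reduce the statement to a manipulation of convergent power series. Write $g(x) = a_{g,1} + \sum_{j=2}^{N_g} a_{g,j} x^{-\nu_{g,j}} + O_R(x^{-R})$, where $0 = \nu_{g,1} < \nu_{g,2} < \cdots$, and set $\varepsilon(x) := g(x) - a_{g,1}$, so that $\varepsilon(x) \to 0$ as $x \to \infty$ and $\varepsilon \in \CK(R)$ with all its exponents strictly positive. Since $h$ is holomorphic at $a_{g,1}$, it has a convergent Taylor expansion $h(a_{g,1} + w) = \sum_{m \ge 0} c_m w^m$ on some disk $|w| < \rho$. For $x$ large enough, $|\varepsilon(x)| < \rho$, so $(h \circ g)(x) = \sum_{m \ge 0} c_m \varepsilon(x)^m$ converges; this justifies the claim that $h \circ g$ is defined for large $x$.

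Next I would control the tail of this series. Choose $M$ so that $(M+1)\nu_{g,2} \ge R$ (if $R = \infty$ one argues for each finite truncation level). Then $(h \circ g)(x) = \sum_{m=0}^{M} c_m \varepsilon(x)^m + \sum_{m > M} c_m \varepsilon(x)^m$, and since $\varepsilon(x) = O(x^{-\nu_{g,2}})$ and $\sum_{m>M} |c_m| \rho_0^m$ converges for some $\rho_0 \in (|\varepsilon(x)|, \rho)$, the tail is $O(|\varepsilon(x)|^{M+1}) = O(x^{-(M+1)\nu_{g,2}}) = O(x^{-R})$. For the finite part, each $\varepsilon(x)^m$ lies in $\CK(R)$ by iterated application of Lemma~\ref{paper:Prop:Algebras}\ref{paper:Prop:Algebras:2}; a finite sum of such terms is again in $\CK(R)$ by Lemma~\ref{paper:Prop:Algebras}\ref{paper:Prop:Algebras:1}. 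Hence $h \circ g \in \CK(R)$.

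It remains to pin down the exponent set. By Lemma~\ref{paper:Prop:Algebras}\ref{paper:Prop:Algebras:2}, the exponents appearing in $\varepsilon(x)^m$ are exactly the $m$-fold sums of elements of $\{\nu_{g,2}, \ldots, \nu_{g,N_g}\}$ lying below the relevant cutoff, i.e. elements of $\big(\sum_{j=2}^{N_g} \nu_{g,j}\N_0\big)$ with total "length" $m$. Ranging over $0 \le m \le M$ and intersecting with $[0,R)$ (the $m=0$ term contributing the exponent $0$, provided $c_0 = h(a_{g,1}) \ne 0$; if it vanishes one still only gains exponents, and the stated set is an upper bound, which the footnote in the paper explicitly permits enlarging), the union of exponent sets is precisely $\big(\sum_{j=1}^{N_g} \nu_{g,j}\N_0\big) \cap [0,R)$, using $\nu_{g,1} = 0$. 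One should note that after collecting terms some coefficients may vanish, so this is the set of \emph{possible} exponents, consistent with the paper's convention.

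The only genuine subtlety is the interchange of the two limiting processes — the infinite Taylor series of $h$ and the asymptotic expansion in $x$ — which is why the uniform bound on the tail via absolute convergence of $\sum |c_m| \rho_0^m$ on a slightly smaller disk is the crux; everything else is bookkeeping with Lemma~\ref{paper:Prop:Algebras}. A secondary point to handle cleanly is the case $R = \infty$: there one fixes an arbitrary target $R' < \infty$, runs the above with $M = M(R')$, and concludes $h \circ g \in \CK(R')$ for every $R'$, hence $h \circ g \in \CK(\infty)$.
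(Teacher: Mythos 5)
Your argument is correct and is exactly the ``straightforward calculation'' the paper leaves implicit: split off the constant term, Taylor-expand $h$ at $a_{g,1}$, bound the tail of the series uniformly by $O(|\varepsilon(x)|^{M+1})=O(x^{-R})$ using absolute convergence on a slightly smaller disk, and track exponents of the finitely many remaining powers via Lemma~\ref{paper:Prop:Algebras}. The only cosmetic points are that $\rho_0$ should be fixed in advance (with $|\varepsilon(x)|\le\rho_0$ for $x$ large) rather than chosen in an interval depending on $x$, and the degenerate case $N_g=1$ (no $\nu_{g,2}$) where $\varepsilon(x)=O_R(x^{-R})$ already gives $h(g(x))=h(a_{g,1})+O_R(x^{-R})$ directly.
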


We need a similar result for general asymptotic expansions. 
	 
\begin{lem}\label{paper:Prop:AsyComp}
	Let $0<R_1,R_2\le\infty$, $\phi\in\CH(R_1)$, $g\in\CK(R_2)$, and $R:=\min\{R_2-\nu_{g,1},\nu_{g,1}R_1\}$. Assume $\nu_{g,1}>0$ and $g(x)>0$ for $x$ sufficiently large. Then $\phi\circ g\in\CK(R)$, $a_{\phi\circ g,1}=a_{\phi,1}a_{g,1}^{\nu_{\phi,1}}$, and
	\begin{align*}
	 \{ \nu_{\phi\circ g,j} \colon 1 \leq j \leq N_{\phi\circ g}\} = \left( \nu_{g,1} \{ \nu_{\phi, 1}, ..., \nu_{\phi, N_\phi}\} + \sum_{j=2}^{N_{g}} (\nu_{g,j} - \nu_{g,1}) \N_0\right) \cap (-\infty, R).
	\end{align*}
\end{lem}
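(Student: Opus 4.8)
The plan is to reduce the composition statement to the two cases already handled, namely Lemma \ref{paper:Prop:Algebras} for sums and products of $\CK$-expansions and Lemma \ref{paper:Prop:Holoexpansion} for composition of a holomorphic function with a $\CK$-expansion whose leading exponent is $0$. The key observation is that if $g\in\CK(R_2)$ with $\nu_{g,1}>0$ and $g(x)>0$ for large $x$, then we may write $g(x) = a_{g,1}x^{-\nu_{g,1}}(1 + r(x))$ where $r(x) := a_{g,1}^{-1}x^{\nu_{g,1}}g(x) - 1$ lies in $\CK(R_2-\nu_{g,1})$ with $\nu_{r,1} \ge \nu_{g,2}-\nu_{g,1} > 0$ and leading behavior governed by the shifted exponents $\nu_{g,j}-\nu_{g,1}$ for $j\ge2$. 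In particular $r(x)\to0$, so $g(x)^\mu = a_{g,1}^\mu x^{-\mu\nu_{g,1}}(1+r(x))^\mu$ for any fixed $\mu\in\C$, and $(1+r(x))^\mu$ is the composition of the function $w\mapsto(1+w)^\mu$, holomorphic at $0$, with $r$; by Lemma \ref{paper:Prop:Holoexpansion} this composition lies in $\CK(R_2-\nu_{g,1})$ with exponents in $\left(\sum_{j\ge2}(\nu_{g,j}-\nu_{g,1})\N_0\right)\cap[0,R_2-\nu_{g,1})$.

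First I would establish this ``power'' lemma: for each $\mu\in\C$, $g^\mu\in\CK(R_2-\nu_{g,1})$ with leading term $a_{g,1}^\mu x^{-\mu\nu_{g,1}}$ and exponent set $\mu\nu_{g,1} + \left(\sum_{j=2}^{N_g}(\nu_{g,j}-\nu_{g,1})\N_0\right)$ truncated at $R_2-\nu_{g,1}$. Next I would use the defining asymptotic expansion \eqref{paper:fasy} for $\phi\in\CH(R_1)$ at the point $z = 1/x \to 0^+$, which lies in $\CC_\d$ for any $\d$: this gives
\[
	\phi\!\left(\tfrac1x\right) = \sum_{j=1}^{N_\phi} a_{\phi,j}\, x^{-\nu_{\phi,j}} + O_R\!\left(x^{-R_1}\right),
\]
an element of $\CK(R_1)$. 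The composition $\phi\circ g$ should then be realized by substituting: heuristically $\phi(g(x))$ behaves like $\phi$ evaluated near $0$ with ``variable'' $g(x)^{-1}$, which suggests writing $\phi\circ g$ as a sum over $j$ of $a_{\phi,j}g(x)^{\nu_{\phi,j}}$ plus a controlled remainder. I would make this rigorous by applying the expansion \eqref{paper:fasy} of $\phi$ with its remainder term: since $g(x)\to0$ along the positive reals (as $\nu_{g,1}>0$) and $g(x)\in\CC_\d$ trivially for real positive values, \eqref{paper:fasy} with $k=0$ gives $\phi(g(x)) = \sum_{j=1}^{N_\phi}a_{\phi,j}g(x)^{\nu_{\phi,j}} + O_{R_1}(g(x)^{R_1}) = \sum_{j=1}^{N_\phi}a_{\phi,j}g(x)^{\nu_{\phi,j}} + O(x^{-\nu_{g,1}R_1})$. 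Each summand $g(x)^{\nu_{\phi,j}}$ is handled by the power lemma, landing in $\CK(R_2-\nu_{g,1})$ with exponent set $\nu_{\phi,j}\nu_{g,1} + \sum_{j'\ge2}(\nu_{g,j'}-\nu_{g,1})\N_0$. Summing finitely many such via Lemma \ref{paper:Prop:Algebras}\ref{paper:Prop:Algebras:1} and incorporating the error, the total expansion lies in $\CK(R)$ with $R = \min\{R_2-\nu_{g,1},\nu_{g,1}R_1\}$ and the claimed exponent set $\left(\nu_{g,1}\{\nu_{\phi,1},\dots,\nu_{\phi,N_\phi}\} + \sum_{j=2}^{N_g}(\nu_{g,j}-\nu_{g,1})\N_0\right)\cap(-\infty,R)$; the leading term is $a_{\phi,1}g(x)^{\nu_{\phi,1}}$ whose leading coefficient is $a_{\phi,1}a_{g,1}^{\nu_{\phi,1}}$, as asserted.

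The main obstacle I anticipate is bookkeeping the error terms correctly when they are substituted inside $\phi$ and when finitely many $\CK$-expansions of possibly different truncation orders are summed: one must check that the floor $R_2-\nu_{g,1}$ coming from the shifted remainder of $g$ and the floor $\nu_{g,1}R_1$ coming from $\phi$'s own remainder combine to exactly $R = \min\{R_2-\nu_{g,1},\nu_{g,1}R_1\}$, and that no spurious exponents $\ge R$ survive. A secondary point requiring care is that $\nu_{\phi,1}$ (and more generally $\nu_{\phi,j}$) may be negative, so $g(x)^{\nu_{\phi,j}}$ could blow up; this is fine because $g(x)\to0$ still, but one should confirm the power lemma holds for arbitrary complex $\mu$ and that the resulting exponent $\mu\nu_{g,1}$ is real when $\mu=\nu_{\phi,j}$ is real — which it is, since the $\nu_{\phi,j}$ are real by definition of $\CH(R_1)$. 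Everything else is a routine application of the two preceding lemmas.
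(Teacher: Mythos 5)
The paper gives no proof of this lemma---it is one of the expansion lemmas the authors record as following by a straightforward calculation---so your task is to supply that calculation, and the route you choose (factor out the leading term of $g$, expand $(1+r(x))^{\nu_{\phi,j}}$ via Lemma \ref{paper:Prop:Holoexpansion}, substitute into \eqref{paper:fasy} with $k=0$ at the point $g(x)\in\CC_\delta\cap\R^+$, then combine finitely many expansions with Lemma \ref{paper:Prop:Algebras}) is exactly the intended one. It correctly produces the leading coefficient $a_{\phi,1}a_{g,1}^{\nu_{\phi,1}}$ (note that $a_{g,1}>0$, needed for $a_{g,1}^{\nu_{\phi,1}}$ to make sense, does follow from $g>0$) and the stated exponent set.

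However, the step you explicitly postpone---checking that the two error floors combine to $R=\min\{R_2-\nu_{g,1},\nu_{g,1}R_1\}$---is the only nontrivial point, and it does not come out quite as stated. Your ``power lemma'' is already off: from $(1+r)^{\mu}\in\CK(R_2-\nu_{g,1})$ you get $g^{\mu}=a_{g,1}^{\mu}x^{-\mu\nu_{g,1}}(1+r)^{\mu}\in\CK(\mu\nu_{g,1}+R_2-\nu_{g,1})$, not $\CK(R_2-\nu_{g,1})$, because the remainder $O(x^{-(R_2-\nu_{g,1})})$ gets multiplied by $x^{-\mu\nu_{g,1}}$. Taking the worst term $j=1$, your argument proves $\phi\circ g\in\CK(\min\{\nu_{g,1}R_1,\ \nu_{\phi,1}\nu_{g,1}+R_2-\nu_{g,1}\})$, which agrees with the stated $R$ only if $\nu_{\phi,1}\ge0$ or if $\nu_{g,1}R_1$ realizes the minimum. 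When $\nu_{\phi,1}<0$---which is the case in every application in this paper, e.g.\ $\nu_{\phi,1}=-\a$ in Lemma \ref{paper:L:Gfasy}---the floor $R_2-\nu_{g,1}$ is not attainable in general: take $\phi(z)=z^{-1}\in\CH(\infty)$ and $g(x)=x^{-1}+x^{-R_2}\sin(\log x)$, for which $1/g(x)=x-x^{2-R_2}\sin(\log x)+O\left(x^{3-2R_2}\right)$ lies in $\CK(R_2-2)$ but not in $\CK(R_2-1)$. So you should either prove the lemma with the corrected floor $\min\{\nu_{g,1}R_1,\ \nu_{\phi,1}\nu_{g,1}+R_2-\nu_{g,1}\}$ (which suffices for all uses here, where $R_1$ and $R_2$ may be taken arbitrarily large and where, in the key instance $R_2=\frac{R_1}{\a+1}+1$, $\nu_{g,1}=\frac{1}{\a+1}$, $\nu_{\phi,1}=-\a$, the two floors coincide), or add the hypothesis $\nu_{\phi,1}\ge0$. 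Everything else in your outline is sound.
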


\subsection{Special functions}

The following theorem collects some facts about the Gamma function.

\begin{prop}[see \cite{AAR,Tenenbaum}]\label{paper:GammaCollect}
	Let $\gamma$ denote the Euler--Mascheroni constant.
	\begin{enumerate}[leftmargin=*,label=\rm{(\arabic*)}]
		\item\label{paper:GC:1} The gamma function $\GG$ is holomorphic on $\C\setminus(-\N_0)$ with simple poles in $-\N_0$. For $n\in\N_0$ we have $\Res_{s=-n}\GG(s)=\frac{(-1)^n}{n!}$.
		
		\item \label{paper:GC:4} For $s = \sigma + it \in \C$ with $\s \in I$ for a compact interval $I \subset [\frac12, \infty)$, we uniformly have
		\begin{equation*}%\label{paper:Gamma}
			\max\left\{1, |t|^{\s - \frac12}\right\} e^{- \frac{\pi |t|}{2} } \ll_I |\Gamma(s)| \ll_I \max\left\{1, |t|^{\s - \frac12}\right\}e^{- \frac{\pi|t|}{2} }.
		\end{equation*}
		The bound also holds for compact intervals $I \subset \IR$ if $|t| \ge 1$.
		
		\item \label{paper:GC:3} Near $s=0$, we have the Laurent series expansion $\GG(s) = \frac{1}{s} - \gamma + O(s)$.
		
		\item \label{paper:GC:5} For all $s \in \C \setminus \Z$, we have $\Gamma(s)\Gamma(1-s) = \frac{\pi}{\sin(\pi s)}$. 
	\end{enumerate}
\end{prop}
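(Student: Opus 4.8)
All four assertions are classical facts about $\GG$ and are proved, e.g., in \cite{AAR,Tenenbaum}; below I outline the arguments I would give.

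For \ref{paper:GC:1}, the plan is to start from the Euler integral $\GG(s)=\int_0^\infty t^{s-1}e^{-t}\,dt$, which is holomorphic on $\{\re(s)>0\}$, and to integrate by parts to obtain the functional equation $\GG(s+1)=s\GG(s)$. Rewriting it as $\GG(s)=\GG(s+n+1)/\big(s(s+1)\cdots(s+n)\big)$ provides a meromorphic continuation to $\{\re(s)>-n-1\}$ for each $n$, hence to all of $\C$; since $\GG(n+1)=n!\ne0$, the only singularities are simple poles at $-\N_0$, and letting $s\to-n$ in this identity yields $\Res_{s=-n}\GG(s)=\GG(1)/\big((-n)(-n+1)\cdots(-1)\big)=(-1)^n/n!$.

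For \ref{paper:GC:3}, part \ref{paper:GC:1} already gives $\GG(s)=\frac1s+c_0+O(s)$ near $0$; I would then use $\GG(s)=\GG(s+1)/s$ together with $\GG(s+1)=1+\GG'(1)s+O(s^2)$ and the value $\GG'(1)=-\g$ (read off from the Weierstrass product $\frac1{\GG(s)}=se^{\g s}\prod_{n\ge1}(1+\frac sn)e^{-\frac sn}$), which forces $c_0=-\g$. For \ref{paper:GC:5}, on the strip $0<\re(s)<1$ I would identify $\GG(s)\GG(1-s)$ with the Beta integral $\int_0^\infty\frac{u^{s-1}}{1+u}\,du$ and evaluate the latter as $\frac{\pi}{\sin(\pi s)}$ by a keyhole-contour (residue) computation; analytic continuation then extends the identity to all $s\in\C\setminus\Z$, both sides being meromorphic with the same simple poles at the integers.

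The only substantial point is \ref{paper:GC:4}, a uniform form of Stirling's estimate. For $\s$ in a fixed compact set and $|t|\to\infty$ one has $|\GG(\s+it)|\sim\sqrt{2\pi}\,|t|^{\s-\frac12}e^{-\frac\pi2|t|}$ uniformly in $\s$ (from $\log\GG(s)=(s-\tfrac12)\log s-s+\tfrac12\log(2\pi)+O(|s|^{-1})$ in a sector off the negative axis, taking real parts), which gives the two-sided bound once $|t|$ is large. For $|t|$ bounded and $\s\in I\subset[\frac12,\infty)$, the set $\{\s+it:\s\in I,\,|t|\le T\}$ contains no poles, so $\GG$ is continuous and zero-free there, hence bounded above and below by positive constants, which matches $\max\{1,|t|^{\s-\frac12}\}$ (this maximum equals $1$ on that range since $\s\ge\frac12$). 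Patching the two ranges proves the estimate for $I\subset[\frac12,\infty)$. For a general compact $I\subset\R$ with $|t|\ge1$, I would apply the reflection formula \ref{paper:GC:5}, $|\GG(\s+it)|=\pi/\big(|\sin(\pi(\s+it))|\,|\GG(1-\s-it)|\big)$, use $|\sin(\pi(\s+it))|\asymp e^{\pi|t|}$ for $|t|\ge1$, and bound $\GG(1-\s-it)$ by the case already treated after shifting via the functional equation so that the real part is $\ge\frac12$. I expect the main obstacle to be precisely this last bookkeeping step: checking that the powers of $|t|$ and the exponential factors coming from Stirling, from boundedness on compacta, and from the reflection formula combine to the claimed shape uniformly across the transition region $|t|\sim1$ and across $\s=\frac12$.
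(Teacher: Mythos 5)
The paper gives no proof of this proposition; it is stated as a collection of classical facts with references to \cite{AAR,Tenenbaum}, so there is no in-text argument to compare yours against. Your outline is the standard textbook treatment and is essentially correct: parts \ref{paper:GC:1}, \ref{paper:GC:3}, and \ref{paper:GC:5} are fine as sketched, and for part \ref{paper:GC:4} your three-step scheme (uniform Stirling for large $|t|$, compactness and zero-freeness for bounded $|t|$ with $\s\ge\frac12$, then reflection plus the functional equation for the remaining range) is the right one.

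The ``bookkeeping'' you flag as the main obstacle does work out, but it resolves in a way worth making explicit. For $\s\le\frac12$ and $|t|\ge1$, the reflection formula combined with the already-established case (applied to $\GG(1-\s-it)$, whose real part lies in $[\frac12,\infty)$) gives the clean two-sided asymptotic $|\GG(\s+it)|\asymp_I|t|^{\s-\frac12}e^{-\frac{\pi|t|}{2}}$, uniformly on compacta; no transition-region patching is needed beyond this. However, note that for $\s<\frac12$ one has $|t|^{\s-\frac12}<1$, so $\max\{1,|t|^{\s-\frac12}\}e^{-\frac{\pi|t|}{2}}=e^{-\frac{\pi|t|}{2}}$, and what your computation actually yields in that range is only the \emph{upper} bound $|\GG(s)|\ll_I e^{-\frac{\pi|t|}{2}}$; the lower bound in the displayed estimate, read literally with the same $\max$, fails as $|t|\to\infty$ when $\s<\frac12$, since $|t|^{\s-\frac12}\to0$. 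The correct uniform statement for a general compact $I\subset\R$ and $|t|\ge1$ is $|\GG(\s+it)|\asymp_I|t|^{\s-\frac12}e^{-\frac{\pi|t|}{2}}$ without the $\max$, which implies the upper bound as stated and is all that the paper uses for such $I$. So your proof is sound; the only caveat is that the final sentence of part \ref{paper:GC:4} should be understood as asserting the upper bound (or the $\max$-free two-sided form), which your argument does deliver.
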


For $s,z \in \IC$ with $s \notin -\N$, the {\it generalized Binomial coefficient} is defined by 
\begin{align*}
	\binom{s}{z} := \frac{\Gamma(s+1)}{\Gamma(z+1)\Gamma(s-z+1)}.
\end{align*}

We require the following properties of the Riemann zeta function.

\begin{prop}[see \cite{Apostol,Brue,Tenenbaum}]\label{paper:ZetaFunc} 
	\ \begin{enumerate}[leftmargin=*,label=\rm{(\arabic*)}]
		\item\label{paper:ZF:1} The $\z$-function has a meromorphic continuation to $\IC$ with only a simple pole at $s=1$ with residue $1$. For $s \in \IC$ we have (as identity between meromorphic functions)
		\begin{equation*}\label{paper:zetafunc}
			\zeta(s) = 2^s \pi^{s-1} \sin\left( \frac{\pi s}{2}\right) \Gamma(1-s) \zeta(1-s).
		\end{equation*} 
		%and equivalently 
		%\begin{align}
			%\label{paper:zetafunc2}	\zeta(1-s) = \frac{2}{(2\pi)^s} \cos\left( \frac{\pi s}{2}\right) \Gamma(s) \zeta(s).
		%\end{align}
	
		%\item \label{paper:ZF:2} Near $s=1$, the Riemann zeta function has a Laurent series expansion {\bf KB: If $\g$ is needed we cannot call our $\g$'s $\g$.} $\zeta(s) = \frac{1}{s-1}+\gamma + O(s)$.
	
		\item\label{paper:ZF:3} For $I := [\sigma_0, \sigma_1]$ and $s = \sigma +it \in \C$, there exists $m_I\in\Z$, such that for $\s \in I$
		\[
			\z(s) \ll (1+|t|)^{m_I},\qquad (|t|\to\infty).
		\]
		
		\item\label{paper:ZF:4} Near $s=1$, we have the Laurent series expansion $\zeta(s) = \frac{1}{s-1}+\gamma + O(s-1)$.
	\end{enumerate}
\end{prop}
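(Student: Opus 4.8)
The plan is to establish the three assertions of Proposition~\ref{paper:ZetaFunc} about $\z$ by the classical arguments, drawing on Proposition~\ref{paper:GammaCollect} for the needed properties of $\GG$.

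\textbf{Meromorphic continuation and functional equation.} I would use Riemann's theta-function method. For $\re(s)>1$, integrating $\pi^{-\frac s2}\GG(\frac s2)n^{-s}=\int_0^\infty x^{\frac s2-1}e^{-\pi n^2 x}\,dx$ and summing over $n\ge1$ gives
\[
	\pi^{-\frac s2}\GG\!\left(\tfrac s2\right)\z(s)=\int_0^\infty x^{\frac s2-1}\psi(x)\,dx,\qquad \psi(x):=\sum_{n\ge1}e^{-\pi n^2 x}.
\]
Poisson summation applied to the Gaussian yields Jacobi's transformation $1+2\psi(1/x)=\sqrt{x}\,(1+2\psi(x))$; splitting the integral at $x=1$ and applying $x\mapsto1/x$ on $(0,1)$ produces
\[
	\pi^{-\frac s2}\GG\!\left(\tfrac s2\right)\z(s)=\frac1{s-1}-\frac1s+\int_1^\infty\Bigl(x^{\frac s2-1}+x^{-\frac{s+1}2}\Bigr)\psi(x)\,dx.
\]
Since $\psi$ decays exponentially at $\infty$, the integral is entire, so the right-hand side is meromorphic on $\C$, visibly invariant under $s\mapsto1-s$, with simple poles only at $s\in\{0,1\}$; the pole at $s=0$ is cancelled by that of $\GG(\frac s2)$, so $\z$ extends holomorphically to $\C\setminus\{1\}$, and matching residues (with $\GG(\frac12)=\sqrt\pi$) gives $\Res_{s=1}\z(s)=1$. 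This is the symmetric functional equation for $\xi(s):=\pi^{-s/2}\GG(s/2)\z(s)$; converting it into the stated shape $\z(s)=2^s\pi^{s-1}\sin(\frac{\pi s}2)\GG(1-s)\z(1-s)$ is routine, using the Legendre duplication formula $\GG(\frac s2)\GG(\frac{s+1}2)=2^{1-s}\sqrt\pi\,\GG(s)$, the reflection formula $\GG(s)\GG(1-s)=\pi/\sin(\pi s)$ from Proposition~\ref{paper:GammaCollect}, and $\sin(\pi s)=2\sin(\frac{\pi s}2)\cos(\frac{\pi s}2)$.

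\textbf{Polynomial growth in vertical strips.} Fix $I=[\sigma_0,\sigma_1]$. Partial summation (Euler--Maclaurin) gives, for any fixed $M\in\N$ and $\re(s)>0$, the representation $\z(s)=\sum_{n\le M}n^{-s}-\frac{M^{1-s}}{1-s}+O_M(|s|M^{-\re(s)})$, hence $\z(s)\ll_{\varepsilon}1+|t|$ uniformly for $\re(s)\ge\varepsilon$, any fixed $\varepsilon>0$. If $I\subseteq[\tfrac12,\infty)$ this already suffices. Otherwise, for $s$ with $\sigma\le\tfrac12$ I would insert the functional equation: on the compact $\sigma$-interval $|2^s\pi^{s-1}|\ll_I1$, while $|\sin(\frac{\pi s}2)|\le e^{\frac\pi2|t|}$, and by the Stirling-type estimate of Proposition~\ref{paper:GammaCollect} (applied on the compact interval $\re(1-s)\in[\tfrac12,1-\sigma_0]$) one has $|\GG(1-s)|\ll_I\max\{1,|t|^{\frac12-\sigma}\}e^{-\frac\pi2|t|}$, while $\z(1-s)\ll_I1+|t|$ since $\re(1-s)\ge\tfrac12$. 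The two exponentials cancel, so multiplying out gives $\z(s)\ll_I(1+|t|)^{m_I}$ for a suitable integer $m_I$ depending only on $I$.

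\textbf{Laurent expansion at $s=1$, and main obstacle.} Since $\frac1{s-1}=\int_1^\infty x^{-s}\,dx$ for $\re(s)>1$,
\[
	\z(s)-\frac1{s-1}=\sum_{n\ge1}\left(n^{-s}-\int_n^{n+1}x^{-s}\,dx\right),
\]
and since each summand is $O(|s|\,n^{-\re(s)-1})$, the series converges locally uniformly on $\re(s)>0$ to a holomorphic function $g$; thus $\z(s)=\frac1{s-1}+g(1)+O(s-1)$ near $s=1$. Finally $g(1)=\sum_{n\ge1}\bigl(\tfrac1n-\log\tfrac{n+1}n\bigr)=\lim_{N\to\infty}\bigl(\sum_{n\le N}\tfrac1n-\log(N+1)\bigr)=\gamma$, the Euler--Mascheroni constant, which gives the claim. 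None of this is deep: the only substantive input is Jacobi's theta transformation (equivalently, Poisson summation for the Gaussian) together with the careful splitting of the Mellin integral; everything else is bookkeeping with $\GG$-identities and elementary estimates. If one prefers to avoid theta functions, the continuation and functional equation can instead be derived from a Hankel-contour representation of $\GG(s)\z(s)$, replacing the Poisson-summation step by a residue computation; I would use whichever is shorter in context.
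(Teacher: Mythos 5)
Your proof is correct and follows exactly the classical arguments found in the references the paper cites for this proposition (Apostol, Br\"udern, Tenenbaum): the theta-function/Poisson-summation proof of the continuation and functional equation, the Euler--Maclaurin representation combined with the functional equation and Stirling for the vertical-strip bound, and the standard telescoping series for the Laurent expansion at $s=1$. The paper itself offers no proof beyond these citations, so there is nothing further to compare.
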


For the Saddle Point Method we need the following estimate.

\begin{lem}\label{paper:Gauss}
	Let $\mu_n$ be an increasing unbounded sequence of positive real numbers, $B>0$, and $P$ a polynomial of degree $m\in\N_0$. Then we have 
	\begin{align*}
		\int_{-\mu_n}^{\mu_n} P(x) e^{-Bx^2} dx = \int_{-\infty}^{\infty} P(x) e^{-Bx^2} dx + O_{B,P}\left( \mu_n^{\frac{m-1}{2}} e^{-B\mu_n^2} \right).
	\end{align*}
\end{lem}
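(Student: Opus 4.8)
The plan is to reduce the truncated integral to the full Gaussian integral by showing the tail contribution is exponentially small, handling the polynomial factor $P(x)$ by a term-by-term analysis of monomials.

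First I would note that it suffices to prove the estimate for a single monomial $P(x)=x^m$, since both sides are linear in $P$ and a general polynomial of degree $m$ is a linear combination of $x^0,\dots,x^m$; the resulting error for the monomial $x^j$ is $O(\mu_n^{(j-1)/2}e^{-B\mu_n^2})$, and since $\mu_n\to\infty$ the dominant term among $j\le m$ is $j=m$, giving the stated $O_{B,P}(\mu_n^{(m-1)/2}e^{-B\mu_n^2})$. Then I would write
\[
	\int_{-\infty}^{\infty} x^m e^{-Bx^2}\,dx - \int_{-\mu_n}^{\mu_n} x^m e^{-Bx^2}\,dx = 2\cdot\mathbbm{1}_{m\text{ even}}\int_{\mu_n}^{\infty} x^m e^{-Bx^2}\,dx,
\]
the point being that for odd $m$ both integrals vanish by symmetry (so the error is even $0$), and for even $m$ the two tails at $\pm\infty$ combine.

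The remaining task is to bound $\int_{\mu_n}^{\infty} x^m e^{-Bx^2}\,dx$. Here I would use the standard trick of comparing with a derivative: on $[\mu_n,\infty)$ one has the crude bound obtained by repeated integration by parts, or more simply by substituting $x = \mu_n + t$ and using $e^{-B(\mu_n+t)^2} \le e^{-B\mu_n^2}e^{-2B\mu_n t}$ together with $(\mu_n+t)^m \ll_m \mu_n^m + t^m$ for $t\ge 0$. This yields
\[
	\int_{\mu_n}^{\infty} x^m e^{-Bx^2}\,dx \ll_m e^{-B\mu_n^2}\int_0^\infty (\mu_n^m + t^m) e^{-2B\mu_n t}\,dt = e^{-B\mu_n^2}\left(\frac{\mu_n^m}{2B\mu_n} + \frac{m!}{(2B\mu_n)^{m+1}}\right) \ll_{B,m} \mu_n^{m-1} e^{-B\mu_n^2},
\]
using $\mu_n$ bounded below by its first (positive) term for $n$ large. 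This bound $\mu_n^{m-1}e^{-B\mu_n^2}$ is even smaller than the claimed $\mu_n^{(m-1)/2}e^{-B\mu_n^2}$ when $m\ge 1$, and for $m=0$ it is exactly $\mu_n^{-1}e^{-B\mu_n^2}=\mu_n^{(m-1)/2}e^{-B\mu_n^2}$, so in all cases the claimed error is satisfied. (One can alternatively invoke the asymptotics of the incomplete Gamma function directly.)

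There is essentially no serious obstacle here; the only mild subtlety is bookkeeping the polynomial degree correctly so that the final error exponent $\frac{m-1}{2}$ is stated with the right power — it is deliberately weaker than what the tail bound actually gives, presumably to match the form in which the lemma is applied later. The convergence of all integrals is guaranteed by $B>0$, and the hypothesis that $\mu_n$ is increasing and unbounded is used only to absorb lower-order powers of $\mu_n$ into the leading one and to ensure $\mu_n$ is eventually bounded away from $0$.
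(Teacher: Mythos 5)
Your tail estimate is correct and is the standard argument: reduce to monomials, kill the odd ones by symmetry, and bound $\int_{\mu_n}^{\infty}x^{m}e^{-Bx^{2}}\,dx\ll_{B,m}\mu_n^{m-1}e^{-B\mu_n^{2}}$ via the substitution $x=\mu_n+t$. The flaw is in the very last step: the comparison between $\mu_n^{m-1}$ and $\mu_n^{(m-1)/2}$ goes the wrong way. Since $\mu_n\to\infty$, for $m\ge2$ one has $\mu_n^{m-1}\gg\mu_n^{(m-1)/2}$ once $\mu_n>1$, so the bound you derived is \emph{weaker} than the one in the statement, not stronger; your sentence ``is even smaller than the claimed bound when $m\ge1$'' is false for every $m\ge2$ (and for $m=0$ the stated exponent is $-\tfrac12$, not $-1$, so the two are not ``exactly'' equal either, though there the inequality does happen to go your way).

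Moreover, no argument can close this gap, because the stated error term is itself too strong: already for $P(x)=x^{2}$ one has, by integration by parts or via $\int_{\mu_n}^{\infty}x^{2}e^{-Bx^{2}}\,dx=\tfrac12B^{-3/2}\Gamma\bigl(\tfrac32,B\mu_n^{2}\bigr)$, the asymptotic $\sim\tfrac{1}{2B}\mu_n e^{-B\mu_n^{2}}$, which is not $O(\mu_n^{1/2}e^{-B\mu_n^{2}})$. The exponent $\tfrac{m-1}{2}$ in the lemma is evidently a slip for $m-1$ (it is the correct exponent only in the variable $\mu_n^{2}$, i.e.\ after the substitution $u=x^{2}$ that turns the tail into an incomplete gamma function). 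Your computation is in fact a complete and sharp proof of the corrected statement with error $O_{B,P}(\mu_n^{m-1}e^{-B\mu_n^{2}})$, which is all that matters downstream since any polynomial prefactor is swallowed by $e^{-B\mu_n^{2}}$; but as written, the concluding claim that ``in all cases the claimed error is satisfied'' is unjustified, and the discrepancy should be flagged rather than asserted away.
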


Finally, we require the following in our study of the Witten zeta function $\zeta_{\so(5)}.$

\begin{lem}\label{paper:L:IntegralPolyBound}
	Let $n\in\N_0$. The function $g:\R\to\R$ defined as $g(u):=e^{|u|}\int_{-\infty}^\infty|v|^ne^{-|v|-|v+u|}dv$ satisfies $g(u)=O_n(u^{n+1})$ as $|u|\to\infty$.
\end{lem}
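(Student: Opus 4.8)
The plan is to split the integral at the sign-change points of the exponent and reduce to elementary computations. Write $I(u) := \int_{-\infty}^\infty |v|^n e^{-|v|-|v+u|}\,dv$; by the symmetry $v \mapsto -v$ (which sends $u \mapsto -u$) we have $I(-u) = I(u)$, so it suffices to treat $u \ge 0$. For $u \ge 0$ the quantity $|v+u|$ equals $-(v+u)$ when $v \le -u$, equals $v+u$ when $v \ge 0$, and equals $v+u$ or $-(v+u)$ on $[-u,0]$ according to sign; correspondingly $|v|+|v+u|$ equals $-2v-u$ on $(-\infty,-u]$, equals $u$ on $[-u,0]$, and equals $2v+u$ on $[0,\infty)$. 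Thus
\begin{align*}
	I(u) = e^{-u}\!\int_{-u}^0 |v|^n\,dv + \int_{-\infty}^{-u}|v|^n e^{2v+u}\,dv + \int_0^\infty v^n e^{-2v-u}\,dv.
\end{align*}

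Next I would estimate each of the three pieces for $u \to \infty$. The middle term is exactly $e^{-u} u^{n+1}/(n+1)$. In the third term, $\int_0^\infty v^n e^{-2v}\,dv$ is a finite constant depending only on $n$, so that term is $O_n(e^{-u})$. For the first term, substitute $v \mapsto v-u$ to get $\int_0^u |v-u|^n e^{v-2u}\,dv \le e^{-u}\int_0^\infty w^n e^{-w}\,dw = O_n(e^{-u})$ after the further substitution $w = u - v$. Adding these, $I(u) = e^{-u}\big(\tfrac{u^{n+1}}{n+1} + O_n(1)\big)$ as $u \to \infty$, hence $g(u) = e^{u}I(u) = \tfrac{u^{n+1}}{n+1} + O_n(1) = O_n(u^{n+1})$. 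Combined with $g(-u) = g(u)$, this gives the claim for $|u| \to \infty$.

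There is essentially no hard step here; the only thing to be careful about is the bookkeeping of which linear expression $|v|+|v+u|$ reduces to on each of the three intervals determined by $-u$ and $0$, and keeping the $n$-dependence of the constants explicit so that the final bound is genuinely $O_n$. One could alternatively avoid the case split entirely: bound $|v+u| \ge |u| - |v|$ crudely to get $e^{|u|}e^{-|v|-|v+u|} \le e^{2|v|}$, which is not integrable, so the crude bound fails and the split (or an equivalent shift-of-contour argument) is genuinely needed — this is worth a one-line remark but presents no real obstacle.
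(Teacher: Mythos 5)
Your decomposition is exactly the one underlying the paper's (one-line) proof: for $u\ge 0$ one splits $\R$ at $-u$ and $0$, where $|v|+|v+u|$ equals $-2v-u$, $u$, and $2v+u$ respectively, and the paper simply records the resulting closed form $g(u)=\frac{n!}{2^{n+1}}\sum_{j=0}^n\frac{2^j}{j!}u^j+\frac{u^{n+1}}{n+1}+\frac{n!}{2^{n+1}}$ before invoking evenness. Your treatment of the $[-u,0]$ and $[0,\infty)$ pieces is correct.

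However, your evaluation of the piece over $(-\infty,-u]$ is wrong. The substitution you write does not correspond to that integral: replacing $v$ by $v-u$ in $\int_{-\infty}^{-u}|v|^n e^{2v+u}\,dv$ gives $\int_{-\infty}^{0}|v-u|^n e^{2v-u}\,dv$, not $\int_0^u|v-u|^n e^{v-2u}\,dv$ (both the limits and the exponent in your expression are incorrect). The correct value is
\begin{equation*}
\int_{-\infty}^{-u}|v|^n e^{2v+u}\,dv \;=\; e^{u}\int_u^\infty t^n e^{-2t}\,dt \;=\; e^{-u}\sum_{j=0}^n\frac{n!}{j!}\,\frac{u^j}{2^{n+1-j}} \;\asymp_n\; u^n e^{-u},
\end{equation*}
which is \emph{not} $O_n(e^{-u})$: it is larger by a factor $u^n$. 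Consequently your intermediate claim $I(u)=e^{-u}\bigl(\tfrac{u^{n+1}}{n+1}+O_n(1)\bigr)$ is false; the correct statement is $I(u)=e^{-u}\bigl(\tfrac{u^{n+1}}{n+1}+O_n(u^n)\bigr)$. This is a repairable slip rather than a wrong approach --- the extra $O_n(u^n)$ is still dominated by $u^{n+1}$, so the lemma's conclusion $g(u)=O_n(u^{n+1})$ survives --- but as written the argument for that piece does not hold, and you should replace it by the computation above (or by the crude bound $(w+u)^n\le 2^n(w^n+u^n)$ after the substitution $t=-v$).
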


\begin{proof}
	Let $u\ge0$. Then we have
	\[
		g(u) = \frac{n!}{2^{n+1}}\sum_{j=0}^n \frac{2^j}{j!}u^j + \frac{u^{n+1}}{n+1} + \frac{n!}{2^{n+1}} = O_n\left(u^{n+1}\right).
	\]
	The lemma follows, since $g$ is an even function.
\end{proof}

\section{Minor and major arcs}\label{sec:minmajarc}

\subsection{The minor arcs}\label{paper:sec:minorarcs}

For $z \in \C$ with $\operatorname{Re}(z) > 0$, we define, with $G_f$ given in \eqref{def:GfLf},
$$
	\Phi_f(z):=\Log(G_f(z)).
$$ 
Note that we assume throughout, that the function $f$ grows polynomially, which is implicitly part of \ref{paper:main:3}. We apply Cauchy's Theorem, writing
\begin{align*}
 p_f(n) = \frac{1}{2\pi} \int_{-\pi}^{\pi} \exp\left( n(\varrho_n + it) + \Phi_f(\varrho_n + it)\right)dt,
\end{align*}
where {$\varrho_n \to 0^+$} is determined in Subsection \ref{paper:subsect:saddlepoint}. We split the integral into two parts, the major and minor arcs, for any $\b\ge1$
\begin{align} \label{paper:MajorMinorSplit}
	p_f(n) = \frac{e^{\varrho_n n}}{2\pi}\int_{|t| \leq \varrho_n^\b} \exp\left( int + \Phi_f(\varrho_n + it)\right)dt + \frac{e^{\varrho_n n}}{2\pi} \int_{\varrho^\b \leq |t| \leq \pi} \exp\left( int + \Phi_f(\varrho_n + it)\right)dt.
\end{align}
The first integral provides the main terms in the asymptotic expansion for $p_f(n)$, the second integral is negligible, as the following lemma shows.

\begin{lem}\label{paper:L:generalminorarcsbound} 
	Let $1<\b<1+\frac\a2$ and assume that $f$ satisfies the conditions of Theorem \ref{paper:T:main2}. Then
	\begin{align*}
 	\int_{\frac{\varrho_n^\beta}{2\pi} \leq |t| \leq \frac{1}{2}} e^{2\pi int} G_f(\varrho_n + 2 \pi i t) dt \ {\ll_L \varrho_n^{L+1} G_f(\varrho_n)}.
	\end{align*}
\end{lem}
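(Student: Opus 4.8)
The plan is to bound the minor arc integral by splitting the exponent $\Phi_f(\varrho_n + 2\pi i t) = \Log G_f(\varrho_n + 2\pi i t)$ against $\Phi_f(\varrho_n) = \Log G_f(\varrho_n)$, i.e.\ I would estimate
\[
	\left| G_f(\varrho_n + 2\pi i t) \right| = G_f(\varrho_n) \exp\left( \re\left( \Phi_f(\varrho_n + 2\pi i t) - \Phi_f(\varrho_n) \right) \right),
\]
so the task reduces to showing that $\re(\Phi_f(\varrho_n) - \Phi_f(\varrho_n + 2\pi i t))$ grows at least like $(L+1)\Log(1/\varrho_n) + C$ uniformly for $\tfrac{\varrho_n^\beta}{2\pi} \le |t| \le \tfrac12$, after which one multiplies by the length of the integration range (which is $O(1)$). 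Expanding the logarithm of the product in \eqref{def:GfLf} gives
\[
	\re\left( \Phi_f(\varrho_n) - \Phi_f(\varrho_n + 2\pi i t) \right) = \sum_{n\ge1} f(n) \sum_{m\ge1} \frac{1}{m}\, e^{-m n \varrho_n}\left( 1 - \cos(2\pi m n t) \right),
\]
which is a sum of nonnegative terms. The strategy is to keep only the $m=1$ terms and only those $n \in \LL$ for which $nt$ is bounded away from $\Z$, using $1 - \cos(2\pi n t) \gg \|nt\|^2$ where $\|\cdot\|$ denotes distance to the nearest integer.

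The key arithmetic input is assumption \ref{paper:main:1}: for the relevant denominators arising from $t$ (think of $t$ close to a rational $a/p$ with small prime $p$, or $t$ with no good rational approximation), the condition $|\LL \setminus (b\N \cap \LL)| \ge L$ for every $b \ge 2$ guarantees at least $L$ elements $n \in \LL$ with $n \not\equiv 0 \pmod b$, hence with $\|nt\|$ bounded below. Concretely, I would distinguish two regimes. In the \emph{major-arc-complement near rationals} regime, where $t$ is within $\varrho_n$ of some $a/b$ with $b$ not too large, one writes $n t = n a / b + n(t - a/b)$; for the $\ge L$ values of $n \in \LL$ with $b \nmid n$ the first term contributes $\|na/b\| \ge 1/b$, and since $n(t-a/b)$ is small these $n$ each contribute $\gg 1/b^2$ to the sum, with the factor $e^{-n\varrho_n} \gg 1$ for bounded $n$; this yields a lower bound $\gg L/b^2$, which needs to beat $(L+1)\Log(1/\varrho_n)$ — so for bounded $b$ this is fine only if we instead exploit that near such rationals $G_f$ itself is exponentially smaller, i.e.\ one should compare not to $G_f(\varrho_n)$ directly but use the stronger decay; more carefully, the cleanest route is to bound $|t|$ away from \emph{all} rationals with small denominator by a Farey/Dirichlet dissection and in the genuinely minor regime (denominators $\gg \varrho_n^{-c}$ or $t$ poorly approximable) get $\re(\Phi_f(\varrho_n) - \Phi_f(\varrho_n + 2\pi it)) \gg \varrho_n^{-\delta}$ for some $\delta > 0$, which trivially dominates $\Log(1/\varrho_n)$. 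In the \emph{intermediate} regime $\varrho_n^\beta \le |t| \le \varrho_n$-ish, one uses $1 - \cos(2\pi nt) \gg (nt)^2$ for $n \le 1/|t|$ together with $e^{-n\varrho_n} \gg 1$ on this range, summing over the $\gg 1/|t|$ admissible $n \in \LL$ (density of $\LL$ from \ref{paper:main:1} ensures a positive proportion), giving $\gg |t|^2 \cdot |t|^{-3} = |t|^{-1} \ge \varrho_n^{-\beta}$; since $\beta > 1$ this again beats $\Log(1/\varrho_n)$ comfortably, and in fact the condition $\beta < 1 + \tfrac{\alpha}{2}$ together with $L > \tfrac{\alpha}{2}$ is exactly what makes the bookkeeping give the clean exponent $\varrho_n^{L+1}$.

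Let me restate the actual mechanism more precisely, since the exponent $L+1$ is not arbitrary. Rather than the crude quadratic bound, the sharp approach is: for each prime $p$ (or general $b$) and each $t$ near $a/b$, the $\ge L$ "surviving" parts $n_1, \dots, n_L \in \LL$ with $b \nmid n_j$ contribute, through the $m=1$ term, a quantity of the form $\sum_{j=1}^L e^{-n_j \varrho_n}(1 - \cos(2\pi n_j t))$; on the portion of the minor arcs that is \emph{not} within $\varrho_n$ of a small-denominator rational these are all $\gg 1$, forcing the sum $\gg L$, but to get a \emph{power} of $\varrho_n$ one instead sums over \emph{all} $n \in \LL$ up to height $\asymp \varrho_n^{-1}$, of which a positive density have $\|nt\| \gg 1$, yielding $\re(\Phi_f(\varrho_n)) - \re(\Phi_f(\varrho_n + 2\pi it)) \gg \varrho_n^{-1}$ — vastly more than $\Log(1/\varrho_n)$. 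The factor $\varrho_n^{L+1}$ in the statement therefore comes not from the minor arcs being only polynomially small (they are exponentially small) but from the \emph{normalization}: we are allowed to write the bound as $\varrho_n^{L+1} G_f(\varrho_n)$ because $e^{-c\varrho_n^{-1}} \cdot G_f(\varrho_n)^{-1} = o(\varrho_n^{L+1})$ follows from the growth $\Phi_f(\varrho_n) \asymp \varrho_n^{-\alpha}$ and $c\varrho_n^{-1} \ge (L+1)\Log(1/\varrho_n)$ for small $\varrho_n$ — so the genuine content is just "exponential saving," repackaged.

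\textbf{Main obstacle.} The hard part is the uniform lower bound for $\re(\Phi_f(\varrho_n) - \Phi_f(\varrho_n + 2\pi i t))$ near rationals $a/b$ with \emph{small} denominator $b$, where the naive estimate loses: there $G_f(\varrho_n + 2\pi i t)$ is not much smaller than $G_f(\varrho_n)$ unless one carefully exploits that the parts divisible by $b$ "resonate" and only the $\ge L$ non-divisible parts guaranteed by \ref{paper:main:1} provide cancellation. Making this quantitative — tracking how the bound $|\LL \setminus (b\N \cap \LL)| \ge L > \tfrac\alpha2$ interacts with the sum $\sum_{n \in \LL,\, b\nmid n} e^{-n\varrho_n}(1-\cos(2\pi n t))$ over the full range of $n$, and patching the near-rational estimates with the poorly-approximable case via a Farey dissection whose order depends on $\varrho_n$ — is the technical core, and is precisely where the generalization beyond Debruyne–Tenenbaum (who had $f = \mathbbm{1}_\LL$) requires the new hypothesis \ref{paper:main:1}. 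I would organize it by: (i) fixing a dissection parameter $\mathcal{N} \asymp \varrho_n^{-1}$; (ii) on arcs around $a/b$ with $b \le \mathcal{N}$, using the $L$ surviving parts plus a continuity estimate in $t$; (iii) on the remaining $t$, using a Dirichlet-approximation argument with denominator $> \mathcal{N}$ and the density of $\LL$; (iv) in all cases converting the resulting lower bound $\gg \varrho_n^{-\delta}$ into the stated $\varrho_n^{L+1} G_f(\varrho_n)$ bound via the elementary inequality above.
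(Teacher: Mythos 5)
Your overall skeleton --- comparing $|G_f(\varrho_n+2\pi it)|$ to $G_f(\varrho_n)$ through $\re(\Phi_f(\varrho_n)-\Phi_f(\varrho_n+2\pi it))$, and using \ref{paper:main:1} to guarantee surviving non-resonant parts near rationals --- is the right one and matches the paper's (sketched) argument, which runs through the product bound $\prod_{m\ge1}\bigl(1+16\|mt\|^2e^{-m\varrho_n}/(m^2\varrho_n^2)\bigr)^{-f(m)/2}$ adapted from Debruyne--Tenenbaum. But there are genuine gaps at the crux, namely the arcs around rationals $a/b$ with small $b$ in the case where $\LL$ is, apart from the $L$ elements guaranteed by \ref{paper:main:1}, essentially contained in $b\N$.

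First, your decision to ``keep only the $m=1$ terms'' of $\sum_{\nu} f(\nu)\sum_{m}m^{-1}e^{-m\nu\varrho_n}(1-\cos(2\pi m\nu t))$ is fatal there. For a fixed surviving part $\nu_0$ with $\|\nu_0t\|\gg1$, the $m=1$ term contributes only $e^{-\nu_0\varrho_n}(1-\cos(2\pi\nu_0t))=O(1)$ to the exponent, so $L$ surviving parts yield a saving of $e^{-cL}$ --- a constant, nowhere near $\varrho_n^{L+1}$. The power saving per surviving part comes precisely from the full geometric series in $m$: one has $\sum_m m^{-1}e^{-m\nu_0\varrho_n}(1-\cos(2\pi m\nu_0t))\asymp\log(1/(\nu_0\varrho_n))$ when $t$ is near $a/b$ with $b\nmid\nu_0$, equivalently from the full factor comparing $|1-q^{\nu_0}|$ with $1-|q|^{\nu_0}$, which is exactly what the paper's product bound retains. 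This is what converts $L$ surviving parts into a factor $\varrho_n^{L}$ rather than $e^{-cL}$. Second, your fallback claim of an ``exponential saving'' ($\gg\varrho_n^{-1}$ or $\gg\varrho_n^{-\delta}$ in the exponent, coming from a ``positive proportion'' of $n\in\LL$ with $\|nt\|\gg1$) is precisely what fails in this regime: \ref{paper:main:1} guarantees only $L$ such elements, not a positive density, and $\LL$ may well coincide with $b\N\cap\LL$ outside a set of exactly $L$ elements. (Relatedly, in your intermediate regime the count of admissible parts up to $x$ is governed by $\sum_{m\le x}f(m)\sim(\omega_\a/\a)x^{\a}$ --- the input the paper cites from Tenenbaum, and the place where $\b<1+\frac{\a}{2}$ enters --- not by a positive natural density.) Finally, even the corrected pointwise bound is only $\ll\varrho_n^{L}$ at $t=a/b$; the exponent $L+1$ in the statement is a bound for the \emph{integral} and requires integrating the $t$-dependence over the resonant arcs in a dissection of order $\asymp\varrho_n^{-1}$, a step your proposal does not supply.
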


\begin{proof}[Sketch of proof.]
	The proof may be adapted from \cite[Lemma 3.1]{DebrTen}. That is, we estimate the quotient,
	\begin{equation*}%\label{E:Quotientinitialboundpaper}
		\frac{|G_f(\p_n+2\pi it)|}{G_f(\p_n)} \leq \prod_{m\geq1} \left(1+\frac{16\nor{mt}^2}{e^{m\p_n}m^2\p_n^2}\right)^{-\frac{f(m)}{2}},
	\end{equation*}
	where $||x||$ is the distance from $x$ to the nearest integer. We then throw away $m$-th factors depending on the location of $t\in[\frac{\p_n^\b}{2\pi},\frac12]$. The proof follows \cite[Lemma 3.1]{DebrTen} {\it mutatis mutandis}; key facts are hypothesis (P3) of Theorem \ref{paper:T:main2} and that (which follows from \cite[Theorem 7.28 (1)]{Tenenbaum})
\begin{equation*}
			\sum_{1 \leq m \leq x} f(m) \sim \frac{\mathrm{Res}_{s=\alpha} L_f}{\alpha} x^{\alpha}. \qedhere
\end{equation*}	
\end{proof}

\subsection{Inverse Mellin transforms for generating functions}

We start this subsection with a lemma on the asymptotic behavior of the function $\Phi_f$ near $z=0$.

\begin{lem}\label{paper:C:Phi-Asymp}
	Let $f:\N\to\N_0$ satisfy \ref{paper:main:3} with $R>0$ and \ref{paper:main:4}. Fix some $0<\d<\frac\pi2-a$. Then we have, as $z\to0$ in $C_\d$,
	\[
		\Phi_f(z) = \sum_{\substack{\nu\in -\mathcal{P}_R\setminus\{0\}}} \Res_{s=-\nu} L_f^*(s)z^{\nu} - L_f(0)\Log(z) + L_f'(0) + O_{R}\left(|z|^R\right).
	\]
	For the $k$-th derivative ($k\in\N$), we have
	\[
		\Phi_f^{(k)}(z) = \sum_{\substack{\nu\in -\mathcal{P}_R\setminus\{0\}}} (\nu)_k \Res_{s=-\nu} L_f^*(s)z^{\nu - k} + \frac{(-1)^k(k-1)! L_f(0)}{z^k} + O_{R,k}\left(|z|^{R-k}\right).
	\]
\end{lem}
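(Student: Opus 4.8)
The plan is to derive the asymptotics of $\Phi_f(z)$ via a Mellin transform representation, picking up residues by shifting the contour, and then to obtain the derivative expansion either by term-by-term differentiation of an exact integral formula or by invoking the fact that an asymptotic expansion valid in a Stolz-type sector $\CC_\d$ can be differentiated (which is exactly the content of the class $\CH(R)$ in the excerpt).

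First I would write $\Phi_f(z) = \sum_{n\ge1} f(n)\,\Log\!\big(\tfrac{1}{1-e^{-nz}}\big) = \sum_{n\ge1}\sum_{k\ge1}\frac{f(n)}{k}e^{-nkz}$ for $\re(z)>0$, and recognize this as $\sum_{m\ge1} c(m) e^{-mz}$ with Dirichlet series $\sum_{m\ge1} c(m) m^{-s} = L_f(s)\zeta(s+1)$. Using the Mellin transform of $e^{-mz}$, namely $e^{-mz} = \frac{1}{2\pi i}\int_{(c)} \GG(s)(mz)^{-s}\,ds$ for $c$ large, I get
\begin{align*}
	\Phi_f(z) = \frac{1}{2\pi i}\int_{(c)} \GG(s)\,\zeta(s+1)\,L_f(s)\, z^{-s}\,ds = \frac{1}{2\pi i}\int_{(c)} L_f^*(s)\, z^{-s}\,ds,
\end{align*}
valid for $c > \a$. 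Then I shift the contour leftward to $\re(s) = -R$. By \ref{paper:main:3} the only poles of $L_f^*$ in the strip $-R \le \re(s) \le c$ are the real points $\g_1 = \a > \g_2 > \cdots$ together with a possibly double pole at $s=0$; near $s=0$ one has $\GG(s)\zeta(s+1) = (\tfrac1s - \g + \cdots)(\tfrac1s + \g + \cdots) = \tfrac{1}{s^2} + \cdots$ and $z^{-s} = 1 - s\Log(z) + \cdots$, so the residue at $s=0$ contributes precisely $-L_f(0)\Log(z) + L_f'(0)$ (matching the stated form, using $L_f^*(s) = \GG(s)\zeta(s+1)L_f(s)$ and expanding all four factors to the needed order). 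Each simple pole at $s = \g_j = -\nu$ with $\nu \in -\CP_R\setminus\{0\}$ contributes $\Res_{s=-\nu} L_f^*(s)\, z^{\nu}$. This yields the claimed first formula provided the shifted integral $\frac{1}{2\pi i}\int_{(-R)} L_f^*(s) z^{-s}\,ds$ is $O_R(|z|^R)$.

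The main obstacle is precisely that error-term estimate and the justification of the contour shift: one must control $|L_f^*(\s+it)|$ as $|t|\to\infty$ uniformly for $\s$ in the relevant range. Here \ref{paper:main:4} gives $L_f(s) = O(e^{a|t|})$ with $a < \tfrac\pi2$, Proposition~\ref{paper:GammaCollect}\ref{paper:GC:4} gives $|\GG(s)| \ll \max\{1,|t|^{\s-1/2}\} e^{-\pi|t|/2}$, and Proposition~\ref{paper:ZetaFunc}\ref{paper:ZF:3} gives polynomial growth of $\zeta(s+1)$ in vertical strips; multiplying, the exponential factors combine to $e^{(a - \pi/2)|t|}$, which decays, so $L_f^*$ is integrable on vertical lines and the horizontal segments of the shifting rectangle vanish in the limit. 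On the line $\re(s) = -R$ this same bound gives $\big|\frac{1}{2\pi i}\int_{(-R)} L_f^*(s) z^{-s}\,ds\big| \ll |z|^{R} \int_{\R} |L_f^*(-R+it)|\, e^{|t|\,|\Arg(z)|}\,dt$, and since $z \in \CC_\d$ forces $|\Arg(z)| \le \tfrac\pi2 - \d$, the exponent is $(a - \d)|t|$ up to the contribution of $|\Arg z|$; choosing $\d < \tfrac\pi2 - a$ (as in the hypothesis) keeps this integral finite and uniformly bounded, giving the $O_R(|z|^R)$ bound. A minor point to handle carefully is that $L_f^*$ must be holomorphic on the line $\re(s) = -R$, which is part of \ref{paper:main:3}; if a $\g_j$ happens to lie exactly on this line one shrinks $R$ slightly, harmless since $R$ only enters the error exponent.

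For the derivative statement, the cleanest route is to observe that $\Phi_f$ is holomorphic on the right half-plane and that the expansion just established shows $\Phi_f \in \CH(R)$ in the sense of the excerpt's Definition~\ref{paper:item:AsyDef2} — indeed that definition builds in the differentiated asymptotics, with the $\Log(z)$ term treated as the $\nu=0$ term whose $k$-th derivative is $(-1)^k(k-1)!\, z^{-k}$ times its coefficient. Concretely, I would either (i) differentiate the contour-integral identity $k$ times under the integral sign — legitimate since $\partial_z^k z^{-s} = (-1)^k (s)_k z^{-s-k}$ and the extra polynomial factor $(s)_k$ does not spoil the exponential decay coming from $\GG$ — picking up $\frac{(-1)^k}{2\pi i}\int_{(-R)} (s)_k L_f^*(s) z^{-s-k}\,ds = O_{R,k}(|z|^{R-k})$ and residues $\Res_{s=-\nu}\big[(-1)^k (s)_k L_f^*(s) z^{-s-k}\big]$; for a simple pole this is $(-1)^k(-\nu)_k \Res_{s=-\nu}L_f^*(s)\, z^{\nu-k}$, and noting $(-1)^k(-\nu)_k = (\nu)_k$ gives the stated coefficient, while the double pole at $s=0$ produces $\tfrac{(-1)^k(k-1)!\,L_f(0)}{z^k}$ (the $L_f'(0)$ constant dies under differentiation); or (ii) simply cite that a sectorial asymptotic expansion of a holomorphic function can be differentiated termwise, which is standard and implicitly the justification that $\Phi_f \in \CH(R)$. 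I expect route (i) to be what the authors intend, as it reuses the estimates already in hand; the only genuinely delicate step throughout is the uniform vertical growth bound on $L_f^*$, everything else being residue bookkeeping.
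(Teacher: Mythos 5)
Your proposal follows essentially the same route as the paper: the authors also start from the inverse Mellin representation $\Phi_f^{(\kappa)}(z)=\frac{1}{2\pi i}\partial_z^\kappa\int L_f^*(s)z^{-s}\,ds$, shift the contour from $\re(s)=\a+1$ to $\re(s)=-R$ via a rectangle (Residue Theorem for the pole terms, with the double pole at $s=0$ giving $-L_f(0)\Log(z)+L_f'(0)$), and control the remaining line integral and the vanishing horizontal segments using exactly the bound $|L_f^*(\s+it)|\ll(1+|t|)^{c}e^{-(\pi/2-a)|t|}$ obtained from \ref{paper:main:4} together with the Gamma and zeta growth estimates, differentiating under the integral sign for the $k$-th derivative. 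The only blemish is a bookkeeping slip in your derivative computation ($\partial_z^k z^{-s}=(-s)_k z^{-s-k}$ in falling-factorial notation, not $(-1)^k(s)_k z^{-s-k}$, and $(-1)^k(-\nu)_k\neq(\nu)_k$ in general), but since $\partial_z^k z^{\nu}=(\nu)_k z^{\nu-k}$ directly yields the stated coefficients, this does not affect the argument.
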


\begin{proof}
	With $J_f(s;z):=L_f^*(s)z^{-s}$, we obtain, {for $\k\in\N_0$},
	\begin{align}\label{paper:eq:Phikderiv}
		2\pi i\Phi_f^{(\kappa)}(z) = \pdd z\kappa \left(\hspace{.1cm}\int_{-R-i\infty}^{-R+i\infty} + \lim_{K\to\infty} \left(\int_{\del\CR_{-R,\a+1;K}} + \int_{\a+1-iK}^{-R-iK} + \int_{-R+iK}^{\a+1+iK}\hspace{.1cm}\right)\right) J_f(s;z) ds.
	\end{align}
	Here we use \ref{paper:main:3}, giving that there are no poles of $J_f(s;z)$ on the path of integration. By Proposition \ref{paper:ZetaFunc} \ref{paper:ZF:3}, \cite[Theorem. 2.1 (3)]{BF}, and \ref{paper:main:4}, we find a constant $c(R,\k)$ such that, as $|v|\to\infty$, 
	\begin{align*}
		\left|L_f^*(-R+iv)\right| \ll_{R} (1+|v|)^{c(R,\kappa)} e^{-\left(\frac{\pi}{2}-a\right)|v|}.
	\end{align*}
	This yields, with Leibniz' integral rule and $0<\d<\frac\pi2-a$,
	\begin{align*}
		\left|\pdd{z}{\kappa}\int_{-R-i\infty}^{-R+i\infty} J_f(s;z) ds\right|%%		\ll_{R} |z|^{R-k}\int_{-\infty}^\infty (1+|v|)^{c(R,k)}e^{-\d|v|} dv
		\ll_{R,\kappa} |z|^{R-\kappa}. 
	\end{align*}
	
	For the second integral in \eqref{paper:eq:Phikderiv}, applying the Residue Theorem gives
	\begin{multline*}
		\pdd z\kappa \lim_{K\to\infty}\frac{1}{2\pi i}\int_{\del\CR_{-R,\a+1;K}} J_f(s;z) ds\\
		= \sum_{\substack{\nu\in -\mathcal{P}_R\setminus\{0\}}} (\nu)_\kappa\Res_{s=-\nu} L_f^*(s) z^{\nu-\kappa} + \pdd z\kappa \left(-L_f(0)\Log(z)+L_f'(0)\right),
	\end{multline*}
	since $s=0$ is a double pole of $J_f(s;z)$. For the last two integrals in \eqref{paper:eq:Phikderiv} we have, for some ${m(I)}\in\N_0$, depending on $I:={[-R,\a+1]}$, 
	\[
		\left|\int_{-R\pm iK}^{\a+1\pm iK} J_f(s;z) ds\right| \ll_I (1+|K|)^{m(I)}\max\left\{|z|^{\a+1},|z|^{-R}\right\}e^{-(\d-a)|K|},
	\]
	which vanishes as $K\to\infty$ and thus the claim follows {by distinguishing $\kappa=0$ and $\kappa\in\N$.} 
\end{proof}

\subsection{Approximation of saddle points}\label{paper:subsect:saddlepoint}

We now approximately solve the saddle point equations
\begin{align} \label{paper:eq:saddle-point}
	-\Phi'_f(\varrho) = n= - \Phi'_f(\varrho_n).
\end{align}
The following proposition provides an asymptotic formula for certain functions.

\begin{prop}\label{paper:T:maininverse}
	Let $\phi\in\CH(R)$ with $R>0$, $\nu_{\phi,1}<0$, and $a_{\phi,1}>0$. Assume that $\phi(\R^+)\subset\R$. %$\a > 0$ and $\phi$ a function holomorphic on the right-half plane, satisfying $\phi|_{\R^+} \subset \R$. Assume that there exist $R > 0$ and $N \in \N$, such that for all $k \in \N_0$, 
 	%\begin{align} \label{paper:fasy}
 	% \phi^{(k)}(z) = \sum_{\lambda\in\CI} (\lambda)_k c_\lambda z^{\lambda-k} + O\left(z^{R-k}\right), \qquad (z \to 0, \, z \in \mathcal{C}_\delta)
 	%\end{align}
 	%where $0 < \delta < \frac{\pi}{2}$, $c_\lambda \in \R$, $c_{-\a-1} > 0$, and $-a-1\in\CI \subset [-\a-1,R]$ is a set of $N$ elements. %$-\a-1 = \lambda_1 < \lambda_2 < \cdots < \lambda_N < R$.
 	Then we have the following:
 	\begin{enumerate}[leftmargin=*,label=\textnormal{(\arabic*)}]
 		\item\label{paper:item:maininverse:1} There exists a positive sequence $(\p_n)_{n\in\N}$, such that for all $n$ sufficiently large, $\phi(\p_n)=n$ holds.
 		
 		\item\label{paper:item:maininverse:2} We have\footnote{Recall that we can consider the sequence $\p_n$ as a function on $\R^+$.} $\p\in\CK(1-\frac{R+1}{\nu_{\phi,1}})$, $a_{\p,1}=a_{\phi,1}^{-\frac{1}{\nu_{\phi,1}}}$, %where the expansion
 		%\begin{equation}\label{paper:rhoexpansion}
 			%$$	\varrho_n = \left(\frac{a_{\phi,\nu_{\phi,1}}}{n}\right)^{-\frac{1}{\nu_{\phi,1}}} + \sum_{\b\in\CJ} \frac{u_\b }{n^{\beta}} + O\left( n^{\frac{R+1}{\nu_{\phi,1}}-1}\right), \qquad n \to \infty,$$
 		%\end{equation}
 		and the corresponding exponent set %the $\frac{1}{\a+1} = \b_1 < \b_2 < \cdots < \b_L$ are the elements of
 		\begin{equation*}%\label{paper:eq:exponentsetrhofin}
 			\{\nu_{\p,j} : 1\le j\le N_\p\} = \left(-\frac{1}{\nu_{\phi,1}}+\sum_{j=1}^{N_\phi} \left(1-\frac{\nu_{\phi,j}}{\nu_{\phi,1}}\right)\N_0\right)\cap \left(-\infty,1-\frac{R+1}{\nu_{\phi,1}}\right).
 		\end{equation*}
 		In particular, we have $\p_n \to 0^+$. 
	\end{enumerate}
\end{prop}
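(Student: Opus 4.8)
The plan is to reduce everything to the inversion results already established, namely Proposition~\ref{paper:T:InverseFormula} together with Proposition~\ref{paper:T:CBiholMain}, by first peeling off the leading singular behavior of $\phi$ so that the remaining map is locally biholomorphic near the origin. Concretely, write $\phi\in\CH(R)$ in the form $\phi(z)=a_{\phi,1}z^{\nu_{\phi,1}}(1+\psi(z))$, where $\psi$ collects the lower-order terms; since $\nu_{\phi,1}<0$ and $a_{\phi,1}>0$, and $\phi(\R^+)\subset\R$, the function $\phi$ is real, positive, and (for $x$ small) strictly decreasing on $\R^+$, hence a bijection from a right-neighborhood of $0$ onto a neighborhood of $+\infty$. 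This already gives part~\ref{paper:item:maininverse:1}: for $n$ large enough there is a unique small $\p_n>0$ with $\phi(\p_n)=n$, and $\p_n\to0^+$ because $\phi(x)\to\infty$ as $x\to0^+$.

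For part~\ref{paper:item:maininverse:2} I would introduce the substitution that turns the fractional-power singularity into an honest power series. Set $w:=x^{-1/\nu_{\phi,1}}$ (note $-1/\nu_{\phi,1}>0$), equivalently $x=w^{-\nu_{\phi,1}}$, so that $\phi(w^{-\nu_{\phi,1}})=a_{\phi,1}w^{-\nu_{\phi,1}\cdot\nu_{\phi,1}}\cdots$; more usefully, consider $\Psi(w):=\phi(x)^{-1/\nu_{\phi,1}}\cdot$ (a suitable normalization), chosen so that $\Psi$ extends to a function holomorphic near $w=0$ with $\Psi(0)=0$ and $\Psi'(0)=a_{\phi,1}^{-1/\nu_{\phi,1}}\neq0$. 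The asymptotic expansion \eqref{paper:fasy} for $\phi$ (with $k=0$) guarantees exactly that $\Psi$ has an asymptotic expansion in powers of $w$ whose exponents are $1+\sum_{j}(1-\nu_{\phi,j}/\nu_{\phi,1})\N_0$ — here one uses Lemma~\ref{paper:Prop:Holoexpansion} (composition with the holomorphic map $t\mapsto t^{-1/\nu_{\phi,1}}$ applied to $\phi(z)/(a_{\phi,1}z^{\nu_{\phi,1}})$, which has leading term $1$) together with Lemma~\ref{paper:Prop:Algebras}. Then $\phi(\p_n)=n$ becomes $\Psi(w_n)=n^{-1/\nu_{\phi,1}}=:W$, i.e.\ $w_n=\Psi^{-1}(W)$, and since $\nu_{\phi,1}<0$ we have $W\to\infty$, so we need the behavior of $\Psi^{-1}$ near $0$ — but $\Psi^{-1}$ near $0$ is governed by Proposition~\ref{paper:T:InverseFormula} applied to the (locally biholomorphic, by Proposition~\ref{paper:T:CBiholMain} if one approximates by partial sums, or directly since $\Psi$ is genuinely holomorphic) map $\Psi$. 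This yields a convergent/asymptotic expansion $w_n=\sum_k b_k W^k$ in nonnegative powers of $W=n^{-1/\nu_{\phi,1}}$, and unwinding $x=w^{-\nu_{\phi,1}}$, i.e.\ $\p_n=w_n^{-\nu_{\phi,1}}$, via Lemma~\ref{paper:Prop:AsyComp} (composition of the asymptotic expansion $w_n$ with the holomorphic power $t\mapsto t^{-\nu_{\phi,1}}$) produces $\p\in\CK(\cdot)$ with $a_{\p,1}=(a_{\phi,1}^{-1/\nu_{\phi,1}})^{-\nu_{\phi,1}}\cdot(\text{leading inverse coeff})^{-\nu_{\phi,1}}=a_{\phi,1}^{-1/\nu_{\phi,1}}$ and the stated exponent set.

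The main obstacle — and the step that needs genuine care rather than bookkeeping — is tracking the error term and the precise value of $R$-truncation through these two compositions. The expansion \eqref{paper:fasy} for $\phi$ only holds up to $O(|z|^{R-k})$, so $\Psi$ only has an asymptotic expansion up to a controlled order; composing with the holomorphic inversion and then with the power map multiplies and shifts these orders, and one must check that the final error exponent is exactly $1-\frac{R+1}{\nu_{\phi,1}}$ as claimed. This is where Lemmas~\ref{paper:Prop:Algebras}, \ref{paper:Prop:Holoexpansion}, and especially \ref{paper:Prop:AsyComp} (with its formula $R=\min\{R_2-\nu_{g,1},\nu_{g,1}R_1\}$ for the composed order) do the heavy lifting; one should substitute $R_1$ coming from $\Psi^{-1}$'s holomorphy (effectively $\infty$ on its disk of convergence, so the binding constraint is $R_2-\nu_{g,1}$) and $R_2$ coming from the order of $w_n$, which in turn traces back to the $R$ in \eqref{paper:fasy} divided by $|\nu_{\phi,1}|$. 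A secondary technical point is justifying that $\Psi$, a priori only \emph{asymptotic} to a power series, can be replaced by a genuinely holomorphic function to which Proposition~\ref{paper:T:InverseFormula} applies: here one truncates $\Psi$ at high order, applies Proposition~\ref{paper:T:CBiholMain} to the sequence of partial sums, and absorbs the tail into the error — exactly the kind of argument the paper has set up its $\CH$/$\CK$ machinery to streamline.
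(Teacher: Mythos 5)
Your part \ref{paper:item:maininverse:1} is fine, and in fact slightly more elementary than the paper's treatment (the paper locates the solution via Rouch\'e's theorem applied to the rescaled functions $\psi_n(w)=-1+\tfrac1n\phi\bigl((n/a_{\phi,1})^{1/\nu_{\phi,1}}w\bigr)$ near $w=1$, which simultaneously sets up part \ref{paper:item:maininverse:2}). The genuine gap is in your inversion step for part \ref{paper:item:maininverse:2}. Your substitution is meant to ``turn the fractional-power singularity into an honest power series'', but no single power substitution can do this: the exponents appearing in the expansion of your $\Psi$ lie in $1+\sum_{j\ge2}\bigl(1-\nu_{\phi,j}/\nu_{\phi,1}\bigr)\N_0$, and the $\nu_{\phi,j}$ are arbitrary real numbers, so these are arbitrary positive reals rather than integers (they would be integers only if the $\nu_{\phi,j}$ lay in an arithmetic progression based at $\nu_{\phi,1}$). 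Consequently $\Psi$ does not extend holomorphically to $w=0$, and neither do its ``partial sums'' (they have branch points at $0$). Proposition \ref{paper:T:InverseFormula} requires an integer-power Taylor series and Proposition \ref{paper:T:CBiholMain} requires holomorphic $\phi_n$, so neither applies as you invoke them; your closing suggestion to truncate $\Psi$ and apply Proposition \ref{paper:T:CBiholMain} to the partial sums does not repair this.

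The paper avoids the obstruction by never inverting at the branch point: it inverts the genuinely holomorphic (for each fixed $n$) functions $\psi_n$ near the \emph{regular} point $w=1$, where $w\mapsto w^{\nu_{\phi,1}}$ is holomorphic for any real exponent. Rouch\'e gives a unique zero $w_n$ of $\psi_n$ near $1$, Proposition \ref{paper:T:CBiholMain} gives biholomorphy on a radius uniform in $n$, and Proposition \ref{paper:T:InverseFormula} is applied to $h_n(w):=\psi_n(w+1)-\psi_n(1)$, which is honestly holomorphic at $0$ with an integer-power Taylor series. The price of this rescaling is that the inversion coefficients $b_m(n)$ depend on $n$, and the bulk of the paper's proof consists of showing that each $b_m(n)$ and each $\psi_n(1)^m$ has an asymptotic expansion in $n$ with the claimed exponent set, and that the tail $\sum_{m>M}$ is absorbed into the error $O_R(n^{R/\nu_{\phi,1}-1})$. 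Your outline has no substitute for this $n$-dependence bookkeeping, precisely because it treats the object being inverted as a single $n$-independent holomorphic function, which is not available here.
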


\begin{proof}
	In the proof we abbreviate $\nu_n:=\nu_{\phi,n}$ and $a_n:=a_{\phi,n}$. \\
	(1) For $n\in\N$, set
 	\[
		\psi_n(w) := -1 + \frac1n\phi\left(\left(\frac{n}{a_1}\right)^\frac{1}{\nu_1}w\right).
	\]
	As $\phi$ is holomorphic on the right-half plane by assumption, so are the $\psi_n$. Using \eqref{paper:fasy}, write
	\begin{equation}\label{paper:eq:gnexpand}
		\psi_n(w) = w^{\nu_1} -1 + E_n(w),
	\end{equation}
	where the error satisfies
	\[
		E_n(w) = \frac1n\sum_{\substack{j=2}}^{N_\phi} a_j\left(\frac{n}{a_1}\right)^\frac{\nu_j}{\nu_1}w^{\nu_j} + O_R\left(n^{\frac{R}{\nu_1}-1}|w|^R\right).
	\]
	We next show that, for all $n$ sufficiently large, the $\psi_n$ only have one zero near $w=1$. We argue with Rouch\'e's Theorem. First, we find that, for $n$ sufficiently large, the inequality
	\begin{align} \label{paper:eq:EnIneq}
		|E_n(w)| < \left| 1 - w^{\nu_1}\right| + \left| w^{\nu_1} - 1 + E_n(w) \right| = \left| 1 - w^{\nu_1}\right| + |\psi_n(w)|
	\end{align}
	holds on the entire boundary of $B_{\k(\nu_1)}(1)$, with $0<\k(\nu_1)<\frac12$ sufficiently small such that $w\mapsto1-w^{\nu_1}$ only has one zero in $B_{\k(\nu_1)}(1)$. By Rouch\'e's Theorem and \eqref{paper:eq:EnIneq}, for $n$ sufficiently large $\psi_n$ also has exactly one zero in $B_{\k(\nu_1)}(1)$. We denote this zero of $\psi_n$ by $w_n$. It is real as $\phi$ is real-valued on the positive real line and a holomorphic function. One can show that $\p_n=(\frac{n}{a_1})^\frac{1}{\nu_1}w_n>0$ satisfies $\phi(\p_n)=n$.\\
	(2) We first give an expansion for $w_n$. By Proposition \ref{paper:T:CBiholMain}, there exists $\k>0$, such that for all $n$ sufficiently large and all $z\in B_\k(0)$, the inverse functions $\psi_n^{-1}$ of $\psi_n$ are defined and holomorphic in $B_\k(1)$. Using this, we can calculate $w_n$, satisfying $\psi_n(w_n)=0$. For this, let
	\[
		h_n(w):=\psi_n(w+1)-\psi_n(1).
	\]
	We have $h_n(0)=0$, and we find, with \Cref{paper:T:InverseFormula},
	\[
		w_n - 1 = h_n^{-1}(-\psi_n(1)) = \sum_{m \ge1} (-1)^mb_m(n)\psi_n(1)^m,
	\]
	where the $b_m$ can be explicitly calculated. First, $\psi_n(1)^m$ ($m\in\N_0$) have expansions in $n$ by \eqref{paper:eq:gnexpand} and \Cref{paper:Prop:Holoexpansion}. They have exponent set $\sum_{2\le j\le N_\phi}(1-\frac{\nu_j}{\nu_1})\N_0\cap[0,1-\frac{R}{\nu_1})$. We find, for $k\in\N$,
	\begin{equation}\label{paper:eq:gnDeriv}
		\psi_n^{(k)}(1) = \frac1n\sum_{j=1}^{N_\phi} (\nu_j)_ka_j\left(\frac{n}{a_1}\right)^\frac{\nu_j}{\nu_1} + O_R\left(n^{\frac{R}{\nu_1}-1}\right).
	\end{equation}
	Again by \Cref{paper:Prop:Holoexpansion}, and \eqref{paper:eq:gnDeriv}, $\psi^{(k)}_n(1)$ ($k\in\N_0$) has expansions in $n$, with exponent set $(\sum_{2\le j\le N_\phi}(1-\frac{\nu_j}{\nu_1})\N_0)\cap[0,1-\frac{R}{\nu_1})$. By \Cref{paper:Prop:Holoexpansion} we have the following expansion in $n$
	\begin{align*}
		\psi_n'(1)^{-m} = \left(\nu_1+\frac1n\sum_{j=2}^{N_\phi} \nu_ja_j\left(\frac{n}{a_1}\right)^\frac{\nu_j}{\nu_1} + O_R\left(n^{\frac{R}{\nu_1}-1}\right)\right)^{-m}
	\end{align*}
	with exponent set $(\sum_{2\le j\le N_\phi}(1-\frac{\nu_j}{\nu_1})\N_0)\cap[0,1-\frac{R}{\nu_1})$. By the formula in \Cref{paper:T:InverseFormula}, the $b_m(n)$ are essentially sums and products of terms $\psi_n'(1)^{-1}$ and $\psi^{(k)}_n(1)$, where $k\ge2$. Hence, $b_m(n)$ has an expansion in $n$, with exponent set $(\sum_{2\le j\le N_\phi}(1-\frac{\nu_j}{\nu_1})\N_0)\cap[0,1-\frac{R}{\nu_1})$, and according to \Cref{paper:Prop:Algebras}, the same holds for finite linear combinations $\sum_{1\le m\le M}(-1)^mb_m(n)\psi_n(1)^m$. As $\psi_n(1)=O(n^{\frac{\nu_2}{\nu_1}-1})$ for $n\to\infty$, one has, for $M$ sufficiently large and not depending on $n$,
	\[
		\sum_{m\geq M+1} (-1)^mb_m(n)\psi_n(1)^m = O_R\left(n^{\frac{R}{\nu_1}-1}\right).
	\]
	Now, as $w_n\sim1$, we conclude the theorem recalling that $\p_n=(\frac{n}{a_1})^\frac{1}{\nu_1}w_n$.
\end{proof}

We next apply \Cref{paper:T:maininverse} to $-\Phi_f'$. For the proof one may use \Cref{paper:C:Phi-Asymp} with $k=1$.

\begin{cor}\label{paper:cor:rhosaddleexp1}
	Let $\p_n$ solve \eqref{paper:eq:saddle-point}. Assume that $f\colon\N\to\N_0$ satisfies the conditions of Theorem \ref{paper:T:main2}. Then $\p\in\CK(\frac{R}{\a+1}+1)$ with $a_{\p,1}=a_{-\Phi_f',1}^\frac{1}{\a+1} = (\w_\a\GG(\a+1)\z(\a+1))^{\frac{1}{\a+1}}$ and we have 
	\begin{align*}
		\{\nu_{\p,j}\colon 1\leq j\leq N_\p\} =\left(\frac{1}{\a+1} - \sum_{\substack{\mu\in \textcolor{black}{\mathcal{P}_R}}}\left( \frac{\mu+1}{\a+1} - 1 \right) \N_0\right)\cap \left[\frac{1}{\a + 1},\frac{R}{\a + 1} + 1\right).
	\end{align*}
\end{cor}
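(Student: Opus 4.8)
The proof is an application of Proposition \ref{paper:T:maininverse} to the function $\phi := -\Phi_f'$, so the plan has three stages: (i) show $\phi\in\CH(R-1)$ and read off its leading exponent and coefficient; (ii) verify the remaining hypotheses of that proposition; (iii) invoke it and rewrite the resulting exponent set in the claimed form.

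For (i), I would feed the $k$-th derivative estimate of Lemma \ref{paper:C:Phi-Asymp} — valid uniformly on $\CC_\d$ for every $\d<\frac{\pi}{2}-a$, which is exactly the uniformity demanded in the definition of $\CH$ — into that definition. Differentiating kills the $-L_f(0)\Log(z)$ term and gives, for $k=1$,
\[
	-\Phi_f'(z) = \sum_{\nu\in -\CP_R\setminus\{0\}} (-\nu)\Res_{s=-\nu}L_f^*(s)\,z^{\nu-1} + \frac{L_f(0)}{z} + O_R\left(|z|^{R-1}\right),
\]
together with the analogous expansions for all higher derivatives; hence $\phi\in\CH(R-1)$, with exponent set (after possibly enlarging trivially) $\{-\mu-1:\mu\in\CP_R\}\cap(-\infty,R-1)$. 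The most singular power comes from the largest pole of $L_f^*$, which equals $\a$ because $\GG(s)\z(s+1)$ is holomorphic and nonvanishing at $s=\a$ (here $\a>0$, so $\a+1>1$); thus $\Res_{s=\a}L_f^*(s)=\GG(\a)\z(\a+1)\w_\a$ and
\[
	\nu_{\phi,1} = -(\a+1) < 0, \qquad a_{\phi,1} = \a\Res_{s=\a}L_f^*(s) = \w_\a\,\GG(\a+1)\,\z(\a+1) > 0,
\]
using $\a\GG(\a)=\GG(\a+1)$; positivity of $\w_\a$ follows from the asymptotic $\sum_{m\le x}f(m)\sim\frac{\w_\a}{\a}x^\a$ (used already in Lemma \ref{paper:L:generalminorarcsbound}), whose left side is nonnegative.

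For (ii), on the positive real axis $G_f(x)=\prod_{n\ge1}(1-e^{-nx})^{-f(n)}$ is a positive real number, so $\Phi_f(x)=\Log G_f(x)$ and hence $\phi(x)=-\Phi_f'(x)$ are real; thus $\phi(\R^+)\subset\R$ and Proposition \ref{paper:T:maininverse} applies, with its parameter equal to $R-1$. Part \ref{paper:item:maininverse:1} yields a positive sequence $(\p_n)$ with $-\Phi_f'(\p_n)=n$, i.e.\ solving \eqref{paper:eq:saddle-point}, and $\p_n\to 0^+$. Part \ref{paper:item:maininverse:2} gives $\p\in\CK(\frac{R}{\a+1}+1)$ and $a_{\p,1}=a_{\phi,1}^{-1/\nu_{\phi,1}}=(\w_\a\GG(\a+1)\z(\a+1))^{\frac{1}{\a+1}}$. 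For the exponent set I would substitute $\nu_{\phi,1}=-(\a+1)$ and $\nu_{\phi,j}=-\mu_j-1$ (with $\mu_j\in\CP_R$) into the displayed set of Proposition \ref{paper:T:maininverse}: then $-\frac{1}{\nu_{\phi,1}}=\frac{1}{\a+1}$ and $1-\frac{\nu_{\phi,j}}{\nu_{\phi,1}}=-\big(\frac{\mu_j+1}{\a+1}-1\big)$, so the set becomes $\big(\frac{1}{\a+1}-\sum_{\mu\in\CP_R}(\frac{\mu+1}{\a+1}-1)\N_0\big)\cap(-\infty,\frac{R}{\a+1}+1)$. Since $\frac{\mu+1}{\a+1}-1=\frac{\mu-\a}{\a+1}\le 0$ for every $\mu\in\CP_R$, every element of this set is $\ge\frac{1}{\a+1}$, so intersecting with $(-\infty,\frac{R}{\a+1}+1)$ is the same as intersecting with $[\frac{1}{\a+1},\frac{R}{\a+1}+1)$, which is the corollary's formula.

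The one step requiring genuine care is (i): one must confirm that Lemma \ref{paper:C:Phi-Asymp} truly certifies membership of $-\Phi_f'$ in $\CH$ (the $\Log$ term must vanish under differentiation — it does — and the sectorial uniformity must match) and, most importantly, that no extra pole or zero of $\GG(s)\z(s+1)$ at $s=\a$ disturbs the identification of $\nu_{\phi,1}$ and $a_{\phi,1}$. Once the leading data are pinned down, the rest is bookkeeping with Minkowski sums inside Proposition \ref{paper:T:maininverse}.
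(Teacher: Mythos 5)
Your proposal is correct and follows exactly the route the paper indicates: apply Proposition \ref{paper:T:maininverse} to $\phi=-\Phi_f'$, using Lemma \ref{paper:C:Phi-Asymp} with $k=1$ to certify $\phi\in\CH(R-1)$ and to identify $\nu_{\phi,1}=-(\a+1)$ and $a_{\phi,1}=\w_\a\GG(\a+1)\z(\a+1)$, then translate the exponent set. The bookkeeping (the substitution $\nu_{\phi,j}=-\mu_j-1$, the positivity of $\w_\a$, and the replacement of $(-\infty,\cdot)$ by $[\frac{1}{\a+1},\cdot)$) is all accurate.
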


\subsection{The major arcs}\label{paper:sec:majorarcs}

In this subsection we approximate, for some $1+\frac\a3<\b<1+\frac\a2$, 
\[
	I_n := \int_{|t|\le\p_n^\b} \exp(\Phi_f(\p_n+it)+int) dt,
\]
where $\a$ is the largest positive pole of $L_f$. The following lemma can be shown using \cite[\S 4]{DebrTen}.

\begin{lem}\label{paper:L:InCalc2}
	Let $f:\N\to\N_0$ satisfy the conditions of Theorem \ref{paper:T:main2}, $\p_n$ solve \eqref{paper:eq:saddle-point}, and $N\in\N$. Then we have
	\begin{align*} %\label{paper:eq:Integralexpansion}
		I_n = \sqrt{2\pi} G_f(\varrho_n) \left( \frac1{\sqrt{\Phi''_f(\varrho_n)}} + \sum_{2 \leq k \leq \frac{3H(N+\a)}{2\a}} \frac{(2k)! \lambda_{2k}(\varrho_n)}{2^k k! \Phi''_f(\varrho_n)^{k+\frac12}} + O_N\left( \varrho_n^N \right) \right),
	\end{align*}
	where $H:=\ceil{\frac{N}{3(\b-1-\frac\a3)}}+1$ and
	\begin{align*}
 	\lambda_{2k}(\varrho) := (-1)^k \sum_{h=1}^{H} \frac{1}{h!} \sum_{\substack{3 \leq m_1, ..., m_h \leq \frac{3(N+\a)}{\a} \\ m_1 + \cdots + m_h = 2k}} \prod_{j=1}^h \frac{\Phi_f^{(m_j)}(\varrho)}{m_j!}.
	\end{align*}
\end{lem}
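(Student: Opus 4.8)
The plan is to expand $\exp(\Phi_f(\p_n+it)+int)$ as a function of $t$ around the saddle point $t=0$, reduce the integral to a Gaussian integral with polynomial corrections, and then extend the range of integration to all of $\R$ using Lemma \ref{paper:Gauss}. First I would Taylor expand $\Phi_f$ about $\p_n$: writing $\Phi_f(\p_n+it) = \Phi_f(\p_n) + it\Phi_f'(\p_n) - \frac{t^2}{2}\Phi_f''(\p_n) + \sum_{m\ge3} \frac{(it)^m}{m!}\Phi_f^{(m)}(\p_n)$. The saddle point equation \eqref{paper:eq:saddle-point} says $int + it\Phi_f'(\p_n) = 0$, so the linear term cancels against $int$, and we obtain
\begin{align*}
	\exp(\Phi_f(\p_n+it)+int) = G_f(\p_n)\exp\left(-\tfrac{t^2}{2}\Phi_f''(\p_n)\right)\exp\left(\sum_{m\ge3}\tfrac{(it)^m}{m!}\Phi_f^{(m)}(\p_n)\right).
\end{align*}
One then expands the second exponential as a power series; grouping terms by total degree gives a formal series $\sum_{k\ge0} t^k c_k(\p_n)$ whose even-degree coefficients $c_{2k}$ are (up to sign) the $\lambda_{2k}(\p_n)$ defined in the statement, organized by the number $h$ of factors $\Phi_f^{(m_j)}$ and the partition $m_1+\dots+m_h=2k$ of the degree. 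The odd-degree terms integrate to zero against the even Gaussian weight.

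The key quantitative issue is truncation: both the power-series expansion of $\exp(\sum_{m\ge3}\dots)$ and the sum over $m$ inside it must be cut off, and one must control the resulting errors on the interval $|t|\le\p_n^\b$. Here the relevant scales come from Corollary \ref{paper:cor:rhosaddleexp1} and Lemma \ref{paper:C:Phi-Asymp}: $\Phi_f''(\p_n)$ has leading order $\p_n^{-\a-1}$, so the Gaussian has width $\sim\p_n^{(\a+1)/2}$, while $\Phi_f^{(m)}(\p_n) \sim \p_n^{-m-\a}$ (for $m>\a$, which holds once $m\ge3$ unless $\a$ is large — in general the leading term is $\p_n^{-\a-m}$ from the $\nu=-\a$ residue, plus lower-order terms with smaller $\nu$). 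On $|t|\le\p_n^\b$, a single factor $t^m\Phi_f^{(m)}(\p_n)$ is of size $\p_n^{m\b - m - \a} = \p_n^{m(\b-1)-\a}$, and since $\b>1+\frac\a3$ this is $o(1)$ for $m\ge3$, with each extra power of $m$ gaining a factor $\p_n^{\b-1}$. This is exactly why $H = \ceil{N/(3(\b-1-\frac\a3))}+1$ and the cutoff $m_j\le 3(N+\a)/\a$ appear: one needs enough factors ($\le H$) and high enough derivative order to push the tail below $\p_n^N$ after dividing by the normalization $\sqrt{\Phi_f''(\p_n)}\cdot G_f(\p_n)$, i.e. after accounting for the $\sqrt{2\pi/\Phi_f''(\p_n)}\,G_f(\p_n)$ prefactor. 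One also needs to bound $|\exp(\sum_{m\ge3}\frac{(it)^m}{m!}\Phi_f^{(m)}(\p_n))|$ on the whole interval to justify interchanging sum and integral — this follows because the real part of the exponent is $\le$ a small negative multiple of $t^2$ absorbed into the Gaussian, or more simply is $O(\p_n^{3(\b-1)-\a})=o(1)$ uniformly.

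After the truncated expansion, each surviving term is $\int_{|t|\le\p_n^\b} t^{2k} e^{-t^2\Phi_f''(\p_n)/2}\,dt$ times a coefficient built from the $\Phi_f^{(m_j)}(\p_n)$. Substituting $u = t\sqrt{\Phi_f''(\p_n)}$ turns this into $\Phi_f''(\p_n)^{-k-1/2}\int_{|u|\le\p_n^\b\sqrt{\Phi_f''(\p_n)}} u^{2k}e^{-u^2/2}du$; since $\p_n^\b\sqrt{\Phi_f''(\p_n)} \sim \p_n^{\b-(\a+1)/2}\to\infty$ (as $\b>1+\frac\a3 > (\a+1)/2$ is equivalent to... actually $\b-(\a+1)/2 = (\b-1) - (\a-1)/2 + \dots$; in any case the exponent $\b - (\a+1)/2$ is positive because $\b > 1+\a/3 \ge (\a+1)/2$ for $\a \le ...$; one checks this directly from $\b>1+\frac\a3$), Lemma \ref{paper:Gauss} lets us replace the truncated Gaussian integral by $\int_{-\infty}^\infty u^{2k}e^{-u^2/2}du = \sqrt{2\pi}\,(2k)!/(2^k k!)$ at the cost of an exponentially small error $O(\exp(-c\p_n^{2\b-\a-1}))$, which is $O_N(\p_n^N)$ for any $N$. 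Collecting the $k=0$ term as $\sqrt{2\pi}G_f(\p_n)/\sqrt{\Phi_f''(\p_n)}$ and the $k\ge2$ terms as the displayed sum — noting the $k=1$ term vanishes because it would require a single $\Phi_f^{(2)}$ factor which is excluded by $m_j\ge3$, so indeed it is absorbed into the Gaussian and not listed — yields exactly the claimed formula. The main obstacle is the careful bookkeeping of the two nested truncations against the $\p_n^N$ target, i.e. verifying that the choices of $H$ and of the derivative-order cutoff $3(N+\a)/\a$ are sufficient; this is where the cited \cite[\S 4]{DebrTen} does the heavy lifting and where I would simply adapt their estimates \emph{mutatis mutandis}.
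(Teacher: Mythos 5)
Your outline follows the same route as the paper, which itself only cites \cite[\S 4]{DebrTen} for this lemma: cancel the linear term via the saddle-point equation, expand $\exp\bigl(\sum_{m\ge3}\frac{(it)^m}{m!}\Phi_f^{(m)}(\varrho_n)\bigr)$, truncate in both $h$ and the $m_j$, integrate the surviving even powers against the Gaussian, and complete the integral to $\R$ via Lemma \ref{paper:Gauss}; the identification of $\lambda_{2k}$ with the multinomial coefficients of that expansion and the explanation of why $k=1$ is absent are both correct. One concrete slip worth fixing: by Lemma \ref{paper:C:Phi-Asymp} the leading term of $\Phi_f^{(k)}(z)$ is $(-\a)_k\Res_{s=\a}L_f^*(s)\,z^{-\a-k}$, so $\Phi_f''(\varrho_n)\asymp\varrho_n^{-\a-2}$, not $\varrho_n^{-\a-1}$ as you state (your own general formula $\Phi_f^{(m)}(\varrho_n)\asymp\varrho_n^{-\a-m}$ already contradicts your $m=2$ claim). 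Consequently the rescaled integration limit is $\varrho_n^{\b}\sqrt{\Phi_f''(\varrho_n)}\asymp\varrho_n^{\b-1-\frac{\a}{2}}$, and since $\varrho_n\to0^+$ this tends to infinity precisely because the exponent is \emph{negative}, i.e.\ because of the \emph{upper} bound $\b<1+\frac{\a}{2}$ from the major-arc setup; your verification has the sign logic reversed and tries to extract the condition from the lower bound $\b>1+\frac{\a}{3}$ (which is instead what makes each factor $t^m\Phi_f^{(m)}(\varrho_n)$ small for $m\ge3$, as you correctly note). With that correction the two bounds on $\b$ each play their intended role, and the remaining truncation bookkeeping is exactly what the cited reference supplies.
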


The following lemma shows that the first term in \Cref{paper:L:InCalc2} dominates the others; its proof follows with \Cref{paper:Prop:AsyComp}, \Cref{paper:C:Phi-Asymp}, and \Cref{paper:cor:rhosaddleexp1} by a straightforward calculation.

\begin{lem}\label{paper:L:lambda}
	Let $k\geq 2$ and assume the conditions as in Lemma \ref{paper:L:InCalc2}. Then we have %$\frac{\lambda_{2k}(\varrho_n)}{\Phi''_f(\varrho_n)^{k+\frac12}} = o( {\Phi''_f(\varrho_n)^{-\frac12}})$.
	\[
		\frac{\l_{2k}(\p_n)}{\Phi''_f(\p_n)^{k+\frac12}} = \sum_{j=1}^M \frac{b_j}{n^{\eta_j}} + O_R\left(n^{-{R+1+\left(k-\pflo{\frac{2k}{3}}+\frac32\right)}\frac{\a}{\a+1}}\right), 
	\]
	where the $\eta_j$ run through
 \[
 	\frac{\a+2}{2(\a+1)} + \frac{\a}{\a+1}\N_0 + \left(-\sum_{\mu\in\CP_R} \left(\frac{\mu+1}{\a+1}-1\right)\N_0\right) \cap \left[0,\frac{R+\a}{\a+1}\right).
 \]
\end{lem}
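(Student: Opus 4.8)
The plan is to reduce the statement to a mechanical bookkeeping exercise on exponent sets, feeding the asymptotic expansions already established for $\Phi_f$ and $\p_n$ through the algebra and composition lemmas of Section~\ref{sec:pre}. First I would record the inputs: by \Cref{paper:C:Phi-Asymp}, each $\Phi_f^{(m)}(\p_n)$, viewed as a function of $\p_n$, lies in $\CH(R)$ after substituting $z=\p_n$; more precisely $\Phi_f^{(m)}(z)$ has leading term a constant times $z^{-\m-m}$ where $-\m$ ranges over $-\CP_R\setminus\{0\}$, the largest being $z^{\a-m}$ (from the pole at $s=\a$, since $\mathrm{Res}_{s=-\nu}L_f^*$ is nonzero there), plus the $z^{-m}$ term from the $-L_f(0)\Log z$ piece. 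Then \Cref{paper:cor:rhosaddleexp1} gives $\p\in\CK(\frac{R}{\a+1}+1)$ with $a_{\p,1}=(\w_\a\GG(\a+1)\z(\a+1))^{1/(\a+1)}$ and the stated exponent set, so $\p_n\to0^+$ and $\p_n^{-1}\in\CK(\infty)$-type behavior with positive leading exponent $\frac1{\a+1}$.

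Next I would treat $\Phi_f^{(m)}(\p_n)$ for fixed $m\ge3$ as a composition: write $\Phi_f^{(m)}(z)=z^{-\a-m}\psi_m(z)$ with $\psi_m\in\CH$ having $\nu_{\psi_m,1}=0$, apply \Cref{paper:Prop:AsyComp} (or the cruder \Cref{paper:Prop:Holoexpansion} after pulling out the leading power) with the inner function $\p_n$, whose leading exponent is $\frac1{\a+1}>0$. This shows $\Phi_f^{(m)}(\p_n)\in\CK$ with leading exponent $-(\a+m)\cdot\frac1{\a+1}\cdot(-1)$... i.e. growth order $n^{(\a+m)/(\a+1)}$, and correction exponents generated by $\frac{\a}{\a+1}\N_0$ together with the ``extra poles'' shifts $-\sum_{\m\in\CP_R}(\frac{\m+1}{\a+1}-1)\N_0$ intersected with the admissible range $[0,\frac{R+\a}{\a+1})$. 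The same applies to $\Phi_f''(\p_n)$, whose leading term is a positive constant times $n^{(\a+2)/(\a+1)}$; hence $\Phi_f''(\p_n)^{-(k+1/2)}$ has leading exponent $-(\a+2)(k+\frac12)/(\a+1)$ and, by \Cref{paper:Prop:Holoexpansion} applied to $x\mapsto x^{-(k+1/2)}$, correction exponents again in $\frac{\a}{\a+1}\N_0 + (\text{extra-pole shifts})$.

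Then I would assemble $\l_{2k}(\p_n)$: it is a finite sum over $h\le H$ of $\frac1{h!}$ times a sum over $m_1+\dots+m_h=2k$ with $m_j\ge3$ of $\prod_j \Phi_f^{(m_j)}(\p_n)/m_j!$. Each product has leading exponent $\sum_j (\a+m_j)/(\a+1)=(h\a+2k)/(\a+1)$ as growth order; since $m_j\ge3$ forces $h\le\flo{2k/3}$, the largest $h$ dominates, giving $\l_{2k}(\p_n)$ leading growth order $n^{(\flo{2k/3}\a+2k)/(\a+1)}$. Dividing by $\Phi_f''(\p_n)^{k+1/2}$ (growth order $n^{(\a+2)(k+1/2)/(\a+1)}$) yields leading exponent $-\eta$ with $\eta=\frac{(\a+2)(k+\frac12)-\flo{2k/3}\a-2k}{\a+1}=\frac{(k-\flo{2k/3})\a + \frac\a2 + 1}{\a+1}$; the constant term $1/(\a+1)$ combined with $\frac\a2/(\a+1)$ rearranges to the stated base exponent $\frac{\a+2}{2(\a+1)}+\frac{\a}{\a+1}(k-\flo{2k/3})$, which lies in the claimed set $\frac{\a+2}{2(\a+1)}+\frac{\a}{\a+1}\N_0$. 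All correction exponents are sums of elements of $\frac{\a}{\a+1}\N_0$ and the extra-pole shifts, so the exponent set is exactly as stated, and the error term is controlled by the minimum of the truncation errors: the $z^R$-error in \Cref{paper:C:Phi-Asymp} propagates, after the compositions, to an error of order $n^{-(R+1+(k-\flo{2k/3}+\frac32)\frac{\a}{\a+1})}$ once one accounts for the $(k+\frac12)$ factors of $\Phi_f''$ in the denominator and the gain $\frac{R}{\a+1}$ from each use of \Cref{paper:Prop:AsyComp}. The main obstacle is the careful truncation-error accounting through nested compositions — keeping track of how the $O(|z|^R)$ in $\Phi_f^{(m)}$ interacts with the positive powers of $n$ coming from the leading behavior, and verifying that the worst product (maximal $h$) governs both the leading exponent and the error — rather than anything conceptually deep; the exponent-set identities are a routine consequence of Lemmas~\ref{paper:Prop:Algebras}--\ref{paper:Prop:AsyComp}.
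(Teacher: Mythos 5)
Your proposal is correct and follows precisely the route the paper intends for this lemma (which is stated there without a written proof, as ``a straightforward calculation'' from \Cref{paper:Prop:AsyComp}, \Cref{paper:C:Phi-Asymp}, and \Cref{paper:cor:rhosaddleexp1}): the identification of the dominant term $h=\flo{\frac{2k}{3}}$ forced by $m_j\ge3$, the resulting leading exponent $\frac{\a+2}{2(\a+1)}+\frac{\a}{\a+1}\left(k-\flo{\frac{2k}{3}}\right)$, and the description of the correction exponents via the exponent sets of \Cref{paper:Prop:Algebras}--\Cref{paper:Prop:AsyComp} are exactly the intended bookkeeping. Only a cosmetic slip: the leading term of $\Phi_f^{(m)}(z)$ is a constant times $z^{-\a-m}$ (not $z^{\a-m}$), as your subsequent growth order $n^{\frac{\a+m}{\a+1}}$ correctly reflects.
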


We next use Lemma \ref{paper:Prop:AsyComp} and Corollary \ref{paper:cor:rhosaddleexp1} to give an asymptotic expansion for $G_f(\p_n)$.

\begin{lem}\label{paper:L:Gfasy}
	Assume that $f:\N\to\N_0$ satisfies the conditions of Theorem \ref{paper:T:main2}. Then, we have 
	\begin{align*}
 	G_f(\varrho_n) & = \frac{e^{L_f'(0)}n^{\frac{L_f(0)}{\a+1}}}{a_{-\Phi_f',1}^{\frac{L_f(0)}{\a+1}}} \exp\left( \frac{1}{\a}(\omega_\a \GG(\a+1)\z(\a+1))^{\frac{1}{\a+1}} n^{\frac{\a}{\a+1}} + \sum_{j=2}^M C_j n^{\b_j}\right) \\
 	& \hspace{4cm} \times \left( 1 + \sum_{j = 1}^N \frac{B_j}{n^{\d_j}} + O_R\left( n^{-\frac{R}{\a+1}}\right)\right),
	\end{align*}
	where $0\le\b_M<\dots<\b_2<\frac{\a}{\a+1}$ run through $\CL$ and $0<\d_1<\d_2<\dots<\d_N$ through $\CM+\CN$.
 %\textcolor{orange}{\begin{align*}
 % \left(- \sum\limits_{\nu\in\mathcal{P}_R} \left( \tfrac{\nu+1}{\a+1} - 1\right) \N_0\right)\cap \left[0,\tfrac{R-1}{\a+1} + 1\right)
 %\end{align*}}
\end{lem}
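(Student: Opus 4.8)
The plan is to derive the expansion for $G_f(\varrho_n)$ by composing the known asymptotic expansion of $\Phi_f$ near $z=0$ (Lemma \ref{paper:C:Phi-Asymp}) with the expansion of the saddle point $\varrho_n$ in $n$ (Corollary \ref{paper:cor:rhosaddleexp1}), and then exponentiating. First I would write $G_f(\varrho_n) = \exp(\Phi_f(\varrho_n))$ and split $\Phi_f(z)$ according to Lemma \ref{paper:C:Phi-Asymp} into three pieces: the ``polar part'' $\sum_{\nu \in -\mathcal{P}_R \setminus\{0\}} \Res_{s=-\nu} L_f^*(s)\, z^\nu$, the logarithmic term $-L_f(0)\Log(z)$, and the constant $L_f'(0)$, up to an error $O_R(|z|^R)$. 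Evaluating at $z = \varrho_n$ and using that $\varrho_n \to 0^+$ with $\varrho_n \in \CK(\frac{R}{\a+1}+1)$ and leading term $a_{-\Phi_f',1}^{1/(\a+1)} n^{-1/(\a+1)}$, the log term contributes $-L_f(0)\Log(\varrho_n) = \frac{L_f(0)}{\a+1}\Log(n) - \frac{L_f(0)}{\a+1}\Log(a_{-\Phi_f',1}) + (\text{lower order})$, which after exponentiation produces the prefactor $e^{L_f'(0)} n^{L_f(0)/(\a+1)} a_{-\Phi_f',1}^{-L_f(0)/(\a+1)}$ together with a convergent power series correction.

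Next I would handle the polar part. Each term $\Res_{s=-\nu}L_f^*(s)\,\varrho_n^\nu$ is, by Lemma \ref{paper:Prop:AsyComp} applied to the power function $z \mapsto z^\nu$ composed with $\varrho_n \in \CK$, again a member of $\CK$ with leading exponent $\frac{\nu}{\a+1}$ (for $\nu>0$) and full exponent set obtained by shifting $\frac{\nu}{\a+1}$ by $-\sum_{\mu \in \mathcal{P}_R}(\frac{\mu+1}{\a+1}-1)\N_0$. Separating the term $\nu = \a$, which gives the dominant $n^{\a/(\a+1)}$ contribution with coefficient $\frac{1}{\a}(\omega_\a \Gamma(\a+1)\zeta(\a+1))^{1/(\a+1)}$ (here I use $\Res_{s=-\a}L_f^*(s) = \Gamma(\a)\zeta(\a+1)\omega_\a$ and $a_{-\Phi_f',1} = \omega_\a\Gamma(\a+1)\zeta(\a+1)$, so the exponents match with those in \eqref{paper:eq:mainconstants2}), the remaining positive-$\nu$ contributions assemble into $\sum_{j=2}^M C_j n^{\b_j}$ with $\b_j$ ranging over $\CL$ as defined in \eqref{paper:setL}; the term $\nu=0$ is excluded by the sum $-\mathcal{P}_R\setminus\{0\}$ and is instead the log term already treated. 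Then I would collect all the exponents with $\b_j \le 0$, i.e. the ``non-growing'' part of the polar contribution together with the log-series correction and the error term, into the bracketed factor $1 + \sum_j B_j n^{-\d_j}$: applying $\exp$ to a $\CK$-function whose expansion starts at exponent $0$ gives, by Lemma \ref{paper:Prop:Holoexpansion}, another $\CK$-function starting at $1$, and the resulting exponent set is the $\N_0$-span of the negative exponents appearing, which is exactly $\CM + \CN$ by the definitions \eqref{paper:setM} and \eqref{paper:setN}; tracking the error through $\exp$ gives $O_R(n^{-R/(\a+1)})$.

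The main obstacle is bookkeeping: one must verify that the three groups of exponents — from the growing powers $n^{\b_j}$, from the ``internal'' $\N_0$-combinations generated by the saddle point substitution, and from the exponential of all these — reorganize precisely into the sets $\CL$, $\CM$, $\CN$ as stated, and in particular that no exponent in the range $[0, R/(\a+1))$ is missed and that the cutoffs (the intersections with $[0,\frac{R+\a}{\a+1})$ in \eqref{paper:setM} and with $(0,\frac{R}{\a+1})$ in \eqref{paper:setN}) are respected. This is where the hypotheses on $R$ and the structure of $\mathcal{P}_R$ enter, and where one must be careful that the error terms from Lemma \ref{paper:C:Phi-Asymp} and Corollary \ref{paper:cor:rhosaddleexp1} are genuinely of the claimed order after exponentiation. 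Beyond this, the argument is a routine application of Lemmas \ref{paper:Prop:Algebras}, \ref{paper:Prop:Holoexpansion}, and \ref{paper:Prop:AsyComp}, plus the explicit value of the residue $\Res_{s=-\a} L_f^*(s)$ to pin down the constant in front of $n^{\a/(\a+1)}$ and the prefactor powers of $a_{-\Phi_f',1}$.
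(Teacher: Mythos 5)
Your proposal is correct and follows essentially the same route as the paper: compose the expansion of $\Phi_f$ from Lemma \ref{paper:C:Phi-Asymp} with the saddle-point expansion of $\varrho_n$ from Corollary \ref{paper:cor:rhosaddleexp1} via Lemma \ref{paper:Prop:AsyComp}, peel off the logarithmic term to produce the prefactor $e^{L_f'(0)}n^{L_f(0)/(\a+1)}a_{-\Phi_f',1}^{-L_f(0)/(\a+1)}$, split the resulting exponents into growing and decaying parts, and exponentiate the latter with Lemma \ref{paper:Prop:Holoexpansion} to land in $\CM+\CN$. The only cosmetic difference is that the paper treats $\phi(z)=\Phi_f(z)+L_f(0)\Log(z)$ as a single function and applies Lemma \ref{paper:Prop:AsyComp} once rather than term by term (and note your ``$\nu=\a$'' should read $\nu=-\a$ in the convention of Lemma \ref{paper:C:Phi-Asymp}).
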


\begin{proof}
	Let $\phi(z):=\Phi_f(z)+L_f(0)\Log(z)$ and $F:=\phi\circ\p$. By Lemma \ref{paper:C:Phi-Asymp}, Proposition \ref{paper:T:maininverse}, and Lemma \ref{paper:Prop:AsyComp} we find that
	\begin{align} \label{paper:eq:PhiRho}
		\Phi_f(\varrho_n) + L_f(0)\log(\varrho_n) = L'_f(0) + \sum_{j = 1}^{N_F} \frac{a_{F,j}}{ n^{\nu_{F,j}}} + O_R\left(n^{-\frac{R}{\a+1}}\right),
	\end{align}
	where $\nu_{F,j}$ run through (the inclusion follows by Corollary \ref{paper:cor:rhosaddleexp1})
	\begin{align}\nonumber
		& \left(-\frac{1}{\a+1}\CP_R + \sum_{j=2}^{N_\p} \left(\nu_{\p,j}-\frac{1}{\a+1}\right)\N_0\right) \cap \left(-\infty, \frac{R}{\a+1}\right) \\
		\label{paper:eq:set5}
		& \hspace{3cm} \subset \left( -\frac{1}{\a+1}\mathcal{P}_R - \sum\limits_{\mu\in\mathcal{P}_R}\left(\frac{\mu+1}{\a+1} - 1\right) \N_0\right) \cap \left(-\infty, \frac{R}{\a+1}\right).
	\end{align}
	%, we find that the set \eqref{paper:eq:set5} is contained in
	%\begin{align*}
		%& -\tfrac{1}{\a+1}\mathcal{P} - \sum\limits_{\nu\in\mathcal{P}}\left(\tfrac{\nu+1}{\a+1} - 1\right) \N_0.
	%\end{align*}
	Note that, again by \Cref{paper:Prop:AsyComp} and \Cref{paper:C:Phi-Asymp}, we obtain
	\begin{align*}
		a_{F,1} = a_{\phi,1} a_{\p,1}^{\nu_{\phi,1}} = \frac{1}{\a} (\omega_\a \GG(\a+1) \z(\a+1))^{\frac{1}{\a+1}}.
	\end{align*}
	We split the sum in \eqref{paper:eq:PhiRho} into two parts: one with nonpositive $\nu_{F,1},\dots,\nu_{F,M}$, say, and the one with positive $\nu_{F,j}<\frac{R}{\a+1}$. Note that $M$ is bounded and independent of $R$. Exponentiating \eqref{paper:eq:PhiRho} yields
	\begin{equation*}%\label{paper:eq:PhiRhoExp}
		\exp(\Phi_f(\p_n)) = \p_n^{-L_f(0)}e^{L_f'(0)}\exp\left(\sum_{j=M+1}^{N_{F}} \frac{a_{F,j}}{n^{\nu_{F,j}}} + O_R\left( n^{-\frac{R}{\a+1}}\right)\right)\exp\left(\sum_{j=1}^M \frac{a_{F,j}}{n^{\nu_{F,j}}}\right).
	\end{equation*}
	Note that the positive $\nu_{F,j}$ run through \eqref{paper:eq:set5} with $-\infty$ replaced by $0$. By \Cref{paper:Prop:Holoexpansion}, we have
	\[
		\exp\left(\sum_{j=M+1}^{N_F} \frac{a_{F,j}}{n^{\nu_{F,j}}} + O_R\left(n^{-\frac{R}{\a+1}}\right)\right) = 1 + \sum_{j=1}^{K} \frac{H_j}{n^{\e_j}} + O_R\left(n^{-\frac{R}{\a+1}}\right)
	\]
	for some $K\in\N$ and with exponents $\e_j$ running through $\CN$. Recall that, by \Cref{paper:cor:rhosaddleexp1}, we have $\p_n\sim a_{\p,1}n^{-\frac{1}{\a+1}}$. Now set $h(n):=n^{-\frac{L_f(0)}{\a+1}}\p_n^{-L_f(0)}$. A straightforward calculation using \Cref{paper:cor:rhosaddleexp1} shows that $h\in\CK(\frac{R+\a}{\a+1})$ with exponent set $(-\sum_{\mu\in\CP_R}(\frac{\mu+1}{\a+1}-1)\N_0)\cap[0,\frac{R+\a}{\a+1})\subset\CM$ and $a_{h,1}=a_{-\Phi_f',1}^{-\frac{L_f(0)}{\a+1}}$. By \Cref{paper:Prop:Algebras} \ref{paper:Prop:Algebras:2}, we obtain, for some $N\in\N$, $B_j\in\C$, and $\d_j$ running through $\CM+\CN$,
	\[
		h(n)\left(1+\sum_{j=1}^K \frac{H_j}{n^{\e_j}}+O_R\left(n^{-\frac{R}{\a+1}}\right)\right) = a_{h,1}\left(1+\sum_{j=1}^N \frac{B_j}{n^{\d_j}}+O_R\left(n^{-\frac{R}{\a+1}}\right)\right).
	\]
	Setting $C_j:=a_{F,j}$ for $1\le j\le M$, the lemma follows.
\end{proof}

Another important step for the proof of our main theorem is the following lemma. % deduced from Lemma \ref{paper:Prop:Holoexpansion} and Corollary \ref{paper:cor:rhosaddleexp1}.

\begin{lem}\label{paper:lem:expnrho}
	Let $f:\N\to\N_0$ satisfy the conditions of \textcolor{black}{Theorem \ref{paper:T:main2}}. Then we have, as $n\to\infty$,
 \begin{align*}
 e^{n\p_n} = \exp\left((\omega_\a \Gamma(\a+1)\zeta(\a+1))^\frac{1}{\a+1}n^{\frac{\a}{\a+1}}+\sum\limits_{j=2}^{M} a_{\p,j}n^{\eta_{j}} \right) \left(1 + \sum_{j=1}^N \frac{D_j}{n^{\mu_j}} + O_R\left( n^{-\frac{R}{\a+1}}\right)\right)
 \end{align*}
 for some $1\le M\le N_\p$, with $\frac{\a}{\a+1}>\eta_2>\dots>\eta_M\ge0$ running through $\CL$ and the $\mu_j$ through $\CN$.
\end{lem}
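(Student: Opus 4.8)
The plan is to deduce the lemma directly from \Cref{paper:cor:rhosaddleexp1}, following the same strategy already used in the proof of \Cref{paper:L:Gfasy}: expand $n\p_n$, separate the part that belongs inside the exponential from the part tending to $0$, and exponentiate the latter via \Cref{paper:Prop:Holoexpansion}. Note that the coefficients $a_{\p,j}$ appearing in the statement are literally the coefficients of the expansion of $\p$ from \Cref{paper:cor:rhosaddleexp1}, since multiplying the $j$-th term of $\p$ by $n$ only shifts its exponent.

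\textbf{Step 1: the expansion of $n\p_n$.} By \Cref{paper:cor:rhosaddleexp1} we have $\p\in\CK(\frac{R}{\a+1}+1)$ with $\nu_{\p,1}=\frac{1}{\a+1}$, $a_{\p,1}=(\w_\a\GG(\a+1)\z(\a+1))^{\frac{1}{\a+1}}$, and exponent set contained in $(\frac{1}{\a+1}-\sum_{\mu\in\CP_R}(\frac{\mu+1}{\a+1}-1)\N_0)\cap[\frac{1}{\a+1},\frac{R}{\a+1}+1)$. Writing $\p_n=\sum_{j=1}^{N_\p}a_{\p,j}n^{-\nu_{\p,j}}+O_R(n^{-\frac{R}{\a+1}-1})$ and multiplying by $n$ gives
\[
	n\p_n=\sum_{j=1}^{N_\p}a_{\p,j}n^{1-\nu_{\p,j}}+O_R\!\left(n^{-\frac{R}{\a+1}}\right),
\]
so that $n\p\in\CK(\frac{R}{\a+1})$ with the same coefficients and exponents $1-\nu_{\p,j}$. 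Since $\a\in\CP_R$ we have $\frac{\a}{\a+1}\in\frac{1}{\a+1}\CP_R$, and hence each $1-\nu_{\p,j}$ lies in $\frac{\a}{\a+1}+\sum_{\mu\in\CP_R}(\frac{\mu+1}{\a+1}-1)\N_0\subset\CL$; in particular the leading exponent is $\frac{\a}{\a+1}$ with coefficient $a_{\p,1}$, and all exponents are $\le\frac{\a}{\a+1}$.

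\textbf{Step 2: splitting and exponentiating.} I would split $n\p_n=S_+(n)+S_-(n)+O_R(n^{-\frac{R}{\a+1}})$, where $S_+(n)=a_{\p,1}n^{\frac{\a}{\a+1}}+\sum_{j=2}^{M}a_{\p,j}n^{\eta_j}$ collects the terms with exponent $\frac{\a}{\a+1}>\eta_2>\dots>\eta_M\ge0$ (all lying in the finite set $\CL\cap[0,\frac{\a}{\a+1}]$, whose cardinality is bounded independently of $R$ because $\frac{\mu+1}{\a+1}-1\le0$ for every $\mu\in\CP_R$, with equality only for $\mu=\a$), and $S_-(n)=\sum_j a_{\p,j}n^{-\mu_j}$ collects the remaining terms, with $\mu_j:=\nu_{\p,j}-1>0$. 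For these, $\mu_j\in-\frac{\a}{\a+1}+\sum_{\mu\in\CP_R}(1-\frac{\mu+1}{\a+1})\N_0\subset-\CL$ and $0<\mu_j<\frac{R}{\a+1}$, hence $\mu_j\in(-\CL)\cap(0,\frac{R}{\a+1})$. Because $S_-(n)\to0$, \Cref{paper:Prop:Holoexpansion} applied to $\exp$ (holomorphic at $0$), exactly as in the proof of \Cref{paper:L:Gfasy}, yields
\[
	\exp\!\left(S_-(n)+O_R\!\left(n^{-\frac{R}{\a+1}}\right)\right)=1+\sum_{j=1}^{N}\frac{D_j}{n^{\mu_j}}+O_R\!\left(n^{-\frac{R}{\a+1}}\right),
\]
where after relabelling the $\mu_j$ now run through the $\N_0$-linear combinations of the previous $\mu_j\in(-\CL)\cap(0,\frac{R}{\a+1})$, a set contained in $\CN$ by definition. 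Multiplying by $\exp(S_+(n))$ gives precisely the asserted formula, with $\frac{\a}{\a+1}>\eta_2>\dots>\eta_M\ge0$ in $\CL$ and $\mu_1,\dots,\mu_N$ in $\CN$.

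\textbf{Main obstacle.} There is no genuine analytic difficulty, since \Cref{paper:cor:rhosaddleexp1} does the heavy lifting; the only real work is the exponent bookkeeping of Steps 1 and 2 — verifying that the nonnegative exponents of $n\p$ land in $\CL$ and that the negated negative exponents land in $(-\CL)\cap(0,\frac{R}{\a+1})$, so that composing with $\exp$ via \Cref{paper:Prop:Holoexpansion} produces exponents in $\CN$ — together with the routine point that the number $M$ of exponential terms is finite and independent of $R$. All of this parallels the proof of \Cref{paper:L:Gfasy} almost verbatim.
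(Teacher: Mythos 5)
Your proposal is correct and follows essentially the same route as the paper's proof: write $n\p_n$ as an expansion via \Cref{paper:cor:rhosaddleexp1}, split off the terms with nonnegative exponents into the exponential, and exponentiate the decaying tail with \Cref{paper:Prop:Holoexpansion}, checking that the nonnegative exponents land in $\CL$ and the $\N_0$-combinations of the negated decaying exponents land in $\CN$ (using $\a\in\CP_R$). The only difference is cosmetic bookkeeping, such as your explicit remark that $M$ is independent of $R$.
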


\begin{proof}
	Let $g(n):=n\p_n$. By \Cref{paper:cor:rhosaddleexp1} we have $g\in\CK(\frac{R}{\a+1})$ with exponent set
	\[
		\{\nu_{g,j} : 1\le j\le N_\p\} = \left(-1+\frac{1}{\a+1}-\sum_{\substack{\mu\in\CP_R}} \left(\frac{\mu+1}{\a+1}-1\right)\N_0\right) \cap \left[-1+\frac{1}{\a+1},\frac{R}{\a+1}\right).
	\]
	Hence, for some $1\leq M\leq N_\p$, we obtain 
	\begin{align*}
 e^{n\p_n} = \exp\left(a_{-\Phi_f',1}^\frac{1}{\a+1}n^{\frac{\a}{\a+1}}+\sum\limits_{j=2}^{M} \frac{a_{\p,j}}{n^{\nu_{g,j}}} \right) \exp\left(\sum\limits_{j=M+1}^{N_\p} \frac{a_{\p,j}}{n^{\nu_{g,j}}} + O_R\left(n^{-\frac{R}{\a+1}}\right)\right)
 \end{align*}
	with $-\frac{\a}{\a+1}<\nu_{g,2}<\dots<\nu_{g,M}\le0<\nu_{g,M+1}<\dots<\nu_{g,N_\p}$. By Lemma \ref{paper:C:Phi-Asymp} we obtain $a_{-\Phi_f',1}^\frac{1}{\a+1}=(\w_\a\GG(\a+1)\z(\a+1))^\frac{1}{\a+1}$. %The $\tfrac{\a}{\a+1} > - \nu_2 > \dots > - \nu_M \geq 0$ are contained in
	%\begin{align*}
	%	\tfrac{\a}{\a+1} + \sum_{\substack{\nu\in \textcolor{black}{\mathcal{P}_R}}}\left( \tfrac{\nu+1}{\a+1} - 1 \right) \N_0,
	%\end{align*}
	%and this is a subset of $\mathcal{L}$, as $\a \in \CP_R$. 
	Note that the exponents $0<\nu_{g,M+1}<\dots<\nu_{g,N_\p}$ run through
	\[
		\left(-\frac{\a}{\a+1}-\sum_{\substack{\mu\in\CP_R}} \left(\frac{\mu+1}{\a+1}-1\right)\N_0\right) \cap \left(0,\frac{R}{\a+1}\right).
	\]
	By Lemma \ref{paper:Prop:Holoexpansion}, $\exp(\sum_{j=M+1}^{N_\p}\frac{a_{\p,j}}{n^{\nu_{g,j}}}+O_R(n^{-\frac{R}{\a+1}}))$ is in $\CK(\frac{R}{\a+1})$, with exponent set
	\[
		\left\{\sum_{j=1}^K b_j\th_j : K,b_j\in\N_0,\ \th_j\in\left(-\frac{\a}{\a+1}-\sum_{\substack{\mu\in\CP_R}} \left(\frac{\mu+1}{\a+1}-1\right)\N_0\right)\cap\left(0,\frac{R}{\a+1}\right)\right\}. %= \CN.\tag*{\qedhere}
	\]
	As $\a\in\CP_R$, this is a subset of $\CN$, so the above exponents are given by $\CN$, proving the lemma.
\end{proof}

The following corollary is very helpful to prove our main theorem.

\begin{cor}\label{paper:C:expTimesGf}
	Let $f \colon \N \to \N_0$ satisfy the conditions of Theorem \ref{paper:T:main2}. Then we have
	\begin{align*}
		e^{n\p_n} G_f(\p_n) = \frac{e^{L_f'(0)}n^{\frac{L_f(0)}{\a+1}}}{a_{-\Phi_f',1}^{\frac{L_f(0)}{\a+1}}} \exp\left( A_1 n^{\frac{\a}{\a+1}} + \sum_{j=2}^M A_j n^{\a_j}\right)\left( 1 + \sum_{j = 1}^N \frac{E_j}{n^{\eta_j}} + O_R\left( n^{-\frac{R}{\a+1}}\right)\right),
	\end{align*}
	with $A_1$ defined in \eqref{paper:eq:mainconstants2}, $\frac{\a}{\a+1}>\a_2>\dots>\a_M\ge0$ running through $\CL$, and $\eta_j$ through $\CM+\CN$.
\end{cor}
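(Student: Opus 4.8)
The plan is to combine the two asymptotic expansions already obtained: the one for $G_f(\varrho_n)$ in Lemma \ref{paper:L:Gfasy} and the one for $e^{n\varrho_n}$ in Lemma \ref{paper:lem:expnrho}. Both are elements of $\CK$-type expansions, so the product is handled by the algebra of asymptotic expansions in Lemma \ref{paper:Prop:Algebras}. First I would write
\[
	e^{n\varrho_n}G_f(\varrho_n) = \underbrace{\frac{e^{L_f'(0)}n^{\frac{L_f(0)}{\a+1}}}{a_{-\Phi_f',1}^{\frac{L_f(0)}{\a+1}}}}_{\text{prefactor}} \cdot \underbrace{\exp\!\left(\textstyle\sum \text{exponential terms}\right)}_{\text{from both lemmas}} \cdot \underbrace{\left(1+\textstyle\sum \tfrac{(\cdot)}{n^{(\cdot)}}+O_R(\cdot)\right)}_{\text{from both lemmas}},
\]
matching up the three structural pieces of each lemma. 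The prefactor $n^{L_f(0)/(\a+1)}a_{-\Phi_f',1}^{-L_f(0)/(\a+1)}$ appears only in $G_f(\varrho_n)$; the factor $e^{L_f'(0)}$ likewise comes only from $G_f(\varrho_n)$.

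The exponential terms are the heart of the constant identification. From Lemma \ref{paper:L:Gfasy} the leading exponential term is $\tfrac1\a(\w_\a\GG(\a+1)\z(\a+1))^{\frac1{\a+1}}n^{\frac\a{\a+1}}$, and from Lemma \ref{paper:lem:expnrho} it is $(\w_\a\GG(\a+1)\z(\a+1))^{\frac1{\a+1}}n^{\frac\a{\a+1}}$; adding these gives $(1+\tfrac1\a)(\w_\a\GG(\a+1)\z(\a+1))^{\frac1{\a+1}}n^{\frac\a{\a+1}} = A_1 n^{\frac\a{\a+1}}$, which is exactly the definition of $A_1$ in \eqref{paper:eq:mainconstants2}. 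For the subleading exponential exponents: Lemma \ref{paper:L:Gfasy} has exponents $\b_j$ running through $\CL\cap[0,\frac\a{\a+1})$ and Lemma \ref{paper:lem:expnrho} has $\eta_j$ running through the same set $\CL\cap[0,\frac\a{\a+1})$, so their sum (inside a single exponential, so the exponent sets are unioned, not added) again runs through $\CL\cap[0,\frac\a{\a+1})$; I would define $A_j$ to be the sum of the corresponding coefficients $C_j$ from Lemma \ref{paper:L:Gfasy} and $a_{\varrho,j}$ from Lemma \ref{paper:lem:expnrho}. This yields the claimed $\exp(A_1n^{\frac\a{\a+1}}+\sum_{j=2}^M A_jn^{\a_j})$ with $\frac\a{\a+1}>\a_2>\dots>\a_M\ge0$ through $\CL$.

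Finally, the two "$1+\sum+O_R$" factors are multiplied using Lemma \ref{paper:Prop:Algebras} \ref{paper:Prop:Algebras:2}: the one from $G_f(\varrho_n)$ has exponents $\d_j$ through $\CM+\CN$, the one from $e^{n\varrho_n}$ has exponents $\mu_j$ through $\CN$, and since $\a\in\CP_R$ forces $0\in\CM$ (as $\CM$ contains $\frac\a{\a+1}\N_0$ anyway and also the relevant negative-shift set which contains $0$), we have $\CN\subseteq\CM+\CN$, so the product has exponents through $(\CM+\CN)+\CN = \CM+\CN$. Both error terms are $O_R(n^{-\frac R{\a+1}})$ and Lemma \ref{paper:Prop:Algebras} \ref{paper:Prop:Algebras:2} keeps the error at $O_R(n^{-\frac R{\a+1}})$ since the leading exponent of each factor is $0$. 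I would name the resulting coefficients $E_j$ and the resulting exponents $\eta_j$ (overloading the symbol $\eta_j$, consistent with the statement), giving the factor $1+\sum_{j=1}^N\frac{E_j}{n^{\eta_j}}+O_R(n^{-\frac R{\a+1}})$. Assembling the three pieces completes the proof.

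The main obstacle I anticipate is purely bookkeeping rather than conceptual: one must check carefully that the exponent sets combine as claimed — in particular that the union of exponents inside the merged exponential is $\CL$ and that $(\CM+\CN)+\CN=\CM+\CN$ — and that no cancellation or coincidence among the finitely many exponential exponents $\a_j$ causes a term to drop below the error threshold or to collide in an unexpected way; the footnote convention allowing one to pad exponent sets with zero coefficients smooths over any such collision. One should also verify $M$ is bounded independently of $R$, which is inherited from Lemmas \ref{paper:L:Gfasy} and \ref{paper:lem:expnrho}.
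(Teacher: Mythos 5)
Your proposal is correct and is exactly the argument the paper intends: the corollary is stated without proof precisely because it follows by multiplying the expansions of Lemma \ref{paper:L:Gfasy} and Lemma \ref{paper:lem:expnrho} via Lemma \ref{paper:Prop:Algebras} \ref{paper:Prop:Algebras:2}, with the exponentials merging to give $A_1$ and the $\CL$-exponents, and the closure properties $0\in\CM$ and $\CN+\CN\subseteq\CN$ giving $(\CM+\CN)+\CN=\CM+\CN$. Your bookkeeping of the exponent sets and of the $O_R(n^{-R/(\a+1)})$ error matches the paper's conventions.
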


\section{Proof of Theorem \ref{paper:T:main2}}\label{sec:proof1.4}

\subsection{The general case}

The following lemma follows by a straightforward calculation, using \eqref{paper:MajorMinorSplit} and Lemmas \ref{paper:L:InCalc2}, \ref{paper:L:generalminorarcsbound}, and \ref{paper:L:lambda}.

\begin{lem}\label{paper:L:mainasy}
	Let $f:\N\to\N_0$ satisfy the conditions of Theorem \ref{paper:T:main2}. Then we have
	\begin{align*}
		p_f(n) = \frac{e^{n\p_n}G_f(\p_n)}{\sqrt{2\pi}}\left(\sum\limits_{j=1}^M \frac{d_j }{n^{\nu_j}} + O_{L,R}\left(n^{-\min\left\{\frac{L+1}{\a+1}, \frac{R+\a}{\a+1}+\frac{\a+2}{2(\a+1)}\right\}}\right)\right)
	\end{align*}
	for some $M\in\N$, $d_1=\frac{1}{\sqrt{\a+1}}(\w_\a\GG(\a+1)\z(\a+1))^\frac{1}{2(\a+1)}$, and the $\nu_j$ run through
	\[
		\frac{\a+2}{2(\a+1)} + \frac{\a}{\a+1}\N_0 + \left(-\sum_{\mu\in\CP_R} \left(\frac{\mu+1}{\a+1}-1\right)\N_0\right) \cap \left[0,\frac{R+\a}{\a+1}\right).
	\]
	In particular, we have $\nu_1 =\tfrac{\a+2}{2(\a+1)}$.
\end{lem}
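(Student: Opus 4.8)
The plan is to assemble the claimed asymptotic expansion for $p_f(n)$ by substituting the already-established pieces into the major/minor arc decomposition \eqref{paper:MajorMinorSplit}. First I would recall that \eqref{paper:MajorMinorSplit} writes $p_f(n) = \frac{e^{\varrho_n n}}{2\pi} I_n + (\text{minor arc})$, where $I_n = \int_{|t|\le\varrho_n^\beta} \exp(\Phi_f(\varrho_n+it)+int)\,dt$ for a suitable $\beta$ with $1+\frac\alpha3<\beta<1+\frac\alpha2$. By \Cref{paper:L:generalminorarcsbound} (applied with the appropriate $\beta$) the minor arc contribution is $O_L(\varrho_n^{L+1} e^{\varrho_n n}G_f(\varrho_n))$; since $\varrho_n \sim a_{\varrho,1} n^{-\frac1{\alpha+1}}$ by \Cref{paper:cor:rhosaddleexp1}, this is $O_L(n^{-\frac{L+1}{\alpha+1}}) \cdot e^{\varrho_n n}G_f(\varrho_n)$ relative to the main term, which accounts for the first term in the error $O_{L,R}(n^{-\min\{\cdots\}})$.

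Next I would invoke \Cref{paper:L:InCalc2}, which gives
\[
	I_n = \sqrt{2\pi}\, G_f(\varrho_n)\left(\frac{1}{\sqrt{\Phi_f''(\varrho_n)}} + \sum_{2\le k\le \frac{3H(N+\alpha)}{2\alpha}} \frac{(2k)!\,\lambda_{2k}(\varrho_n)}{2^k k!\,\Phi_f''(\varrho_n)^{k+\frac12}} + O_N(\varrho_n^N)\right).
\]
Dividing through, $p_f(n) = \frac{e^{n\varrho_n}G_f(\varrho_n)}{\sqrt{2\pi}}\big(\frac{1}{\sqrt{\Phi_f''(\varrho_n)}} + \sum_k (\cdots) + O(\cdots)\big)$ up to the minor-arc error already handled. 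The task is then purely to expand each factor in powers of $n$. For the leading term, I would use \Cref{paper:C:Phi-Asymp} with $k=2$ to get $\Phi_f''(\varrho_n) = \sum_{\nu\in-\mathcal P_R\setminus\{0\}}(\nu)_2\Res_{s=-\nu}L_f^*(s)\varrho_n^{\nu-2} + L_f(0)\varrho_n^{-2} + O(\varrho_n^{R-2})$, whose dominant term is $\alpha(\alpha+1)\omega_\alpha\Gamma(\alpha)\zeta(\alpha+1)\varrho_n^{-\alpha-2}$ (rewriting $\Res_{s=\alpha}L_f^*$ via $\Gamma(\alpha+1)$); composing with the expansion of $\varrho_n$ from \Cref{paper:cor:rhosaddleexp1} via \Cref{paper:Prop:AsyComp}, then applying $x\mapsto x^{-1/2}$ via \Cref{paper:Prop:AsyComp} again, yields that $\frac{1}{\sqrt{\Phi_f''(\varrho_n)}}$ lies in $\CK$ with leading coefficient $\frac{1}{\sqrt{\alpha+1}}(\omega_\alpha\Gamma(\alpha+1)\zeta(\alpha+1))^{\frac{-1}{2(\alpha+1)}}\cdot\big(\text{power of }n\big)$; a short bookkeeping check identifies the leading exponent as $-\frac{\alpha+2}{2(\alpha+1)}$ and the constant $d_1$ as stated. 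For the higher $k$-terms, \Cref{paper:L:lambda} already tells us $\frac{\lambda_{2k}(\varrho_n)}{\Phi_f''(\varrho_n)^{k+1/2}}$ has an expansion with exponents in $\frac{\alpha+2}{2(\alpha+1)} + \frac{\alpha}{\alpha+1}\N_0 + (-\sum_\mu(\frac{\mu+1}{\alpha+1}-1)\N_0)\cap[0,\frac{R+\alpha}{\alpha+1})$ and error $O(n^{-(R+1+(k-\lfloor 2k/3\rfloor+3/2))\frac{\alpha}{\alpha+1}})$; since $k-\lfloor 2k/3\rfloor \ge 1$ for $k\ge 2$, each such error is $O(n^{-(\frac{R+\alpha}{\alpha+1}+\frac{\alpha+2}{2(\alpha+1)})})$, which is the second term in the claimed error bound. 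Combining finitely many such expansions via \Cref{paper:Prop:Algebras} produces the sum $\sum_{j=1}^M \frac{d_j}{n^{\nu_j}}$ with $\nu_j$ ranging over the stated set, and the $O_N(\varrho_n^N)$ term from \Cref{paper:L:InCalc2} is harmless once $N$ is chosen large enough relative to $R$.

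The main obstacle I anticipate is the careful tracking of exponent sets: one must verify that combining the exponent set of $\frac{1}{\sqrt{\Phi_f''(\varrho_n)}}$ (which is $\frac{\alpha+2}{2(\alpha+1)} + (-\sum_\mu(\frac{\mu+1}{\alpha+1}-1)\N_0)\cap[\cdots)$, by running \Cref{paper:Prop:AsyComp} on the $\Phi_f''$ expansion) with that of the $\lambda_{2k}$-terms from \Cref{paper:L:lambda} collapses exactly to $\frac{\alpha+2}{2(\alpha+1)} + \frac{\alpha}{\alpha+1}\N_0 + (-\sum_\mu(\frac{\mu+1}{\alpha+1}-1)\N_0)\cap[0,\frac{R+\alpha}{\alpha+1})$ — in particular that the $\frac{\alpha}{\alpha+1}\N_0$ piece, absent from the leading term alone, is generated by the $k\ge2$ contributions and that no exponents below $\frac{\alpha+2}{2(\alpha+1)}$ survive. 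This is the ``straightforward calculation'' the lemma alludes to, but it requires attentive application of \Cref{paper:Prop:Algebras} \ref{paper:Prop:Algebras:1}, \ref{paper:Prop:Algebras:2} and matching the two error thresholds $\frac{L+1}{\alpha+1}$ and $\frac{R+\alpha}{\alpha+1}+\frac{\alpha+2}{2(\alpha+1)}$ against the minimum in the statement. The identification $\nu_1 = \frac{\alpha+2}{2(\alpha+1)}$ and the explicit value of $d_1$ then fall out immediately from the leading-order analysis of $\frac{1}{\sqrt{\Phi_f''(\varrho_n)}}$.
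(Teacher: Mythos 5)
Your proposal is correct and follows essentially the same route as the paper's (one-line) proof, which likewise assembles the lemma from \eqref{paper:MajorMinorSplit} together with Lemmas \ref{paper:L:generalminorarcsbound}, \ref{paper:L:InCalc2}, and \ref{paper:L:lambda}, the leading term $d_1 n^{-\nu_1}$ coming from expanding $\Phi_f''(\varrho_n)^{-1/2}$ via Lemma \ref{paper:C:Phi-Asymp} and Corollary \ref{paper:cor:rhosaddleexp1}. One minor slip: in your intermediate display the leading coefficient of $\Phi_f''(\varrho_n)^{-1/2}$ should carry the exponent $+\tfrac{1}{2(\alpha+1)}$ rather than $-\tfrac{1}{2(\alpha+1)}$, consistent with the value of $d_1$ you correctly state at the end.
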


We prove the following lemma.

\begin{lem}\label{paper:lem:One-Pole-Saddle}
	Assume that $f$ satisfies the conditions of Theorem \ref{paper:T:main2} and that $L_f$ has only one positive pole $\a$. Then we have 
\begin{align*}
 n \varrho_n + \Phi_f(\varrho_n) = \left(\omega_{\a} \Gamma(\a+1)\z(\a+1)\right)^{\frac{1}{\a+1}} \left(1 + \tfrac{1}{\a}\right)n^{\frac{\a}{\a+1}} - L_f(0) \log(\varrho_n) + L'_f(0) + o(1). 
\end{align*}
\end{lem}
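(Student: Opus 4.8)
The plan is to start from the saddle point equation \eqref{paper:eq:saddle-point}, $\Phi_f'(\varrho_n)=-n$, which lets us rewrite the quantity of interest as
\[
n\varrho_n + \Phi_f(\varrho_n) = \Phi_f(\varrho_n) - \varrho_n\Phi_f'(\varrho_n),
\]
and then to substitute the expansions of $\Phi_f$ and $\Phi_f'$ near $z=0$ from Lemma~\ref{paper:C:Phi-Asymp}. Since $\Gamma(s)\zeta(s+1)$ has no poles in $\mathrm{Re}(s)>0$, the lemma's extra hypothesis — that $\alpha$ is the only positive pole of $L_f$ — means $\alpha$ is the only positive element of $\mathcal{P}_R$; hence in the finite sum $\sum_{\nu\in-\mathcal{P}_R\setminus\{0\}}$ appearing in Lemma~\ref{paper:C:Phi-Asymp} the unique exponent $\nu\le0$ is $\nu=-\alpha$, and every other $\nu$ is strictly positive. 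Consequently all non-displayed terms contribute $o(1)$ (respectively $o(z^{-1})$ after differentiating) as $z\to0$ in $\mathcal{C}_\delta$, which is exactly where the one-positive-pole assumption is used.

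Using $\Res_{s=\alpha}L_f^*(s)=\omega_\alpha\Gamma(\alpha)\zeta(\alpha+1)$ and the case $k=1$ of Lemma~\ref{paper:C:Phi-Asymp}, this specialization reads
\[
\Phi_f(z) = \omega_\alpha\Gamma(\alpha)\zeta(\alpha+1)z^{-\alpha} - L_f(0)\log z + L_f'(0) + o(1),
\]
\[
-z\Phi_f'(z) = \omega_\alpha\Gamma(\alpha+1)\zeta(\alpha+1)z^{-\alpha} + L_f(0) + o(1),
\]
the second line using $\alpha\Gamma(\alpha)=\Gamma(\alpha+1)$. Adding these at $z=\varrho_n$ and using $\Gamma(\alpha)+\Gamma(\alpha+1)=\left(1+\tfrac1\alpha\right)\Gamma(\alpha+1)$ gives, with $c:=\omega_\alpha\Gamma(\alpha+1)\zeta(\alpha+1)$,
\[
n\varrho_n + \Phi_f(\varrho_n) = \left(1+\tfrac1\alpha\right)c\,\varrho_n^{-\alpha} + L_f(0) - L_f(0)\log\varrho_n + L_f'(0) + o(1).
\]
It remains to convert $\varrho_n^{-\alpha}$, which tends to $\infty$, into a power of $n$ accurately enough to pin down the resulting $O(1)$ constant. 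Multiplying the saddle point equation by $\varrho_n$ gives $n\varrho_n=c\,\varrho_n^{-\alpha}+L_f(0)+o(1)$, i.e.\ $\varrho_n^{-\alpha-1}=\tfrac nc-\tfrac{L_f(0)}{c}\varrho_n^{-1}+o(\varrho_n^{-1})$; combining this with $\varrho_n\sim c^{1/(\alpha+1)}n^{-1/(\alpha+1)}$ from Corollary~\ref{paper:cor:rhosaddleexp1} and a first-order Taylor expansion of the $\tfrac{\alpha}{\alpha+1}$-th power yields
\[
\varrho_n^{-\alpha} = c^{-\frac{\alpha}{\alpha+1}}n^{\frac{\alpha}{\alpha+1}} - \frac{\alpha L_f(0)}{(\alpha+1)c} + o(1).
\]
(Equivalently, Corollary~\ref{paper:cor:rhosaddleexp1} gives $\varrho_n=a_{\varrho,1}n^{-1/(\alpha+1)}+a_{\varrho,2}n^{-1}+O(n^{-(1+2\alpha)/(\alpha+1)})$, the second exponent there being $1$, and the saddle point equation forces $a_{\varrho,2}=\tfrac{L_f(0)}{\alpha+1}$.)

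Plugging the last display into the previous one, using $c\cdot c^{-\alpha/(\alpha+1)}=c^{1/(\alpha+1)}$ and $\left(1+\tfrac1\alpha\right)\tfrac{\alpha}{\alpha+1}=1$, gives
\[
\left(1+\tfrac1\alpha\right)c\,\varrho_n^{-\alpha} = \left(1+\tfrac1\alpha\right)c^{\frac1{\alpha+1}}n^{\frac{\alpha}{\alpha+1}} - L_f(0) + o(1),
\]
so the spurious $-L_f(0)$ cancels the $+L_f(0)$ above, leaving exactly
\[
n\varrho_n + \Phi_f(\varrho_n) = \left(1+\tfrac1\alpha\right)\left(\omega_\alpha\Gamma(\alpha+1)\zeta(\alpha+1)\right)^{\frac1{\alpha+1}}n^{\frac{\alpha}{\alpha+1}} - L_f(0)\log\varrho_n + L_f'(0) + o(1),
\]
as claimed. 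The main obstacle is the conversion step: because $\varrho_n^{-\alpha}\to\infty$ one must keep one term of the expansion of $\varrho_n$ beyond the leading one, and the resulting $L_f(0)$-correction must cancel the explicitly appearing $L_f(0)$; checking this numerology — and that the single-positive-pole hypothesis genuinely renders all remaining terms $o(1)$ — is the only delicate point, the rest being direct substitution.
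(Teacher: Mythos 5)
Your proof is correct and follows essentially the same route as the paper: both arguments rest on the case $k\in\{0,1\}$ of Lemma~\ref{paper:C:Phi-Asymp} (with the single-positive-pole hypothesis making all remaining terms $o(1)$) together with a second-order expansion of the saddle point, and the decisive cancellation of the $L_f(0)$-constants is the same in both. The only difference is organizational: the paper expands $\varrho_n$ to the term $\frac{L_f(0)}{(\a+1)n}$ and substitutes into $\Phi_f$, whereas you rewrite $n\varrho_n=-\varrho_n\Phi_f'(\varrho_n)$ first and then convert $\varrho_n^{-\a}$ to a power of $n$ — as you note yourself, these are equivalent.
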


\begin{proof}
	By Lemma \ref{paper:C:Phi-Asymp}, we have
	\begin{align} \label{paper:eq:Phi-simple}
		\Phi_f(\varrho_n) = \frac{\omega_{\a} \Gamma(\a)\z(\a+1)}{\varrho_n^\a} - L_f(0)\log(\varrho_n) + L'_f(0) + O\left(\varrho_n^{R_0}\right),
	\end{align}
	where
	\begin{align}\label{paper:eq:R0defi}
		R_0 :=
		\begin{cases}
			\us{\nu\in\CP_R\cap(-R,0)}{-\max\nu} & \text{if }\CP_R\cap(-R,0)\ne\emptyset,\\
			R & \text{otherwise}.
		\end{cases}
	\end{align}
	To show the lemma, we need an expansion for $\p_n$. We have, by \eqref{paper:eq:saddle-point} and again by \Cref{paper:C:Phi-Asymp},
	\begin{align*}
		-\Phi'_f(\varrho_n) = \frac{\omega_{\a} \Gamma(\a+1) \zeta(\a+1)}{\varrho_n^{\a+1}} + \frac{L_f(0)}{\varrho_n} + O\left(\varrho_n^{{R_0}-1}\right).
	\end{align*}
	By \Cref{paper:cor:rhosaddleexp1}, we have an expansion for $\p_n$ with an error $o(1)$. We iteratively find the first terms. By \Cref{paper:cor:rhosaddleexp1} we have $\p_n\sim a_{-\Phi_f',1}^\frac{1}{\a+1}n^{-\frac{1}{\a+1}}$, as $n\to\infty$. We next determine the second order term in $\p_n=\frac{a_{-\Phi_f',1}^\frac{1}{\a+1}}{n^\frac{1}{\a+1}}+\frac{K_2}{n^{\k_2}}+o(n^{-\k_2})$ for some $\k_2<\frac{1}{\a+1}$ and $K_2\in\C$. We choose $\k$ in
	\begin{align*}
		n \left( 1 + \frac{K_2}{a_{-\Phi_f',1}^{\frac{1}{\a+1}}n^{\kappa_2 - \frac{1}{\a+1}}}\right)^{-\a-1} + \frac{L_f(0)}{a_{-\Phi_f',1}^{\frac{1}{\a+1}}} n^{\frac{1}{\a+1}} \left( 1 + \frac{K_2}{a_{-\Phi_f',1}^{\frac{1}{\a+1}} n^{\kappa_2 - \frac{1}{\a+1}}}\right)^{-1} = n + O(n^\kappa)
	\end{align*}
	as small as possible. One finds that
	\begin{align*}
		\frac{(\a+1)K_2}{a_{-\Phi_f',1}^{\frac{1}{\a+1}}} n^{1-\kappa_2+\frac{1}{\a+1}} = \frac{L_f(0)}{a_{-\Phi_f',1}^{\frac{1}{\a+1}}}n^{\frac{1}{\a+1}},
	\end{align*}
	and hence
	\begin{align} \label{paper:eq:Varrho-Approx}
		\varrho_n = \frac{a_{-\Phi_f',1}^{\frac{1}{\a+1}}}{n^{\frac{1}{\a+1}}} + \frac{L_f(0)}{(\a+1)n} + o\left( \frac{1}{n}\right).
	\end{align}
	Plugging \eqref{paper:eq:Varrho-Approx} into $\Phi_f$ leads, by \eqref{paper:eq:Phi-simple}, to 
	\[
		\Phi_f\left(\frac{a_{-\Phi_f',1}^\frac{1}{\a+1}}{n^\frac{1}{\a+1}}+\frac{L_f(0)}{(\a+1)n}+o\left(\frac1n\right)\right) = \frac{a_{-\Phi_f',1}^\frac{1}{\a+1}}{\a}n^\frac{\a}{\a+1} - \frac{L_f(0)}{\a+1} - L_f(0)\log(\p_n) + L_f'(0) + o(1).
	\]
	As a result, using \eqref{paper:eq:Varrho-Approx}, we conclude the claim.
\end{proof}

We are now ready to prove \Cref{paper:T:main2}.

\begin{proof}[Proof of Theorem \ref{paper:T:main2}]
	\Cref{paper:T:main2} follows from Lemmas \ref{paper:Prop:Algebras} \ref{paper:Prop:Algebras:2}, \ref{paper:L:generalminorarcsbound}, \ref{paper:L:InCalc2}, \ref{paper:L:Gfasy}, \ref{paper:L:mainasy}, \ref{paper:lem:One-Pole-Saddle} and Corollaries \ref{paper:cor:rhosaddleexp1} and \ref{paper:C:expTimesGf}.
\end{proof}

\subsection{The case of two positive poles of $L_f$}\label{paper:subsec:TwoPoles}

If $\a>0$ is the only positive pole of $L_f$, then we can calculate the single term in the exponential in the asymptotic of $p_f(n)$ explicitly, by \Cref{paper:T:main2}. In this subsection we assume that $L_f$ has exactly two positive simple poles, $\a$ and $\b$. In this case, \Cref{paper:C:Phi-Asymp} with $k=1$ gives
\begin{equation*}%\label{paper:eq:phidouble}
	-\Phi'_f(z) = \frac{c_1}{z^{\a+1}} + \frac{c_2}{z^{\b+1}} + \frac{c_3}{z} + O_R\left(|z|^{R_0-1}\right)
\end{equation*}
with $R_0$ from \eqref{paper:eq:R0defi}.
Above we set $c_j:=a_{-\Phi_f',j}$ for $1\le j\le3$, i.e., by Lemma \ref{paper:C:Phi-Asymp}
\begin{align} \label{paper:eq:cDef}
	c_1 = \omega_{\a} \Gamma(\a+1)\z(\a+1), \quad c_2 = \omega_{\b} \Gamma(\b+1) \z(\b+1), \quad c_3 = L_f(0).
\end{align}
In the next lemma, we approximate the saddle point in this special situation. 

\begin{lem}\label{paper:L:varrho}
	Let $f$ satisfy the conditions of \Cref{paper:T:main2}. Additionally assume that $L_f$ has exactly two positive poles $\a$ and $\b$ that satisfy $\frac{\ell+1}{\ell}\b<\a\le\frac{\ell}{\ell-1}\b$ for some $\ell\in\N$, where we treat the case $\ell=1$ simply as $2\b<\a$. Then there exists $0<r\le\frac{R}{\a+1}$ such that
	\begin{equation}\label{paper:eq:rhoexplicit}
		\p_n = \sum_{j=1}^{\ell+1} \frac{K_j}{n^{(j-1)\left(1-\frac{\b+1}{\a+1}\right)+\frac{1}{\a+1}}} + \frac{c_3}{(\a+1)n} + O_R\left(n^{-r-1}\right)
	\end{equation}
	for some constants $K_j$ independent of $n$ and $c_3$ as in \eqref{paper:eq:cDef}. In particular, we have
	\begin{align*}
		K_1 &= c_1^\frac{1}{\a+1},
		\hspace{3mm} K_2 = \frac{c_2}{(\a+1)c_1^\frac{\b}{\a+1}},
		\hspace{3mm} K_3 = \frac{c_2^2(\a-2\b)}{2(\a+1)^2c_1^\frac{2\b+1}{\a+1}},
		\hspace{3mm}
		K_4 = \frac{c_2^3\left(2\a^2-9\a\b-2\a+9\b^2+3\b\right)}{6(\a+1)^3c_1^\frac{3\b+2}{\a+1}}, \\
		K_5 &= \frac{c_2^4(6 \a^3-44 \a^2 \b-15 \a^2+96 \a \b^2+56 \a \b+6 \a-64 \b^3-48 \b^2-8 \b)}{24(\a+1)^4 c_1^{\frac{4\b + 3}{\a + 1}}}.
	\end{align*}
\end{lem}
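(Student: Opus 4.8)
I would prove \Cref{paper:L:varrho} by bootstrapping the saddle point equation \eqref{paper:eq:saddle-point} in the form $-\Phi_f'(\p_n)=n$ using the asymptotic expansion of $-\Phi_f'$ provided by \Cref{paper:C:Phi-Asymp}, namely
\[
	-\Phi_f'(z) = \frac{c_1}{z^{\a+1}} + \frac{c_2}{z^{\b+1}} + \frac{c_3}{z} + O_R\left(|z|^{R_0-1}\right),
\]
together with the a priori expansion $\p\in\CK(\tfrac{R}{\a+1}+1)$ from \Cref{paper:cor:rhosaddleexp1}, which already tells us that the exponents of $\p_n$ live in the set $\bigl(\tfrac1{\a+1}-\sum_{\mu\in\CP_R}(\tfrac{\mu+1}{\a+1}-1)\N_0\bigr)\cap[\tfrac1{\a+1},\tfrac R{\a+1}+1)$. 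Since the only positive poles are $\a$ and $\b$, the only ``large'' negative generators in this sum are $\tfrac{\a+1}{\a+1}-1=0$ (harmless) and $\tfrac{\b+1}{\a+1}-1=-(1-\tfrac{\b+1}{\a+1})<0$; so the leading block of exponents is precisely $\tfrac1{\a+1}+(1-\tfrac{\b+1}{\a+1})\Z_{\ge0}$, i.e. $\tfrac{1}{\a+1},\ \tfrac{1}{\a+1}+(1-\tfrac{\b+1}{\a+1}),\dots$, until one reaches the exponent $1$ coming from the $c_3/z$ term (the hypothesis $\tfrac{\ell+1}{\ell}\b<\a\le\tfrac{\ell}{\ell-1}\b$ is exactly the condition pinning down how many of these terms, namely $\ell+1$, precede the $c_3$-term at exponent $1$). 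This justifies the \emph{shape} of the ansatz \eqref{paper:eq:rhoexplicit}; the remainder $O_R(n^{-r-1})$ absorbs the deeper poles $\g_3,\g_4,\dots$ of $L_f^*$ and the $R$-truncation, with $r$ the appropriate minimum of $\tfrac{R}{\a+1}$ and the gap to the next exponent.

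**The main computation.** With the shape fixed, I would substitute $\p_n = K_1 n^{-\frac1{\a+1}}\bigl(1+\sum_{j\ge2}\tfrac{K_j}{K_1} n^{-(j-1)(1-\frac{\b+1}{\a+1})}+\cdots\bigr)$ into $-\Phi_f'(\p_n)=n$ and expand. The dominant term forces $c_1 K_1^{-(\a+1)}=1$, i.e. $K_1=c_1^{1/(\a+1)}$. Writing $x:=1-\tfrac{\b+1}{\a+1}>0$ and collecting the coefficient of $n^{1-jx}$ for $j=1,2,\dots$, each equation determines $K_{j+1}$ linearly in terms of $c_1,c_2$ and the previously found $K_2,\dots,K_j$ — the term $c_1/\p_n^{\a+1}$ contributes a $-(\a+1)K_1^{-(\a+1)}K_1^{-1}K_{j+1}=-(\a+1)K_{j+1}/(c_1^{1/(\a+1)}c_1)$-type piece that never vanishes, so the recursion is always solvable. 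I would expand $\p_n^{-(\a+1)}$ and $\p_n^{-(\b+1)}$ via the generalized binomial series (using \Cref{paper:Prop:Holoexpansion} to organize this cleanly) and read off $K_2,K_3,K_4,K_5$; the appearance of $(\a-2\b)$, $(2\a^2-9\a\b-\cdots)$, etc. in the displayed formulas is exactly what one gets from the quadratic, cubic, quartic Taylor coefficients of $(1+t)^{-(\a+1)}$ combined with $(1+t)^{-(\b+1)}$. Finally the $c_3/z$ term first enters at order $n^{1/(\a+1)}$ on the right after rescaling, forcing the separate summand $\tfrac{c_3}{(\a+1)n}$ just as in \eqref{paper:eq:Varrho-Approx} of \Cref{paper:lem:One-Pole-Saddle}; under the hypothesis on $\a,\b$ this term sits at or below the $(\ell+1)$-st term of the $K_j$-block, so it is legitimate to display it separately and push everything else into the error.

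**The main obstacle.** The only genuinely delicate point is \textbf{bookkeeping of the exponents}: one must check that no exponent produced by mixing the $c_2$-block with the $c_3$-term (or with the deeper poles) accidentally coincides with, or falls below, one of $\tfrac1{\a+1}+jx$ for $j\le\ell$ in a way that would corrupt the stated coefficients $K_1,\dots,K_5$ or the claim that the error is $O_R(n^{-r-1})$ with $r>0$. The chain of inequalities $\tfrac{\ell+1}{\ell}\b<\a\le\tfrac{\ell}{\ell-1}\b$ is precisely designed to make $\ell x<1\le(\ell+1)x$ (interpreting the endpoints appropriately, and the case $\ell=1$ meaning $2\b<\a$ so that $x>\tfrac12$, i.e. already the second $K$-term is ``past'' $n^{1/(\a+1)}\cdot n^{-1+1/(\a+1)}$ scaling), so that exactly $\ell+1$ terms of the $c_2$-block precede the $c_3$ contribution; verifying this carefully, and that $r:=\min\{\tfrac{R}{\a+1},\,(\ell+1)x-1+\text{(gap)}\}>0$ can be taken, is where all the care goes. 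Everything else is the straightforward Lagrange-inversion-style recursion already set up by \Cref{paper:T:maininverse} and \Cref{paper:cor:rhosaddleexp1}; indeed one could alternatively just invoke \Cref{paper:T:maininverse}\ref{paper:item:maininverse:2} directly and then identify the first $\ell+1$ coefficients plus the $c_3$-term by hand, which is the route I would actually write up.
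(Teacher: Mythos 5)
Your proposal is correct and follows essentially the same route as the paper: the exponent set from \Cref{paper:cor:rhosaddleexp1} fixes the shape of the expansion, the hypothesis $\frac{\ell+1}{\ell}\b<\a\le\frac{\ell}{\ell-1}\b$ is shown to be equivalent to exactly $\ell+1$ of the exponents $\frac{1}{\a+1}+(j-1)(1-\frac{\b+1}{\a+1})$ lying in $(0,1]$, and the coefficients $K_1,\dots,K_5$ together with the $\frac{c_3}{(\a+1)n}$ term are then determined iteratively from the saddle-point equation. Your write-up merely supplies more detail than the paper's (very terse) argument; the only minor imprecision is that the threshold for the exponent count is $\frac{\a}{\a+1}$ rather than $1$ in your ``$\ell x<1\le(\ell+1)x$'' heuristic, which you already flag as needing care.
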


\begin{proof}
	By \Cref{paper:cor:rhosaddleexp1}, the exponents of $\p_n$ that are at most $1$ are given by combinations
	\[
		\frac{1}{\a+1} + (j-1)\left(1-\frac{\b+1}{\a+1}\right) + m\left(1-\frac{1}{\a+1}\right) \le 1,
	\]
	with $j\in\N$ and $m\in\N_0$. A straightforward calculation shows that $\frac{\ell+1}{\ell}\b<\a\le\frac{\ell}{\ell-1}\b$ if and only if
	\[
		0 < \frac{1}{\a+1} + (j-1)\left(1-\frac{\b+1}{\a+1}\right) \le 1
	\]
	for all $1\le j\le\ell+1$ but not for $j>\ell+1$. Together with the error term induced by \Cref{paper:cor:rhosaddleexp1}, \eqref{paper:eq:rhoexplicit} follows. Assuming $\ell\ge5$, $K_1$ to $K_5$ and the term $\frac{c_3}{(\a+1)n}$ can be determined iteratively.
\end{proof}

We are now ready to prove asymptotic formulas if $L_f$ has exactly two positive poles.

\begin{thm}\label{paper:T:TwoPoleAsymptotics} 
	Assume that $f:\N\to\N_0$ satisfies the conditions of Theorem \ref{paper:T:main2} and that $L_f$ has exactly two positive poles $\a>\b$, such that $\frac{\ell+1}{\ell}\b<\a\le\frac{\ell}{\ell-1}\b$ for some $\ell\in\N$. Then we have
	\begin{multline*}
		p_f(n) = \frac{C}{n^b}\exp\left(A_1n^\frac{\a}{\a+1}+A_2n^\frac{\b}{\a+1}+\sum_{k=3}^{\ell+1} A_kn^{\frac{(k-1)\b}{\a+1}+\frac{k-2}{\a+1}+2-k}\right)\\
		\times \left(1+\sum_{j=2}^{M_1} \frac{B_j}{n^{\nu_j}} + O_{L,R}\left(n^{-\min\left\{\frac{2L-\a}{2(\a+1)},\frac{R}{\a+1}\right\}}\right)\right),\qquad (n\to\infty),
	\end{multline*}
	with
	\begin{equation}\label{paper:eq:A1}
		A_1 := (\w_{\a}\GG(\a+1)\z(\a+1))^\frac{1}{\a+1}\left(1+\frac1\a\right),\qquad A_2 := \frac{\w_{\b}\GG(\b)\z(\b+1)}{(\w_\a\GG(\a+1)\z(\a+1))^\frac{\b}{\a+1}},
	\end{equation}
	and for all $k \geq 3$
	\begin{multline*}%\label{paper:eq:Ak}%\label{paper:eq:C}\label{paper:eq:b}
		A_k := K_k + \frac{c_1^\frac{1}{\a+1}}{\a}\sum_{m=1}^\ell \binom{-\a}{m}\sum_{\substack{0\le j_1,\dots,j_\ell\le m\\j_1+\ldots+j_\ell=m\\j_1+2j_2+\ldots+\ell j_\ell=k-1}} \binom{m}{j_1,j_2,\dots,j_\ell}\frac{K_2^{j_1}\cdots K_{\ell+1}^{j_\ell}}{c_1^\frac{m}{a+1}}\\
		+ \frac{c_2}{\b c_1^\frac{\b}{a+1}}\sum_{m=1}^\ell \binom{-\b}{m}\sum_{\substack{0\le j_1,\dots,j_\ell\le m\\j_1+\ldots+j_\ell=m\\j_1+2j_2+\ldots+\ell j_\ell=k-2}} \binom{m}{j_1,j_2,\dots,j_\ell}\frac{K_2^{j_1}\cdots K_{\ell+1}^{j_\ell}}{c_1^\frac{m}{a+1}}.
	\end{multline*}
	Here, $C$ and $b$ are defined in \eqref{paper:eq:mainconstants2} and \eqref{paper:eq:b}, the $\nu_j$ run through $\CM+\CN$, the $K_j$ are given in \Cref{paper:L:varrho}, and $c_1$, $c_2$, and $c_3$ run through \eqref{paper:eq:cDef}.
\end{thm}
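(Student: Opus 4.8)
The plan is to specialize the general machinery developed in Sections \ref{sec:minmajarc} and \ref{sec:proof1.4} to the two-pole situation, where all exponent sets and leading constants can be made explicit. Starting from the master formula of Lemma \ref{paper:L:mainasy}, which already gives
\[
	p_f(n) = \frac{e^{n\p_n}G_f(\p_n)}{\sqrt{2\pi}}\left(\sum_{j=1}^M \frac{d_j}{n^{\nu_j}} + O_{L,R}\!\left(n^{-\min\left\{\frac{L+1}{\a+1},\frac{R+\a}{\a+1}+\frac{\a+2}{2(\a+1)}\right\}}\right)\right),
\]
with $d_1 = (\a+1)^{-1/2}(\w_\a\GG(\a+1)\z(\a+1))^{1/(2(\a+1))}$, the remaining task is to compute the exponential factor $e^{n\p_n}G_f(\p_n)$ explicitly. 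First I would invoke Corollary \ref{paper:C:expTimesGf}, which already isolates the shape $\frac{e^{L_f'(0)}n^{L_f(0)/(\a+1)}}{a_{-\Phi_f',1}^{L_f(0)/(\a+1)}}\exp(A_1 n^{\a/(\a+1)} + \sum_{j\ge 2} A_j n^{\a_j})(1 + \sum_j E_j n^{-\eta_j})$; combining the prefactor with $d_1$ and using the constants in \eqref{paper:eq:mainconstants2}, \eqref{paper:eq:b} collapses the leading constant to $C/n^b$ exactly as in the proof of Theorem \ref{paper:T:main2}, and the polynomial correction factor becomes $1 + \sum_{j=2}^{M_1} B_j n^{-\nu_j}$ with $\nu_j$ through $\CM+\CN$. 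So the genuinely new content is identifying the finitely many exponents $\a_j$ in the exponential and pinning down the coefficients $A_k$.

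The core computation is to plug the explicit saddle-point expansion \eqref{paper:eq:rhoexplicit} from Lemma \ref{paper:L:varrho} into $n\p_n + \Phi_f(\p_n)$ and collect terms. Here I would write $\p_n = \sum_{j=1}^{\ell+1} K_j n^{-(j-1)(1-\frac{\b+1}{\a+1})-\frac{1}{\a+1}} + \frac{c_3}{(\a+1)n} + O_R(n^{-r-1})$ and substitute into the two-pole version of Lemma \ref{paper:C:Phi-Asymp},
\[
	\Phi_f(z) = \frac{c_1}{\a z^\a} + \frac{c_2}{\b z^\b} + \text{(lower-order, nonpositive powers)} - L_f(0)\Log z + L_f'(0) + O_R(|z|^{R_0}),
\]
using \eqref{paper:eq:cDef}. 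The term $n\p_n$ contributes $\sum_j K_j n^{1-(j-1)(1-\frac{\b+1}{\a+1})-\frac{1}{\a+1}} = \sum_j K_j n^{\frac{(j-1)\b}{\a+1}+\frac{j-2}{\a+1}+2-j}$ (matching the $K_k$ piece of $A_k$, and for $j=1$ giving the $\frac1\a$-free part that combines with $\Phi_f$ into $A_1$), while expanding $c_1/(\a \p_n^\a)$ and $c_2/(\b \p_n^\b)$ via the generalized binomial series $\big(\sum_{j\ge1}K_j x^{j-1}\big)^{-\a}$ produces exactly the multinomial sums $\sum_m \binom{-\a}{m}\sum \binom{m}{j_1,\dots,j_\ell} K_2^{j_1}\cdots K_{\ell+1}^{j_\ell} c_1^{-m/(\a+1)}$ displayed in the statement, the constraint $j_1+2j_2+\cdots+\ell j_\ell = k-1$ (resp. $k-2$) isolating the coefficient of the $k$-th exponential exponent. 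The $-L_f(0)\Log(\p_n)$ term, together with the $L_f(0)/(\a+1)$ shift coming from the $\frac{c_3}{(\a+1)n}$ correction (cf. the $o(1)$ computation in Lemma \ref{paper:lem:One-Pole-Saddle}), is precisely what is absorbed into the $n^{-b}$ power and the constant $C$ via Corollary \ref{paper:C:expTimesGf}; a short bookkeeping check confirms $A_2 = c_2/(\b c_1^{\b/(\a+1)}) = \w_\b\GG(\b)\z(\b+1)/(\w_\a\GG(\a+1)\z(\a+1))^{\b/(\a+1)}$ using $\GG(\b+1)/\b = \GG(\b)$. Finally, the error term is the combination of the minor-arc bound (Lemma \ref{paper:L:generalminorarcsbound}, giving $n^{-(2L-\a)/(2(\a+1))}$ after accounting for the $d_1 n^{-\nu_1}$ normalization) and the $n^{-R/(\a+1)}$ truncation error propagated through Corollary \ref{paper:C:expTimesGf} and Lemma \ref{paper:L:mainasy}.

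The main obstacle is purely organizational rather than conceptual: one must verify that the substitution of \eqref{paper:eq:rhoexplicit} into $\Phi_f$ does not generate exponents between $\frac{(k-1)\b}{\a+1}+\frac{k-2}{\a+1}+2-k$ and $0$ beyond those already listed — in particular that the contributions from the genuinely lower-order (negative-power) terms of $\Phi_f$ and from cross-terms in the binomial expansions all have exponents that are either nonpositive (hence absorbed into the $C/n^b(1+\sum B_j n^{-\nu_j})$ factor, with $\nu_j \in \CM+\CN$) or coincide with one of the $\a_k$. The hypothesis $\frac{\ell+1}{\ell}\b < \a \le \frac{\ell}{\ell-1}\b$ is exactly the combinatorial condition (established in the proof of Lemma \ref{paper:L:varrho}) ensuring the list $k=1,\dots,\ell+1$ exhausts the positive exponents arising from the pure $K_j$-tower, and one checks that powers of the $n^{-1}$-correction $\frac{c_3}{(\a+1)n}$ only ever lower exponents, so they contribute to the polynomial factor, never to the exponential. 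Granting this bookkeeping, the theorem follows by assembling Lemmas \ref{paper:Prop:Algebras}\ref{paper:Prop:Algebras:2}, \ref{paper:L:generalminorarcsbound}, \ref{paper:L:InCalc2}, \ref{paper:L:mainasy}, \ref{paper:L:varrho} and Corollaries \ref{paper:cor:rhosaddleexp1}, \ref{paper:C:expTimesGf} in the same manner as the proof of Theorem \ref{paper:T:main2}.
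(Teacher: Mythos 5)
Your proposal is correct and follows essentially the same route as the paper: starting from Lemma \ref{paper:L:mainasy}, substituting the explicit saddle-point expansion of Lemma \ref{paper:L:varrho} into $n\varrho_n+\frac{c_1}{\a\varrho_n^\a}+\frac{c_2}{\b\varrho_n^\b}$, and extracting the nonnegative-exponent part via the binomial and multinomial theorems to identify $A_1$, $A_2$, and the $A_k$, with the remaining terms absorbed into $C/n^b$ and the polynomial correction factor indexed by $\CM+\CN$. The bookkeeping point you flag about the hypothesis $\frac{\ell+1}{\ell}\b<\a\le\frac{\ell}{\ell-1}\b$ exhausting the positive exponents is exactly the role it plays in the paper's argument.
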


\begin{proof}
	Assume that $g:\N\to\C$ has an asymptotic expansion as $n\to\infty$ and denote by $[g(n)]_*$ the part with nonnegative exponents. With Lemmas \ref{paper:C:Phi-Asymp} and \ref{paper:L:mainasy} we obtain, using that $L_f$ has exactly two positive poles in $\a$ and $\b$,
	\begin{align*}
		p_f(n) & = \frac{C}{n^b} \exp\left( \left[n\varrho_n + \frac{c_1}{\a \varrho_n^\a} + \frac{c_2}{\b \varrho_n^\b}\right]_* %+ O\left(\p_n^{R_0}\right)
		\right) \left(1+\sum\limits_{j=2}^{M_1} \frac{a_j }{n^{\d_j}} + O_{L,R}\left(n^{-\min\left\{\frac{2L-\a}{2(\a+1)}, \frac{R}{\a+1}\right\}}\right)\right)
	\end{align*}
	with the $\d_j$ running through $\CM$. With the Binomial Theorem and \Cref{paper:L:varrho}, we find
	\begin{equation}\label{paper:eq:series}
		\hspace{-.1cm}\frac{c_1}{\a\p_n^\a} = \frac{c_1^\frac{1}{\a+1}}{\a}n^\frac{\a}{\a+1}\left(1 + \hspace{-.1cm}\sum_{m\ge1}\hspace{-.1cm} \binom{-\a}{m} \left(\sum_{j=2}^{\ell+1} \frac{K_jc_1^{-\frac{1}{\a+1}}}{n^{(j-1)\left(1-\frac{\b+1}{\a+1}\right)}} + \frac{c_3c_1^{-\frac{1}{\a+1}}}{(\a+1)n^\frac{\a}{\a+1}} + o\left(n^{-\frac{\a}{\a+1}}\right)\right)^m\hspace{.1cm}\right).
	\end{equation}
	By definition, $[\frac{c_1}{\a\p_n^\a}]_*$ is the part of the expansion of $\frac{c_1}{\a\p_n^\a}$ involving nonnegative powers of $n$, %as the remaining part tends to zero for $n \to \infty$. Therefore
	i.e., for $m\ge2$ in the sum on the right of \eqref{paper:eq:series} we can ignore the term
	\[
		\frac{c_3}{(\a+1)c_1^\frac{1}{\a+1}n^\frac{\a}{\a+1}} + o\left(n^{-\frac{\a}{\a+1}}\right).
	\]
	Applying the Multinomial Theorem to \eqref{paper:eq:series} gives
	\begin{multline}\label{eq:mult1}
		\frac{c_1}{\a\p_n^\a} = \frac{c_1^\frac{1}{\a+1}}{\a}n^\frac{\a}{\a+1} - \frac{c_3}{\a+1} + \frac{c_1^\frac{1}{\a+1}}{\a}\sum_{m=1}^\ell \binom{-\a}{m}\sum_{\substack{0\le j_1,j_2,\dots,j_\ell\le m\\j_1+\dots+j_\ell=m}} \binom{m}{j_1,j_2,\dots,j_\ell}\frac{K_2^{j_1}\cdots K_{\ell+1}^{j_\ell}}{c_1^\frac{m}{a+1}}\\
		\times n^{\frac{(j_1+2j_2+\dots+\ell j_\ell)\b}{\a+1}+\frac{j_1+2j_2+\dots+\ell j_\ell-1}{\a+1}-(j_1+2j_2+\dots+\ell j_\ell-1)} + o(1).
	\end{multline}
	Similarly, we have
	\begin{multline}\label{eq:mult2}
		\frac{c_2}{\b\p_n^\b} = \frac{c_2}{\b c_1^\frac{\b}{a+1}}n^\frac{\b}{\a+1} + \frac{c_2}{\b c_1^\frac{\b}{a+1}}\sum_{m=1}^\ell \binom{-\b}{m}\sum_{\substack{0\le j_1,j_2,\dots,j_\ell\le m\\j_1+\dots+j_\ell=m}} \binom{m}{j_1,j_2,\dots,j_\ell}\frac{K_2^{j_1}\cdots K_{\ell+1}^{j_\ell}}{c_1^\frac{m}{a+1}}\\
		\times n^{\frac{(j_1+2j_2+\dots+\ell j_\ell+1)\b}{\a+1}+\frac{j_1+2j_2+\dots+\ell j_\ell}{\a+1}-(j_1+2j_2+\dots+\ell j_\ell)} + o(1).
	\end{multline}
	Finally, we obtain, with Lemma \ref{paper:L:varrho},
	\begin{equation}\label{eq:mult3}
		[n\p_n]_*= K_1n^\frac{\a}{\a+1} + \sum_{m=1}^\ell K_{m+1}n^{\frac{m\b}{\a+1}+\frac{m-1}{\a+1}-(m-1)} +\frac{c_3}{\a+1}.
	\end{equation}
	Combining \eqref{eq:mult1}, \eqref{eq:mult2}, and \eqref{eq:mult3}, we find that
	\[
		\left[n\p_n+\frac{c_1}{\a\p_n^\a}+\frac{c_2}{\b\p_n^\b}\right]_* = \left(1+\frac1\a\right)c_1^\frac{1}{\a+1}n^\frac{\a}{\a+1} + \frac{c_2}{\b c_1^\frac{\b}{\a+1}}n^\frac{\b}{\a+1} + \sum_{k=2}^\ell A_{k+1}n^{\frac{k\b}{\a+1}+\frac{k-1}{\a+1}-(k-1)},
	\]
	where
	\begin{multline*}
		A_k = K_k + \frac{c_1^\frac{1}{\a+1}}{\a}\sum_{m=1}^\ell \binom{-\a}{m}\sum_{\substack{0\le j_1,j_2,\dots,j_\ell\le m\\j_1+\dots+j_\ell=m\\j_1+2j_2+\dots+\ell j_\ell=k-1}} \binom{m}{j_1,j_2,\dots,j_\ell}\frac{K_2^{j_1}\cdots K_{\ell+1}^{j_\ell}}{c_1^\frac{m}{a+1}}\\
		+ \frac{c_2}{\b c_1^\frac{\b}{a+1}}\sum_{m=1}^\ell \binom{-\b}{m}\sum_{\substack{0\le j_1,j_2,\dots,j_\ell\le m\\j_1+\dots+j_\ell=m\\j_1+2j_2+\dots+\ell j_\ell=k-2}} \binom{m}{j_1,j_2,\dots,j_\ell}\frac{K_2^{j_1}\cdots K_{\ell+1}^{j_\ell}}{c_1^\frac{m}{a+1}}.
	\end{multline*}
	Note that we have by definition of $c_1$, $c_2$ (see \eqref{paper:eq:cDef}), $K_1$, and $K_2$ (see \Cref{paper:L:varrho}),
	\begin{align*}
		A_1 &= \left(1+\frac1\a\right)c_1^\frac{1}{\a+1} = \left(1+\frac1\a\right)(\w_\a\GG(\a+1)\z(\a+1))^\frac{1}{\a+1},\\
		A_2 &= \frac{c_2}{\b c_1^\frac{\b}{a+1}} = \frac{\w_\b\GG(\b)\z(\b+1)}{(\w_\a\GG(\a+1)\z(\a+1))^\frac{\b}{\a+1}},
	\end{align*}
	which gives \eqref{paper:eq:A1}. Hence we indeed obtain, as $n\to\infty$, for suitable $M_1\in\N$
	\begin{align*}
		p_f(n) & = \frac{C}{n^b}\exp\left( A_1 n^{\frac{\a}{\a+1}} + A_2 n^{\frac{\b}{\a+1}} + \sum_{k=3}^{\ell+1} A_k n^{\frac{(k-1)\b}{\a+1}+\frac{k-2}{\a+1}-(k-2)} \right) \\
		& \hspace{4cm} \times \left(1+\sum\limits_{j=2}^{M_1} \frac{B_j}{n^{\nu_j}} + O_{L,R}\left(n^{-\min\left\{\frac{2L-\a}{2(\a+1)}, \frac{R}{\a+1}\right\}}\right)\right), 
	\end{align*}
	where the $\nu_j$ run, as in Theorem \ref{paper:T:main2}, through $\CM+\CN$. This proves the theorem. 
\end{proof}

\section{Proofs of Theorems \ref{paper:T:n-gonalpartitions}, \ref{paper:T:shortermainSO5}, and \ref{paper:T:rsu5asy}}\label{sec:proofs1.1-1.3}

We require the zeta function associated to a polynomial $P$,
\begin{align*}
 Z_P(s) := \sum_{n\geq1} \frac{1}{P(n)^s}
\end{align*}
with $P(n)>0$ for $n\in\N$. In particular, we consider $P=P_k$, where 
\begin{align*}
 P_k(w) := \frac{(k-2)w^2 - (k-4)w}{2}.
\end{align*}
The following lemma ensures that all the $P_k$ satisfy \ref{paper:main:1} with $L$ arbitrary large.

\begin{lem}\label{paper:lem:Lambdak}
	Let $k \geq 3$ be an integer and let
	\begin{align*}
		\Lambda^{[k]} := \left\{ P_k(n) : n \in \N\right\}.
	\end{align*}
	For every prime $p$, we have $|\LL^{[k]}\sm(\LL^{[k]}\cap p\N)|=\infty$.
\end{lem}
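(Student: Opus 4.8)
The plan is to exhibit, for each prime $p$, an explicit infinite family of elements of $\LL^{[k]}$ that are coprime to $p$. The key elementary input is the observation that
\[
	P_k(1) = \frac{(k-2) - (k-4)}{2} = 1,
\]
a value that is a unit modulo every prime. So it suffices to produce infinitely many $n$ with $P_k(n) \equiv P_k(1) \pmod p$, and for this I would use that $P_k$ is a fixed quadratic polynomial taking integer values, so its reduction modulo $p$ is periodic in $n$ along a suitable arithmetic progression.

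Concretely, I would compute the finite difference of $P_k$ along the shift $2p$. A short expansion gives
\[
	2\bigl(P_k(n+2p) - P_k(n)\bigr) = (k-2)\bigl((n+2p)^2 - n^2\bigr) - 2p(k-4) = 2p\bigl(2(k-2)(n+p) - (k-4)\bigr),
\]
hence $P_k(n+2p) - P_k(n) = p\bigl(2(k-2)(n+p) - (k-4)\bigr) \in p\Z$. I deliberately shift by $2p$ rather than $p$, so that the division by $2$ in the definition of $P_k$ creates no difficulty and the argument is uniform in $p$, in particular for $p=2$. Iterating the congruence $P_k(n+2p) \equiv P_k(n) \pmod p$ starting from $n=1$ then yields $P_k(1+2pm) \equiv P_k(1) = 1 \pmod p$ for all $m \in \N_0$, so $p \nmid P_k(1+2pm)$.

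Finally, $P_k$ is strictly increasing on $[1,\infty)$ for $k \ge 3$ (indeed $P_k'(w) = (k-2)w - \frac{k-4}{2} > 0$ for $w \ge 1$), so the numbers $P_k(1+2pm)$, $m \ge 0$, are pairwise distinct elements of $\LL^{[k]}$, none divisible by $p$. This gives $|\LL^{[k]} \sm (\LL^{[k]} \cap p\N)| = \infty$. I do not anticipate a genuine obstacle: the only point requiring a little care is precisely the choice of period $2p$ instead of $p$, which makes the finite-difference computation work uniformly across all primes, including $p = 2$.
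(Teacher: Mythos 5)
Your proof is correct: the finite-difference computation showing $P_k(n+2p)\equiv P_k(n)\pmod p$ is valid (the shift by $2p$ indeed handles the division by $2$ uniformly, including $p=2$), $P_k(1)=1$ is a unit mod every prime, and strict monotonicity of $P_k$ on $[1,\infty)$ gives infinitely many distinct values. The paper states this lemma without proof, and your elementary periodicity argument is exactly the kind of argument the authors evidently had in mind; there is nothing to add.
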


We next show that \ref{paper:main:3} and \ref{paper:main:4} hold.

\begin{prop}\label{paper:lem:zetaPk}
	Let $k\in\N$ with $k\ge3$.
	\begin{enumerate}[leftmargin=*,label=\rm{(\arabic*)}]
		\item\label{paper:lem:zetaPk:1} The function $Z_{P_k}$ has a meromorphic continuation to $\C$ with at most simple poles in $\frac12-\N_0$. The positive pole lies in $s=\frac12$.
		
		\item\label{paper:lem:zetaPk:3} We have $Z_{P_k}(s) \ll Q_k(|\im (s)|)$ as $|\im (s)|\to\infty$ for some polynomial $Q_k$.
	\end{enumerate}
\end{prop}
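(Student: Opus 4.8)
The plan is to obtain the meromorphic continuation of $Z_{P_k}$ and the polynomial growth in vertical strips simultaneously, by writing $P_k(n)^{-s}$ in a form amenable to a Mellin--Barnes / Euler--Maclaurin argument. First I would factor $P_k(w) = \frac{k-2}{2}\, w\left(w - \frac{k-4}{k-2}\right)$, so that for $n \ge 1$ (note $0 \le \frac{k-4}{k-2} < 1$, so the second factor is positive),
\begin{align*}
	P_k(n)^{-s} = \left(\tfrac{k-2}{2}\right)^{-s} n^{-2s}\left(1 - \tfrac{k-4}{(k-2)n}\right)^{-s}.
\end{align*}
Expanding the last factor by the generalized binomial series gives, for any $M \in \N$,
\begin{align*}
	\left(1 - \tfrac{k-4}{(k-2)n}\right)^{-s} = \sum_{j=0}^{M-1} \binom{-s}{j}\left(-\tfrac{k-4}{k-2}\right)^j n^{-j} + O_{M}\left(\tfrac{(1+|s|)^M}{n^M}\right),
\end{align*}
uniformly for $n \ge 1$ and $s$ in any fixed half-plane $\re(s) \ge -C$, after possibly discarding finitely many initial terms $n < n_0(k)$ where the binomial series may fail to converge (these contribute an entire function). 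Summing over $n$, the main terms produce $\left(\frac{k-2}{2}\right)^{-s}\sum_{j=0}^{M-1}\binom{-s}{j}\left(-\frac{k-4}{k-2}\right)^j \zeta(2s+j)$, while the tail is holomorphic for $\re(s) > \frac{1-M}{2}$.

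Since $M$ is arbitrary, this already yields part \ref{paper:lem:zetaPk:1}: $Z_{P_k}(s)$ equals $\left(\frac{k-2}{2}\right)^{-s}\sum_{j\ge 0}\binom{-s}{j}\left(-\frac{k-4}{k-2}\right)^j \zeta(2s+j)$ as a meromorphic function, and the only poles come from the poles of $\zeta(2s+j)$ at $2s+j = 1$, i.e. $s = \frac{1-j}{2} \in \frac12 - \tfrac12\N_0$. Two subtleties need checking: that no pole exceeds $\frac12$ (clear, since $j \ge 0$), that the putative poles are at worst simple (each $\zeta(2s+j)$ contributes a simple pole, and for a fixed $s_0 = \frac{1-j_0}{2}$ only the single term $j = j_0$ is singular there because $\binom{-s}{j}$ is entire and $\zeta(2s+j)$ for $j \ne j_0$ is regular at $s_0$), and that the positive pole is genuinely at $s = \frac12$ with nonzero residue --- for $j=0$ the residue is $\left(\frac{k-2}{2}\right)^{-1/2}\cdot\frac12 \ne 0$. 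One should also confirm the half-integer poles with $j$ even versus odd; when $k=4$ the coefficient $\frac{k-4}{k-2}$ vanishes and only the $j=0$ term survives, recovering $Z_{P_4}(s) = 2^{-s}\zeta(2s)$, consistent with the statement.

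For part \ref{paper:lem:zetaPk:3}, I would fix a vertical strip $\sigma_0 \le \re(s) \le \sigma_1$, choose $M$ large enough that $\frac{1-M}{2} < \sigma_0$, and estimate the finite sum term by term: $\binom{-s}{j}$ is a polynomial in $s$ of degree $j$, hence $\ll_j (1+|\im(s)|)^j$ in the strip, and each $\zeta(2s+j)$ satisfies $\zeta(2s+j) \ll (1+|\im(s)|)^{m_{I,j}}$ by Proposition \ref{paper:ZetaFunc} \ref{paper:ZF:3}; the holomorphic tail is bounded. Taking $Q_k$ to be a monomial of degree $\max_j(j + m_{I,j})$ (times a constant) gives the claim. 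The main obstacle is bookkeeping rather than conceptual: ensuring the binomial expansion is valid term-by-term after removing the finitely many small $n$, and carefully tracking that the error term in the truncated expansion is holomorphic and polynomially bounded in the strip once $M$ is chosen large; this is exactly the kind of "straightforward but careful" estimate that Proposition \ref{paper:ZetaFunc} is designed to feed into.
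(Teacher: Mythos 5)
Your route is genuinely different from the paper's. The paper obtains the continuation, the location and order of the poles, the holomorphy at the nonpositive integers, and the vertical polynomial growth by citing the relevant theorems of Matsumoto--Weng \cite{MatWen}, whereas you reprove everything from scratch by factoring $P_k(w)=\tfrac{k-2}{2}w\bigl(w-\tfrac{k-4}{k-2}\bigr)$ and expanding $\bigl(1-\tfrac{k-4}{(k-2)n}\bigr)^{-s}$ binomially. This is a sound, self-contained strategy (it is essentially the mechanism behind the cited results), and your treatment of the residue at $s=\frac12$ (which matches $\sqrt{1/(2(k-2))}$ from Proposition \ref{paper:lem:zetapkRes}) and of the polynomial bounds in part \ref{paper:lem:zetaPk:3} is fine. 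Two small slips: for $k=3$ one has $\tfrac{k-4}{k-2}=-1$, so your parenthetical ``$0\le\tfrac{k-4}{k-2}<1$'' fails there (the factor is still positive, and your device of discarding $n<n_0(k)$ handles the borderline convergence at $n=1$); and the holomorphic tail is polynomially bounded in the strip rather than bounded, which is all you need.

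The one genuine gap is precisely the point you defer: your expansion produces candidate simple poles at every $s=\tfrac{1-j}{2}$ with $j\in\N_0$, i.e.\ in $\tfrac12-\tfrac12\N_0$, while the proposition asserts poles only in $\tfrac12-\N_0$, so you must rule out poles at $s=-m$, $m\in\N_0$ (coming from odd $j=2m+1$). This is not optional: in the application one takes $R$ arbitrarily large, and the hypothesis of Theorem \ref{paper:T:main2} requires $L_f^*(s)=\Gamma(s)\zeta(s+1)L_f(s)$ to have only simple poles apart from a possible double pole at $0$; since $\Gamma$ already has simple poles at $-\N_0$ and $\zeta(1-m)$ does not vanish in general, a pole of $Z_{P_k}$ at a negative integer would violate that hypothesis. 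Fortunately the check is one line in your framework: at $s=-m$ the only singular term of the finite sum is $j=2m+1$, and its residue carries the factor $\binom{-s}{j}\big|_{s=-m}=\binom{m}{2m+1}=\frac{m(m-1)\cdots(m-2m)}{(2m+1)!}=0$, because the falling factorial contains the factor $m-m$. Hence these singularities are removable and your argument closes; the paper instead quotes Theorem C of \cite{MatWen} for exactly this point.
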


\begin{proof}
	(1) The meromorphic continuation of $Z_{P_k}$ to $\C$ follows by \cite[Theorem B]{MatWen}. By \cite[Theorem A (ii)]{MatWen} the only possible poles (of order at most one) are located at $\frac12-\frac12\N_0$. Holomorphicity in $-\N_0$ is a direct consequence of \cite[Theorem C]{MatWen}. Finally, note that $P_k(n) \ll_k n^2$. Thus, as $x\to\infty$,
	\begin{align*}
		\sum_{1\leq n \leq x} \frac{1}{P_k(n)^{\frac{1}{2}}} \gg_k \sum_{1\leq n \leq x} \frac{1}{n}.
	\end{align*}
	This proves the existence of a pole in $s = \frac12$, completing the proof.\\
	(2) This result follows directly by \cite[Proposition 1 (iii)]{MatWen}.
\end{proof}

To apply Theorem \ref{paper:T:main2}, it remains to compute $Z_{P_k}(0)$ and $Z_{P_k}'(0)$, as well as $\Res_{s=\frac12} Z_{P_k}(s)$.

\begin{prop}\label{paper:lem:zetapkRes}
	Let $k\in\mathbb{N}$ with $k\geq3$.
	\begin{enumerate}[leftmargin=*,label=\rm{(\arabic*)}]
		\item\label{paper:lem:zetapkRes:1} We have $Z_{P_k}(0)= \frac{1}{2-k}$ and
		\[
			Z_{P_k}'(0) = \frac{\log\left(\frac{k-2}{2}\right)}{k-2} + \log\left(\GG\left(\frac{2}{k-2}\right)\right) - \log(2\pi).
		\]
		
		\item\label{paper:lem:zetapkRes:2} We have $\Res_{s=\frac12}Z_{P_k}(s)=\sqrt{\frac{1}{2(k-2)}}$. 
 \end{enumerate}
\end{prop}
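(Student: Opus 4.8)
The plan is to reduce everything to the Hurwitz zeta function by completing the square in $P_k$. Write $P_k(w) = \frac{k-2}{2}\bigl(w - \frac{k-4}{2(k-2)}\bigr)^2 - \frac{(k-4)^2}{8(k-2)}$; however, a cleaner route is to factor $P_k(n) = \frac{k-2}{2} \cdot n \cdot \bigl(n - \frac{k-4}{k-2}\bigr) = \frac{k-2}{2} n \bigl(n + \frac{2}{k-2} - 1\bigr)$, so that $P_k(n) = \frac{k-2}{2} n(n+\theta-1)$ with $\theta := \frac{2}{k-2}$. This exhibits $Z_{P_k}$ as a double-shifted zeta series. First I would use the integral representation valid for $\re(s)$ large,
\[
	Z_{P_k}(s) = \left(\tfrac{k-2}{2}\right)^{-s} \sum_{n\ge1} \frac{1}{n^s (n+\theta-1)^s} = \left(\tfrac{k-2}{2}\right)^{-s} \frac{1}{\Gamma(s)^2}\int_0^\infty\!\!\int_0^\infty (xy)^{s-1} \sum_{n\ge1} e^{-n(x+y)} e^{-(\theta-1)y}\,dx\,dy,
\]
and then extract the pole and special values. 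A more economical approach for part (1) is to write $\frac{1}{n(n+\theta-1)} = \frac{1}{\theta-1}\bigl(\frac1n - \frac{1}{n+\theta-1}\bigr)$ when $\theta\ne1$ (i.e.\ $k\ne4$), but this only works for $s=1$; for general $s$ one instead compares $Z_{P_k}(s)$ with $\bigl(\frac{k-2}{2}\bigr)^{-s}\sum_n n^{-2s}$ and shows the difference is holomorphic and computes its value at $s=0$, or — cleanest — uses the binomial expansion $(n+\theta-1)^{-s} = n^{-s}\sum_{j\ge0}\binom{-s}{j}(\theta-1)^j n^{-j}$ to get
\[
	Z_{P_k}(s) = \left(\tfrac{k-2}{2}\right)^{-s}\sum_{j\ge0}\binom{-s}{j}(\theta-1)^j \zeta(2s+j),
\]
valid as an identity of meromorphic functions (the series converges locally uniformly away from poles since $\binom{-s}{j}(\theta-1)^j$ decays).

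From this binomial expansion, part (1) is immediate: at $s=0$ only the $j=0$ term survives the evaluation since $\binom{0}{j}=0$ for $j\ge1$ and $\binom{0}{0}=1$, giving $Z_{P_k}(0) = \zeta(0) = -\frac12$. Wait — that contradicts the claimed value $\frac{1}{2-k}$, so the naive term-by-term evaluation is too cavalier: the terms $\binom{-s}{j}(\theta-1)^j\zeta(2s+j)$ with $j=1$ have $\zeta(2s+1)$ which has a pole at $s=0$, and $\binom{-s}{1} = -s$ vanishes there, so one gets a finite contribution $(\theta-1)\cdot\lim_{s\to0}(-s)\zeta(2s+1) = (\theta-1)\cdot(-\frac12) = \frac{1-\theta}{2}$. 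Hence $Z_{P_k}(0) = -\frac12 + \frac{1-\theta}{2} = -\frac{\theta}{2} = -\frac{1}{k-2} = \frac{1}{2-k}$, as claimed. So the key subtlety — and the main bookkeeping obstacle — is tracking which terms $\binom{-s}{j}(\theta-1)^j\zeta(2s+j)$ contribute in the limit: for $s\to0$ only $j=0$ and $j=1$ matter (for $j\ge2$, $\binom{-s}{j}$ is regular and $\zeta(2s+j)$ is regular, and $\binom{0}{j}=0$); for $s\to\frac12$ only $j=0$ matters (that is the $\zeta(2s)$ term with its pole at $s=\frac12$).

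For the residue in part (2): near $s=\frac12$, only the $j=0$ summand $\bigl(\frac{k-2}{2}\bigr)^{-s}\zeta(2s)$ is singular, with $\zeta(2s) = \frac{1}{2(s-\frac12)} + \gamma + O(s-\frac12)$ by Proposition~\ref{paper:ZetaFunc}\ref{paper:ZF:4} (chain rule on the $\frac{1}{(2s)-1}$ pole). Therefore $\Res_{s=\frac12} Z_{P_k}(s) = \bigl(\frac{k-2}{2}\bigr)^{-1/2}\cdot\frac12 = \sqrt{\frac{1}{2(k-2)}}$, matching the claim. For the derivative $Z_{P_k}'(0)$ in part (1), I would differentiate the binomial expansion and evaluate at $s=0$. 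The $j=0$ term contributes $\frac{d}{ds}\bigl[\bigl(\frac{k-2}{2}\bigr)^{-s}\zeta(2s)\bigr]_{s=0} = -\log\bigl(\frac{k-2}{2}\bigr)\zeta(0) + 2\zeta'(0) = \frac12\log\bigl(\frac{k-2}{2}\bigr) - \log(2\pi)$ using $\zeta(0)=-\frac12$, $\zeta'(0) = -\frac12\log(2\pi)$. The $j=1$ term requires expanding $\binom{-s}{1}(\theta-1)\zeta(2s+1) = -s(\theta-1)\bigl(\frac{1}{2s}+\gamma+O(s)\bigr) = (\theta-1)\bigl(-\frac12 - \gamma s + O(s^2)\bigr)$, whose derivative at $0$ is $-\gamma(\theta-1) = \gamma(1-\theta)$; combined with the $\bigl(\frac{k-2}{2}\bigr)^{-s}$ prefactor expansion this also picks up $-\log\bigl(\frac{k-2}{2}\bigr)\cdot\frac{1-\theta}{2}$. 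The $j\ge2$ terms contribute $\frac{d}{ds}\bigl[\binom{-s}{j}(\theta-1)^j\zeta(2s+j)\bigr]_{s=0}$; since $\binom{-s}{j}\big|_{s=0}=0$ this equals $\frac{d}{ds}\binom{-s}{j}\big|_{s=0}(\theta-1)^j\zeta(j)$. Using $\frac{d}{ds}\binom{-s}{j}\big|_{s=0} = \frac{(-1)^j}{j}\cdot\frac{1}{(j-1)!}\cdot(j-1)! = \frac{(-1)^{j}}{j}$ (the derivative of $\frac{(-s)(-s-1)\cdots(-s-j+1)}{j!}$ at $0$), the $j\ge2$ part sums to $\sum_{j\ge2}\frac{(-1)^j}{j}(\theta-1)^j\zeta(j) = \sum_{j\ge2}\frac{(1-\theta)^j}{j}\zeta(j)$. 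Now I would invoke the standard identity $\sum_{j\ge2}\frac{\zeta(j)}{j}x^j = -\gamma x - \log\Gamma(1+x)$ (for $|x|<1$, which holds since $0<1-\theta<1$ for $k\ge3$, $k\ne4$, and the $k=4$ case $\theta=1$ is handled by continuity or directly as $Z_{P_4}(s)=\zeta(2s)$), valid via the logarithmic derivative of the Weierstrass product for $\Gamma$. Collecting all pieces and simplifying, using $\log\Gamma(1+(1-\theta)) = \log\Gamma(2-\theta) = \log((1-\theta)\Gamma(1-\theta))$ and reorganizing, should yield exactly $\frac{\log((k-2)/2)}{k-2} + \log\Gamma\bigl(\frac{2}{k-2}\bigr) - \log(2\pi)$ after noting $\Gamma(1-\theta)$ relates to $\Gamma(\theta) = \Gamma\bigl(\frac{2}{k-2}\bigr)$ — here one must be careful, since $\Gamma(1-\theta)$ and $\Gamma(\theta)$ are genuinely different; the correct route is that the $\theta$ in the final answer comes out directly from the combination $-\frac{1-\theta}{2}\log(\frac{k-2}{2}) + \frac12\log(\frac{k-2}{2}) = \frac{\theta}{2}\log(\frac{k-2}{2})$ and separately the reflection/shift relations must produce $\Gamma(\theta)$ rather than $\Gamma(1-\theta)$. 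The anticipated main obstacle is precisely this final algebraic reconciliation: making the $\Gamma$-argument come out as $\frac{2}{k-2}$ rather than $\frac{k-4}{k-2}$, which I expect requires either using the integral representation $Z_{P_k}(s) = \bigl(\frac{k-2}{2}\bigr)^{-s}\frac{1}{\Gamma(s)^2}\int_0^\infty (\text{something})$ and the known Laurent expansion of $\frac{1}{\Gamma(s)^2} = s^2(1 + 2\gamma s + \cdots)$ together with a beta-integral, or more cleanly the Lerch/Barnes-type formula $\sum_{n\ge1}\frac{1}{n^s(n+a)^s}$ whose value and derivative at $s=0$ are classical and directly give $\Gamma(1+a) = \Gamma(\theta)$ with $a=\theta-1$. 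I would adopt whichever of these makes the $\Gamma\bigl(\frac{2}{k-2}\bigr)$ appear most transparently, most likely the integral representation route since it bypasses the delicate cancellations.
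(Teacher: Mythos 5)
Your route is genuinely different from the paper's: the paper simply quotes Matsumoto--Weng's general results on zeta functions attached to two polynomials (their Theorems D and E for the values at $0$, and their equation (16) for the residue), whereas you give a self-contained derivation from the factorization $P_k(n)=\frac{k-2}{2}\,n\,(n+\theta-1)$, $\theta=\frac{2}{k-2}$, and the binomial expansion
\[
	Z_{P_k}(s)=\left(\tfrac{k-2}{2}\right)^{-s}\sum_{j\ge0}\binom{-s}{j}(\theta-1)^j\zeta(2s+j).
\]
This is a perfectly viable alternative, and your bookkeeping of which $j$ contribute at $s=0$ and $s=\tfrac12$ is correct; in particular the values $Z_{P_k}(0)=-\tfrac{\theta}{2}=\tfrac{1}{2-k}$ and $\operatorname{Res}_{s=1/2}Z_{P_k}(s)=\bigl(\tfrac{k-2}{2}\bigr)^{-1/2}\cdot\tfrac12$ come out exactly as you say. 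What your approach buys is independence from the Matsumoto--Weng machinery; what it costs is that you must justify the continuation and the term-by-term differentiation yourself.

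Two concrete issues. First, the ``main obstacle'' you anticipate at the end is an artifact of a sign error: the correct identity is $\log\Gamma(1+x)=-\gamma x+\sum_{j\ge2}\frac{(-1)^j\zeta(j)}{j}x^j$, equivalently $\sum_{j\ge2}\frac{\zeta(j)}{j}x^j=\log\Gamma(1-x)-\gamma x$, not $-\gamma x-\log\Gamma(1+x)$. With the correct version, your $j\ge2$ contribution $\sum_{j\ge2}\frac{(-1)^j}{j}(\theta-1)^j\zeta(j)$ equals $\log\Gamma(1+(\theta-1))+\gamma(\theta-1)=\log\Gamma(\theta)+\gamma(\theta-1)$; the $\gamma$-term cancels exactly against the $-\gamma(\theta-1)$ from the $j=1$ term, and the $j=0$ term gives $2\zeta'(0)=-\log(2\pi)$. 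So $F'(0)=\log\Gamma\bigl(\tfrac{2}{k-2}\bigr)-\log(2\pi)$ with no reconciliation needed --- the argument of $\Gamma$ is $1+(\theta-1)=\theta$ from the start, and there is no need to retreat to an integral representation. Second, your convergence claim fails at $k=3$: there $\theta-1=1$, so $\binom{-s}{j}(\theta-1)^j\sim(-1)^jj^{s-1}/\Gamma(s)$ does not decay geometrically, the $n=1$ term of the double sum converges only conditionally, and there is no region where the interchange of the $n$- and $j$-sums is absolutely justified (absolute convergence of the $n=1$ binomial series needs $\operatorname{Re}(s)<0$, while $\zeta(2s)$ needs $\operatorname{Re}(s)>\tfrac12$). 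This is repairable --- split off the $n=1$ term and expand only for $n\ge2$, replacing $\zeta(2s+j)$ by $\zeta(2s+j)-1$, or treat $Z_{P_3}(s)=2^s\sum_n(n(n+1))^{-s}$ separately --- but as written the uniform-convergence assertion is false for triangular numbers, which is one of the three cases the paper actually uses in Corollary \ref{paper:cor:n-gonalpartitions}.
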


\begin{proof}
	\ref{paper:lem:zetapkRes:1} Since the roots of $P_k$ are not in $\R_{\ge1}$, we may use \cite[Theorem D]{MatWen} to obtain that $Z_{P_k}(0)=\frac{1}{2-k}$. For the derivative, one applies \cite[Theorem E]{MatWen} yielding
	\[
		Z_{P_k}'(0) = \frac{\log\left(\frac{k-2}{2}\right)}{k-2} + \log\left(\GG\left(\frac{2}{k-2}\right)\right) - \log(2\pi).
	\]
	%with \textit{Hurwitz' multiple zeta function}
	%\begin{align*}
		% \zeta(s_1,\dots,s_d;\theta_1,\dots,\theta_d) := \sum\limits_{m_1>\dots>m_d>0} (m_1+\theta_1)^{-s_1}\cdots(m_d+\theta_d)^{-s_d}.
	%\end{align*}
	
	%Since $\z(s)$ and $\z(s;\th)$ for all $\th\in\C\setminus(-\N)$ both are holomorphic in $s=\frac12$, also their product is. %and hence
	%\begin{align*}
		% \Res_{s=\frac{1}{2}} Z_{P_k}(s) = -\left(\frac{2}{k-2}\right)^\frac{1}{2}\Res_{s=\frac{1}{2}} \left(\zeta\left(s,s;0,-\frac{k-4}{k-2}\right) + \zeta\left(s,s;-\frac{k-4}{k-2},0\right)\right).
	%\end{align*}
	%Using \cite[Theorem 1.4]{KelMas}, yields
	%\begin{align*}
		% \Res_{s=\frac{1}{2}} \zeta(s,s;\theta_1,\theta_2) = \theta_1-\theta_2 -\frac{1}{2}
	%\end{align*}
	%and hence we have
	\noindent\ref{paper:lem:zetapkRes:2} Since $Z_{P_k}(s)=(\frac{2}{k-2})^s\sum\limits_{n\ge1}(n-\frac{k-4}{k-2})^{-s}n^{-s}$, the result follows as the sum has residue $\frac12$ at $s=\frac12$ by equation (16) of \cite{MatWen}.
\end{proof}

The previous three lemmas are used to prove Theorem \ref{paper:T:n-gonalpartitions}.

%\begin{thm} \label{paper:T:asymptrisqupen}
%	We have for every $N\in\N$
%	\begin{multline*}
%		p_k(n) = \frac{\left(\frac{k-2}{2}\right)^\frac{1}{k-2}\GG\left(\frac{2}{k-2}\right) \left(\sqrt{\frac{1}{2(k-2)}\pi}\z\left(\frac32\right)\right)^\frac{k}{3k-6}} {2\sqrt{3}\pi^\frac32n^\frac{5k-9}{6k-12}}\\
%		\times \exp\left(3\left(\sqrt{\tfrac{1}{2(k-2)}\pi}\z\left(\tfrac32\right)\right)^\frac23 n^\frac13\right)\left(1+\sum_{j=1}^N \frac{B_j}{n^{\frac{j}{3}}}+O_N\left(n^{-\frac{N+1}{3}}\right)\right),
%	\end{multline*}
%	where the coefficients $B_j$ can be calculated explicitly. In particular, one obtains as $n\to\infty$
%	\begin{align*}
%		p_{k}(n) \sim \frac{\left(\frac{k-2}{2}\right)^\frac{1}{k-2}\GG\left(\frac{2}{k-2}\right)\left(\sqrt{\frac{1}{2(k-2)}\pi}\zeta\left(\frac{3}{2}\right)\right)^\frac{k}{3k-6}}{2\sqrt{3}\pi^\frac{3}{2}n^{\frac{5k-9}{6k-12}}} \exp\left(3\left(\sqrt{\frac{1}{2(k-2)}\pi}\zeta\left(\tfrac{3}{2}\right)\right)^\frac{2}{3} n^\frac{1}{3}\right).
%	\end{align*}
%	and 
%\end{thm}

\begin{proof}[Proof of Theorem \ref{paper:T:n-gonalpartitions}]
	We may apply \Cref{paper:T:main2} as \Cref{paper:lem:Lambdak} and \Cref{paper:lem:zetaPk} ensure that conditions \ref{paper:main:1}--\ref{paper:main:4} are satisfied. Hence, one obtains an asymptotic formula for $p_k(n)$. The constants occurring in Theorem \ref{paper:T:main2} are computed using \eqref{paper:eq:mainconstants2}, \eqref{paper:eq:b}, and \Cref{paper:lem:zetapkRes}. That the exponential consists only of the term $A_1 n^\frac{1}{3}$ follows by \Cref{paper:T:main2}, since $Z_{P_k}(s)$ has exactly one positive pole, lying in $s=\frac12$. Note that we are allowed to choose $L$ and $R$ arbitrarily large due to \Cref{paper:lem:Lambdak} and \Cref{paper:lem:zetaPk} \ref{paper:lem:zetaPk:1}. 
\end{proof}

We consider some special cases of \Cref{paper:T:n-gonalpartitions}. 

\begin{cor}
\label{paper:cor:n-gonalpartitions}
 For triangular numbers, squares, and pentagonal numbers, respectively, we have
	\begin{align*}
		p_3(n) &\sim \frac{\z\left(\frac32\right)}{2^\frac72\sqrt3\pi n^\frac32}\exp\left(\frac32\pi^\frac13\z\left(\frac32\right)^\frac23n^\frac13\right),\qquad p_4(n) \sim \frac{\z\left(\frac32\right)^\frac23}{2^\frac73\sqrt3\pi^\frac76n^\frac76} \exp\left(\frac{3}{2^\frac43}\pi^\frac13\z\left(\frac32\right)^\frac23n^\frac13\right),\\
		p_5(n) &\sim \frac{\GG\left(\frac23\right)\z\left(\frac32\right)^\frac59} {2^\frac{13}{6}3^\frac49\pi^\frac{11}{9}n^\frac{19}{18}} \exp\left(\frac{3^\frac23}{2}\pi^\frac13\z\left(\frac32\right)^\frac23n^\frac13\right).
	\end{align*}
\end{cor}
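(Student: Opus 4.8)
The plan is to specialize \Cref{paper:T:n-gonalpartitions} to $k\in\{3,4,5\}$ and simplify the constants. Since the corollary asserts only the leading-order behavior, it suffices to take $N=1$ there: the sum $\sum_{j=1}^{1}B_{j,k}n^{-j/3}=B_{1,k}n^{-1/3}$ and the error $O_1(n^{-2/3})$ are both $o(1)$, so the parenthesis in \Cref{paper:T:n-gonalpartitions} tends to $1$ and
\[
	p_k(n)\sim\frac{C(k)}{n^{\frac{5k-6}{6(k-2)}}}\exp\!\left(A(k)\,n^{\frac13}\right),\qquad n\to\infty .
\]
The hypotheses needed to invoke \Cref{paper:T:n-gonalpartitions} for each fixed $k$ are supplied by \Cref{paper:lem:Lambdak} and \Cref{paper:lem:zetaPk}, and because $Z_{P_k}$ has a single positive pole, at $s=\tfrac12$, the exponential genuinely collapses to the one term $A(k)n^{1/3}$, as already recorded in the proof of \Cref{paper:T:n-gonalpartitions}.

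It then remains to evaluate
\[
	C(k)=\frac{(k-2)^{\frac{6-k}{6(k-2)}}\GG\!\left(\tfrac{2}{k-2}\right)\z(\tfrac32)^{\frac{k}{3(k-2)}}}{2^{\frac{3k-2}{2(k-2)}}\sqrt3\,\pi^{\frac{4k-9}{3(k-2)}}},\qquad A(k)=\frac32\left(\sqrt{\tfrac{\pi}{k-2}}\,\z(\tfrac32)\right)^{\frac23}
\]
at $k=3,4,5$. For $k=3$ one has $k-2=1$, $\GG(2)=1$, and the exponents reduce to $\tfrac{6-k}{6(k-2)}=\tfrac12$, $\tfrac{3k-2}{2(k-2)}=\tfrac72$, $\tfrac{4k-9}{3(k-2)}=1$, $\tfrac{k}{3(k-2)}=1$, $\tfrac{5k-6}{6(k-2)}=\tfrac32$, which gives the stated formula, with $A(3)=\tfrac32\pi^{1/3}\z(\tfrac32)^{2/3}$. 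For $k=4$ one has $k-2=2$, $\GG(1)=1$, and the only nontrivial point is combining the numerator factor $2^{1/6}=(k-2)^{\frac{6-k}{6(k-2)}}$ with $2^{-5/2}=2^{-\frac{3k-2}{2(k-2)}}$ to obtain $2^{-7/3}$, while $A(4)=\tfrac32\cdot 2^{-1/3}\pi^{1/3}\z(\tfrac32)^{2/3}=\tfrac{3}{2^{4/3}}\pi^{1/3}\z(\tfrac32)^{2/3}$. For $k=5$ one has $k-2=3$ and the Gamma factor $\GG(\tfrac23)$; here the key simplification is $\sqrt3=3^{1/2}$, so that $3^{1/18}/3^{1/2}=3^{-4/9}$ moves a factor $3^{4/9}$ into the denominator, and together with $\tfrac{3k-2}{2(k-2)}=\tfrac{13}{6}$, $\tfrac{4k-9}{3(k-2)}=\tfrac{11}{9}$, $\tfrac{k}{3(k-2)}=\tfrac59$, $\tfrac{5k-6}{6(k-2)}=\tfrac{19}{18}$, $A(5)=\tfrac{3^{2/3}}{2}\pi^{1/3}\z(\tfrac32)^{2/3}$, this yields the claimed expression.

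Since every step is a direct substitution into \Cref{paper:T:n-gonalpartitions}, there is essentially no obstacle; the only care required is the bookkeeping of rational exponents of $2$, $3$, $\pi$, and $\z(\tfrac32)$ — in particular, remembering to absorb $\sqrt3$ into the $(k-2)$-power in the numerator of $C(k)$ in the case $k=5$.
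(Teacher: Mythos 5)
Your proposal is correct and matches the paper's (implicit) proof exactly: the corollary is obtained by direct substitution of $k=3,4,5$ into the constants $C(k)$ and $A(k)$ of Theorem \ref{paper:T:n-gonalpartitions}, and all of your exponent computations check out.
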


The next lemma shows that $\prod_{j,k\ge1}({1-q^\frac{jk(j+k)(j+2k)}{6}})^{-1}$ satisfies \ref{paper:main:1} for $L$ arbitrarily large.% of Theorem \ref{paper:T:main2}.

\begin{lem}\label{paper:lem:SO5conditions}
	Let $f \colon \N \to \N_0$ be defined by
	\[
		f(n) := \left|\left\{(j,k)\in\N^2 : \frac{jk(j+k)(j+2k)}{6}=n\right\}\right|.
	\]
	Then, for all primes $p$, we have $|\Lambda \setminus (\Lambda\cap p\N )| = \infty$.
\end{lem}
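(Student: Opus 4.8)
The plan is to produce an explicit infinite subfamily of $\Lambda$ none of whose members is divisible by $p$. First I would restrict the two summation parameters to the slice $k=1$: there the quantity $\frac{jk(j+k)(j+2k)}{6}$ collapses to the tetrahedral number $T_j:=\frac{j(j+1)(j+2)}{6}=\binom{j+2}{3}$, so that the pair $(j,1)$ witnesses $f(T_j)\ge 1$ and hence $T_j\in\Lambda$ for every $j\in\N$. Since $T_{j+1}/T_j=(j+3)/j>1$, the sequence $(T_j)_{j\ge 1}$ is strictly increasing, so it is injective; it therefore suffices to find infinitely many $j$ with $p\nmid T_j$.

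Fix a prime $p$. I would invoke Kummer's theorem: $v_p\!\left(\binom{j+2}{3}\right)$ equals the number of carries produced when $j-1$ and $3$ are added in base $p$. Since $3<p^2$ holds for every prime $p$, the base-$p$ digits of $3$ occupy only the two least significant positions; consequently, whenever $j\equiv 1\pmod{p^2}$ the two least significant base-$p$ digits of $j-1$ both vanish, the addition produces no carry, and hence $v_p(T_j)=0$, i.e.\ $p\nmid T_j$. The arithmetic progression $j\equiv 1\pmod{p^2}$ contains infinitely many positive integers, and by the injectivity above the corresponding $T_j$ are pairwise distinct; this yields infinitely many elements of $\Lambda$ not divisible by $p$, which is exactly $|\Lambda\setminus(\Lambda\cap p\N)|=\infty$.

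If one prefers to avoid Kummer's theorem, the same conclusion follows from a direct $p$-adic valuation count for $j\equiv 1\pmod{p^2}$: if $p\ge 5$ then $p\nmid 6$ and none of $j,j+1,j+2$ is divisible by $p$, so $v_p(T_j)=0$; if $p=3$ then $v_3(6)=1$ and among $j,j+1,j+2$ only $j+2\equiv 3\pmod 9$ contributes, with $v_3(j+2)=1$; if $p=2$ then $v_2(6)=1$ and only $j+1\equiv 2\pmod 4$ contributes, with $v_2(j+1)=1$. In every case the valuation is $0$, so $p\nmid T_j$.

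I do not expect a genuine obstacle here. The only point requiring a little care is that the denominator $6$ shares prime factors with the small primes $p\in\{2,3\}$, which is why one works modulo $p^2$ rather than modulo $p$; the Kummer formulation (or the short case split above) handles the small and large primes uniformly.
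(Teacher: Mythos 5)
Your proof is correct and complete: restricting to the slice $k=1$ exhibits the tetrahedral numbers $T_j=\binom{j+2}{3}$ as a strictly increasing subfamily of $\Lambda$, and for $j\equiv 1\pmod{p^2}$ both your Kummer argument and your direct valuation count (which correctly handles the interaction of the denominator $6$ with $p\in\{2,3\}$) show $p\nmid T_j$. The paper states this lemma without proof, and your argument is the natural one; no gaps.
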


For investigating the function $\z_{\SO (5)}$, we need the \textit{Mordell--Tornheim zeta function}, defined by
\begin{align*}
 \zeta_{\MT,2}(s_1,s_2,s_3) := \sum_{m,n\geq1} m^{-s_1} n^{-s_2} (m+n)^{-s_3}.
\end{align*}
By \cite{Mat}, for $\re(s)>1$ and some $-\re(s)<c<0$ we get a relation between $\z_{\MT,2}$ and $\z_{\SO(5)}$ via
\begin{align} \label{paper:eq:zetaso5}
 \zeta_{\SO(5)}(s) = \frac{6^s}{2\pi i \Gamma(s)} \int_{c - i\infty}^{c + i\infty} \Gamma(s+z)\Gamma(-z) \zeta_{\MT,2}(s,s-z,2s+z)dz.
\end{align}
We have the following theorem.

\begin{thm}[\cite{Mat2}, Theorem 1] \label{paper:T:MT2} The function $\zeta_{\MT, 2}$ has a meromorphic continuation to $\C^3$ and its singularities satisfy
$
 s_1 + s_3 = 1-\ell, 
 s_2 + s_3 = 1-\ell, 
 s_1 + s_2 + s_3 = 2,
$
with $\ell \in \N_0$.
\end{thm}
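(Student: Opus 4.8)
The plan is to derive the meromorphic continuation from a Mellin--Barnes integral, the standard tool for multiple zeta-functions. Starting in a region of absolute convergence of the defining series, say $\re(s_3)>0$, $\re(s_1+s_3)>1$, and $\re(s_2)>1$ (which lies inside the convergence region of $\zeta_{\MT,2}$), I would write $(m+n)^{-s_3}=m^{-s_3}(1+n/m)^{-s_3}$ and insert the classical identity
\[
	(1+u)^{-s}=\frac{1}{2\pi i}\int_{\re(z)=c}\frac{\Gamma(s+z)\Gamma(-z)}{\Gamma(s)}\,u^{z}\,dz,\qquad -\re(s)<c<0,\quad |\arg u|<\pi,
\]
with $u=n/m$, $s=s_3$, and $c$ chosen close to $0$. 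Interchanging summation and integration -- justified by absolute convergence together with the Stirling bound for $\Gamma$ on vertical lines (\Cref{paper:GammaCollect}) and the polynomial bound for $\zeta$ (\Cref{paper:ZetaFunc}) -- produces
\[
	\zeta_{\MT,2}(s_1,s_2,s_3)=\frac{1}{2\pi i\,\Gamma(s_3)}\int_{\re(z)=c}\Gamma(s_3+z)\Gamma(-z)\,\zeta(s_1+s_3+z)\,\zeta(s_2-z)\,dz.
\]

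Next I would shift the line of integration to $\re(z)=M$ for a large half-integer $M$. When $\re(s_1+s_3)$ and $\re(s_3)$ are large, the poles of $\Gamma(s_3+z)$ and of $\zeta(s_1+s_3+z)$ lie to the left of $\re(z)=c$, so the only poles met are $z=\ell$ with $0\le\ell<M$ (from $\Gamma(-z)$) and $z=s_2-1$ (from $\zeta(s_2-z)$). Collecting residues gives
\begin{align*}
	\zeta_{\MT,2}(s_1,s_2,s_3) &= \frac{1}{\Gamma(s_3)}\sum_{0\le\ell<M}\frac{(-1)^{\ell}}{\ell!}\,\Gamma(s_3+\ell)\,\zeta(s_1+s_3+\ell)\,\zeta(s_2-\ell)\\
	&\quad+\frac{\Gamma(s_2+s_3-1)\,\Gamma(1-s_2)}{\Gamma(s_3)}\,\zeta(s_1+s_2+s_3-1)+R_M,
\end{align*}
where $R_M$ is the integral over $\re(z)=M$. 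Using the same growth bounds, $R_M$ converges absolutely and is holomorphic for $\re(s_1+s_3)>1-M$ and $\re(s_3)>-M$ (with the remaining variable in any fixed bounded set); since $M$ is arbitrary and every residue term is meromorphic on all of $\C^3$, the right-hand side is the sought meromorphic continuation of $\zeta_{\MT,2}$ to $\C^{3}$.

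Finally I would read off the singular loci. In the $\ell$-th summand, $\Gamma(s_3+\ell)/\Gamma(s_3)=s_3(s_3+1)\cdots(s_3+\ell-1)$ is a polynomial, so its only poles come from $\zeta(s_1+s_3+\ell)$, along $s_1+s_3=1-\ell$, and from $\zeta(s_2-\ell)$, along the apparent locus $s_2=1+\ell$. The $z=s_2-1$ term is singular along $s_2+s_3=1-\ell$ (poles of $\Gamma(s_2+s_3-1)$), along $s_1+s_2+s_3=2$ (pole of $\zeta(s_1+s_2+s_3-1)$), and again along $s_2=1+\ell$ (poles of $\Gamma(1-s_2)$). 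A direct residue computation shows that at $s_2=1+\ell$ the contribution of the $\ell$-th summand is $\frac{(-1)^{\ell}}{\ell!}\frac{\Gamma(s_3+\ell)}{\Gamma(s_3)}\zeta(s_1+s_3+\ell)$ and that of the $z=s_2-1$ term is its negative, so they cancel; and $R_M$ has no pole there. Hence no singularity at $s_2=1+\ell$ -- nor, by the symmetry $\zeta_{\MT,2}(s_1,s_2,s_3)=\zeta_{\MT,2}(s_2,s_1,s_3)$, at $s_1=1+\ell$ -- survives, and the singular set is contained in $\{s_1+s_3=1-\ell\}\cup\{s_2+s_3=1-\ell\}\cup\{s_1+s_2+s_3=2\}$ with $\ell\in\N_0$, as claimed.

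The main obstacle is this last bookkeeping step: one has to track carefully the signs incurred when the contour is moved and match them against the residues $\frac{(-1)^{\ell}}{\ell!}$ of both $\Gamma(-z)$ at $z=\ell$ and $\Gamma(1-s_2)$ at $s_2=1+\ell$, so that the cancellation of the spurious poles is exact (and, at the coincident point $s_2=1+\ell$, that the two originally colliding simple poles genuinely reassemble into a single cancelling pair rather than a double pole). A lesser, purely technical point is the uniform estimate on $R_M$ guaranteeing holomorphy on an exhausting family of domains, which rests on exactly the Stirling-type bound of \Cref{paper:GammaCollect} and the vertical-line bound of \Cref{paper:ZetaFunc}. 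Since the statement is due to Matsumoto, one may of course simply cite \cite[Theorem 1]{Mat2}; the argument above recovers the idea of that proof.
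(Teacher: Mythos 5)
Your argument is correct and is precisely the Mellin--Barnes approach of the cited source: the paper itself gives no proof of this theorem (it simply cites \cite[Theorem 1]{Mat2}), and the representation you derive is exactly equation (5.3) of \cite{Mat2}, which the paper reproduces later as \eqref{paper:eq:MTintegral}. Your bookkeeping (the residues $\binom{-s_3}{\ell}=\tfrac{(-1)^\ell\Gamma(s_3+\ell)}{\ell!\,\Gamma(s_3)}$, the cancellation of the spurious poles at $s_2=1+\ell$, and letting $M\to\infty$) checks out.
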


Fix $M\in\N_0$ and $0<\e<1$. Let $\re(s_1),\re(s_3)>1$, $\re(s_2)>0$, and $s_2\notin\N$. Then, for $\re(s_2)<M+1-\e$, we have (see equation (5.3) in \cite{Mat2})
\begin{align}\nonumber
	\zeta_{\MT,2}(s_1, s_2, s_3) & = \frac{\Gamma(s_2+s_3-1)\Gamma(1-s_2)}{\Gamma(s_3)} \zeta(s_1 + s_2 + s_3 - 1) \\
	\nonumber
	& \hspace{0.35cm} + \sum_{m=0}^{M-1} {-s_3 \choose m} \zeta(s_1+s_3+m)\zeta(s_2 - m) \\
	\label{paper:eq:MTintegral}
	&\quad + \frac{1}{2\pi i} \int_{M-\varepsilon - i\infty}^{M-\varepsilon + i\infty} \frac{\Gamma(s_3 + w)\Gamma(-w)}{\Gamma(s_3)} \zeta(s_1 + s_3 + w)\zeta(s_2 - w)dw.
\end{align}
The first two summands on the right-hand side of \eqref{paper:eq:MTintegral} extend meromorphically to $\C^3$, so to show that \eqref{paper:eq:zetaso5} extends meromorphically, we consider \eqref{paper:eq:MTintegral}. Note that $\mathrm{Re}(w)=M-\varepsilon$. To avoid poles on the line of integration, we assume that 
\begin{align}\label{paper:eq:Conditions1}
	\mathrm{Re}(s_3 + w) > 0 & \Leftrightarrow \mathrm{Re}(s_3) > \varepsilon-M, \\
	\label{paper:eq:Conditions2}
	\mathrm{Re}(s_1+s_3+w) > 1 & \Leftrightarrow \mathrm{Re}(s_1) + \mathrm{Re}(s_3) > 1 - M + \varepsilon, \\
	\label{paper:eq:Conditions3}
	\mathrm{Re}(s_2 - w) < 1 & \Leftrightarrow \mathrm{Re}(s_2) < 1 + M - \varepsilon.
\end{align}
Note that the final condition is already assumed above.

By Propostition 2.6 \ref{paper:GC:4}, the integral converges compactly and the integrands are locally holomorphic. Thus, the integral is a holomorphic function in the region defined by \eqref{paper:eq:Conditions1}, \eqref{paper:eq:Conditions2}, and \eqref{paper:eq:Conditions3}. Recalling \eqref{paper:eq:zetaso5}, we are interested in $\z_{\MT,2}(s,s-z,2s+z)$. By \Cref{paper:T:MT2}, this function is meromorphic in $\C^2$ and holomorphic outside the hyperplanes defined by $3s+z=1-\ell$, $3s=1-\ell$, and $4s=2$, where $\ell\in\N_0$. With \eqref{paper:eq:MTintegral}, we obtain
\begin{multline}\label{paper:eq:MTZeta-Formula}
	\z_{\MT,2}(s,s-z,2s+z) = \frac{\GG(3s-1)\GG(z+1-s)}{\GG(2s+z)}\z(4s-1)\\
	+ \sum_{m=0}^{M-1} \binom{-2s-z}{m}\z(3s+z+m)\z(s-z-m) + I_M(s;z),
\end{multline}
where $s\in\C\setminus\{\frac12,\frac{1-\ell}{3}\}$, and
\begin{equation*}%\label{eq:IMDef}
	I_M(s; z) := \frac{1}{2\pi i} \int_{M-\varepsilon-i\infty}^{M-\varepsilon+i\infty} \frac{\Gamma(2s+z+w)\Gamma(-w)}{\Gamma(2s+z)} \zeta(3s+z+w)\zeta(s-z-w) dw.
\end{equation*}
The following lemma shows that $I_M(s;z)$ is holomorphic in $z$. To state it let
\[
	\mu = \mu_{M,\s} := \max\{-1+\s-M+\e,1-3\s-M+\e,-2\s-M+\e\}.
\]

\begin{lem}\label{paper:L:integralholo}
	Let $s=\s+it\in\C$, $M\in\N_0$, and $0<\e<1$. Then $z\mapsto I_M(s;z)$ is holomorphic in $S_{\mu,\infty}$.
\end{lem}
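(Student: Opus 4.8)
The plan is to show that the integrand of $I_M(s;z)$, namely
\[
	\frac{\Gamma(2s+z+w)\Gamma(-w)}{\Gamma(2s+z)}\zeta(3s+z+w)\zeta(s-z-w),
\]
is, for fixed $s$ with $\re(s)=\sigma$, holomorphic in $z$ throughout the half-plane $S_{\mu,\infty}$ along the whole contour $\re(w)=M-\varepsilon$, and that the integral converges locally uniformly in $z$ there, so that Morera's theorem (or differentiation under the integral sign) yields holomorphicity. The condition $\re(z)>\mu$ is exactly what is needed to keep all three ``dangerous'' factors away from their poles on the contour: writing $\re(w)=M-\varepsilon$, the factor $\Gamma(2s+z+w)$ is holomorphic as long as $\re(2s+z+w)>0$, i.e. $2\sigma+\re(z)+M-\varepsilon>0$, which rearranges to $\re(z)>-2\sigma-M+\varepsilon$; the factor $\zeta(3s+z+w)$ is holomorphic provided $\re(3s+z+w)>1$ (it has only the pole at argument $1$), i.e. $\re(z)>1-3\sigma-M+\varepsilon$; and $\zeta(s-z-w)$ is holomorphic provided $\re(s-z-w)<1$, i.e. $\re(z)>-1+\sigma-M+\varepsilon$. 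Taking the maximum of these three bounds gives precisely $\mu=\mu_{M,\sigma}$, and $\Gamma(-w)$ is harmless since $\re(-w)=\varepsilon-M$ is never a nonnegative integer for $0<\varepsilon<1$. So on the open region $\re(z)>\mu$ the integrand has no poles anywhere on the contour.

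The steps, in order, are: (i) fix $s$ and a compact subset $K$ of $S_{\mu,\infty}$, and verify via the three inequalities above that for $z\in K$ the integrand is jointly holomorphic in $(z,w)$ in a neighborhood of the contour — this is the bookkeeping just described; (ii) establish a convergence estimate for the tail of the integral, uniform for $z\in K$: here I would invoke Proposition~\ref{paper:GammaCollect}\ref{paper:GC:4} to bound $|\Gamma(2s+z+w)\Gamma(-w)|$ by an expression decaying like $e^{-\pi|\im(w)|}$ times a polynomial in $|\im(w)|$ (the two $\Gamma$-factors each contribute $e^{-\frac{\pi}{2}|\im(w)|}$), and Proposition~\ref{paper:ZetaFunc}\ref{paper:ZF:3} to bound the two $\zeta$-factors by polynomials in $|\im(w)|$; the product is then integrable in $\im(w)$, uniformly for $z$ in the compact set $K$; (iii) conclude by Morera's theorem: for any closed triangle $\Delta\subset S_{\mu,\infty}$, Fubini together with the uniform convergence lets one interchange $\int_\Delta$ and $\int_{M-\varepsilon-i\infty}^{M-\varepsilon+i\infty}$, and the inner integral of a holomorphic function over $\Delta$ vanishes by Cauchy's theorem, so $\int_\Delta I_M(s;z)\,dz=0$, whence $I_M(s;\cdot)$ is holomorphic on $S_{\mu,\infty}$.

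I expect the main obstacle to be step (ii): making the uniformity in $z$ completely explicit. The $\Gamma$-factor bounds from Proposition~\ref{paper:GammaCollect}\ref{paper:GC:4} are stated for $s$ in a compact interval of real parts, so when $w$ runs to $\pm i\infty$ along the line $\re(w)=M-\varepsilon$ and $z$ varies over $K$, one must check that the real parts $\re(2s+z+w)$ and $\re(-w)$ stay in a fixed compact interval (they do: the first lies in $2\sigma+M-\varepsilon+\{\re(z):z\in K\}$, a bounded set, and the second is constant), and that the exponential decay rate $e^{-\frac{\pi}{2}|\im(w)|}$ is not eaten up by growth of $|\Gamma|$ coming from the $\s-\frac12$ power — but that power is only polynomial. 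Likewise the $\zeta$-bounds of Proposition~\ref{paper:ZetaFunc}\ref{paper:ZF:3} depend on a compact interval of real parts, which again is fine since $\re(3s+z+w)$ and $\re(s-z-w)$ range over bounded sets as $z\in K$. Once these boundedness observations are in place, the dominated-convergence / Morera argument is routine, and the value of $\mu$ is forced by exactly the three pole-avoidance conditions, matching the definition given before the lemma.
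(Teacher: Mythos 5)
Your proposal is correct and follows essentially the same route as the paper: the three pole-avoidance inequalities on $\re(2s+z+w)$, $\re(3s+z+w)$, and $\re(s-z-w)$ are exactly how $\mu$ is derived there, the observation that $M-\e\notin\N_0$ handles $\Gamma(-w)$, and the uniform convergence on compact subsets via Proposition \ref{paper:GammaCollect} \ref{paper:GC:4} and the polynomial growth of $\zeta$ is the paper's concluding step (it infers holomorphy from locally uniform convergence rather than invoking Morera explicitly, but that is the same standard argument). No gaps.
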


\begin{proof}
	If $z\in S_{\mu,\infty}$, then $\re(2s+z+w)>0$, $\re(3s+z+w)>1$, and $\re(s-z-w)<1$ for $w\in\C$ satisfying $\re(w)=M-\e$, so $\GG(2s+z+w)$, $\z(3s+z+w)$, and $\z(s-z-w)$ have no poles on the path of integration. As $0<\e<1$, we have $M-\e\notin\N_0$, so $w\mapsto\GG(-w)$ has no pole if $\re(w)=M-\e$. As a result, no pole is located on the path of integration, and by \Cref{paper:GammaCollect} \ref{paper:GC:4} and the uniform polynomial growth of the zeta function along vertical strips we find that the integral converges uniformly on compact subsets of $S_{\mu,\infty}$.
\end{proof}

The next lemma shows, that $I_M$ is bounded polynomially in certain vertical strips. A proof is obtained using Propositions \ref{paper:GammaCollect} \ref{paper:GC:4} and \ref{paper:ZetaFunc} \ref{paper:ZF:3}.

\begin{lem}\label{paper:L:Ibound}
	Let $\s_1 < \s_2$ and $\s_3 < \s_4$, such that $S_{\s_3, \s_4} \subset S_{\mu,\infty}$ for all $s \in S_{\s_1, \s_2}$ and fix $0 < \varepsilon < 1$ sufficiently small. In $S_{\sigma_1,\sigma_2} \times S_{\sigma_3,\sigma_4}$ the function $(s,z) \mapsto I_M(s;z)$ is holomorphic and satisfies $|I_M(s;z)|\le P_{\s_1,\s_2, \s_3,\s_4,M}(|\im(s)|,|\im(z)|)$ for some polynomial $P_{\s_1,\s_2, \s_3,\s_4,M}(X,Y)\in\R[X,Y]$.
\end{lem}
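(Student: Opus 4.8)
The plan is to write the contour in $I_M(s;z)$ as $w=M-\e+iv$ with $v\in\R$, to bound the integrand pointwise in $v$ using the exponential asymptotics of $\GG$ and the polynomial growth of $\z$ on vertical strips, and then to integrate in $v$. The crucial point will be that the exponential growth in $\im(s),\im(z)$ produced by the denominator factor $1/\GG(2s+z)$ is exactly compensated by the exponential decay of the numerator factors $\GG(2s+z+w)$ and $\GG(-w)$, via the triangle inequality, so that only polynomial growth in $|\im(s)|$ and $|\im(z)|$ remains.

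For holomorphicity I would argue that the reasoning behind Lemma~\ref{paper:L:integralholo} applies equally in the variable $s$ (the integrand is holomorphic in $s$ wherever no factor has a pole, and the estimates below give local uniform convergence of the integral), so that $(s,z)\mapsto I_M(s;z)$ is separately holomorphic on $S_{\s_1,\s_2}\times S_{\s_3,\s_4}$; together with the local boundedness coming from the estimate below, Osgood's lemma (or Hartogs' theorem) then yields joint holomorphicity. Equivalently, the truncated integrals over $[M-\e-iV,\,M-\e+iV]$ are jointly holomorphic and converge to $I_M$ uniformly on compact subsets.

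For the estimate I would set $T:=2\im(s)+\im(z)$ and record that on the contour the imaginary parts of the arguments $2s+z+w$, $2s+z$, $-w$, $3s+z+w$, and $s-z-w$ equal $T+v$, $T$, $-v$, $3\im(s)+\im(z)+v$, and $\im(s)-\im(z)-v$, respectively, while the hypothesis $S_{\s_3,\s_4}\subset S_{\mu,\infty}$ (for every $s\in S_{\s_1,\s_2}$) forces their real parts into fixed compact intervals with $\re(2s+z+w)>0$, $\re(3s+z+w)>1$, and $\re(s-z-w)<1$; in particular no factor meets a pole on the contour, and the real parts of the two $\z$-arguments lie in compact intervals at positive distance from $1$. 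Proposition~\ref{paper:GammaCollect}\,\ref{paper:GC:4} then gives, with implied constants depending only on $\s_1,\s_2,\s_3,\s_4,M,\e$, bounds of the shape $|\GG(-w)|\ll(1+|v|)^{c_1}e^{-\frac\pi2|v|}$, $|\GG(2s+z+w)|\ll(1+|T+v|)^{c_2}e^{-\frac\pi2|T+v|}$, and $1/|\GG(2s+z)|\ll e^{\frac\pi2|T|}$, while Proposition~\ref{paper:ZetaFunc}\,\ref{paper:ZF:3} bounds $|\z(3s+z+w)|$ and $|\z(s-z-w)|$ by polynomials in $|3\im(s)+\im(z)+v|$ and $|\im(s)-\im(z)-v|$. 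Multiplying these, the exponential part of the integrand is at most $\exp\left(\tfrac\pi2(|T|-|T+v|-|v|)\right)\le 1$ by the triangle inequality, and for $|v|\ge 2|T|$ it is in fact at most $e^{-\frac\pi2|v|}$.

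To finish I would split the $v$-integral at $|v|=2|T|$: on $\{|v|<2|T|\}$ the integrand is dominated by a polynomial in $v$, hence (integrating over a set of measure $4|T|$) by a polynomial in $|T|$, times polynomials in $|\im(s)|,|\im(z)|$; on $\{|v|\ge 2|T|\}$ it is dominated by $e^{-\frac\pi2|v|}$ times a polynomial in $v,|\im(s)|,|\im(z)|$, whose $v$-integral is a polynomial in $|\im(s)|,|\im(z)|$. Since $|T|\le 2|\im(s)|+|\im(z)|$, both contributions are $\le P_{\s_1,\s_2,\s_3,\s_4,M}(|\im(s)|,|\im(z)|)$, proving the lemma. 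I expect the only real difficulty to be this exponential bookkeeping — checking that the factor $e^{\pi|T|/2}$ from $1/\GG(2s+z)$ is absorbed by the numerator Gamma factors uniformly in $v$, and that enough decay in $v$ survives for convergence and polynomial control in $\im(s),\im(z)$; the $\z$-growth bounds, the near-pole behaviour of the two $\z$-factors, and the holomorphicity statement are all routine.
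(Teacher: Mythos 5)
Your argument is correct and follows exactly the route the paper indicates: the paper gives no written proof, only citing Proposition~\ref{paper:GammaCollect}~\ref{paper:GC:4} and Proposition~\ref{paper:ZetaFunc}~\ref{paper:ZF:3}, and your proposal supplies precisely the missing bookkeeping — the triangle-inequality cancellation $|T|\le|T+v|+|v|$ absorbing the $e^{\pi|T|/2}$ from $1/\GG(2s+z)$, the split at $|v|=2|T|$, and local uniform convergence for holomorphicity. No gaps beyond the boundary-of-strip technicalities already implicit in the lemma's statement itself.
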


Next we investigate $\z_{\MT,2}(s,s-z,2s+z)$ for fixed $s$ more in detail.

\begin{lem}\label{paper:MTfixsvarz}
	Let $s\in\C\sm\{\frac12,\frac13-\frac13\N_0\}$. Then $z\mapsto\z_{\MT,2}(s,s-z,2s+z)$ is holomorphic in the entire complex plane except for possibly simple poles in $z=1-\ell-3s$ with $\ell\in\N_0$.
\end{lem}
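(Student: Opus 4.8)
The plan is to start from the decomposition \eqref{paper:eq:MTZeta-Formula} of $\z_{\MT,2}(s,s-z,2s+z)$ into three pieces: the ``main'' Gamma--zeta term $\frac{\GG(3s-1)\GG(z+1-s)}{\GG(2s+z)}\z(4s-1)$, the finite sum $\sum_{m=0}^{M-1}\binom{-2s-z}{m}\z(3s+z+m)\z(s-z-m)$, and the integral $I_M(s;z)$. Since $s$ is fixed with $s\notin\{\tfrac12,\tfrac13-\tfrac13\N_0\}$, the factor $\z(4s-1)$ is a finite constant and $\GG(3s-1)$ is finite (as $3s-1\notin-\N_0$ precisely because $s\notin\tfrac13-\tfrac13\N_0$, noting $3s-1=-\ell$ iff $s=\frac{1-\ell}{3}$); so the first term is a constant multiple of $z\mapsto\frac{\GG(z+1-s)}{\GG(2s+z)}$, which is meromorphic on all of $\C$ with at worst simple poles where $\GG(z+1-s)$ has poles, i.e. at $z+1-s=-\ell$, $\ell\in\N_0$, that is $z=s-1-\ell$. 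Crucially $\GG(2s+z)$ in the denominator contributes zeros, not poles, and cannot cancel the simple-pole structure (it can only kill some of them), so this term is holomorphic except for at most simple poles in $z\in s-1-\N_0$.

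Next I would handle the finite sum: for each fixed $m$ with $0\le m\le M-1$, the binomial coefficient $\binom{-2s-z}{m}$ is a polynomial in $z$ (hence entire), $\z(s-z-m)$ is meromorphic in $z$ with a single simple pole at $s-z-m=1$, i.e. $z=s-m-1$, and $\z(3s+z+m)$ is meromorphic in $z$ with a single simple pole at $3s+z+m=1$, i.e. $z=1-m-3s$. So each summand is meromorphic on $\C$ with at worst simple poles at $z\in\{s-m-1,\ 1-m-3s\}$; note $s-m-1$ lies in $s-1-\N_0$ and $1-m-3s$ lies in $1-3s-\N_0$. Summing over $0\le m\le M-1$ keeps everything meromorphic with at worst simple poles (no two summands share a pole location for generic $s$, but even if they did the pole would stay simple unless residues are arranged to cancel — in any case at worst simple) in the finite set $(s-1-\N_0)\cup(1-3s-\N_0)$ intersected with the relevant range. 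Then by Lemma \ref{paper:L:integralholo}, $I_M(s;z)$ is holomorphic in the half-plane $S_{\mu,\infty}$ where $\mu=\mu_{M,\s}$.

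The final step is to let $M\to\infty$. For each fixed $M$ the right-hand side of \eqref{paper:eq:MTZeta-Formula} is meromorphic on the half-plane $\{\re(z)>\mu_{M,\s}\}$ with all poles simple and contained in $\bigl((s-1-\N_0)\cup(1-3s-\N_0)\bigr)$; moreover $\mu_{M,\s}\to-\infty$ as $M\to\infty$ because each of $-1+\s-M+\e$, $1-3\s-M+\e$, $-2\s-M+\e$ tends to $-\infty$. Since these meromorphic continuations agree on overlaps (they all equal $\z_{\MT,2}(s,s-z,2s+z)$ on the common domain of definition by uniqueness of meromorphic continuation), they patch together to a meromorphic function on all of $\C$ whose poles are simple and lie in $(s-1-\N_0)\cup(1-3s-\N_0)$. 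It remains to reconcile this with the claimed pole set $\{1-\ell-3s:\ell\in\N_0\}$: I would observe that the poles at $z=s-1-\ell$ coming from $\GG(z+1-s)$ and from $\z(s-z-m)$ are in fact cancelled — the residue of the first term at $z=s-1-\ell$ (from $\Res_{w=-\ell}\GG(w)=\frac{(-1)^\ell}{\ell!}$) is cancelled against the $m=\ell$ term of the sum, using $\binom{-2s-z}{\ell}\big|_{z=s-1-\ell}$ and $\z(3s+z+\ell)\big|_{z=s-1-\ell}=\z(4s-1)$, which matches the $\frac{\GG(3s-1)}{\GG(2s+z)}\z(4s-1)$ prefactor via the functional relation $\GG(2s+z)\big|_{z=s-1-\ell}=\GG(3s-1-\ell)$ and $\frac{\GG(3s-1)}{\GG(3s-1-\ell)}=(3s-2)(3s-3)\cdots(3s-1-\ell)$. \textbf{This residue cancellation is the main obstacle:} carrying it out cleanly requires bookkeeping the binomial coefficient evaluated at a negative-integer-shifted argument and matching it with the ratio of Gamma values, and one must double-check the boundary case $\ell=0$ and the interaction with the poles of $\z(3s+z+m)$ at $z=1-m-3s$ (which are genuinely present and give exactly the asserted family). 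So after establishing meromorphy and simplicity in general, I would compute the residue at each candidate pole $z=s-1-\ell$, show it vanishes, and conclude that the only surviving poles are the simple ones at $z=1-\ell-3s$, $\ell\in\N_0$.
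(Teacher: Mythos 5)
Your proposal is correct and follows essentially the same route as the paper: decompose via \eqref{paper:eq:MTZeta-Formula}, use \Cref{paper:L:integralholo} to push the half-plane of holomorphy of $I_M$ arbitrarily far left by increasing $M$, and then verify that the residues at the candidate poles $z=s-1-\ell$ cancel between the term $\frac{\GG(3s-1)\GG(z+1-s)}{\GG(2s+z)}\z(4s-1)$ and the $m=\ell$ summand, leaving only the simple poles at $z=1-\ell-3s$. The cancellation you sketch (matching $\binom{-3s+1+\ell}{\ell}$ against $\frac{(-1)^\ell}{\ell!}\frac{\GG(3s-1)}{\GG(3s-1-\ell)}$) is exactly the ``direct calculation'' the paper leaves implicit, and it checks out.
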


\begin{proof}
	As holomorphicity is a local property, it suffices to consider arbitrary right half-planes. By Lemma \ref{paper:L:integralholo}, for $M$ sufficiently large, $I_M$ is holomorphic in an arbitrary right half-plane. By \eqref{paper:eq:MTintegral}, possible poles of $\z_{\text{MT},2}(s,s-z,2s+z)$ therefore lie in $z=s-\ell$ and in $z=-3s-m-\ell$, $\ell\in\N$. A direct calculation shows that the residue at $z=s-\ell$ vanishes if $\ell\le M-1$. Consequently, for a fixed pole $s-\ell$, we can choose $M$ sufficiently large such that we only have to consider the of \eqref{paper:eq:MTintegral}. This gives the claim.
\end{proof}

We are now ready to prove growth properties of $\z_{\MT,2}$. As we need to avoid critical singular points, we focus on incomplete half-planes of the type $S_{\s_1, \s_2, \d}$ (with $\d>0$ arbitrarily small).

\begin{lem}\label{paper:L:zetaMTvertical}
	Let $\s_1<\s_2$, $\s_3<\s_4$ with $1-3\s_1<\s_3$ and $\d > 0$ arbitrarily small. For $(s,z)\in S_{\s_1,\s_2,\d}\times S_{\s_3,\s_4}$, we have, for some polynomial $P_{\s_1,\s_2,\s_3,\s_4,\d}$ only depending on $S_{\s_1,\s_2,\d}$ and $S_{\s_3,\s_4}$,
	\[
		|\z_{\MT,2}(s,s-z,2s+z)| \le P_{\s_1,\s_2,\s_3,\s_4,\d}(|\im(s)|,|\im(z)|).
	\]
	If $\s_1 < 0$, for all $s \in U$ with $U \subset S_{\s_1, \s_2}$, a sufficiently small neighborhood of $0$, we have
		\begin{align*}
	\left| \frac{\z_{\MT,2}(s,s-z,2s+z)}{\GG(s)}\right| \leq P_{\s_3, \s_4, U}(|\im(z)|),
	\end{align*}
where the polynomial $P_{\s_3, \s_4, U}$ only depends on $\s_3$, $\s_4$, and $U$.
\end{lem}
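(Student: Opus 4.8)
The plan is to estimate the right-hand side of \eqref{paper:eq:MTZeta-Formula} term by term. Fix $0<\e<1$; since $\mu_{M,\s}=\max\{-1+\s-M+\e,\,1-3\s-M+\e,\,-2\s-M+\e\}\to-\infty$ uniformly for $\s\in[\s_1,\s_2]$ as $M\to\infty$, I first choose $M\in\N$ so large that $\s_3>\mu_{M,\s}$ for all $\s\in[\s_1,\s_2]$ and $M>\lceil\s_2-\s_3\rceil=:N_0$. Then \Cref{paper:L:Ibound} gives that $(s,z)\mapsto I_M(s;z)$ is holomorphic on $S_{\s_1,\s_2,\d}\times S_{\s_3,\s_4}$ and bounded there by a polynomial in $(|\im(s)|,|\im(z)|)$. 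Moreover, by \Cref{paper:MTfixsvarz} the only $z$-poles of $\z_{\MT,2}(s,s-z,2s+z)$ lie at $z=1-\ell-3s$, where $\re(z)=1-\ell-3\re(s)\le1-3\s_1<\s_3$ by hypothesis; together with joint continuity this shows that $\z_{\MT,2}(s,s-z,2s+z)$, and hence also $G:=\z_{\MT,2}(s,s-z,2s+z)-I_M$, which by \eqref{paper:eq:MTZeta-Formula} is the sum of the first term and the finite sum there, is holomorphic on $S_{\s_1,\s_2,\d}\times S_{\s_3,\s_4}$.

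Next I would bound the first term and each summand of the finite sum individually. The poles of $\GG(3s-1)$ lie in $s\in\frac13-\frac13\N_0$ and those of $\z(4s-1)$ at $s=\frac12$, all excluded by the definition of $S_{\s_1,\s_2,\d}$; the factor $\z(3s+z+m)$ ($0\le m\le M-1$) is holomorphic on the region since $\re(3s+z+m)\ge3\s_1+\s_3>1$ by $1-3\s_1<\s_3$; and $1/\GG(2s+z)$ is entire. Hence the only remaining poles inside the region are the ``diagonal'' poles of $\GG(z+1-s)$ at $z=s-1-n$ ($n\in\N_0$) and of $\z(s-z-m)$ at $z=s-1-m$ ($0\le m\le M-1$), and since $\re(s-1-n)\le\s_2-1$ only $n<N_0$ occur. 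Away from these finitely many hyperplanes each term is polynomially bounded: by \Cref{paper:GammaCollect} \ref{paper:GC:4} the exponential part of $|\GG(3s-1)\GG(z+1-s)/\GG(2s+z)|$ is $\asymp\exp(-\tfrac\pi2(3|\im(s)|+|\im(z)-\im(s)|-|2\im(s)+\im(z)|))\le1$ because $2\im(s)+\im(z)=3\im(s)+(\im(z)-\im(s))$, while the attendant $\max\{1,|\cdot|^{\bullet}\}$-factors, $\z(4s-1)$, $\z(3s+z+m)$, $\z(s-z-m)$ (via \Cref{paper:ZetaFunc} \ref{paper:ZF:3}) and the polynomial $\binom{-2s-z}{m}$ are all bounded by a polynomial in $(|\im(s)|,|\im(z)|)$.

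Combining the two steps: $G$ is holomorphic on all of $S_{\s_1,\s_2,\d}\times S_{\s_3,\s_4}$ and is dominated by a single polynomial $\widetilde P(|\im(s)|,|\im(z)|)$ on the dense open complement of the diagonal hyperplanes, so by continuity it is dominated by $\widetilde P$ on the entire region. (The reason the diagonal poles of the two individual terms cancel in $G$ is the residue identity $\Res_{z=s-1-n}(\text{first term})=-\Res_{z=s-1-n}(n\text{-th summand})$, which reduces to $\GG(3s-1)/\GG(3s-1-n)=\prod_{j=1}^n(3s-1-j)=(-1)^n n!\binom{n+1-3s}{n}$, but this identity is not needed once \Cref{paper:MTfixsvarz} is invoked.) Adding the polynomial bound for $I_M$ gives the first assertion.

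For the second assertion, let $\s_1<0$, so $\s_3>1-3\s_1>1$, and take $U\subset S_{\s_1,\s_2}$ an open disc about $0$ of radius less than $\min\{|\s_1|,\tfrac13\}$. On $U\times S_{\s_3,\s_4}$ the only singularity of $\z_{\MT,2}(s,s-z,2s+z)$ is a simple pole at $s=0$: in \eqref{paper:eq:MTZeta-Formula} this pole is carried solely by $\GG(3s-1)$, the other excluded points $\tfrac12,\pm\tfrac13,\dots$ do not lie in $U$, and the $z$-poles $z=1-\ell-3s$ have $\re(z)$ close to $1-\ell\le1<\s_3$. Dividing by $\GG(s)$, whose pole at $s=0$ has residue $1$, removes it; explicitly $\GG(3s-1)/\GG(s)=-\GG(3s+1)/(3(1-3s)\GG(s+1))$ is holomorphic on $U$ with value $-\tfrac13$ at $s=0$. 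Since $\GG(3s-1)/\GG(s)$, $\z(4s-1)$ and $1/\GG(s)$ are bounded on $U$, while $\GG(z+1-s)/\GG(2s+z)$, $\binom{-2s-z}{m}$, $\z(3s+z+m)$, $\z(s-z-m)$ and $I_M(s;z)$ are, for $U$ small enough, holomorphic on $U\times S_{\s_3,\s_4}$ and bounded there by a polynomial in $|\im(z)|$ alone (as $|\im(s)|$ is bounded on $U$ and no zeta pole line meets the region), collecting the estimates yields the stated bound in $|\im(z)|$. The main subtlety throughout is that the individual terms of \eqref{paper:eq:MTZeta-Formula} carry poles absent from $\z_{\MT,2}(s,s-z,2s+z)$; this is finessed by combining the exact holomorphicity supplied by \Cref{paper:MTfixsvarz} with the continuity/density argument above, so that no delicate estimate near the diagonals is required.
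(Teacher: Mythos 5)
The paper states this lemma without proof, but your route---term-by-term estimation of \eqref{paper:eq:MTZeta-Formula} after choosing $M$ so large that $S_{\s_3,\s_4}\subset S_{\mu_{M,\s},\infty}$ for all relevant $s$, so that Lemma \ref{paper:L:Ibound} handles $I_M$---is clearly the intended one, and your proof of the second assertion is correct: there $\s_3>1-3\s_1>1$ pushes every pole hyperplane of the individual terms out of $U\times S_{\s_3,\s_4}$, and your computation that $\GG(3s-1)/\GG(s)$ is holomorphic near $s=0$ with value $-\tfrac13$ is right.

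There is, however, a genuine gap in the first assertion, at the step ``$G$ is dominated by a single polynomial $\widetilde P$ on the dense open complement of the diagonal hyperplanes, so by continuity it is dominated by $\widetilde P$ on the entire region.'' The premise is not what your estimates establish. Both $|\GG(z+1-s)|$ and $|\z(s-z-m)|$ blow up as $(s,z)$ approaches the complex hyperplanes $z=s-1-n$: Proposition \ref{paper:GammaCollect} \ref{paper:GC:4} controls $\GG$ on compact intervals with negative real part only for $|\im(z)-\im(s)|\geq 1$, and Proposition \ref{paper:ZetaFunc} \ref{paper:ZF:3} says nothing near the pole of $\z$. Hence for any \emph{fixed} polynomial $\widetilde P$ the inequality $|T_1|+|T_2|\le\widetilde P$ holds only outside a neighborhood of those hyperplanes, and continuity then yields the bound only on the closure of that smaller set---not on the hyperplanes or near them. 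Since these hyperplanes do meet $S_{\s_1,\s_2,\d}\times S_{\s_3,\s_4}$ in general (e.g.\ $\s_1=1$, $\s_2=5$, $\s_3=0$, $n=0$), an extra argument is required. The standard repair is the maximum modulus principle: $z\mapsto G(s,z)$ is holomorphic on a slightly widened strip, your estimates do hold with uniform constants on the circles $|z-(s-1-n)|=\tfrac12$ (the offending poles being spaced by $1$), and transferring the boundary bound to the disks' interiors costs only a harmless adjustment of $\widetilde P$. Alternatively, pair the $n$-th pole of $\GG(z+1-s)$ in the first term with the $m=n$ summand and bound the regularized combination directly using the residue cancellation you mention parenthetically. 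Either way, ``by continuity'' alone does not close the argument.
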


We need another lemma dealing with the poles of the Mordell--Tornheim zeta function.

\begin{lem}\label{paper:L:Holointegers}
	Let $k\in\N_0$. Then the meromorphic function $s\mapsto\z_{\MT, 2}(s,s-k,2s+k)$ is holomorphic for $s=-\ell$ with $\ell\in\N_{\ge\frac k2}$ and has possible simple poles at $s=\ell\in\N_0$ with $0\le\ell<\frac k2$. In particular, $s\mapsto{\GG(s+k)\z_{\MT,2}(s,s-k,2s+k)}{\GG(s)^{-1}}$ is holomorphic at $s=-\ell$ with $\ell\in\N_0$.
\end{lem}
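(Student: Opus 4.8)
The plan is to work from the explicit formula \eqref{paper:eq:MTZeta-Formula} for $\z_{\MT,2}(s,s-z,2s+z)$, specialized to $z=k$, and to track the poles coming from each of the three summands as functions of $s$. Setting $z=k$ in \eqref{paper:eq:MTZeta-Formula} gives
\begin{align*}
	\z_{\MT,2}(s,s-k,2s+k) &= \frac{\GG(3s-1)\GG(k+1-s)}{\GG(2s+k)}\z(4s-1)\\
	&\quad + \sum_{m=0}^{M-1} \binom{-2s-k}{m}\z(3s+k+m)\z(s-k-m) + I_M(s;k),
\end{align*}
valid for $M$ large (we may take $M > \frac{k}{2}+\ell$ at the end, using Lemma \ref{paper:MTfixsvarz}: the residue at $z=s-\ell'$ of the full function vanishes for $\ell' \le M-1$, so the only genuine poles near any fixed point come from finitely many terms). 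First I would note that, by Lemma \ref{paper:L:integralholo} with the substitution $z=k$, the term $I_M(s;k)$ is holomorphic on any fixed right half-plane once $M$ is chosen large enough relative to that half-plane, so it contributes no poles at $s=-\ell$, $\ell\in\N_0$.

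The main work is then the first two summands. For the first term, $\frac{\GG(3s-1)\GG(k+1-s)}{\GG(2s+k)}\z(4s-1)$: I would use Proposition \ref{paper:GammaCollect} \ref{paper:GC:1} to locate the poles of $\GG(3s-1)$ (at $3s-1\in-\N_0$, i.e.\ $s=\frac{1-j}{3}$) and of $\GG(k+1-s)$ (at $s=k+1+j$, $j\in\N_0$, all positive), the zeros of $1/\GG(2s+k)$ (at $s=\frac{-k-j}{2}$), and the pole of $\z(4s-1)$ at $s=\frac12$. At a negative integer $s=-\ell$ with $\ell\ge\frac k2$: none of $\GG(3s-1)$, $\GG(k+1-s)$, $\z(4s-1)$ has a pole there unless $3(-\ell)-1\in-\N_0$, which happens exactly when $\ell\equiv 1\pmod 3$; but in that case $2s+k=-2\ell+k$ is a non-positive even... actually one must check $2s+k=k-2\ell\le 0$ (true since $\ell\ge\frac k2$) and whether it is a pole of $\GG$, i.e.\ whether $k-2\ell\in-\N_0$ — it is, being an integer $\le 0$ — so $1/\GG(2s+k)$ vanishes there and kills the simple pole of $\GG(3s-1)$. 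This cancellation is the delicate point and must be done carefully, distinguishing $k-2\ell=0$ from $k-2\ell<0$. For the finite sum $\sum_{m=0}^{M-1}\binom{-2s-k}{m}\z(3s+k+m)\z(s-k-m)$: the binomial coefficient $\binom{-2s-k}{m}=\frac{\GG(-2s-k+1)}{\GG(m+1)\GG(-2s-k-m+1)}$ is entire in $s$ (ratio of $\GG$'s with the pole structure cancelling, or just a polynomial in $s$), $\z(3s+k+m)$ has a pole only at $s=\frac{1-k-m}{3}$, and $\z(s-k-m)$ has a pole only at $s=k+m+1>0$. So at $s=-\ell$ with $\ell\in\N_0$, a pole in this sum can arise only from $\z(3s+k+m)$ when $3\ell=k+m-1$, i.e.\ $m=3\ell-k+1$, which requires $m\ge 0$, i.e.\ $\ell\ge\frac{k-1}{3}$; one checks this is consistent with $\ell<\frac k2$ for small $\ell$ and gives at most a simple pole (simple pole of $\z$, everything else holomorphic and, crucially, nonzero there — the other zeta factor $\z(s-k-m)=\z(-\ell-k-m)$ is a negative-integer value, which could be a trivial zero; I would check it is not zero, or if it is, that only strengthens the conclusion).

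Assembling: for $s=-\ell$ with $\ell\ge\frac k2$ I expect to show every potential pole cancels (the $\GG(3s-1)$ pole against the $1/\GG(2s+k)$ zero, and in the finite sum the constraint $\ell\ge\frac k2$ together with $m\le M-1$ and the vanishing-residue fact from Lemma \ref{paper:MTfixsvarz} rules out surviving poles), giving holomorphicity; for $0\le\ell<\frac k2$ the only surviving contribution is at most the simple pole of $\z(3s+k+m)$, so the function has at most a simple pole there. The final assertion about $\GG(s+k)\z_{\MT,2}(s,s-k,2s+k)/\GG(s)$ being holomorphic at every $s=-\ell$, $\ell\in\N_0$, then follows by combining two facts: for $\ell\ge\frac k2$ the zeta factor is already holomorphic and $\GG(s+k)/\GG(s)=(s+k-1)(s+k-2)\cdots s$ is a polynomial (entire), so the product is holomorphic; for $0\le\ell<\frac k2$ the factor $\GG(s+k)/\GG(s)$ — note $s+k=k-\ell>0$ there while $1/\GG(s)$ has a zero at $s=-\ell$ — contributes a simple zero at $s=-\ell$ that cancels the at-most-simple pole of $\z_{\MT,2}(s,s-k,2s+k)$. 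I expect the main obstacle to be the bookkeeping of the $\GG$-pole/$\GG$-zero cancellation in the first term of \eqref{paper:eq:MTZeta-Formula}, precisely because one must treat the boundary case $2s+k=0$ (where $1/\GG$ does \emph{not} vanish) separately and confirm that in that case $\GG(3s-1)$ has no pole either — i.e.\ that $s=-\frac k2$ with $\frac k2\in\N$ is never of the form $\frac{1-j}{3}$ with $j\in\N_0$, which is a simple congruence check but is the linchpin of the $\ell\ge\frac k2$ case.
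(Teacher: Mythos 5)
Your overall strategy --- specialize \eqref{paper:eq:MTZeta-Formula} to $z=k$, invoke holomorphy of $I_M(s;k)$ near $s=-\ell$ for $M$ large, and read off the poles of the two explicit summands --- is exactly the paper's route. However, two of the cancellations you rely on in the crucial case $\ell\ge\frac k2$ are misidentified, and one of them as stated would derail the argument.

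First, the ``linchpin'' you propose for the first summand rests on two false facts. By Proposition \ref{paper:GammaCollect} \ref{paper:GC:1}, $\GG$ has a simple pole at $0$, so $1/\GG(2s+k)$ \emph{does} vanish at $s=-\frac k2$; and $s=-\frac k2$ \emph{is} of the form $\frac{1-j}{3}$ (take $j=1+\frac{3k}{2}$), so $\GG(3s-1)$ \emph{does} have a pole there. Indeed $\GG(3s-1)$ has a simple pole at \emph{every} $s=-\ell$, $\ell\in\N_0$, since $-3\ell-1\in-\N$, not only when $\ell\equiv1\pmod 3$. The congruence check you call the linchpin would therefore come out the wrong way. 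Fortunately no case distinction is needed: for every $\ell\ge\frac k2$, including $\ell=\frac k2$, the integer $k-2\ell$ lies in $-\N_0$, so $1/\GG(2s+k)$ has a simple zero at $s=-\ell$ that cancels the simple pole of $\GG(3s-1)$, while $\GG(k+1+\ell)$ and $\z(-4\ell-1)$ are finite.

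Second --- and this is the genuine gap --- for $\ell\ge\frac k2$ the finite sum \emph{does} contain a term with a pole at $s=-\ell$, and your proposed justification for why it disappears does not work. The index $m=3\ell-k+1$ satisfies $m\ge\frac k2+1>0$ and must occur in the sum: holomorphy of $I_M(\cdot\,;k)$ near $s=-\ell$ via Lemmas \ref{paper:L:integralholo} and \ref{paper:L:Ibound} requires $k>\mu_{M,-\ell}=1+3\ell-M+\e$, i.e.\ $M\ge3\ell-k+2$, hence $m\le M-1$. For that $m$, $\z(3s+k+m)$ has a simple pole at $s=-\ell$. Lemma \ref{paper:MTfixsvarz}, which you cite here, concerns poles in $z$ for fixed $s$ and says nothing about this pole in $s$. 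The actual mechanism is that the polynomial $\binom{-2s-k}{m}=\frac1{m!}\prod_{j=0}^{m-1}(-2s-k-j)$ vanishes at $s=-\ell$: there $-2s-k=2\ell-k$ is a nonnegative integer with $2\ell-k\le3\ell-k=m-1$, so the factor with $j=2\ell-k$ is zero, and this simple zero cancels the simple pole of $\z(3s+k+m)$. With these two corrections your plan goes through; your treatment of the range $0\le\ell<\frac k2$ and of the final assertion about $\GG(s+k)\z_{\MT,2}(s,s-k,2s+k)\GG(s)^{-1}$ is correct.
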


\begin{proof}
	 Let $s$ lie in a bounded neighborhood of $-\ell$. We use \eqref{paper:eq:MTZeta-Formula} with $s=k$. Analogous to the proof of \Cref{paper:L:integralholo}, the function $s\mapsto I_M(s;k)$ is holomorphic in a neighborhood of $s=-\ell$. The analysis of the remaining terms is straightforward, and the lemma follows.
\end{proof}

The next lemma states where the integral of \eqref{paper:eq:zetaso5} defining $\z_{\SO(5)}$ is a meromorphic function.

\begin{lem}\label{paper:L:MTintholo}
	Let $\e>0$ be sufficiently small and let $K\in\N$. Then the function
	\begin{equation}\label{function}
		s \mapsto \frac{1}{2\pi i \Gamma(s)} \int_{K - \varepsilon - i\infty}^{K - \varepsilon + i\infty} \Gamma(s+z)\Gamma(-z) \zeta_{\MT,2}(s, s-z, 2s+z)dz
	\end{equation}
	is meromorphic on the half plane $\{s\in\C:\re(s)>\frac{1-K+\e}{3}\}$ with at most simple poles in $\{\frac12,\frac13-\frac13\N_0\}\setminus(-\mathbb{N}_0)$ (with $\re(s)>\frac{1-K+\e}{3}$) and grows polynomially on vertical strips with finite width.
\end{lem}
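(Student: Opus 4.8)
The plan is to write the function in \eqref{function} as $\Gamma(s)^{-1}\int_{(K-\e)}F(s,z)\,dz$, where $F(s,z):=\Gamma(s+z)\Gamma(-z)\z_{\MT,2}(s,s-z,2s+z)$ and $\int_{(K-\e)}$ denotes integration over the line $\re(z)=K-\e$. Since meromorphy and polynomial growth on finite-width strips are local properties, I would fix an arbitrary strip $S_{\s_1,\s_2}$ with $\s_1>\frac{1-K+\e}{3}$ and argue there; in such a strip the set $\{\tfrac12\}\cup(\tfrac13-\tfrac13\N_0)$ has only finitely many points, which I would treat individually. First I would check that $F(s,\cdot)$ has no poles on the line $\re(z)=K-\e$ for $s\in S_{\s_1,\s_2}$: the poles of $\Gamma(s+z)$ occur at $\re(z)=-\re(s)-m<\frac{1-K+\e}{3}<K-\e$, those of $\Gamma(-z)$ at $z\in\N_0$ (disjoint from the line since $K-\e\notin\N_0$, as $0<\e<1$), and, by Lemma~\ref{paper:MTfixsvarz}, the poles of $z\mapsto\z_{\MT,2}(s,s-z,2s+z)$ at $\re(z)=1-\ell-3\re(s)\le1-3\re(s)<K-\e$. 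This is precisely where the hypothesis $\re(s)>\frac{1-K+\e}{3}$ enters. It follows that $F$ is jointly holomorphic on a neighbourhood of $\bigl(S_{\s_1,\s_2}\setminus(\{\tfrac12\}\cup(\tfrac13-\tfrac13\N_0))\bigr)\times\{\re(z)=K-\e\}$.

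Next I would run a convergence and growth estimate. Writing $s=\s+it$, $z=K-\e+iv$, Proposition~\ref{paper:GammaCollect}~\ref{paper:GC:4} gives $|\Gamma(s+z)\Gamma(-z)|\ll(1+|t+v|)^{A}(1+|v|)^{B}e^{-\frac{\pi}{2}(|t+v|+|v|)}$ on the line, while Lemma~\ref{paper:L:zetaMTvertical} bounds $|\z_{\MT,2}(s,s-z,2s+z)|$ by a polynomial in $(|t|,|v|)$, uniformly on $S_{\s_1,\s_2,\d}$ times a thin $z$-strip about the line (its hypothesis $1-3\s_1<K-\e-\d_0$ holds for $\d_0$ small). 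Since $|t+v|+|v|\ge|t|$, integrating in $v$ produces a bound $\ll e^{-\frac{\pi}{2}|t|}Q(|t|)$ with $Q$ a polynomial; this yields absolute, locally uniform convergence of $\int_{(K-\e)}F(s,z)\,dz$ — hence its holomorphy on $S_{\s_1,\s_2,\d}$ away from the candidate poles — together with the estimate $\bigl|\int_{(K-\e)}F(s,z)\,dz\bigr|\ll e^{-\frac{\pi}{2}|t|}Q(|t|)$. Multiplying by $|\Gamma(s)|^{-1}\ll(1+|t|)^{\frac12-\s}e^{\frac{\pi}{2}|t|}$ (the lower bound in Proposition~\ref{paper:GammaCollect}~\ref{paper:GC:4}, valid for $|t|\ge1$) then gives polynomial growth of \eqref{function} on $S_{\s_1,\s_2}$.

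Finally I would locate the poles. Fix $s_0\in\{\tfrac12\}\cup(\tfrac13-\tfrac13\N_0)$ with $\re(s_0)>\frac{1-K+\e}{3}$; note $\re(s_0)\le\tfrac13<K+1-\e$. On a small disc about $s_0$ and for $z$ on the line I would use the representation \eqref{paper:eq:MTZeta-Formula} with $M$ chosen large (depending on $K$ and the disc): by Lemmas~\ref{paper:L:integralholo} and~\ref{paper:L:Ibound} the term $I_M(s;z)$ is jointly holomorphic in $(s,z)$ there with polynomial-in-$|\im(z)|$ bounds, and the finite sum $\sum_{m}\binom{-2s-z}{m}\z(3s+z+m)\z(s-z-m)$ is holomorphic in $s$ near $s_0$ because $\re(3s+z+m)=3\re(s)+K-\e+m>1$ and $\re(s-z-m)=\re(s)-K+\e-m<1$ on the line once the disc is small and $K\ge1$. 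Hence, for $z$ on the line,
\[
	\z_{\MT,2}(s,s-z,2s+z)=\Gamma(3s-1)\z(4s-1)\,\frac{\Gamma(z+1-s)}{\Gamma(2s+z)}+h(s,z),
\]
with $h$ holomorphic in $s$ near $s_0$ (and with the $z$-decay from the previous paragraph), $\Gamma(3s-1)\z(4s-1)$ having at most a simple pole at $s_0$ (from $\z(4s-1)$ if $s_0=\tfrac12$, from $\Gamma(3s-1)$ if $s_0\in\tfrac13-\tfrac13\N_0$, these being disjoint), and $\frac{\Gamma(z+1-s)}{\Gamma(2s+z)}$ holomorphic and polynomially bounded in $|\im(z)|$ on the line. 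Splitting $\int_{(K-\e)}F(s,z)\,dz$ along this decomposition and applying the convergence argument of the second paragraph to each piece, I would conclude that $\int_{(K-\e)}F(s,z)\,dz$ has at most a simple pole at $s_0$; since $\Gamma(s)^{-1}$ is entire with simple zeros exactly at $-\N_0$, dividing leaves a simple pole if $s_0\notin-\N_0$ and a removable singularity otherwise, which gives the asserted pole set $\{\tfrac12,\tfrac13-\tfrac13\N_0\}\setminus(-\N_0)$. I expect the main obstacle to be precisely this last step — verifying that the simple $s$-pole of $\z_{\MT,2}(s,s-z,2s+z)$ is transmitted, without increase of order, through the $z$-integration — which is exactly what the uniform-in-$z$ holomorphy and the polynomial $z$-bounds are there to secure; the rest is bookkeeping with the region $\re(s)>\frac{1-K+\e}{3}$ and the estimates already in hand.
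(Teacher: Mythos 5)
Your proposal is correct and follows essentially the same route as the paper: holomorphy away from the candidate poles via the absence of poles of the integrand on the contour (Lemma~\ref{paper:MTfixsvarz}) plus locally uniform convergence, polynomial growth from Proposition~\ref{paper:GammaCollect}~\ref{paper:GC:4} and Lemma~\ref{paper:L:zetaMTvertical}, and the pole analysis via the decomposition \eqref{paper:eq:MTZeta-Formula}, reducing to the term $\frac{\Gamma(3s-1)\zeta(4s-1)}{\Gamma(s)}$ times a convergent integral, with the zeros of $\Gamma(s)^{-1}$ removing the points of $-\N_0$. The only cosmetic difference is that you bound the $v$-integral directly via $|t+v|+|v|\ge|t|$ where the paper invokes Lemma~\ref{paper:L:IntegralPolyBound}.
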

	
\begin{proof}
	We first show holomorphicity in $S_{\s_1,\s_2,\d}$ with $\frac{1-K+\e}{3}<\s_1<\s_2$ and $0<\d<1$. Since $\re(s)>\frac{1-K+\e}{3}>-K+\e$, there are no poles of $\GG(s+z)\GG(-z)$ on the path of integration $\re(z)=K-\e$. By \Cref{paper:MTfixsvarz}, $z\mapsto\z_{\MT,2}(s,s-z,2s+z)$ has no poles for $s\in S_{\s_1,\s_2,\d}$, as $\re(z+3s-1)=K-\e+3\re(s)-1>0$. By \Cref{paper:GammaCollect} \ref{paper:GC:4}, \Cref{paper:L:zetaMTvertical}, and \Cref{paper:L:IntegralPolyBound}, the integral is holomorphic away from singularities and grows polynomially on vertical strips of finite width.
	
	We are left to show that \eqref{function} has at most a simple pole at $s=s_0$, where $s_0\in\{\frac12,\frac13-\frac13\N_0\}\setminus(-\mathbb{N}_0)$ with $s_0\ge\frac{1-K+\e}{3}$. Recall the representation of $\z_{\MT,2}$ in \eqref{paper:eq:MTZeta-Formula}. By \Cref{paper:L:Ibound}
	\[
		\int_{K-\e-i\infty}^{K-\e+i\infty} \GG(s+z) \GG(-z) I_M(s;z) dz
	\]
	converges absolutely and uniformly on any sufficiently small compact subset $C$ containing $s_0$ for $M$ sufficiently large. Similarly, by Propositions \ref{paper:ZetaFunc} \ref{paper:ZF:3} and \ref{paper:GammaCollect} \ref{paper:GC:4},
	\[
		\int_{K-\e-i\infty}^{K-\e+i\infty} \GG(s+z)\GG(-z) \sum_{m=0}^{M-1} \binom{-2s-z}{m}\z(3s+z+m)\z(s-z-m) dz
	\]
	converges absolutely and uniformly in $C$. In particular, both integrals continue holomorphically to $s_0$. As $s\mapsto\frac{1}{\GG(s)}$ is entire, it is sufficient to study
	\[
		\frac{\GG(3s-1)\z(4s-1)}{\GG(s)} \int_{K-\e-i\infty}^{K-\e+i\infty} \frac{\GG(s+z)\GG(-z)\GG(1+z-s)}{\GG(2s+z)}dz
	\]
	around $s_0$. Again, by \Cref{paper:GammaCollect} \ref{paper:GC:4}, the integral converges absolutely and uniformly in $C$. As $\frac{\GG(3s-1)\z(4s-1)}{\GG(s)}$ has at most a simple pole in $s_0$ and a removable singularity if $s_0\in-\N_0$, the proof of the lemma is complete.
\end{proof}

The following lemma is a refinement of \Cref{paper:L:MTintholo} for the specific case that $z\in\Z$ and follows from \Cref{paper:L:Ibound}, by using Propositions \ref{paper:GammaCollect} and \ref{paper:ZetaFunc}.
 
\begin{lem}\label{paper:L:MTzetakpoly}
	Let $k\in\N_0$ with $0\le k\le K-1$. Then, for all $\s_1<\s_2$, there exists a polynomial $P_{K,\s_1,\s_2}$, such that, uniformly for all $\s_1\le\re(s)\le\s_2$ and $|\im(s)|\ge1$,
	\[
		|\z_{\MT,2}(s,s-k,2s+k)| \le P_{K,\s_1,\s_2}(|\im(s)|).
	\]
\end{lem}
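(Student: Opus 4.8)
The plan is to fix $k\in\N_0$ with $k\le K-1$ and $\s_1<\s_2$, restrict attention to $|\im(s)|\ge1$ (which automatically keeps $s$ off the real exceptional set $\{\tfrac12\}\cup\{\tfrac{1-\ell}{3}:\ell\in\N_0\}$ and off every real pole of the Gamma and zeta factors appearing below), and read the estimate for $\z_{\MT,2}(s,s-k,2s+k)$ off the explicit decomposition \eqref{paper:eq:MTZeta-Formula} specialized to $z=k$. It then suffices to bound each of its three summands by a polynomial in $|\im(s)|$, uniformly for $\s_1\le\re(s)\le\s_2$.

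First I would fix the truncation parameter $M$. Since $k\ge0$ and, for $\s$ in the compact interval $[\s_1,\s_2]$, the quantity $\mu_{M,\s}=\max\{-1+\s-M+\e,\,1-3\s-M+\e,\,-2\s-M+\e\}$ decreases to $-\infty$ uniformly as $M\to\infty$, we may choose $M\in\N$ depending only on $K,\s_1,\s_2$ so large that $\sup_{\s_1\le\s\le\s_2}\mu_{M,\s}<k$, and then pick $\s_3<\s_4$ with $\sup_{\s_1\le\s\le\s_2}\mu_{M,\s}<\s_3<k<\s_4$. With this choice $k\in S_{\s_3,\s_4}$ and $S_{\s_3,\s_4}\subset S_{\mu,\infty}$ for all $s\in S_{\s_1,\s_2}$, so \Cref{paper:L:Ibound} shows that $s\mapsto I_M(s;k)$ is holomorphic on $S_{\s_1,\s_2}$ with $|I_M(s;k)|\le P_{\s_1,\s_2,\s_3,\s_4,M}(|\im(s)|,0)$, a polynomial in $|\im(s)|$.

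For the remaining two summands of \eqref{paper:eq:MTZeta-Formula}, in the finite sum $\sum_{m=0}^{M-1}\binom{-2s-k}{m}\z(3s+k+m)\z(s-k-m)$ each generalized binomial coefficient $\binom{-2s-k}{m}$ is a polynomial in $s$ of degree $m\le M-1$, and \Cref{paper:ZetaFunc} \ref{paper:ZF:3} bounds each zeta factor by $(1+|\im(s)|)^{m_I}$ for $\re(s)\in[\s_1,\s_2]$ (again using $|\im(s)|\ge1$ to stay off their poles), so this sum is polynomially bounded in $|\im(s)|$. For the term $\tfrac{\GG(3s-1)\GG(k+1-s)}{\GG(2s+k)}\z(4s-1)$ I would invoke \Cref{paper:GammaCollect} \ref{paper:GC:4}: on $|\im(s)|\ge1$, $\re(s)\in[\s_1,\s_2]$, the three exponential factors multiply to $e^{-\frac{3\pi}{2}|\im(s)|}\,e^{-\frac{\pi}{2}|\im(s)|}\,e^{+\pi|\im(s)|}=e^{-\pi|\im(s)|}$, which swamps the polynomial-in-$|\im(s)|$ prefactors of \Cref{paper:GammaCollect} \ref{paper:GC:4} and the polynomial growth of $\z(4s-1)$ coming from \Cref{paper:ZetaFunc} \ref{paper:ZF:3}; hence this term is in fact bounded on the region considered. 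Summing the three polynomial bounds gives the desired $P_{K,\s_1,\s_2}$.

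The only delicate point is the bookkeeping between $M$ and the fixed integer $k$: $M$ must be taken large enough, in terms of $K,\s_1,\s_2$, that $k$ lies strictly to the right of $\mu_{M,\s}$ for every $\s\in[\s_1,\s_2]$, so that \Cref{paper:L:Ibound} can be applied with $z=k$ held fixed; once this is in place, the remaining estimates are routine uses of the uniform growth bounds for $\GG$ and $\z$ recorded in \Cref{paper:GammaCollect} and \Cref{paper:ZetaFunc}.
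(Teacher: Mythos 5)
Your proof is correct and follows essentially the same route the paper intends: the paper's own justification of this lemma is precisely the one-line remark that it "follows from \Cref{paper:L:Ibound}, by using Propositions \ref{paper:GammaCollect} and \ref{paper:ZetaFunc}," i.e.\ bounding the three terms of \eqref{paper:eq:MTZeta-Formula} at $z=k$ exactly as you do. Your explicit bookkeeping of $M$ versus $k$ so that \Cref{paper:L:Ibound} applies, and the cancellation $e^{-\frac{3\pi}{2}|t|}e^{-\frac{\pi}{2}|t|}e^{\pi|t|}=e^{-\pi|t|}$ in the Gamma-quotient term, are both accurate.
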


The following theorem shows that the function $\z_{\SO(5)}$ satisfies the conditions of \Cref{paper:T:main2} and gives the more precise statement of \Cref{paper:T:shortermainSO5}.

\begin{thm} \label{paper:T:mainSO5} 
	The function $\z_{\SO(5)}$ extends to a meromorphic function in $\C$ and is holomorphic in $\N_0$. For $K\in\N$ and $0 < \varepsilon < 1$, we have, on $S_{\frac{1-K+\varepsilon}{3}, \infty}$,
	\begin{align}\label{paper:zeta0:1}
 	\zeta_{\SO(5)}(s) &= \frac{6^s}{\GG(s)} \sum_{k=0}^{K-1} \frac{(-1)^k \Gamma(s+k)}{k!}\zeta_{\MT,2}(s, s-k, 2s+k)\nonumber\\
 		& \hspace{1cm} + \frac{6^s}{2\pi i\GG(s)} \int_{K - \varepsilon - i\infty}^{K - \varepsilon + i\infty} \Gamma(s+z)\Gamma(-z) \zeta_{\MT,2}(s, s-z, 2s+z) dz.
	\end{align}
	All poles of $\z_{\SO(5)}$ are simple and contained in $\{\frac12,\frac13,-\frac13,-\frac23,\dots\}$. Furthermore, for all $\s_0\le\s\le\s_1$ as $|\im(s)|\to\infty$, for some polynomial depending only on $\s_0$ and $\s_1$,
	\begin{equation*}%\label{polbaud}
		|\z_{\SO(5)}(s)| \le P_{\s_0,\s_1}(|\im(s)|).
	\end{equation*}
\end{thm}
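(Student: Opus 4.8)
The plan is to establish the three claims of Theorem~\ref{paper:T:mainSO5} in the following order: (i) the representation~\eqref{paper:zeta0:1} of $\z_{\SO(5)}$ on the half-plane $S_{\frac{1-K+\e}{3},\infty}$ together with its meromorphic continuation; (ii) the location and simplicity of the poles; (iii) the polynomial growth on vertical strips. The starting point is the integral representation~\eqref{paper:eq:zetaso5} from \cite{Mat}, valid for $\re(s)>1$, and the idea is to push the contour from $\re(z)=c$ (with $-\re(s)<c<0$) to the far right, to $\re(z)=K-\e$, picking up residues along the way.

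\textbf{Step (i): contour shift and the representation.}
For $\re(s)>1$ the integrand $\GG(s+z)\GG(-z)\z_{\MT,2}(s,s-z,2s+z)$ in~\eqref{paper:eq:zetaso5} is meromorphic in $z$; between $\re(z)=c$ and $\re(z)=K-\e$ its only poles are the simple poles of $\GG(-z)$ at $z=0,1,\dots,K-1$ (the poles of $\z_{\MT,2}(s,s-z,2s+z)$ in $z$, which by Lemma~\ref{paper:MTfixsvarz} lie at $z=1-\ell-3s$, are to the left of $\re(z)=c$ when $\re(s)$ is large). By Proposition~\ref{paper:GammaCollect}\ref{paper:GC:4} and the polynomial growth of $\z_{\MT,2}$ on vertical strips (Lemma~\ref{paper:L:zetaMTvertical}), the horizontal connecting integrals vanish as the imaginary part tends to $\infty$, so the residue theorem gives
\[
	\z_{\SO(5)}(s) = \frac{6^s}{\GG(s)}\sum_{k=0}^{K-1}\frac{(-1)^k\GG(s+k)}{k!}\z_{\MT,2}(s,s-k,2s+k) + \frac{6^s}{2\pi i\GG(s)}\int_{K-\e-i\infty}^{K-\e+i\infty}\GG(s+z)\GG(-z)\z_{\MT,2}(s,s-z,2s+z)\,dz,
\]
using $\Res_{z=k}\GG(-z)=\frac{(-1)^{k+1}}{k!}$ from Proposition~\ref{paper:GammaCollect}\ref{paper:GC:1}. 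Each term $\z_{\MT,2}(s,s-k,2s+k)$ continues meromorphically in $s$ to all of $\C$ by Theorem~\ref{paper:T:MT2}, and the integral term is meromorphic on $S_{\frac{1-K+\e}{3},\infty}$ by Lemma~\ref{paper:L:MTintholo}. Since $K$ is arbitrary and these half-planes exhaust $\C$, and since the representations for different $K$ agree on overlaps (they all equal $\z_{\SO(5)}$ where the original series converges and then by analytic continuation), this yields the meromorphic continuation to $\C$ and the formula~\eqref{paper:zeta0:1}.

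\textbf{Step (ii): poles.}
The factor $\frac{1}{\GG(s)}$ is entire with zeros at $s\in-\N_0$. The integral term has at most simple poles in $\{\frac12,\frac13-\frac13\N_0\}\setminus(-\N_0)$ by Lemma~\ref{paper:L:MTintholo}. For the finite sum, each $\GG(s+k)\z_{\MT,2}(s,s-k,2s+k)/\GG(s)$ is, by Lemma~\ref{paper:L:Holointegers}, holomorphic at every $s\in-\N_0$, and by Theorem~\ref{paper:T:MT2} the singular hyperplanes $s_1+s_3=1-\ell$, $s_2+s_3=1-\ell$, $s_1+s_2+s_3=2$ specialized to $(s_1,s_2,s_3)=(s,s-k,2s+k)$ give $3s+k=1-\ell$, $3s=1-\ell$, $4s=2$, i.e.\ candidate poles only at $s=\frac12$ and $s\in\frac13-\frac13\N_0$; the factor $\frac{1}{\GG(s)}$ kills any that land in $-\N_0$. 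Hence all poles of $\z_{\SO(5)}$ lie in $\{\frac12,\frac13,-\frac13,-\frac23,\dots\}$ and are simple, and $\z_{\SO(5)}$ is holomorphic on $\N_0$ (in fact on all of $-\N_0\cup\N_0$, but the positive integers are what is asserted, consistent with Theorem~\ref{paper:T:shortermainSO5}). One should double-check that no coincidental cancellation is needed — simplicity follows because each contributing term contributes at most a simple pole and the pole sets from the sum and from the integral, at a given $s_0$, come from a single source after choosing $K$ large, as in the proof of Lemma~\ref{paper:L:MTintholo}.

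\textbf{Step (iii): growth.}
Fix $\s_0\le\s\le\s_1$ and choose $K$ with $\frac{1-K+\e}{3}<\s_0$. In~\eqref{paper:zeta0:1}, $|6^s|=6^\s$ is bounded, and $1/|\GG(s)|$ is bounded by $\max\{1,|\im(s)|^{\frac12-\s_0}\}e^{\frac{\pi}{2}|\im(s)|}$ via Proposition~\ref{paper:GammaCollect}\ref{paper:GC:4}. Each summand $\GG(s+k)\z_{\MT,2}(s,s-k,2s+k)$ is bounded, for $|\im(s)|\ge1$, by $\max\{1,|\im(s)|^{\s+k-\frac12}\}e^{-\frac\pi2|\im(s)|}\cdot P_{K,\s_0,\s_1}(|\im(s)|)$ using Proposition~\ref{paper:GammaCollect}\ref{paper:GC:4} and Lemma~\ref{paper:L:MTzetakpoly}; the exponential $e^{-\frac\pi2|\im(s)|}$ exactly cancels the $e^{\frac\pi2|\im(s)|}$ from $1/\GG(s)$, leaving polynomial growth. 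For the integral term, Lemma~\ref{paper:L:MTintholo} already asserts polynomial growth of $\frac{1}{2\pi i\GG(s)}\int_{K-\e-i\infty}^{K-\e+i\infty}\GG(s+z)\GG(-z)\z_{\MT,2}(s,s-z,2s+z)\,dz$ on vertical strips of finite width (its proof uses Proposition~\ref{paper:GammaCollect}\ref{paper:GC:4}, Lemma~\ref{paper:L:zetaMTvertical}, and Lemma~\ref{paper:L:IntegralPolyBound} to bound the $z$-integral by a polynomial in $|\im(s)|$ after the $e^{-\frac\pi2|\im(s)|}$/$e^{+\frac\pi2|\im(s)|}$ cancellation). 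Summing finitely many polynomially bounded terms gives the asserted bound $|\z_{\SO(5)}(s)|\le P_{\s_0,\s_1}(|\im(s)|)$.

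\textbf{Main obstacle.}
The delicate point is not any single estimate but the bookkeeping of which poles in $z$ are crossed during the contour shift and the verification that the residues at $z=1-\ell-3s$ (poles of $\z_{\MT,2}$ in the $z$-variable) do not interfere — this is handled by taking $\re(s)$ large enough initially that these lie to the left of $\re(z)=c$, then continuing in $s$ afterward, exactly the mechanism already used in Lemmas~\ref{paper:MTfixsvarz} and~\ref{paper:L:MTintholo}. The other subtlety is the exponential-cancellation argument in Step (iii): one must track the $e^{\pm\frac{\pi}{2}|t|}$ factors from $\GG(s)$, $\GG(s+k)$, and $\GG(s+z)\GG(-z)$ carefully so that the division by $\GG(s)$ produces no net exponential growth; the key input is that $\z_{\MT,2}$ itself grows only polynomially on the relevant strips (Lemmas~\ref{paper:L:zetaMTvertical} and~\ref{paper:L:MTzetakpoly}), so there is nothing to offset the Gamma-function decay beyond what the $1/\GG(s)$ prefactor demands.
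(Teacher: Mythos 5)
Your proposal is correct and follows essentially the same route as the paper: shift the contour in \eqref{paper:eq:zetaso5} to $\re(z)=K-\e$ for $\re(s)>1$ (using Lemma \ref{paper:MTfixsvarz} to locate the $z$-poles and Lemma \ref{paper:L:zetaMTvertical} to kill the horizontal segments), continue meromorphically via Theorem \ref{paper:T:MT2} and Lemma \ref{paper:L:MTintholo}, remove the candidate poles in $-\N_0$ with Lemma \ref{paper:L:Holointegers} together with the zeros of $1/\GG(s)$, and obtain the polynomial bound from Lemmas \ref{paper:L:MTzetakpoly} and \ref{paper:L:MTintholo}. The residue computation $\Res_{z=k}\GG(-z)=\frac{(-1)^{k+1}}{k!}$ and the $K\to\infty$ exhaustion argument match the paper's proof.
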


\begin{proof}
	Assume $\re(s) > 1$. By \Cref{paper:MTfixsvarz}, the only poles of the integrand in \eqref{paper:eq:zetaso5} in $S_{-\re(s),\infty}$ lie at $z\in\N_0$. By shifting the path to the right of $\re(z)=M-\e$, we find, with \Cref{paper:L:zetaMTvertical} and the Residue Theorem, that \eqref{paper:zeta0:1} holds on $S_{1,\infty}$. By \Cref{paper:L:MTintholo} the right-hand side is a meromorphic function on $S_{\frac{1-K+\e}{3},\infty}$. By \Cref{paper:T:MT2}, the functions $s\mapsto\z_{\MT,2}(s,s-k,2s+k)$ only have possible (simple) poles for $s_1+s_3=3s+k=1-\ell$,\ $s_2+s_3=3s =1-\ell$, $s_1+s_2+s_3=4s=2$, with $\ell\in\N_0$, i.e., for $s\in\{\frac12,\frac13,0,-\frac13,-\frac23,-1,\dots\}$. However, by \Cref{paper:L:Holointegers} the sum in \eqref{paper:zeta0:1} continues holomorphically to $-\N_0$, so the sum only contributes possible poles $s\in\CS:=\{\frac12,\frac13,-\frac13,-\frac23,-\frac43,\dots\}$. Note that this argument does not depend on the choice of $K$. On the other hand, if we choose $K$ sufficiently large, then the integral in \eqref{paper:zeta0:1} is a holomorphic function around $s=-m$ for fixed but arbitrary $m\in\N_0$, and it only contributes poles in $\CS$ in $S_{\frac{1-K+\e}{3},\infty}$ by \Cref{paper:L:MTintholo}, where $0<\e<1$. So the statement about the poles follows if $K\to\infty$.
	
	We are left to show the polynomial bound. With Lemma \ref{paper:L:MTzetakpoly} we obtain the bound for the finite sum, as we chose $K$ in terms of $\s_0$ and $\s_1$. Lemma \ref{paper:L:MTintholo} implies the polynomial bound for the integral.
\end{proof}

To apply \Cref{paper:T:main2} we require $\z_{\SO(5)}(0)$. %Note that for interchanging limit and integration, we use Lebesgue's Theorem of dominated convergence.

\begin{prop} \label{paper:P:zetaso5(0)}
We have $\zeta_{\SO(5)}(0) = \frac{3}{8}$.
\end{prop}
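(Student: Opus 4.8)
The plan is to evaluate \eqref{paper:zeta0:1} at $s=0$, choosing $K=2$, which is admissible in a neighbourhood of $s=0$ since $\tfrac{1-K+\e}{3}=\tfrac{\e-1}{3}<0$. Using $\frac{\GG(s+k)}{\GG(s)}=s(s+1)\cdots(s+k-1)$ (so the $k=1$ term carries a factor $s$), \eqref{paper:zeta0:1} with $K=2$ becomes
\begin{equation*}
	\z_{\SO(5)}(s) = 6^s\,\z_{\MT,2}(s,s,2s) - 6^s s\,\z_{\MT,2}(s,s-1,2s+1) + \frac{6^s}{2\pi i\,\GG(s)}\int_{2-\e-i\infty}^{2-\e+i\infty}\GG(s+z)\GG(-z)\,\z_{\MT,2}(s,s-z,2s+z)\,dz.
\end{equation*}
I will show that as $s\to0$ these three summands tend to $\tfrac{13}{36}$, $0$, and $\tfrac{1}{72}$, respectively, so that $\z_{\SO(5)}(0)=\tfrac{13}{36}+\tfrac{1}{72}=\tfrac{3}{8}$. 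Throughout, \eqref{paper:eq:MTZeta-Formula} is used near $s=0$ (it is valid there, failing only at $s=0$ itself), the continuous extension to $s=0$ being supplied by \Cref{paper:L:Holointegers}.

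For the middle summand, \Cref{paper:L:Holointegers} gives that $\z_{\MT,2}(s,s-1,2s+1)$ has at most a simple pole at $s=0$; inserting $z=1$ into \eqref{paper:eq:MTZeta-Formula} and taking residues at $s=0$ shows that the residue $\Res_{s=0}\GG(3s-1)\cdot\frac{\GG(2)\z(-1)}{\GG(1)}=\tfrac{1}{36}$ of the leading term cancels the residue $\Res_{s=0}\z(3s+1)\cdot\z(-1)=-\tfrac{1}{36}$ of the $m=0$ term (the remaining terms, and $I_M(s;1)$, being regular at $s=0$), so $\z_{\MT,2}(s,s-1,2s+1)$ is in fact holomorphic at $s=0$ and $-6^s s\,\z_{\MT,2}(s,s-1,2s+1)\to0$. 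For the first summand one computes $\z_{\MT,2}(0,0,0)$ from \eqref{paper:eq:MTZeta-Formula} with $z=0$: each of the three terms there is regular at $s=0$ (the pole of $\GG(3s-1)$ being absorbed by the zero of $1/\GG(2s)$, and the pole of $\z(3s+1)$ in the $m=1$ term by $\binom{-2s}{1}=-2s$), and letting $s\to0$ with $\Res_{s=0}\GG(3s-1)=-\tfrac{1}{3}$, $\z(0)=-\tfrac{1}{2}$, $\z(-1)=-\tfrac{1}{12}$ yields $\tfrac{1}{18}$ from the leading term, $\tfrac{1}{4}$ from $m=0$, $\tfrac{1}{18}$ from $m=1$, and $0$ from the higher $m$ and from $I_M$; hence $\z_{\MT,2}(0,0,0)=\tfrac{1}{18}+\tfrac{1}{4}+\tfrac{1}{18}=\tfrac{13}{36}$.

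For the integral summand, write it as $\frac{6^s}{\GG(s)}\,\mathcal I(s)$ with $\mathcal I(s):=\frac{1}{2\pi i}\int_{2-\e-i\infty}^{2-\e+i\infty}\GG(s+z)\GG(-z)\z_{\MT,2}(s,s-z,2s+z)\,dz$. Since $\frac{6^s}{\GG(s)}$ has a simple zero at $s=0$, the value of this summand at $s=0$ equals $\Res_{s=0}\mathcal I(s)$. Inserting \eqref{paper:eq:MTZeta-Formula} and using \Cref{paper:L:integralholo}, \Cref{paper:L:Ibound}, \Cref{paper:L:zetaMTvertical} together with the exponential decay of $\GG(s+z)\GG(-z)$ in $|\im(z)|$ (Proposition \ref{paper:GammaCollect} \ref{paper:GC:4}), one checks that on the line $\re(z)=2-\e$ the function $s\mapsto\z_{\MT,2}(s,s-z,2s+z)$ has a simple pole at $s=0$ with $\Res_{s=0}\z_{\MT,2}(s,s-z,2s+z)=\Res_{s=0}\GG(3s-1)\cdot\frac{\GG(z+1)}{\GG(z)}\,\z(-1)=\frac{z}{36}$ — coming entirely from the leading term of \eqref{paper:eq:MTZeta-Formula} — and that $\Res_{s=0}$ may be taken under the integral sign. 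Hence, using $z\,\GG(z)\GG(-z)=-\GG(z)\GG(1-z)=-\frac{\pi}{\sin(\pi z)}$ (Proposition \ref{paper:GammaCollect} \ref{paper:GC:5}),
\begin{equation*}
	\Res_{s=0}\mathcal I(s) = \frac{1}{36}\cdot\frac{1}{2\pi i}\int_{2-\e-i\infty}^{2-\e+i\infty}z\,\GG(z)\GG(-z)\,dz = -\frac{1}{36}\cdot\frac{1}{2\pi i}\int_{2-\e-i\infty}^{2-\e+i\infty}\frac{\pi}{\sin(\pi z)}\,dz.
\end{equation*}
Since $\frac{1}{2\pi i}\int_{c-i\infty}^{c+i\infty}\frac{\pi}{\sin(\pi z)}\,dz=\tfrac{1}{2}$ for $0<c<1$ (by oddness of the integrand and closing the contour in a tall rectangle, the only enclosed pole being $z=0$ with residue $1$), crossing the pole at $z=1$ (residue $-1$) gives $-\tfrac{1}{2}$ for $1<c<2$; thus the integral summand contributes $-\tfrac{1}{36}\cdot(-\tfrac{1}{2})=\tfrac{1}{72}$, and $\z_{\SO(5)}(0)=\tfrac{13}{36}+0+\tfrac{1}{72}=\tfrac{27}{72}=\tfrac{3}{8}$.

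The one genuinely delicate step is the analysis of the integral summand: one must verify, via the exponential decay of the $\GG$-factors against the polynomial bounds on $\z_{\MT,2}$ and $I_M$ furnished by \Cref{paper:L:zetaMTvertical} and \Cref{paper:L:Ibound}, that $\Res_{s=0}$ commutes with the contour integral and that only the leading summand of \eqref{paper:eq:MTZeta-Formula} is singular at $s=0$ on the line of integration. Everything else reduces to residue arithmetic for $\GG$ and $\z$ and the elementary evaluation of $\frac{1}{2\pi i}\int_{(2-\e)}\frac{\pi}{\sin(\pi z)}\,dz$.
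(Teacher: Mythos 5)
Your proposal is correct and follows essentially the same route as the paper: both start from \eqref{paper:zeta0:1}, split off the finite sum from the contour integral, evaluate the sum by residue arithmetic for $\GG$ and $\z$ in \eqref{paper:eq:MTZeta-Formula} (your explicit values $\tfrac{13}{36}$ for $k=0$ and $0$ for $k=1$ agree with the paper's $\tfrac38+\tfrac{(-1)^{K+1}}{72}$ at $K=2$), and reduce the integral via $z\,\GG(z)\GG(-z)=-\tfrac{\pi}{\sin(\pi z)}$ to a shifted $\tfrac{\pi}{\sin(\pi z)}$ contour integral. The only cosmetic difference is that you fix $K=2$ while the paper works with general $K\ge3$; the computations are otherwise identical.
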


\begin{proof}
	Since $I_M(s; z)$ is holomorphic in $s$ for $z\in S_{\mu,\infty}$ by \Cref{paper:L:Ibound} and $\GG(s)$ has a pole in $s=0$,
	\begin{equation}\label{paper:eq:MTZeta-firstterm}
		\lim\limits_{s\rightarrow 0} \frac{I_M(s; z)}{\Gamma(s)} = 0.
	\end{equation}
	Let $K\in\N$. For $z\in\C$ with $\re(z)=K-\frac12$ and $m\in\N_0$, we have $\pm(z+m)\ne1$. Hence, $s\mapsto\binom{-2s-z}{m}\z(3s+z+m)\z(s-z-m)$ is holomorphic at $s=0$. This implies that for $z\in\C$ with $\re(z)=K-\frac12$, we have
	\begin{equation*}%\label{paper:eq:MTZeta-secondterm}
		\lim\limits_{s\rightarrow 0} \binom{-2s-z}{m}\frac{\zeta(3s+z+m)\zeta(s-z-m)}{\Gamma(s)} = 0.
	\end{equation*}
	Using this, \eqref{paper:zeta0:1} with $\e=\frac12$, \eqref{paper:eq:MTZeta-firstterm}, \Cref{paper:GammaCollect} \ref{paper:GC:5}, and Lebesgue's dominated convergence theorem, we obtain, for integers $K\ge3$,
	\[
		\lim_{s\to0} \frac{6^s}{2\pi i\GG(s)}\int_{K-\frac12-i\infty}^{K-\frac12+i\infty} \GG(s+z)\GG(-z)\z_{\MT,2}(s,s-z,2s+z) dz
		= \frac{i}{72} \int_{K-\frac12-i\infty}^{K-\frac12+i\infty} \frac{1}{\sin(\pi z)} dz.
	\]
	Since $\sin(\pi(z+1)) = -\sin(\pi z)$ and
	\begin{align*}
		\lim\limits_{L\rightarrow\infty}\int_{K-\frac{1}{2}-i L}^{K + \frac{1}{2}-i L} \frac{1}{\sin(\pi z)} dz = \lim\limits_{L\rightarrow\infty}\int_{K+\frac{1}{2}+i L}^{K - \frac{1}{2}+i L} \frac{1}{\sin(\pi z)} dz = 0,
	\end{align*}
	the Residue Theorem implies that
	\begin{equation}\label{paper:zeta0:2}
		\lim_{s\to0} \tfrac{6^s}{2\pi i\GG(s)}\int_{K-\frac12-i\infty}^{K-\frac12+i\infty} \GG(s+z)\GG(-z)\z_{\MT,2}(s,s-z,2s+z) dz = \tfrac{1}{72}\Res_{z=K} \tfrac{\pi}{\sin(\pi z)} = \tfrac{(-1)^K}{72}.
	\end{equation}
	In the following we use that $\zeta(s)$ does not have a pole in $s=\pm m$ for $m\in\N_{\geq 2}$, implying that $s\mapsto \binom{-2s-1}{m-1}\z(3s+m)\z(s-m)$ is holomorphic at $s=0$. Moreover $s\mapsto\GG(s+k)\binom{-2s-k}{m}\z(3s+k+m)\z(s-k-m)$ is holomorphic at $s=0$ for $(k,m)\in(\N\times\N_0)\backslash\{(1,0)\}$. Thus, using Propositions \ref{paper:GammaCollect} \ref{paper:GC:3} and \ref{paper:ZetaFunc} \ref{paper:ZF:4} and the fact that $\z(-1)=-\frac{1}{12}$ and $\z(0) = \frac12$, we obtain, with \eqref{paper:eq:MTZeta-Formula},
	\begin{align}\nonumber
		\lim\limits_{s\rightarrow 0} \frac{6^s}{\Gamma(s)} &\sum_{k=0}^{K-1} \frac{(-1)^k \Gamma(s+k)}{k!}\zeta_{\MT,2}(s, s-k, 2s+k)\\
		\label{paper:zeta0:3}
		=& \frac{3}{8}+\frac{(-1)^{K+1}}{72}+\lim\limits_{s\to 0} I_M(s;0) +\sum\limits_{k=1}^{K-1} \frac{(-1)^k}{k}\lim\limits_{s\to 0} \frac{I_M(s;k)}{\GG(s)}.
	\end{align}
	Since, by Lemma \ref{paper:L:Ibound}, $s\mapsto I_M(s;k)$ is holomorphic at $s=0$ for every $k\in\N_0$ and $\frac{1}{\GG(s)}$ vanishes in $s=0$, we have
	\begin{align*}
		\lim\limits_{s\rightarrow 0} \frac{I_M(s;k)}{\Gamma(s)} = 0.
	\end{align*}
	Applying the Lebesgue dominated convergence theorem gives $\lim\limits_{s\to0}I_M(s;0)=0$, yielding the claim with \eqref{paper:zeta0:1}, \eqref{paper:zeta0:2}, and \eqref{paper:zeta0:3}.
\end{proof}

Furthermore, we need certain residues of $\z_{\SO(5)}$.

\begin{prop}\label{paper:P:reszetaso(5)1/2}
	The poles of $\z_{\SO(5)}$ are precisely $\{\frac12\}\cup\{\frac d3\notin\Z:d\le 1\text{ odd}\}$. We have
	\[
		\Res_{s=\frac12} \z_{\SO(5)}(s) = \frac{\sqrt3\GG\left(\frac14\right)^2}{8\sqrt\pi}.
	\]
	Moreover for $d\in\Z_{\leq1}\setminus ( - 3\N_0)$, 
	\begin{equation}\label{paper:resu}
		\Res_{s=\frac d3} \z_{\SO(5)}(s) = \frac{3^{\frac d3-\frac32}\pi\GG\left(\frac d6\right)\z\left(\frac{4d}{3}-1\right)}{2^{\frac d3-1}(1-d)!\GG\left(\frac d3\right)^2\GG\left(\frac d2\right)}\left(\frac d3\right)\left(1+2^{\frac{2d}{3}-1}\right).
	\end{equation}
	In particular, we have
	\[
		\Res_{s=\frac13} \z_{\SO(5)}(s) = \frac{2^\frac13+1}{3^\frac23}\z\left(\frac13\right).
	\]
\end{prop}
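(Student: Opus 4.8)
The plan is to read the pole set off \Cref{paper:T:mainSO5} and to compute every residue directly from the representations \eqref{paper:eq:zetaso5}, \eqref{paper:eq:MTZeta-Formula} and \eqref{paper:zeta0:1}; the residue formula at $s=\tfrac d3$ will itself exhibit the vanishing that decides which candidates are genuine poles. Since \Cref{paper:T:mainSO5} already forces all poles into $\{\tfrac12\}\cup\{\tfrac d3\notin\Z:d\le 1\}$, it is enough to evaluate $\Res_{s=1/2}$ and $\Res_{s=d/3}$ and then to observe that the latter vanishes exactly when $d$ is even (through the factor $\tfrac1{\Gamma(d/2)}$, $\Gamma(d/2)$ having a pole for even $d\le 0$) and is nonzero for $d$ odd (all the $\Gamma$- and $\zeta$-values occurring are then finite and nonzero, since $d/6,d/3\notin\Z$, $d/2$ is a half-integer, and $\tfrac{4d}3-1$ is neither $1$ nor a trivial zero of $\zeta$).

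For $s=\tfrac12$ I would continue \eqref{paper:eq:zetaso5} to a punctured neighbourhood of $\tfrac12$ by holding the contour on a fixed line $\re(z)=c\in(-\tfrac12,0)$; this is legitimate because by \Cref{paper:L:zetaMTvertical} and Gamma-decay the integral converges and no pole of the integrand crosses it as $\re(s)\to\tfrac12^+$. By \eqref{paper:eq:MTZeta-Formula}, the only $s=\tfrac12$ singularity of $\z_{\MT,2}(s,s-z,2s+z)$ sits in $\tfrac{\Gamma(3s-1)\Gamma(z+1-s)}{\Gamma(2s+z)}\zeta(4s-1)$, where $\zeta(4s-1)$ contributes a simple pole of residue $\tfrac14$, so $\Res_{s=1/2}\z_{\MT,2}(s,s-z,2s+z)=\tfrac{\sqrt\pi}4\tfrac{\Gamma(z+1/2)}{\Gamma(z+1)}$ uniformly in $z$. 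Interchanging $\Res_s$ with $\int_z$ (dominated convergence, again via \Cref{paper:L:zetaMTvertical}),
\[
	\Res_{s=1/2}\z_{\SO(5)}(s)=\frac{\sqrt6}{4}\cdot\frac1{2\pi i}\int_{(c)}\frac{\Gamma(z+\tfrac12)^2\Gamma(-z)}{\Gamma(z+1)}\,dz,
\]
and this Mellin--Barnes integral I would evaluate by writing $\tfrac{\Gamma(z+1/2)}{\Gamma(z+1)}=\tfrac1{\sqrt\pi}\int_0^1t^{z-1/2}(1-t)^{-1/2}dt$, interchanging, and using $\tfrac1{2\pi i}\int_{(c)}\Gamma(z+\tfrac12)\Gamma(-z)t^z\,dz=\sqrt\pi(1+t)^{-1/2}$; this reduces it to $\int_0^1\tfrac{dt}{\sqrt{t(1-t^2)}}$, which after $t=u^2$ becomes $2\int_0^1(1-u^4)^{-1/2}du=\tfrac12B(\tfrac14,\tfrac12)$, and the reflection formula $\Gamma(\tfrac14)\Gamma(\tfrac34)=\pi\sqrt2$ turns it into $\tfrac{\sqrt3\,\Gamma(1/4)^2}{8\sqrt\pi}$.

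For $s=\tfrac d3$ ($d\in\Z_{\le1}\setminus(-3\N_0)$) I would use \eqref{paper:zeta0:1} with $K$ and $M$ large and locate the pole via \eqref{paper:eq:MTZeta-Formula} with $z=k$ in each sum term. Two mechanisms produce it: (a) the factor $\Gamma(3s-1)=\Gamma(s_2+s_3-1)$ in the leading term, of residue $\tfrac{(-1)^{1-d}}{3(1-d)!}$ at $s=\tfrac d3$, which contributes through both the finite $k$-sum and the $z$-integral; and (b) the factor $\zeta(3s+k+m)$ in the $m$-sum with $k+m=1-d$ (so its companion $\zeta(s-k-m)$ specialises to $\zeta(\tfrac{4d}3-1)$), which contributes only through the $k$-sum. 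Reversing the contour shift that produced \eqref{paper:zeta0:1} recombines all the (a)-terms into $\tfrac{6^{d/3}}{\Gamma(d/3)}\cdot\tfrac{(-1)^{1-d}\zeta(4d/3-1)}{3(1-d)!}\cdot\tfrac1{2\pi i}\int_{\mathcal C}\tfrac{\Gamma(d/3+z)\Gamma(z+1-d/3)\Gamma(-z)}{\Gamma(2d/3+z)}\,dz$, while the (b)-terms form the finite sum $\tfrac{6^{d/3}\zeta(4d/3-1)}{3\Gamma(d/3)}\sum_{k=0}^{1-d}\tfrac{(-1)^k\Gamma(d/3+k)}{k!}\binom{-2d/3-k}{1-d-k}$. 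The Mellin--Barnes integral I would evaluate by a beta-integral representation of $\tfrac{\Gamma(d/3+z)}{\Gamma(2d/3+z)}$ together with the $(1+t)^{-\mu}$ formula (equivalently Barnes' first lemma), getting $\tfrac{\Gamma(1-d/3)\Gamma(d/6)}{2\Gamma(d/2)}$; the finite sum I would recast, after a reindexing $\binom{-2d/3-k}{1-d-k}\leftrightarrow\tfrac{\Gamma(d/3+j)}{j!\Gamma(d/3)}$, as a terminating ${}_2F_1(d-1,\tfrac d3;\tfrac{2d}3;-1)$ and evaluate by Kummer's theorem. Feeding in the reflections $\Gamma(\tfrac d3)\Gamma(1-\tfrac d3)=\tfrac\pi{\sin(\pi d/3)}$, $\Gamma(\tfrac{2d}3)\Gamma(1-\tfrac{2d}3)=\tfrac\pi{\sin(2\pi d/3)}$ (both sines equal $\tfrac{\sqrt3}2\bigl(\tfrac d3\bigr)$ for $d$ odd, which is where the Legendre symbol enters) and the duplication $\Gamma(\tfrac d3)=\tfrac{2^{d/3-1}}{\sqrt\pi}\Gamma(\tfrac d6)\Gamma(\tfrac{d+3}6)$, the (a)- and (b)-pieces collapse to the $2^{d/3}$- and $2^{1-d/3}$-parts of \eqref{paper:resu}, whose sum is $\tfrac{1+2^{2d/3-1}}{2^{d/3-1}}$ times the remaining factor; both parts carry $\tfrac1{\Gamma(d/2)}$, giving the even-$d$ vanishing. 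The case $d=1$ is easiest ($\Gamma(\tfrac23+z)$ cancels in the integral, leaving $\tfrac1{2\pi i}\int\Gamma(\tfrac13+z)\Gamma(-z)dz=\Gamma(\tfrac13)2^{-1/3}$, and the sum has one term), yielding $\tfrac{2^{1/3}+1}{3^{2/3}}\zeta(\tfrac13)$ at $s=\tfrac13$.

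The main obstacle is the bookkeeping for $d\le 0$: one must verify that only (a) and (b) contribute (using \Cref{paper:L:Holointegers} and \Cref{paper:L:Ibound} to discard the $I_M$- and integer-shift terms, and that $\tfrac{6^s}{\Gamma(s)}\Gamma(s+k)$ is regular at $s=\tfrac d3$ since $d/3\notin-\N_0$), show that the recombination of the finite sum with the integral is independent of $K$ and $M$, and deal with the fact that for $\re(s)<0$ the contour $\mathcal C$ must be indented around the finitely many poles of $\Gamma(\tfrac d3+z)$ that have migrated into $\re(z)>0$ — the stray residues cancel against the shift, but this needs to be checked, or else the closed form of the integral must be invoked by analytic continuation in the parameter $d/3$ (which also legitimises the beta-integral representation, valid a priori only for $\re(d/3)>0$). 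Justifying the $\Res$–$\int$ interchanges throughout via the polynomial bounds of \Cref{paper:L:zetaMTvertical} is the remaining, routine, point.
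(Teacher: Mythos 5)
Your proposal is correct and follows essentially the same route as the paper: isolate the two pole mechanisms via \eqref{paper:eq:MTZeta-Formula} (the $\Gamma(3s-1)\zeta(4s-1)$ term and the $\zeta(3s+z+m)$ terms with $k+m=1-d$), interchange the residue with the $z$-integral, and evaluate the resulting Mellin--Barnes integrals and terminating sums by Kummer-type ${}_2F_1(\cdot;-1)$ evaluations, with the even-$d$ vanishing coming from $1/\Gamma(d/2)$. The only differences are cosmetic: at $s=\tfrac12$ you keep the contour at $\re(z)=c\in(-\tfrac12,0)$ instead of using \eqref{paper:zeta0:1} with $K=1$, and you evaluate the Mellin--Barnes integral by an elementary beta-integral reduction rather than quoting the ${}_2F_1(\tfrac12,\tfrac12;1;-1)$ table value, both yielding $\tfrac{\sqrt3\,\Gamma(1/4)^2}{8\sqrt\pi}$.
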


\begin{proof}
	With Lemma \ref{paper:L:MTintholo}, near $s=\frac12$, we can choose $K=1$ in \eqref{paper:zeta0:1} and obtain 
	\begin{multline*}
		\Res_{s=\frac12}\z_{\SO(5)}(s)\\ = \lim_{s \to \frac12} \left( s - \frac12 \right) \left( 6^s \zeta_{\mathrm{MT},2}(s,s,2s) + \frac{6^s}{2\pi i\GG(s)} \int_{\frac12-i\infty}^{\frac12 + i\infty} \Gamma(s+z)\Gamma(-z) \zeta_{\mathrm{MT},2}(s,s-z,2s+z)dz\right).
	\end{multline*}
	Now, we have
	\begin{align*}
		\lim_{s\to\frac12} \left(s-\frac12\right)6^s\z_{\MT,2}(s,s,2s) = \frac{\sqrt{3}\pi}{2\sqrt{2}}.
	\end{align*}
	On the other hand, we find
	\begin{multline}\label{eq:str}
		\lim_{s\to\frac12} \left(s-\frac12\right)\frac{6^s}{2\pi i\GG(s)}\int_{\frac12-i\infty}^{\frac12+i\infty} \GG(s+z)\GG(-z)\z_{\MT,2}(s,s-z,2s+z) dz\\
		= \lim_{s\to\frac12} \left(s-\frac12\right)\frac{6^s\GG(3s-1)\z(4s-1)}{2\pi i\GG(s)}\int_{\frac12-i\infty}^{\frac12+i\infty} \GG(s+z)\GG(-z)\GG(z+1-s) dz,
	\end{multline}
	since $s\mapsto\frac{\GG(s+z)\GG(-z)\z(3s+z)\z(s-z)}{\GG(s)}$ and $s\mapsto\frac{\GG(s+z)\GG(-z)I_1(s;z)}{\GG(s)}$ are holomorphic if $\re(z)=\frac12$. Shifting the path to the left and using \cite[9.113]{GradRyzh}, \Cref{paper:GammaCollect} \ref{paper:GC:1}, 15.4.26 of \cite{NIST}, and Proposition \ref{paper:GammaCollect} \ref{paper:GC:5} we obtain that \eqref{eq:str} equals%and collecting the residues (see \cite[9.113]{GradRyzh}) that \eqref{eq:str} equals
	\[
		\frac{\sqrt3\pi}{2\sqrt2}{}_2F_1\left(\frac12,\frac12;1;-1\right) - \frac{\sqrt3\pi}{2\sqrt2} = \frac{\sqrt3\GG\left(\frac14\right)^2}{8\sqrt\pi} - \frac{\sqrt3\pi}{2\sqrt2}.
	\]
	This proves the first part of the proposition.
	
	Now, let $d\in\Z_{\le1}\sm(-3\N_0)$ and choose $0<\e<\frac13$, and also $K,M>1-d$. We have, by \eqref{paper:zeta0:1},
	\begin{multline}\label{paper:eq:zetaRes13comp:1}
		\Res_{s=\frac d3} \z_{\SO(5)}(s) = \lim_{s\to\frac d3} \frac{\left(s-\frac d3\right)6^s}{\GG(s)}\sum_{k=0}^{K-1} \frac{(-1)^k\GG(s+k)}{k!}\z_{\MT,2}(s,s-k,2s+k)\\
		+ \lim_{s\to\frac d3} \frac{\left(s-\frac d3\right)6^s}{2\pi i\GG(s)}\int_{K-\e-i\infty}^{K-\e+i\infty} \GG(s+z)\GG(-z)\z_{\MT,2}(s,s-z,2s+z) dz.
	\end{multline}
	Note that $\lim\limits_{s\to\frac d3}(s-\frac d3)I_M(s;k)=0$ because of holomorphicity of $I_M$ by \Cref{paper:L:Ibound} and
	\begin{equation*}%\label{paper:eq:limresn3}
		\lim_{s\to\frac d3} \left(s-\frac d3\right)\z(3s+k+m) = \frac13\d_{m=1-d-k}.
	\end{equation*}
	Thus we obtain, by \eqref{paper:eq:MTZeta-Formula} and (15.4.26) of \cite{NIST},
	\begin{align}\nonumber
		&\lim_{s\to\frac d3} \frac{\left(s-\frac d3\right)6^s}{\GG(s)}\sum_{k=0}^{K-1} \frac{(-1)^k\GG(k+s)}{k!}\z_{\MT,2}(s,s-k,k+2s) = \frac{6^\frac d3\z\left(\frac{4d}{3}-1\right)}{3(1-d)!\GG\left(\frac d3\right)}\\
		\nonumber
		&\hspace{1cm}\times \left(\sum_{k=0}^{K-1} \frac{(-1)^{k+d+1}\GG\left(k+1-\frac d3\right)\GG\left(k+\frac d3\right)}{k!\GG\left(k+\frac{2d}{3}\right)} + \sum_{k=0}^{1-d} (-1)^k\binom{1-d}{k}\frac{\GG\left(k+\frac d3\right)\GG\left(1-\frac{2d}{3} -k\right)}{\GG\left(\frac d3\right)}\right)\\
		\nonumber
		&\hspace{.2cm}= \frac{6^\frac d3\z\left(\frac{4d}{3}-1\right)}{3(1-d)!\GG\left(\frac d3\right)}\left(\sum_{k=0}^{K-1} \frac{(-1)^{k+d+1}\GG\left(k+1-\frac d3\right)\GG\left(k+\frac d3\right)}{k!\GG\left(k+\frac{2d}{3}\right)} + \GG\left(1-\tfrac{2d}{3}\right){}_2F_1\left(\tfrac d3,d-1;\tfrac{2d}{3};-1\right)\right)\\
		\nonumber
		&\hspace{.2cm}= \frac{6^\frac d3\z\left(\frac{4d}{3}-1\right)}{3(1-d)!\GG\left(\frac d3\right)}\sum_{k=0}^{K-1} \frac{(-1)^{k+d+1}\GG\left(k+1-\frac d3\right)\GG\left(k+\frac d3\right)}{k!\GG\left(k+\frac{2d}{3}\right)}\\
		\label{paper:eq:zetaRes13comp:2}
		&\hspace{7cm}+ \frac{3^{\frac d3-1}\z\left(\frac{4d}{3}-1\right)\GG\left(1-\frac{2d}{3}\right)\GG\left(\frac{2d}{3}\right)\GG\left(\frac d6\right)}{2^\frac d3(1-d)!\GG\left(\frac d3\right)^2\GG\left(\frac d2\right)}.
	\end{align}
	For the integral in \eqref{paper:eq:zetaRes13comp:1}, we obtain that
	\begin{multline} \label{paper:eq:limn3integral}
		\lim_{s\to\frac d3} \frac{\left(s-\frac d3\right)6^s}{2\pi i\GG(s)}\int_{K-\e-i\infty}^{K-\e+i\infty} \GG(s+z)\GG(-z)\z_{\MT,2}(s,s-z,2s+z) dz\\
		= \frac{(-1)^{d+1}6^\frac d3\z\left(\frac{4d}{3}-1\right)}{3(1-d)!\GG\left(\frac d3\right)}\frac{1}{2\pi i}\int_{K-\e-i\infty}^{K-\e+i\infty} \frac{\GG\left(z+\frac d3\right)\GG\left(z+1-\frac d3\right)\GG(-z)}{\GG\left(z+\frac{2d}{3}\right)} dz.
	\end{multline}
	By shifting the path of integration to the left such that all poles of $\GG(\frac d3+z)\GG(1-\frac d3+z)\GG(-z)$ except the ones in $\N_0$ lie left to the path of integration, we obtain with formula (9.113) of \cite{GradRyzh}
	\begin{align*}
		&\frac{1}{2\pi i}\int_{K-\e-i\infty}^{K-\e+i\infty} \frac{\GG\left(z-\frac d3\right)\GG\left(z+1-\frac d3\right)\GG(-z)}{\GG\left(z+\frac{2d}{3}\right)} dz\\
		&\hspace{2.5cm}= \frac{\GG\left(\frac d3\right)\GG\left(1-\frac d3\right)}{\GG\left(\frac{2d}{3}\right)}{}_2F_1\left(\frac d3,1-\frac d3;\frac{2d}{3};-1\right) + \sum_{k=0}^{K-1} \frac{(-1)^{k+1}\GG\left(k+\frac d3\right)\GG\left(k+1-\frac d3\right)}{k!\GG\left(k+\frac{2d}{3}\right)}\\
		%\label{paper:eq:limn3integral2} 
		&\hspace{2.5cm}= \frac{\GG\left(1-\frac d3\right)\GG\left(\frac d6\right)}{2\GG\left(\frac d2\right)} - \sum_{k=0}^{K-1} \frac{(-1)^k\GG\left(k+\frac d3\right)\GG\left(k+1-\frac d3\right)}{k!\GG\left(k+\frac{2d}{3}\right)},
	\end{align*}
	where the final equality is due to (15.4.26) of \cite{NIST}. \Cref{paper:resu} follows by this calculation together with \eqref{paper:eq:zetaRes13comp:1}, \eqref{paper:eq:zetaRes13comp:2}, \eqref{paper:eq:limn3integral}, and \Cref{paper:GammaCollect} \ref{paper:GC:5}. Finally note that \eqref{paper:resu} vanishes for even $d\le1$.
\end{proof}

Now we are ready to prove Theorem \ref{paper:T:rsu5asy}.

\begin{proof}[Proof of Theorem \ref{paper:T:rsu5asy}]
	Note that by \Cref{paper:lem:SO5conditions} and \Cref{paper:T:mainSO5} all conditions of \Cref{paper:T:main2} are satisfied (with $L$ and $R\notin\frac13\N$ arbitrary large). As $\z_{\SO(5)}$ has, by \Cref{paper:P:reszetaso(5)1/2}, exactly two positive poles $\a:=\frac12>\frac13=:\b$, \Cref{paper:T:TwoPoleAsymptotics} applies with $\ell=3$, and we obtain
	\[
		r_{\SO(5)}(n) = \frac{C}{n^b}\exp\left(A_1n^\frac13+A_2n^\frac29+A_3n^\frac19+A_4\right)\left(1+\sum_{j=2}^{N+1} \frac{B_j}{n^\frac{j-1}{9}}+O_N\left(n^{-\frac{N+1}{9}}\right)\right),\quad (n\to\infty).
	\]
	So we are left to calculate $c$, $b$, $A_1$, $A_2$, $A_3$, and $A_4$. By \Cref{paper:P:zetaso5(0)}, $\z_{\SO(5)}(0)=\frac38$ and by \Cref{paper:P:reszetaso(5)1/2}, $\Res_{s=\frac12}\z_{\SO(5)}(s)$, $\w_\frac12=\frac{\sqrt3\GG(\frac14)^2}{8\sqrt\pi}$ and $\w_\frac13=\frac{2^\frac13+1}{3^\frac23}\z(\frac13)$. Hence, by \eqref{paper:eq:cDef}, we get
	\[
		c_1\frac{\sqrt3\GG\left(\frac14\right)^2\z\left(\frac32\right)}{16},\qquad c_2 = 3^{-\frac53}\left(2^\frac13+1\right)\GG\left(\frac13\right)\z\left(\frac13\right) \z\left(\frac43\right).
	\]
	Moreover, by \Cref{paper:L:varrho}, we have
	\begin{align*}
		K_2 = \frac{2c_2}{3c_1^{\frac{2}{9}}},\quad K_3 = -\frac{c_2^2}{27c_1^{\frac{10}{9}}}.
	\end{align*}
	Now, we compute $A_1$, $C$, and $b$ by \eqref{paper:eq:mainconstants2} and $A_2$, $A_3$, $A_4$ by \Cref{paper:T:TwoPoleAsymptotics} and obtain
	\begin{align}\label{paper:A}
		b &= \frac{7}{12},\qquad C = \frac{e^{\z_{\SO(5)}'(0)}\GG\left(\frac14\right)^\frac16 \z\left(\frac32\right)^\frac{1}{12}}{2^\frac133^\frac{11}{24}\sqrt\pi},\qquad A_1 = \frac{3^\frac43\GG\left(\frac14\right)^\frac43\z\left(\frac32\right)^\frac23}{2^\frac83},\\
		A_2 &= \frac{2^\frac89\left(2^\frac13+1\right)\GG\left(\frac13\right)\z\left(\frac13\right) \z\left(\frac43\right)}{3^\frac79\GG\left(\frac14\right)^\frac49 \z\left(\frac32\right)^\frac29},\qquad A_3 = -\frac{2^\frac{40}{9}\left(2^\frac13+1\right)^2\GG\left(\frac13\right)^2 \z\left(\frac13\right)^2\z\left(\frac43\right)^2}{3^\frac{44}{9} \GG\left(\frac14\right)^\frac{20}{9}\z\left(\frac32\right)^\frac{10}{9}},\\
		\label{paper:A2}
		A_4 &= \frac{2^8\left(2^\frac13+1\right)^3\GG\left(\frac13\right)^3\z\left(\frac13\right)^3 \z\left(\frac43\right)^3}{3^8\GG\left(\frac14\right)^4\z\left(\frac32\right)^2}.
	\end{align}
	This proves the theorem.
\end{proof}

\section{Open problems}\label{sec:open}

We are led by our work to the following questions:
\begin{enumerate}[leftmargin=19pt, labelwidth=!, labelindent=0pt]
	\item Is there a simple expression for $\zeta'_{\so(5)}(0)$?
	
	\item Can one weaken the hypothesis that $f(n) \geq 0$ for all $n$ in Theorem \ref{paper:T:main2}? An important application would be that the $r_f(n)$ are eventually positive. There are many special cases in the literature (see \cite{Chern,C1,C2,Craig}), but to the best of our knowledge no general asymptotic formula has been proved.\footnote{The one exception is in Todt's Ph.D. thesis \cite[Theorem 3.2.1]{Todt}; however, there it is further assumed that $r_f(n)$ is non-decreasing, which precludes the princple application of such an asymptotic.}
	
	\item In \cite{Erdos}, Erd\H{o}s proved by elementary means that if $S\subset\N$ has natural density $d$ and $\1_S$ is the indicator function of $S$, then $\log(p_{\1_S}(n))\sim\pi\sqrt\frac{2dn}{3}$. Referring to Theorem \ref{paper:T:main2}, can one prove by elementary means that for any $\e>0$
	$$
		\log \left(r_f(n)\right)=A_1n^{\frac{\alpha}{\alpha+1}}+\sum_{j=2}^MA_jn^{\alpha_j}+O(n^{\varepsilon})?
	$$
	
	\item Can one ``twist'' the products in \Cref{paper:T:main2} by $w\in\C$ and prove asymptotic formulas for the (complex) coefficients of
	\[
		\prod_{n\ge1} \frac{1}{\left(1-wq^n\right)^{f(n)}} ?
	\]
	If $f(n)=n$ or $f(n)=1$, then such asymptotics were shown to determine zero attractors of polynomials (see \cite{BG,BP}) and equidistribution of partition statistics see \cite{BFG,BFM}), and the general case of $|w|\ne1$ was treated by Parry \cite{P}. Nevertheless, all of these results require that $L_f(s)$ has only a single simple pole with positive real part.
	
	\item In \Cref{paper:T:main2}, can one write down explicit or recursive expressions for the constants $A_j$ in the exponent, say in the case that $L_f(s)$ has three positive poles?
	
	\item Can one prove limit shapes for the partitions generated by \eqref{def:GfLf} in the sense of \cite{DVZ,V}?
\end{enumerate}


\begin{thebibliography}{99}
	%\bibitem{Amann} H. Amann, J. Escher, \textit{Analysis 1}, Dritte Auflage, Birkh\"auser, 2006.
	
	%\bibitem{Andrews} G. Andrews, \textit{The theory of partitions}, Cambridge University Press, 1998.
	
	\bibitem{AAR} G. Andrews, R. Askey, and R. Roy, {\it Special functions}, Encyclopedia of Mathematics and its Applications {\bf 71}, Cambridge University Press, 2000.
	
	\bibitem{Apostol} T. Apostol, {\it Introduction to analytic number theory}, Springer Science + Business Media, 1976.
	
	\bibitem{BG} R. Boyer and W. Goh, {\it Partition polynomials: asymptotics and zeros,} arXiv:0711.1373.
	
	\bibitem{BP} R. Boyer and D. Parry, {\it On the zeros of plane partition polynomials}, Electron. J. Combin. {\bf 18} (2012)
	
	\bibitem{BFG} W. Bridges, J. Franke, and T. Garnowski, {\it Asymptotics for the twisted eta-product and applications to sign changes in partitions}, Res. Math. Sci. {\bf9} (2022)
	
	\bibitem{BFM} W. Bridges, J. Franke, and J. Males, {\it Sign changes in statistics for plane partitions,} arXiv:2207.14590
	
	\bibitem{BF} K. Bringmann and J. Franke, {\it An asymptotic formula for the number of $n$-dimensional representations of $\SU(3)$}, Revista Matemática Iberoamericana, accepted for publication. 
	
	%\bibitem{BSM} K. Bringmann, C. Jennings-Shaffer, and K. Mahlburg, \textit{On a Tauberian theorem of Ingham and Euler--Maclaurin summation}, Ramanujan Journal, 2021. 
	
	\bibitem{Brue} J. Br\"udern, {\it Einf\"uhrung in die analytische Zahlentheorie}, Springer, 1995.
	
	\bibitem{Burckel} R. Burckel, {\it Classical analysis in the complex plane}, Birkh\"auser, 2021. 
		
	%\bibitem{Cauchy} A.-L. Cauchy, {\it Démonstration du théorèm général de Fermat sur les nombres polygones}, Mém. Sci. Math. Phys. Inst. France {\bf 14} (1813-1815), 177–220; Oeuvres complètes {\bf VI} (1905), 320–353. 
	
	\bibitem{Char} C. Charalambides, {\it Enumerative combinatorics}, Chapman and Hall/CRC, 2002.
	
	\bibitem{Chern} S. Chern, {\it Nonmodular infinite products and a conjecture of Seo and Yee}, arXiv:1912.10341.
	
	\bibitem{C1} A. Ciolan, {\it Asymptotics and inequalities for partitions into squares}, Int. J. Number Theory {\bf16} (2020), 121--143.
		
	\bibitem{C2} A. Ciolan, {\it Equidistribtion and inequalities for partitions into powers}, arXiv:2002.05682
	
	\bibitem{Craig} W. Craig, {\it Seaweed algebras and the index statistic for partitions}, arXiv:2112.09269.
		
	% \bibitem{Conway} J. Conway, {\it Functions of One Complex Variable I}, Second Edition, Springer, 1978. 
	
	\bibitem{DebrTen} G. Debruyne and G. Tenenbaum, {\it The saddle-point method for general partition functions}, Indagationes Mathematicae {\bf31} (2020), 728--738.
	
	\bibitem{DVZ} A. Dembo, A. Vershik, and O. Zeitouni, {\it Large deviations for integer partitions}, Markov Process. Related Fields {\bf 6} (2000), 147--179.
	
	\bibitem{DunnRob} A. Dunn and N. Robles, {\it Polynomial partition asymptotics}, Journal of Mathematical Analysis and Applications 
	{ \bf 459}
	(2018),
359--384.
	
	\bibitem{Erdos} P. Erd\H{o}s, {\it On an elementary proof of some asymptotic formulas in the theory of partitions,} Annals of Math. {\bf 43} (1942), 437--450.
	
	%\bibitem{DR} A. Dunn and N. Robles, \textit{Polynomial partition asymptotics}, Journal of Mathematical Analysis and Applications \textbf{459}, 359--384.
	
	% \bibitem{Duren} P. Duren, {\it Univalent Functions}, Springer Verlag, 1983. 
	
	%\bibitem{Flajolet} P. Flajolet and R. Sedgewick, {\it Analytic Combinatorics}, Cambridge University Press, 2009. 
	
	%\bibitem{Ga} A. Gafni, \textit{Power partitions}, Journal of Number Theory {\bf 163} (2016), 19--42.
	
	\bibitem{GradRyzh} I. Gradshteyn and I. Ryzhik, {\it Table of integrals, series, and products}, Sixth Edition, Academic Press, 2000.
	
	\bibitem{Hall} B. Hall, {\it Lie groups, lie algebras, and representations: An elementary introduction.} Springer, 2004.
	
	\bibitem{HardyRama} G. Hardy and S. Ramanujan, {\it Asymptotic formulae in combinatory analysis}, Proceedings of the London Mathematical Society (2) {\bf 17} (1918), 75–115.
	
	%\bibitem{Kaup} L. Kaup and B. Kaup, {\it Holomorphic functions of several variables}, de Gruyter Studies in Mathematics {\bf 3}, 1983.
		
	\bibitem{KelMas} J. Kelliher and R. Masr, {\it Analytic continuation of multiple zeta functions}, Cambridge University Press {\bf 3}, 2008.
	
	%\bibitem{Lang} S. Lang, {\it Complex Analysis}, Fourth Edition, Springer, 1999.
	
	%\bibitem{LangAna} S. Lang, {\it Real and Functional Analysis}, Third Edition, Springer, 1993.
		
	%\bibitem{LangUGA} S. Lang, {\it Undergraduate Analysis}, Second Edition, Springer, 1997.
		
	\bibitem{MacMahon} P. MacMahon, {\it Combinatory analysis}, Vol. I, II, Dover Publ., Mineola, New York, 2004.
	
	% \bibitem{Mah} K. Mahler, \textit{\"Uber einen Satz von Mellin}, Math. Ann. {\bf 100} (1928), 384--398.
	
	\bibitem{Mat2} K. Matsumoto, {\it On analytic continuation of various multiple zeta-functions}, in Number Theory for the Millennium (Urbana, 2000), Vol. II, M. Bennett et. al. (eds.), A. Peters, Natick, MA, (2002), 417--440.
	
	\bibitem{Mat} K. Matsumoto, {\it On Mordell--Tornheim and other multiple zeta-functions}, in Proceedings of the Session in Analytic Number Theory and Diophantine Equations, D. Heath-Brown and B. Moroz (eds.), Bonner Math. Schriften 360, Univ. Bonn, Bonn, 2003, {\bf25}.
	
	\bibitem{MatTsu} K. Matsumoto and H. Tsumura, {\it On Witten multiple Zeta-functions associated with semisimple Lie algebras I}, Ann. Inst. Fourier, Grenoble {\bf5} (2006), 1457--1504.
	
	\bibitem{MatWen} K. Matsumoto and L. Weng, \textit{Zeta-functions defined by two polynomials}, Proceedings of the 2nd China-Japan Seminar (Iizuka, 2001), Developments in Math. {\bf 8}, S. Kanemitsu, C. Jia (eds.), Kluwer Academic Publishers, 2002, 233--262.
	
	\bibitem{Meinardus} G. Meinardus, {\it Asymptotische Aussagen \"uber Partitionen}, Math. Z. {\bf59}, 1954, 388--398. 
	
	% \bibitem{MSV} J. Mehta, B. Saha, and G. Viswanadham, \textit{Analytic properties of multiple zeta functions and certain weighted variants, an elementary approach}, Journal of Number Theory {\bf 168} (2016), 487--508.
	
	%	\bibitem{Morse} P. Morse and H. Feshbach, {\it Methods of Theoretical Physics, Part I.} New York: McGraw-Hill, 1953.
	
	% \bibitem{Overholt} M. Overholt, {\it A Course in Analytic Number Theory}, AMS, Volume 160, 2014. 
	
	%\bibitem{Remmert} R. Remmert, G. Schumacher, {\it Funktionentheorie 2}, 3. Auflage, Springer, 2007. 
	% \bibitem{Palka} B. Palka, {\it An Introduction to Complex Function Theory}, Springer, 1991.
	
	\bibitem{NIST} F. Olver, D. Lozier, R. Boisvert and C. Clark, \textit{The NIST Handbook of Mathematical Functions}, Cambridge University Press, New York, NY, (2010).
	
	\bibitem{P} D. Parry, {\it A polynomial variation on Meinardus’ theorem}, Int. J. of Number Theory {\bf11} (2015), 251--268.
	
	\bibitem{Ro} D. Romik, {\it On the number of $n$-dimensional representations of $\SU(3)$, the Bernoulli numbers, and the Witten zeta function}, arXiv:1503.03776
	
	% \bibitem{Stein} E. Stein and R. Shakarchi, {\it Complex Analysis}, Princeton Lectures in Analysis, Princeton University Press, 2003.
	
	% \bibitem{Temme} N. Temme, {\it Asymptotic methods for integrals}, Series in Analysis Vol. {\bf 6}, World Scientific, 2015.
	
	\bibitem{Tenenbaum} G. Tenenbaum, {\it Introduction to analytic and probabilistic number theory}, Graduate Studies in Mathematics, American Mathematical Society \textbf{163}, third edition, 2008.
	
	\bibitem{Todt} H. Todt, {\it Asymptotics of partition functions,} Ph.D. Thesis, Pennsylvania State University, 2011.
	
	%\bibitem{TWL} G. Tenenbaum, J. Wu, and Y. Li, \textit{Power partitions and Saddle-point method}.
	
	\bibitem{V} A. Vershik, {\it Statistical mechanics of combinatorial partitions, and their limit configurations}, Translation from Russian, in: Funct. Anal. Appl. {\bf 30} (1996), 90--105.
	
	\bibitem{Wright} E. Wright, {\it Asymptotic partition formulae I: Plane partitions}. Q. J. Math. {\bf 1} (1931), 177–189.
	
	%\bibitem{Wr} M. Wright, \textit{Asymptotic partition formulas III, partition into $k$-th powers.}
	
	% \bibitem{Zag} D. Zagier, {\it Values of zeta functions and their applications}, in Proc. First Congress of Math., Paris, vol.II, Progress in Math. {\bf 120}, Birkh\"auser, (1994), 497--512.
	
	%\bibitem{ABMP} S.~Alexandrov, S.~Banerjee, J.~Manschot, and B.~Pioline, {\it Indefinite theta series and generalized error functions}, Selecta Mathematica (NS) {\bf 24} (2018), 3927.
	
	%\bibitem{BKMZ} K. Bringmann, J. Kaszian, A. {Milas,} and S. Zwegers, \emph{Rank two false theta functions and Jacobi forms of negative definite matrix index}, {Advances in Applied Mathematics} {\bf 112} (2020), 101946.
	
	%\bibitem{BMM} K. Bringmann, K. Mahlburg and A. {Milas}, {\em Higher depth quantum modular forms and plumbed $3$-manifolds}, to appear in Letters in Mathematical Physics, arXiv:1906.10722.
	
	%\bibitem{BN} K.~Bringmann and C.~Nazaroglu, {\it A Framework for Modular Properties of False Theta Functions}, Research in the Mathematical Sciences {\bf 6} (2019).
	
	%\bibitem{BNi} M. Buican and T. Nishinaka, \emph{On the superconformal index of Argyres-
	%Douglas theories}, Journal of Physics A {\bf 49} (2016), 015401.

	%\bibitem{CS} C. Cordova and S.-H. Shao, {\em Schur Indices, BPS Particles, and Argyres-Douglas Theories}, {Journal of High Energy Physics} {\bf 2016} (2016), 40.
	
	%\bibitem{Creutzig0} T. Creutzig, \emph{W-algebras for Argyres-Douglas theories,} European Journal of Mathematics {\bf 3.3} (2017), 659-690.
	
	%\bibitem{Gukov} S. Gukov, D. Pei, P. Putrov, and C. Vafa, {\em BPS spectra and 3-manifold invariants}, \texttt{arXiv:1701.06567}.
	
	%\bibitem{GM} S. Gukov and C. Manolescu, {\em A two-variable series for knot complements}, arXiv:1904.06057 (2019).
	
	%\bibitem{KW0} V. Kac and M. Wakimoto, \emph{A remark on boundary level admissible representations}, Comptes Rendus Mathematique {\bf 355.2} (2017), 128-132.
\end{thebibliography}
\end{document}